\definecolor{light-gray}{gray}{0.60}
\theoremstyle{plain}
\newtheorem{theorem}{Theorem}
\newtheorem{proposition}[theorem]{Proposition}
\newtheorem{corollary}[theorem]{Corollary}
\newtheorem{lemma}[theorem]{Lemma}
\newtheorem{fact}[theorem]{Fact}
\newtheoremstyle{theoremwithref}{}{}{\itshape}{}{\bfseries}{.}{.5em}{#1 #2 #3}
\theoremstyle{theoremwithref}
\theoremstyle{definition}
\newtheorem{definition}[theorem]{Definition}
\newtheorem{example}[theorem]{Example}
\newtheorem{remark}[theorem]{Remark}
\numberwithin{theorem}{section}
\numberwithin{equation}{section}
\newcommand{\D}{\mathrm{d}}
\newcommand{\Id}{\mathrm{Id}}
\newcommand{\ZZ}{\mathbb{Z}}
\newcommand{\RR}{\mathbb{R}}
\newcommand{\EE}{\mathbb{E}}
\newcommand{\CC}{\mathbb{C}}
\newcommand{\HH}{\mathbb{H}}
\newcommand{\PP}{\mathbb{P}}
\newcommand{\RP}{\mathbb{RP}}
\newcommand{\SL}{\mathsf{SL}}
\newcommand{\GL}{\mathsf{GL}}
\newcommand{\SO}{\mathsf{SO}}
\newcommand{\OO}{\mathsf{O}}
\newcommand{\PO}{\mathsf{PO}}
\newcommand{\PSO}{\mathsf{PSO}}
\newcommand{\PSL}{\mathsf{PSL}}
\newcommand{\PGL}{\mathsf{PGL}}
\newcommand{\Sp}{\mathsf{Sp}}
\newcommand{\Aff}{\mathsf{Aff}}
\newcommand{\Isom}{\mathsf{Isom}}
\newcommand{\axisH}{\mathscr{A}}
\newcommand{\axisR}{\mathcal{A}}
\newcommand{\metric}{\mathsf{g}}
\newcommand{\Ad}{\operatorname{Ad}}
\newcommand{\id}{\mathrm{id}}
\newcommand{\Hom}{\mathrm{Hom}}
\newcommand{\spa}{\mathrm{span}}
\newcommand{\ie}{i.e.\ }
\newcommand{\eps}{\varepsilon}
\newcommand{\diag}{\mathrm{diag}}
\newcommand{\Gr}{\mathbf{Gr}}
\title[Affine actions with Hitchin linear part]{
Affine actions with Hitchin linear part}
\author{Jeffrey Danciger}
\address{Department of Mathematics, The University of Texas at Austin, 1 University Station C1200, Austin, TX 78712, USA}
\email{jdanciger@math.utexas.edu}
\author{Tengren Zhang}
\address{Department of Mathematics, National University of Singapore, 21 Lower Kent Ridge Road, Singapore 119077}
\email{matzt@nus.edu.sg}
\thanks{J.D. was partially supported by an Alfred P. Sloan Foundation fellowship, and by the National Science Foundation under grants DMS~1510254 and DMS~1812216. T.Z. was partially supported by the National Science Foundation grant DMS~1566585, and by the NUS-MOE grant R-146-000-270-133.
The authors also acknowledge support from the GEAR Network, funded by the National Science Foundation under grant numbers DMS 1107452, 1107263, and 1107367 (``RNMS: GEometric structures And Representation varieties").}
\begin{document}

\maketitle

\begin{abstract}
Properly discontinuous actions of a surface group by affine automorphisms of $\RR^d$ were shown to exist by Danciger-Gueritaud-Kassel. We show, however, that if the linear part of an affine surface group action is in the Hitchin component, then the action fails to be properly discontinuous. The key case is that of linear part in $\SO(n,n-1)$, so that the affine action is by isometries of a flat pseudo-Riemannian metric on $\RR^d$ of signature $(n,n-1)$. Here, the translational part determines a deformation of the linear part into $\PSO(n,n)$-Hitchin representations and the crucial step is to show that such representations are not Anosov in $\PSL(2n,\RR)$ with respect to the stabilizer of an $n$-plane. 
We also prove a negative curvature analogue of the main result, that the action of a surface group on the pseudo-Riemannian hyperbolic space of signature $(n,n-1)$ by a $\PSO(n,n)$-Hitchin representation fails to be properly discontinuous.
\end{abstract}

\tableofcontents

\section{Introduction}

This paper is about an application of some rapidly developing tools from higher Teichm\"uller-Thurston theory to the study of properly discontinuous group actions in affine geometry, flat pseudo-Riemannian geometry, and also pseudo-Riemannian hyperbolic geometry.

An \emph{affine manifold} is a manifold $M$ equipped with a flat, torsion-free affine connection $\nabla$. If the geodesic flow of $\nabla$ is complete, then $M$ is called a \emph{complete affine manifold}.
Equivalently, a complete affine manifold is the quotient $M = \Gamma \backslash \RR^d$ of a \emph{proper affine action}, \ie a properly discontinuous action of a group $\Gamma$ by affine automorphisms of~$\RR^d$. Here the group $\Gamma$, which identifies with the fundamental group $\pi_1 M$, is required to be torsion free (otherwise the quotient is an orbifold rather than a manifold).
Complete affine manifolds are generalizations of complete Euclidean manifolds, for which the connection $\nabla$ is the Levi-Cevita connection of a complete flat Riemannian metric or equivalently the action by $\Gamma$ preserves the standard Euclidean metric on $\RR^d$. In this case, by Bieberbach's theorems, $\Gamma$ contains a finite index subgroup $\Gamma_0 \cong \ZZ^k$ for which the corresponding finite cover of $M$ deformation retracts onto a totally geodesic $k$-torus.  

By contrast to the setting of Euclidean geometry, the general picture of what complete affine manifolds $M$ can look like is much more mysterious. The Auslander conjecture~\cite{Aus1964, AMS3} gives a conjectural analogue of Beiberbach's theorem for the case that $M$ is compact. However, in the non-compact case, it is unclear what restrictions the presence of a complete affine structure puts on the topology of $M$. Indeed in 1983, Margulis~\cite{mar83, mar87} found examples of proper affine actions by non-abelian free groups in dimension three, destroying the natural intuition that a complete flat affine structure ought to obstruct word hyperbolicity in the fundamental group. The geometry, topology, and deformation theory of complete affine three-manifolds with free fundamental group, now known as Margulis spacetimes, has been studied thoroughly in recent years, see e.g.~\cite{DG,GM,GLM,CDG, ChoiGoldman2, DGK1, DGKstrips, DGK2, ChDG}.

Recently, Danciger-Gu\'eritaud-Kassel~\cite{DGK3} found examples of proper affine actions for \emph{any} right-angled Coxeter group, and consequently any subgroup of such a group. 
While this class of groups is very large and rich, let us focus on the sub-class of \emph{surface groups}, \ie the fundamental groups $\pi_1 S$ of closed orientable surfaces $S$ of genus $g \geq 2$. In this case, the construction of~\cite{DGK3} gives examples of proper affine actions in dimension as low as $d = 6$. 

Here we take up the problem of classifying proper affine actions by surface groups $\pi_1 S$, or equivalently complete affine manifolds which are homotopy equivalent to a surface $S$.
The advantage in considering surface groups is that tools to study representations of surface groups have developed rapidly over recent years.  Indeed, this paper will make use of some recent results in higher Teichm\"uller-Thurston theory  in order to obstruct properness for affine actions coming from a well-studied component of representations, called the \emph{Hitchin component}.

The group of affine automorphisms $\mathrm{Aff}(\RR^d) = \GL(d, \RR) \ltimes \RR^d$ decomposes as the semi-direct product of the linear automorphisms $\GL(d,\RR)$ with the translation subgroup $\RR^d$. Hence an affine action of the group $\Gamma$ consists of two pieces of data
\begin{align*}
(\rho, u): \Gamma \to \mathrm{Aff}(\RR^d) = \GL(d, \RR) \ltimes \RR^d
\end{align*}
where here $\rho: \Gamma \to \GL(d,\RR)$, a homomorphism, is called the \emph{linear part}, and $u: \Gamma \to \RR^d$, a cocycle twisted by $\rho$, is called the \emph{translational part}. The main theorem is:

\begin{theorem}\label{thm:main-affine}
Suppose that $(\rho, u): \pi_1S \to \mathrm{Aff}(\RR^d) = \GL(d,\RR) \ltimes \RR^d$ is a proper affine action. Then the linear part $\rho$ does not lie in a Hitchin component.
\end{theorem}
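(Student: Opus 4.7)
The plan is to argue by contradiction. Suppose $(\rho, u): \pi_1 S \to \Aff(\RR^d) = \GL(d,\RR) \ltimes \RR^d$ is a proper affine action with $\rho$ in a Hitchin component of $\SL(d,\RR)$. Because Hitchin representations are self-contragredient, $\rho$ preserves a nondegenerate bilinear form on $\RR^d$ — symmetric of signature $(n,n-1)$ when $d = 2n-1$ is odd, and symplectic when $d = 2n$ is even. Following the abstract's identification of the odd case as the crucial one, I would focus on $\rho \in \SO(n,n-1)$-Hitchin and treat the symplectic case separately, either by a parallel embedding/deformation or by a direct Fried-Goldman-style cohomological obstruction.

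Given such $\rho$ and a translational cocycle $u \in Z^1(\pi_1 S, \RR^{n,n-1}_\rho)$, I would next exploit the $\SO(n,n-1)$-module decomposition $\so(n,n) = \so(n,n-1) \oplus \RR^{n,n-1}$ to read $u$ as a tangent vector at $\rho$ to the $\SO(n,n)$-representation variety, transverse to the $\SO(n,n-1)$-slice. Since the representation variety is smooth at Hitchin points, this tangent direction integrates to a smooth family $\rho_t : \pi_1 S \to \SO(n,n)$ with $\rho_0 = \rho$ and $\dot\rho_0 = u$, and openness of Hitchin components places $\rho_t$ in the $\PSO(n,n)$-Hitchin component for small $t \neq 0$. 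Note that $\SO(n,n-1)$-Hitchin sits inside $\SO(n,n)$-Hitchin via the compatible principal $\SL(2)$'s, so this is consistent.

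I would then invoke a properness criterion in the spirit of Guichard-Gu\'eritaud-Kassel-Wienhard and Danciger-Gu\'eritaud-Kassel to translate proper discontinuity of the affine action $(\rho, t u)$ on $\RR^{n,n-1}$ into an Anosov property of the deformed representation $\rho_t$ in $\PSL(2n,\RR)$, specifically with respect to the parabolic $P_n$ stabilizing an $n$-plane. The geometric mechanism is that $\SO(n,n-1) \ltimes \RR^{n,n-1}$ is a group contraction of $\SO(n,n)$, and properness of the affine action corresponds to a uniform spectral gap between the $n$-th and $(n+1)$-th eigenvalues of $\rho_t(\gamma)$, which is precisely the $P_n$-Anosov condition in $\PSL(2n,\RR)$.

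The contradiction then follows by showing that \emph{no} $\PSO(n,n)$-Hitchin representation is $P_n$-Anosov in $\PSL(2n,\RR)$. At the Fuchsian locus this is an algebraic fact: the principal $\SL(2)$ in $\SO(n,n)$ (type $D_n$) decomposes $\RR^{2n}$ as $V_{2n-1} \oplus V_1$, so every loxodromic principal element has middle eigenvalue pair exactly $(1,1)$, leaving no spectral gap. The main obstacle — and the crux of the paper — is propagating this failure throughout the entire Hitchin component: under the identification of the Cartan subspace of $D_n$ with $\RR^n$, the functional controlling the $n$-plane gap is $2 a_n$, which is \emph{not} a simple root, so $\PSO(n,n)$-Borel-Anosov gives no a priori control over it. A genuine dynamical or limit-map argument is required, presumably exploiting the two-component structure of the isotropic $n$-plane Grassmannian and a transversality obstruction to lifting the Hitchin boundary curve into $\Gr(n, 2n)$.
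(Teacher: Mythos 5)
Your overall architecture matches the paper's: read the cocycle $u$ as a tangent vector via $\mathfrak{so}(n,n)=\mathfrak{so}(n,n-1)\oplus\RR^{n,n-1}$, integrate it to a path $\varrho_r$ into $\PSO(n,n)$ (smoothness at irreducible points, as in Lemma~\ref{lem:smooth-point}), show that properness of the affine action forces $\iota_{2n}\circ\varrho_r$ to be $P_n$-Anosov for small $r$ (Theorem~\ref{thm:general}, via the Margulis-invariant/length-function comparison of Lemma~\ref{lem:key}), and contradict this with the fact that no $\PSO(n,n)$-Hitchin representation is $P_n$-Anosov (Theorem~\ref{thm:main2}). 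You also correctly identify why the last step is the crux ($2a_n$ is not a simple root of $D_n$) and correctly guess the mechanism of its proof (tangency of the limit curve in $\Gr_{n-1}(\RR^{n,n})$ to the fibers of $\varpi^+$, with transversality supplied by positivity).

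However, your opening reduction contains a genuine error. It is \emph{not} true that $\PSL(d,\RR)$-Hitchin representations are self-contragredient: the contragredient involution acts nontrivially on the Hitchin component, and its fixed locus (the representations preserving a bilinear form) is a proper submanifold --- already for $d=3$ a generic Hitchin representation is not conjugate to its dual. So you cannot conclude that $\rho$ lands in $\SO(n,n-1)$ or $\Sp(2n,\RR)$ this way. The correct mechanism is elementary but different: an affine transformation $(A,v)$ with no fixed point must have $1$ as an eigenvalue of $A$; since a proper action of the torsion-free group $\pi_1S$ is free, this holds for every $\rho(\gamma)$, and the eigenvalue-one condition passes to the Zariski closure. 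Combining this with Guichard's classification of Zariski closures of Hitchin representations (which always contain the principal $\SL(2,\RR)$) kills the even case outright --- $\tau_{2n}(\diag(e^t,e^{-t}))$ has no eigenvalue one, so there is no free affine action with symplectic-Hitchin linear part and your proposed ``separate treatment'' of that case is unnecessary and, as sketched, would not obviously go through --- and in the odd case forces the Zariski closure (after handling the $\GL$ versus $\SL$ lift via the commutator subgroup) into $\SO(n,n-1)$. Without this step your argument never gets off the ground, since the entire deformation-into-$\PSO(n,n)$ machinery requires the linear part to preserve the signature-$(n,n-1)$ form. Finally, be aware that your last paragraph describes rather than proves Theorem~\ref{thm:main2}; as written it is a correct roadmap but the tangency and positivity arguments (Lemma~\ref{lem:tangent}, Corollary~\ref{cor:transversality}) still have to be supplied.
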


Here, the term {Hitchin component} refers to a special connected component (in fact, multiple related components) of representations that was singled out by Hitchin~\cite{Hitchin} for its connection to Teichm\"uller theory.
Goldman \cite{Goldman1988} proved that the space $\Hom(\pi_1 S, \PSL(2,\RR))$ has $4g-3$ components, where $g$ is the genus of $S$. The discrete faithful representations sort into two components, called the \emph{Teichm\"uller components}, corresponding to oriented hyperbolic structures on $S$ of each possible orientation. For $G$ an adjoint real split semi-simple Lie group, such as $G = \PSL(d,\RR)$,
the compositions of representations in the Teichm\"uller components with the principle representation $\tau_G: \PSL(2,\RR) \to G$ are called \emph{Fuchsian representations} and the connected components of $\Hom(\pi_1 S, G)$ containing all deformations of Fuchsian representations are called \emph{$G$-Hitchin components} and their elements called \emph{$G$-Hitchin representations} (we suppress the $G$ when clear from context). See Section~\ref{sec:Hitchin}. In the case $G = \PSL(d,\RR)$, if $d$ is odd, there is one Hitchin component and if $d$ is even, there are two Hitchin components which are nonetheless referred to as ``the" Hitchin component since the two components are related by an automorphism of $\PSL(d,\RR)$. Hitchin showed that, like the Teichm\"uller components for $\PSL(2,\RR)$, a $G$-Hitchin component is (after dividing out by conjugation) homeomorphic to a ball of dimension $\dim(G)\cdot(2g-2)$ inside which the locus of Fuchsian representations make up a $6g-6$ dimensional sub-manifold (also a ball). 

Labourie~\cite{labourie2001} proved that a (lift of a) Fuchsian representation $\rho$ is never the linear part of a proper affine action and Theorem~\ref{thm:main-affine} extends Labourie's result to the entire Hitchin component. We note that in the case $d = 3$, the key case of Theorem~\ref{thm:main-affine} follows from Mess~\cite{Mess2007} and Goldman-Margulis~\cite{GM}.
We also note that, unlike the case $d =2$, for $d \geq 3$ the space $\Hom(\pi_1 S, \PSL(d, \RR))$ has only three (resp. six) connected components if $d$ is odd (resp. if $d$ is even). However, the behavior of the representations in the other two (or four) components is very different and still quite mysterious, making a study of proper affine actions with linear part in those components intractible at this time.

Hitchin representations have many nice properties. In particular, Labourie \cite{labourie2006} showed that every $\PSL(d,\RR)$-Hitchin representation is \emph{Anosov}; indeed he invented the notion of Anosov representation, now central in higher Teichm\"uller-Thurston theory, for the purpose of studying the $\PSL(d,\RR)$-Hitchin component. 
Anosov representations were generalized by Guichard--Wienhard~\cite{GW} to the setting of representations of any word hyperbolic group into a semi-simple Lie group $G$. There is a notion of Anosov for each parabolic subgroup $P$ of $G$. 
For $G$ an adjoint real split semi-simple Lie group, the $G$-Hitchin representations satisfy this notion for the minimal parabolic (the Borel subgroup) $B$, or equivalently for all of the parabolic subgroups of $G$.
Anosov representations, including some recent characterizations  due to Guichard--Gu\'eritaud--Kassel--Wienhard~\cite{GGKW} and Kapovich--Leeb--Porti~\cite{KLPa,KLPb}, will be the essential tool for the proof of Theorem~\ref{thm:main-affine}.

\subsection{Flat pseudo-Riemannian geometry in signature $(n,n-1)$}

The affine transformation $(\rho(\gamma), u(\gamma)) \in \mathrm{Aff}(\RR^d)$ fixes a point if $\rho(\gamma)$ does not have one as an eigenvalue. Hence if $(\rho,u)$ is a free affine action by $\pi_1 S$, then the linear part $\rho(\gamma)$ has one as an eigenvalue, for all $\gamma \in \pi_1 S$, and the same property passes to the Zariski closure of $\rho(\pi_1 S)$. In the context of Theorem~\ref{thm:main-affine}, Guichard's characterization of the possible Zariski closures of Hitchin representations~\cite{Guichard2} allows us to reduce to the case that $d = 2n-1$ is odd (with $n\geq 2$), and that the linear part $\rho(\pi_1 S) \subset \SO(n,n-1)$ is contained in the special orthogonal group of the standard indefinite symmetric bilinear form of signature $(n,n-1)$. The vector space $\RR^d$ together with this form will be denoted by $\RR^{n,n-1}$ and the affine space of this vector space, equipped with the induced flat pseudo-Riemannian metric, will be denoted by $\EE^{n,n-1}$. Hence, in this case the affine action $(\rho,u)$ is by isometries of $\EE^{n,n-1}$. 
Theorem~\ref{thm:main-affine} is a corollary of:
\begin{theorem}\label{thm:main-flat}
Suppose $(\rho, u): \pi_1 S \to \mathrm{Isom^+}(\EE^{n,n-1}) = \SO(n,n-1) \ltimes \RR^{n,n-1}$ is an action by isometries of $\EE^{n,n-1}$ with linear part $\rho$ a $\SO(n,n-1)$-Hitchin representation. Then the action is not proper.
\end{theorem}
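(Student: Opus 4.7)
The plan is to follow the strategy previewed in the abstract. Assuming for contradiction that $(\rho,u)$ acts properly on $\EE^{n,n-1}$, I would first convert $u$ into a genuine deformation of $\rho$ in $\PSO(n,n)$, then argue that properness forces this deformation to be $P_n$-Anosov in $\PSL(2n,\RR)$ (where $P_n$ is the stabilizer of an $n$-plane), and finally contradict this by showing independently that no $\PSO(n,n)$-Hitchin representation is $P_n$-Anosov in $\PSL(2n,\RR)$.

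For the deformation step, embed $\SO(n,n-1)\hookrightarrow\PSO(n,n)$ as the stabilizer of a unit positive vector $e_0\in\RR^{n,n}$, so that the $\SO(n,n-1)$-module $\so(n,n)/\so(n,n-1)$ is canonically identified with $\RR^{n,n-1}\cong e_0^\perp$. Under this identification $u$ becomes a $\so(n,n)$-valued cocycle twisting $\rho$, which integrates to a smooth path $\rho_t:\pi_1 S\to\PSO(n,n)$ with $\rho_0=\rho$ and derivative $u$ at $t=0$. Because the principal embedding $\PSL(2,\RR)\to\PSO(n,n)$ factors through $\SO(n,n-1)$, any $\SO(n,n-1)$-Hitchin representation is $\PSO(n,n)$-Hitchin under inclusion, so by openness of the Hitchin component $\rho_t$ remains $\PSO(n,n)$-Hitchin for $|t|$ small.

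For the Anosov step, I would adapt the Margulis-invariant machinery now standard in the subject. The eigenvalue $1$ of each $\rho(\gamma)\in\SO(n,n-1)$ should split, under $\rho_t$, into a reciprocal pair of eigenvalues of $\rho_t(\gamma)\in\PSL(2n,\RR)$ with logarithms $\pm t\,\alpha_u(\gamma)+O(t^2)$ for a Margulis invariant $\alpha_u$ linear in $u$. The dictionary between affine properness and Anosov-type properties of the associated linear deformations, originating with Goldman--Labourie--Margulis and extended to higher rank by Danciger--Gu\'eritaud--Kassel and Ghosh--Treib, should force $\alpha_u$ to have a definite sign on all nontrivial conjugacy classes. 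Combined with the $P_n$-Anosov property of $\rho$ itself (Labourie: $\SO(n,n-1)$-Hitchin representations are Borel-Anosov, hence $P_n$-Anosov when included in $\PSL(2n,\RR)$), the singular-value / eigenvalue-gap characterization of Anosov representations due to Kapovich--Leeb--Porti and Gu\'eritaud--Guichard--Kassel--Wienhard then yields a uniform gap between the $n$-th and $(n{+}1)$-st singular values of $\rho_t(\gamma)$ along conjugacy classes, giving the desired $P_n$-Anosov property of $\rho_t$ for all sufficiently small $t\neq 0$ of the correct sign.

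The main obstacle is the final step: proving that no $\PSO(n,n)$-Hitchin representation is $P_n$-Anosov in $\PSL(2n,\RR)$. I would attack it through Labourie's Frenet-type limit curve $\xi:\partial\pi_1 S\to\mathcal{F}(\RR^{2n})$. Under the hypothetical $P_n$-Anosov property, the $n$-th component $\xi^{(n)}:\partial\pi_1 S\to\Gr_n(\RR^{2n})$ would have to be equivariantly transverse, sending distinct boundary points to complementary $n$-planes. However $\Gr_n(\RR^{2n})$ decomposes under the signature-$(n,n)$ form into two $\PSO(n,n)$-orbits of maximally isotropic $n$-planes together with non-isotropic strata, and compatibility of $\xi$ with this form — forced by $\PSO(n,n)\subset\PSL(2n,\RR)$ — should compel distinct boundary points to map to $n$-planes sharing a positive-dimensional isotropic intersection, violating transversality. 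Making this incompatibility precise, by combining the Frenet property of $\xi$ with the $\PSO(n,n)$-orbit structure on partial isotropic flags, is where the real work lies and will be the technical heart of the argument.
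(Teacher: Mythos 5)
Your overall architecture matches the paper's: integrate $u$ to a path $\varrho_r$ into $\PSO(n,n)$, show properness forces $\iota_{2n}\circ\varrho_r$ to be $P_n$-Anosov for small $r\neq 0$, and contradict this with a theorem that $\PSO(n,n)$-Hitchin representations are never $P_n$-Anosov in $\PSL(2n,\RR)$. The first two steps are correct in outline (one caveat: $\iota_{2n}\circ\iota_{n,n}\circ\rho$ itself is emphatically \emph{not} $P_n$-Anosov --- its elements have $1$ as a double eigenvalue, so the middle root gap vanishes; it is only the first-order splitting $\pm r\,\alpha_{(\rho,u)}(\gamma)+O(r^2)$ of that double eigenvalue that produces the gap, and to get the \emph{uniform} lower bound $\lambda_n'(\varrho_r(\gamma))\geq c|\gamma|_\infty$ required by the Kapovich--Leeb--Porti / GGKW criterion you must run the Goldman--Labourie--Margulis properness criterion over the compact space of probability currents and prove the convergence $\tfrac1r\mathscr L_{\varrho_r}\to\alpha_{(\rho,u)}$ is uniform there, not merely conjugacy class by conjugacy class).

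The genuine gap is in your final step. Your proposed mechanism --- that the $(n,n)$-form forces $\xi^{(n)}(x)$ and $\xi^{(n)}(y)$ to share a positive-dimensional isotropic intersection --- is exactly the argument that works only when $n$ is \emph{odd}, because only then do transverse isotropic $n$-planes necessarily lie in opposite $\PSO(n,n)$-orbits. When $n$ is even, two isotropic $n$-planes of the \emph{same} sign can be (and generically are) transverse, so a curve valued in $\Gr_n^+(\RR^{n,n})$ faces no orbit-theoretic obstruction to transversality whatsoever. Since the $n$ odd case of the affine theorem already follows from Abels--Margulis--Soifer, $n$ even is the only case that matters, and there your argument collapses. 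The paper's proof of this step for $n$ even is of a completely different nature: it uses Fock--Goncharov positivity of the Hitchin limit curve to establish an extra transversality (that the coordinate $u_{n-1,n}$ of $\xi^{(n-1)}(z)$ in the affine chart based at $\xi^{(n-1)}(x)$ never vanishes), combines this with a comparison of contraction rates on the line bundles $\Hom(L_i,L_j)$ coming from Borel--Anosovness to show that the tangent cone to $\xi^{(n-1)}(\partial\Gamma)$ at every point is exactly the line $\Hom(L_{n-1},L_n)$, and then observes that this line is precisely the tangent direction to the fiber of $\varpi^+:\Gr_{n-1}(\RR^{n,n})\to\Gr_n^+(\RR^{n,n})$; hence $\xi^{(n-1)}(\partial\Gamma)$ is a $C^1$ curve everywhere tangent to the fibers, so it lies in a single fiber, contradicting injectivity of $\xi^{(n)}=\varpi^+\circ\xi^{(n-1)}$. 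None of this is visible from the orbit structure of $\Gr_n(\RR^{2n})$ alone, so the "technical heart" you correctly identify needs an entirely different idea from the one you propose.
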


In fact, if $n$ is odd, Theorem~\ref{thm:main-affine} follows from an observation of Abels-Margulis-Soifer~\cite{AMS1} (see Remark~\ref{rem:AMS}), so we need only treat the case that $n$ is even. However, for much of the setup we will not distinguish between the case $n$ odd and $n$ even.

The strategy for Theorem~\ref{thm:main-affine} follows the key point of view in the work of Danciger-Gu\'eritaud-Kassel~\cite{DGK1,DGKstrips, DGK4, DGK2} on proper actions by free groups in $\EE^{2,1}$ and their quotients, called Margulis spacetimes. In that context, Margulis spacetimes were studied as limits of their negative curvature counterparts, namely three-dimensional AdS spacetimes which are quotients of anti de Sitter space $\mathrm{AdS}^3 = \HH^{2,1}$. Similarly, here we will study the above isometric actions on $\EE^{n,n-1}$ by thinking of these as infinitesimal versions of isometric actions on the pseudo-Riemannian hyperbolic space $\HH^{n,n-1}$.

\subsection{Deforming into hyperbolic geometry of signature $(n,n-1)$}\label{sec:intro:negcurv}
The pseudo-Riemannian hyperbolic space $\HH^{n,n-1}$ is the model for constant negative curvature in signature $(n,n-1)$. 
The projective model for $\HH^{n,n-1}$ is:
\begin{align*}
\HH^{n,n-1} &= \PP \left\{ x \in \RR^{n,n} \smallsetminus \{0\} : \langle x, x \rangle_{n,n} < 0 \right\} \subset \PP(\RR^{n,n}),
\end{align*}
where here $\RR^{n,n}$ denotes the vector space $\RR^{2n}$ equipped with the standard symmetric bilinear form $\langle \cdot, \cdot \rangle_{n,n}$ of signature $(n,n)$.
The projective special orthogonal group $\PSO(n,n)$ acts transitively on $\HH^{n,n-1}$ as the orientation preserving isometry group of a complete metric of constant negative curvature with signature $(n,n-1)$. With coordinates respecting the orthogonal splitting $\RR^{n,n} = \RR^{n,n-1} \oplus \RR^{0,1}$, the stabilizer in $\PSO(n,n)$ of the basepoint $\mathbf x_0 = [0:\cdots:0:1]$ is precisely the orthogonal group $\OO(n,n-1)$ acting on the $\RR^{n,n-1}$ factor in the standard way and acting trivially on the $\RR^{0,1} = \RR\cdot\mathbf x_0$ factor. 

Now consider a Hitchin representation $\rho: \pi_1 S \to \SO(n,n-1)$. 
Let $\iota_{n,n}: \SO(n,n-1)\hookrightarrow \PSO(n,n)$ be the natural inclusion for the orthogonal splitting $\RR^{n,n} = \RR^{n,n-1} \oplus \RR^{0,1}$ above. Then $\iota_{n,n} \circ \rho$ stabilizes the basepoint $\mathbf x_0 \in \HH^{n,n-1}$ and acts on the tangent space of that point, a copy of $\RR^{n,n-1}$, in the standard way by linear isometries. Consider a deformation path $\varrho_\eps: \pi_1 S \to \PSO(n,n)$ based at $\varrho_0 =\iota_{n,n} \circ \rho$. 
Any such deformation $\varrho_\eps$ (for $\eps$ not necessarily small, or possibly zero) is a $\PSO(n,n)$-Hitchin representation and such representations make up the $\PSO(n,n)$-Hitchin component. The derivative of $\varrho_\eps$ at time $\eps = 0$ is naturally a cocycle $v: \pi_1 S \to \mathfrak{pso}(n,n)$ twisted by the adjoint action of $\varrho_0$, which splits as an invariant orthogonal sum
\begin{align*}
\mathfrak{pso}(n,n) = \mathfrak{so}(n,n-1) \oplus \RR^{n,n-1},
\end{align*} where the action in the first factor is by the adjoint representation and the action in the second factor is by the standard representation. Hence the projection $u$ of the infinitesimal deformation $v$ to the $\RR^{n,n-1}$ factor gives a cocycle of translational parts for an affine action $(\rho, u)$ on $\EE^{n, n-1}$. 
The geometric way to think of this fact is as follows: As $\eps \to 0$, the action of each $\varrho_\eps(\gamma)$ moves the basepoint $\mathbf x_0$ less and less, and by zooming in on the basepoint at just the right rate as $\eps \to 0$ and taking a limit (in an appropriate sense), the action converges to an affine action $(\rho,u)$ on $\EE^{n,n-1}$ whose action on the basepoint is translation by the derivative $\left.\dfrac{\D}{\D \eps}\right|_{\eps = 0}\varrho_\eps(\gamma)\mathbf x_0 \in T_{\mathbf x_0} \HH^{n,n-1} = \RR^{n,n-1}$, where here we identify the tangent space $T_{\mathbf x_0} \HH^{n,n-1}$ with $\RR^{n,n-1}$.

Now, since $\iota_{n,n} \circ \rho$ has trivial centralizer in $\PSO(n,n)$, results of Goldman~\cite{Goldmansymplectic} on representation varieties of surface groups imply that any $\rho$-cocycle $u: \pi_1 S \to \RR^{n,n-1}$ is realized as the $\RR^{n,n-1}$ part of the derivative of a smooth deformation path $\varrho_\eps$ as above (and the $\mathfrak{so}(n,n-1)$ part may be taken to be trivial).
We prove a key lemma (Lemma~\ref{lem:key}) that connects a criterion~\cite{GLM, GT} for properness of the affine action $(\rho, u)$ on $\EE^{n,n-1}$ with the first order behavior of the two middle eigenvalues of elements $\varrho_\eps(\gamma)$, an inverse pair $\lambda_n, \lambda_n^{-1}$ which converges to the two one-eigenvalues of $\iota_{n,n} \circ \rho$ as $\eps \to 0$. From this eigenvalue behavior, we use~\cite{GGKW,KLPa,KLPb} to prove that if $(\rho,u)$ acts properly on $\RR^{n,n-1}$, then the representations $\varrho_\eps$ satisfy an unexpected Anosov condition. Specifically, for $\eps > 0$ small enough, $\iota_{2n} \circ \varrho_\eps$ is Anosov with respect to the stabilizer $P_n$ of an $n$-plane in $\RR^{2n}$, where here $\iota_{2n}: \PSO(n,n) \to \PSL(2n,\RR)$ is the inclusion, see Theorem~\ref{thm:general}. 
Theorem~\ref{thm:main-flat} then follows from the next theorem, which is the main technical result of the paper:

\begin{theorem}\label{thm:main2} 
If $\varrho:\pi_1 S \to\PSO(n,n)$ is a $\PSO(n,n)$-Hitchin representation, then $\iota_{2n} \circ \varrho: \pi_1 S \to \PSL(2n,\RR)$ is not $P_n$-Anosov .
\end{theorem}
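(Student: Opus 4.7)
The plan is to argue by contradiction. Suppose $\iota_{2n} \circ \varrho$ is $P_n$-Anosov, so there is a continuous, $\varrho$-equivariant, transverse boundary map $\eta \colon \partial\pi_1 S \to \Gr(n, 2n)$ with uniform singular-value gap
\[
\log\sigma_n(\iota_{2n}\varrho(\gamma)) - \log\sigma_{n+1}(\iota_{2n}\varrho(\gamma)) \ \geq\ c|\gamma| - C.
\]
I would combine this with the $B$-Anosov structure of $\varrho$ in $\PSO(n,n)$ (which it enjoys as a Hitchin representation), yielding boundary maps $\xi^\pm \colon \partial\pi_1 S \to \mathcal{IG}_n^\pm$ into the two families of maximal isotropic $n$-planes of $\RR^{n,n}$.

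The first step is to show that $\eta = \xi^+$ (up to swapping the two families). For a periodic $\gamma^+ \in \partial\pi_1 S$, $\eta(\gamma^+)$ is the attracting top-$n$ eigenspace of $\iota_{2n}\varrho(\gamma)$. The eigenvalues of $\varrho(\gamma) \in \PSO(n,n)$ occur in reciprocal pairs $e^{\pm t_i(\gamma)}$ where $(t_1, \ldots, t_n)$ is the Jordan projection in the $D_n$-Weyl chamber, satisfying $t_1 > \cdots > t_{n-1} > |t_n| > 0$ under the Anosov gap hypothesis. Since $\varrho(\gamma)$ preserves the $(n,n)$-form, eigenvectors for reciprocal eigenvalues pair up as a hyperbolic plane, so the top-$n$ eigenspace is always a maximal isotropic: it lies in $\mathcal{IG}_n^+$ if $t_n(\varrho(\gamma)) > 0$ and in $\mathcal{IG}_n^-$ if $t_n(\varrho(\gamma)) < 0$. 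Because $\mathcal{IG}_n^+$ and $\mathcal{IG}_n^-$ are disjoint closed subvarieties of $\Gr(n,2n)$, periodic points are dense in the connected space $\partial\pi_1 S$, and $\eta$ is continuous, the image of $\eta$ must lie entirely in one family, say $\mathcal{IG}_n^+$; hence $\eta = \xi^+$ on all of $\partial\pi_1 S$, and $t_n(\varrho(\gamma)) > 0$ for every $\gamma \neq e$.

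When $n$ is odd this already produces a contradiction via the classical parity rule for $D_n$: two distinct maximal isotropic $n$-planes of $\RR^{n,n}$ lying in the same family meet in a subspace of dimension congruent to $n \pmod 2$. For $n$ odd this dimension is at least $1$, so $\xi^+(x) \cap \xi^+(y) \neq 0$ for $x \neq y$, violating the transversality of $\eta = \xi^+$ in $\Gr(n,2n)$ demanded by the $P_n$-Anosov condition.

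When $n$ is even, same-family maximal isotropics can be transverse, so the transversality obstruction vanishes and the contradiction must come from failing the uniform singular-value gap, which now computes to $\log\sigma_n - \log\sigma_{n+1} = 2\,t_n(\varrho(\gamma))$. I would approach this by using the $\PSO(n,n)$-equivariant splitting $\Lambda^n \RR^{2n} = \Lambda^n_+ \oplus \Lambda^n_-$, under which $\Lambda^n \circ \iota_{2n} \circ \varrho$ decomposes as a direct sum $\varrho_+ \oplus \varrho_-$ of two proximal Anosov representations (arising from the $P_n^\pm$-Anosov structure of $\varrho$) whose top spectral radii $\ell_\pm(\gamma) = \log\operatorname{sr}(\varrho_\pm(\gamma))$ satisfy $\ell_+ - \ell_- = 2 t_n$. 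The hypothesis then forces the ratio $\ell_+/\ell_-$ to be uniformly bounded away from $1$ on $\pi_1 S$, which I would rule out: a promising route is to exploit the outer automorphism of $\PSO(n,n)$ exchanging the two families of maximal isotropics (and hence swapping $\varrho_+$ with $\varrho_-$, and so $\ell_+$ with $\ell_-$), combined with connectedness of the Hitchin component, openness of the Anosov condition, and a continuity argument on the space of geodesic currents, so as to produce a sequence $\gamma_k$ with $t_n(\gamma_k) / |\gamma_k| \to 0$, contradicting the supposed uniform gap. The main obstacle lies precisely in establishing this "near collision" of $\ell_+$ and $\ell_-$ in full generality throughout the $\PSO(n,n)$-Hitchin component.
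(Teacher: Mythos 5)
Your reduction step and your treatment of $n$ odd are sound and match the paper: the limit map $\eta$ in $\Gr_n(\RR^{2n})$ must coincide with one of $\xi^{(n)}_\pm$ (this is Proposition~\ref{prop:Anosov relations} in the paper, proved exactly by your eigenspace/density/connectedness argument), and for $n$ odd the parity of intersections of same-family isotropic $n$-planes kills transversality immediately (Remark~\ref{rem:isotropic transverse}). The identity $\lambda_n - \lambda_{n+1} = 2\lambda_n'$ for the middle gap is also correct.

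The case $n$ even, however, is the entire content of the theorem, and there you have a genuine gap rather than a proof. You correctly identify that one must rule out a uniform linear lower bound on $\lambda_n'(\varrho(\gamma))$ in terms of $|\gamma|_\infty$, but the route you sketch does not close: the outer automorphism of $\PSO(n,n)$ swapping the two families replaces $\varrho$ by a \emph{different} representation with $\lambda_n'$ negated; it gives no information about the spectrum of the fixed $\varrho$ you are trying to contradict, and connectedness of the Hitchin component plus openness of Anosovness cannot by themselves force a ``near collision'' of $\ell_+$ and $\ell_-$ for that fixed $\varrho$ (indeed, nothing in your argument so far distinguishes Hitchin representations from arbitrary $B'$-Anosov representations in $\PSO(n,n)$, for which the statement is false --- proper actions on $\HH^{n,n-1}$ do exist for $n$ even by Okuda, and these are exactly $P_n$-Anosov by Theorem~\ref{thm:gen-H}). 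The missing ingredient is Fock--Goncharov positivity of the Hitchin limit curve. The paper's actual argument is local and differential-geometric: positivity yields the transversality statement of Corollary~\ref{cor:transversality}, which guarantees that the coordinate $u_{n-1,n}(y,z,x)$ of $\xi^{(n-1)}(z)$ in the affine chart based at $\xi^{(n-1)}(x)$ never vanishes; combined with the Anosov contraction estimates of Proposition~\ref{prop:ratio of norms}, this shows the tangent cone of the curve $\xi^{(n-1)}(\partial\Gamma)$ at every point is exactly the line $\Hom(L_{n-1},L_n)$, which is tangent to the fibers of $\varpi^+:\Gr_{n-1}(\RR^{n,n})\to\Gr_n^+(\RR^{n,n})$. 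Hence the curve lies in a single fiber, contradicting injectivity of $\xi^{(n)}_+$. Also note that the eventual current-based length function $\mathscr L_\varrho$ you invoke is, in the paper, developed \emph{downstream} of Theorem~\ref{thm:main2} (it is used to deduce Theorems~\ref{thm:main-flat} and~\ref{thm:main-H} from it), so importing it here risks circularity.
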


As discussed above, $\PSO(n,n)$-Hitchin representations enjoy all possible forms of Anosovness available in $\PSO(n,n)$. However, a $\PSO(n,n)$-Hitchin representation has no reason to be $P_n$-Anosov in the larger group $\PSL(2n, \RR)$, and the representations landing in the subgroup $\iota_{n,n}\SO(n,n-1)$ (of the form above $\iota_{n,n}\circ \rho$) obviously fail this condition. Theorem~\ref{thm:main2} says that $P_n$-Anosovness in $\PSL(2n, \RR)$ never happens, even by accident, to the inclusion of a $\PSO(n,n)$-Hitchin representation.

The proof of Theorem~\ref{thm:main2} uses more than just Anosovness of $\PSO(n,n)$-Hitchin representations, specifically it uses that $\PSO(n,n)$-Hitchin representations satisfy Fock-Goncharov positivity~\cite{FockGoncharov}. However, 
we remark that the proof of Theorem~\ref{thm:main-flat} outlined above only requires Theorem~\ref{thm:main2} for $\PSO(n,n)$-Hitchin representations which are small deformations of $\SO(n,n-1)$-Hitchin representations. We remark that a proof of Theorem~\ref{thm:main2} in that case can be achieved without the full strength of positivity in $\PSO(n,n)$, using in its place some special properties of $\SO(n,n-1)$-Hitchin representations (Labourie's Property (H) from~\cite{labourie2006}) and an argument about persistence of such properties under small deformation into $\PSO(n,n)$, but we do not give this proof here. 

\begin{remark}\label{rem:Sourav}
Sourav Ghosh has announced independent work~\cite{Sourav} that has overlap with some of our results. 
Specifically, Lemma~\ref{lem:key} and Theorem~\ref{thm:general}, showing that a proper action on $\EE^{n,n-1}$ whose linear part is Anosov with respect to the stabilizer of an isotropic $(n-1)$-plane corresponds to a deformation path into $\PSO(n,n)$ for which the inclusions into $\PSL(2n,\RR)$ are $P_n$-Anosov, is also proved by Ghosh. Theorem~\ref{thm:general} is one of the two main inputs in our proof of Theorem~\ref{thm:main-flat}, the other being Theorem~\ref{thm:main2} which Ghosh does not obtain. We also remark that, whereas we focus on surface groups only, Ghosh works in the more general setting of actions by any word hyperbolic group.
\end{remark}

\subsection{Proper actions in $\HH^{n,n-1}$}

We also use Theorem~\ref{thm:main2}, together with a properness criterion, Theorem~\ref{thm:gen-H}, based on techniques from~\cite{GGKW} to show the negative curvature analogue of Theorem~\ref{thm:main-flat}.

\begin{theorem}\label{thm:main-H}
A $\PSO(n,n)$-Hitchin representation $\varrho: \pi_1 S \to \PSO(n,n)$ does not act properly on $\HH^{n,n-1}$.
\end{theorem}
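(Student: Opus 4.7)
The plan is essentially a direct combination of the hyperbolic properness criterion Theorem~\ref{thm:gen-H} with the non-Anosov Theorem~\ref{thm:main2}, paralleling how Theorem~\ref{thm:main-flat} is deduced from Theorem~\ref{thm:general} together with Theorem~\ref{thm:main2}. Suppose, for contradiction, that the $\PSO(n,n)$-Hitchin representation $\varrho:\pi_1 S \to \PSO(n,n)$ acts properly discontinuously on $\HH^{n,n-1}$. By Theorem~\ref{thm:gen-H}, this proper action forces the composition $\iota_{2n}\circ\varrho:\pi_1 S \to \PSL(2n,\RR)$ to be $P_n$-Anosov. But Theorem~\ref{thm:main2} asserts precisely that no $\PSO(n,n)$-Hitchin representation has this property after inclusion via $\iota_{2n}$. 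These conclusions contradict each other, establishing the theorem.

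Behind this concise argument sits the real content, which lies in Theorem~\ref{thm:gen-H}. The intuition is the Benoist-Kobayashi characterization of proper actions on the reductive homogeneous space $\HH^{n,n-1}=\PSO(n,n)/\OO(n,n-1)$: properness of the $\varrho$-action is equivalent to the Cartan projections of $\varrho(\pi_1 S)$ escaping every tube about the Cartan projection of the stabilizer $\OO(n,n-1)$. Viewed through $\iota_{2n}$, the subgroup $\OO(n,n-1)$ fixes the timelike line $\RR\mathbf{x}_0$ and acts standardly on the orthogonal $\RR^{n,n-1}$, so its Cartan projection in $\PSL(2n,\RR)$ is exactly the wall $\{\mu_n=0\}$, where $\mu_n$ denotes the $n$-th log singular value. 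Properness therefore translates to the qualitative divergence $\mu_n(\iota_{2n}\varrho(\gamma))\to\infty$ as $\gamma\to\infty$ in $\pi_1 S$. Because $\PSO(n,n)$-elements satisfy the symmetry $\mu_{n+1}=-\mu_n$, the relevant $P_n$-simple root $\mu_n-\mu_{n+1}$ of $\PSL(2n,\RR)$ equals $2\mu_n$, so this divergence is exactly the functional one must control for $P_n$-Anosov.

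The main obstacle is therefore the remaining step inside Theorem~\ref{thm:gen-H}: upgrading qualitative Cartan-projection divergence to the quasi-isometric-embedding lower bound that defines the Anosov property. This upgrade uses the GGKW and KLPa-b characterizations of Anosov representations, together with the two auxiliary inputs that $\pi_1 S$ is word-hyperbolic and that the Hitchin representation $\varrho$ is already $B$-Anosov inside $\PSO(n,n)$; the ambient Anosov structure provides enough regularity that mere escape along the functional $\mu_n$ can be promoted to the quantitative linear growth $\mu_n(\iota_{2n}\varrho(\gamma))\geq c|\gamma|-C$. Once this upgrade is secured, the proof of Theorem~\ref{thm:main-H} collapses to the two-line contradiction with Theorem~\ref{thm:main2} sketched above, in parallel with the proof of Theorem~\ref{thm:main-flat} but without the intermediate passage through a deformation path.
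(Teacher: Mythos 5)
Your proof is correct and is essentially the paper's own argument: the paper likewise deduces Theorem~\ref{thm:main-H} directly from Theorem~\ref{thm:gen-H}, Theorem~\ref{thm:main2}, and the fact that Hitchin representations are $B'$-Anosov (hence $P_{n-1}'$-Anosov, as required by Theorem~\ref{thm:gen-H}). Your description of the content of Theorem~\ref{thm:gen-H} (Benoist--Kobayashi plus the GGKW/KLP upgrade from Cartan-projection divergence to the Anosov condition) also matches the paper's proof of that theorem.
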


We note that proper actions by surface groups on $\HH^{n,n-1}$ do exist when $n$ is even (Okuda~\cite{okuda13}), but not when $n$ is odd (Benoist~\cite{Benoist}). 
Note that in the case $n=2$, Theorem~\ref{thm:main-H} follows from work of Mess~\cite{Mess2007} or of Gu\'eritaud--Kassel~\cite{GueKas2017}.
In that case $\HH^{n,n-1} = \HH^{2,1}$ is the three-dimensional anti-de Sitters space, whose study is greatly simplified by  the accidental isomorphism between $\PSO(2,2)_0$ and $\PSL(2,\RR) \times \PSL(2,\RR)$. The $n=2$ case of the proof given here of Theorem~\ref{thm:main-H}, through Theorem~\ref{thm:main2}, is fundamentally different. Indeed, the work of Mess and of Gu\'eritaud--Kassel does not naturally generalize to higher $\HH^{n,n-1}$, though it does generalize to the setting of some other homogeneous spaces whose structure group is a product.

\subsection{Overview of proofs and organization}
The paper naturally splits into two main parts, namely Sections~\ref{sec:grass}--\ref{sec:thm:main-H} and Sections~\ref{sec:length-functions}--\ref{sec:geometric limit}. The proof of Theorem~\ref{thm:main2} is given in Section~\ref{sec:main2}, which builds on Sections~\ref{sec:grass}--\ref{sec:Anosov-repns}. Sections~\ref{sec:grass} gives background information about flag manifolds. Section~\ref{sec:3} introduces Hitchin representations and positivity and then proves a new transversality result, Corollary~\ref{cor:transversality}, for triples on the positive curve associated to a $\PSO(n,n)$-Hitchin representation. Section~\ref{sec:Anosov-repns} introduces Anosov representations, reviews some relevant recent results about them, and also proves Proposition~\ref{prop:ratio of norms}, a key dynamical input for Theorem~\ref{thm:main2}.
Section~\ref{sec:thm:main-H} proves a general theorem, Theorem~\ref{thm:gen-H}, connecting properness of actions on $\HH^{n,n-1}$ with certain Anosov conditions and then proves Theorem~\ref{thm:main-H}.

In the second, more geometric part of the paper, Section~\ref{sec:length-functions} reviews the properness criterion (Theorem~\ref{thm:GLM-GT}) for actions on $\EE^{n,n-1}$ with Anosov linear part. This criterion is stated in terms of a signed length function associated to such an affine action. We introduce a new length function in the setting of $\HH^{n,n-1}$, defined analogously. Section~\ref{sec:geometric limit} gives the main geometric transition arguments connecting the behavior of actions on $\EE^{n,n-1}$ with that of actions on $\HH^{n,n-1}$. We explain how a $\EE^{n,n-1}$ action determines a path of $\HH^{n,n-1}$ actions, which are in a certain sense nearby, and prove Lemma~\ref{lem:key}, which relates the length function for the $\EE^{n,n-1}$ action to the first order behavior of the length functions for the associated path of $\HH^{n,n-1}$ actions. We then prove Theorem~\ref{thm:general}, relating proper discontinuity of an $\EE^{n,n-1}$ action with certain Anosov behavior of the nearby $\HH^{n,n-1}$ actions. Finally, we prove Theorems~\ref{thm:main-flat} and then Theorem~\ref{thm:main-affine}.

\subsection{Acknowledgements}
We thank Bill Goldman, Oliver Guichard, Fanny Kassel, and Qiongling Li for valuable discussions.

We also acknowledge the independent work of Sourav Ghosh~\cite{Sourav}, already mentioned in Remark~\ref{rem:Sourav}, which includes similar statements to our Lemma~\ref{lem:key} and Theorem~\ref{thm:general} (see Proposition 0.0.1 and Theorem 0.0.2), and also an alternate proof of Theorem~\ref{thm:gen-H} based on ideas of~\cite{GT} (see Theorem 0.0.3). When we recently found out about this work, we discussed our results with Ghosh. Thanks to those discussions, we realized that the techniques from Sections~\ref{sec:length-functions} and~\ref{sec:geometric limit}, which are similar to Ghosh's techniques, and which we had originally written in the more narrow context of affine actions whose linear part is Anosov with respect to the minimal parabolic, actually apply in the more general context where the linear part is only assumed to be Anosov with respect to the stabilizer of an isotropic $(n-1)$-plane, as in Theorem~\ref{thm:general}.

We also thank the Mathematisches Forschungsinstitut Oberwolfach for hosting both authors in September 2018 for the meeting ``New trends in Teichm\"uller theory and mapping class groups", at which the first author gave a lecture about this project. See~\cite{MFO} for a short description of the contents of the lecture. The first author acknowledges with regret that at the time of that lecture, he was not aware that Theorem~\ref{thm:GLM-GT} had already been shown by Ghosh-Trieb~\cite{GT} in full generality, and he failed to cite that work during the lecture.

\section{Grassmannians and flag manifolds.}\label{sec:grass}
In this paper, three semi-simple Lie groups will appear frequently, namely $G := \PSL(d,\RR)$ with $d\geq 2$, and $G' := \PSO(n,n)$ and $G'' := \SO(n,n-1)$ with $n\geq 2$. After introducing these groups, the goal of this section will be to give a description of the relevant flag manifolds on which these groups act, and to then give some basic facts about them that will be used throughout the paper.

First, recall that $\SL(d,\RR)$ is the space of volume preserving linear automorphims of the vector space $\RR^d$. Fixing a basis of $\RR^d$, $\SL(d,\RR)$ identifies with the group of $d \times d$ real matrices of determinant one. The group $\PSL(d,\RR)$ is the quotient of $\SL(d,\RR)$ by its center, which is trivial if $d$ is odd, or is $\{ \pm \id\}$ if $d$ is even.

Next, consider the symmetric anti-diagonal matrix,
\[J_d=\left(\begin{array}{ccccc}
0&0&\dots&0&1\\
0&0&\dots&1&0\\
\vdots&\vdots&&\vdots&\vdots\\
0&1&\dots&0&0\\
1&0&\dots&0&0\\
\end{array}\right),\] 
In the case $d = 2n$ is even, the signature of $J_d$ is $(n,n)$ (meaning it has $n$ positive eigenvalues and $n$ negative eigenvalues). We will use the notation $\langle \cdot, \cdot \rangle_{n,n}$ to denote the symmetric bilinear form on $\RR^{2n}$ whose matrix is $J_{2n}$:
\[ \langle x, y\rangle_{n,n} = \sum_{i=1}^{2n} x_i y_{2n+1-i} \]
where $(x_1, \ldots, x_{2n})$ and $(y_1, \ldots, y_{2n})$ are the coordinates of $x, y \in \RR^{2n}$ with respect to the standard basis $(e_1, \ldots, e_{2n})$. We prefer to work in this basis in the initial part of the paper. However, in the final sections of the paper it will be more natural to work in a basis $(e_1', \ldots, e_{2n}')$ which diagonalizes the form $\langle \cdot, \cdot \rangle_{n,n}$. We will use the notation $\RR^{n,n}$ to denote the vector space $\RR^{2n}$ together with the symmetric bilinear form $\langle \cdot, \cdot \rangle_{n,n}$.  We equip $\RR^{n,n}$ with the orientation making the standard basis positive and define $\SO(n,n) < \SL(2n,\RR)$ to be the orientation preserving automorphism group of $\RR^{n,n}$, that is the special linear automorphisms of $\RR^{2n}$ which preserve $\langle \cdot, \cdot \rangle_{n,n}$. We define $\PSO(n,n)$ to be the projection of $\SO(n,n)$ to $\PSL(2n,\RR)$. 

In the case $d = 2n-1$ is odd, the signature of $J_{2n-1}$ is $(n,n-1)$ (meaning $J_{2n-1}$ has $n$ positive eigenvalues and $n-1$ negative eigenvalues). We will use the notation $\langle \cdot, \cdot \rangle_{n,n-1}$ to denote the symmetric bilinear form on $\RR^{2n-1}$ whose matrix is $J_{2n-1}$:
\[ \langle x, y\rangle_{n,n-1} = \sum_{i=1}^{n-1} (x_i y_{2n-i} + x_{2n-i} y_i) + x_{n}y_{n} \]
where $(x_1, \ldots, x_{2n-1})$ and $(y_1, \ldots, y_{2n-1})$ are the coordinates of $x, y \in \RR^{2n}$ with respect to the standard basis $(f_1, \ldots, f_{2n-1})$ of $\RR^{2n-1}$.
We will use the notation $\RR^{n,n-1}$ to denote the vector space $\RR^{2n-1}$ together with the symmetric bilinear form $\langle \cdot, \cdot \rangle_{n,n-1}$.  We equip $\RR^{n,n-1}$ with the orientation making the standard basis positive
and define $\SO(n,n-1) = \PSO(n,n-1)$ to be the orientation preserving automorphism group of $\RR^{n,n-1}$, that is the special linear automorphisms of $\RR^{2n-1}$ that preserve the symmetric bilinear form $\langle \cdot, \cdot \rangle_{n,n-1}$. 

We will often embed $\RR^{n,n-1}$ in $\RR^{n,n}$ in the following way. The vector $e_{n} - e_{n+1}$ has negative signature in $\RR^{n,n}$. Hence the orthogonal complement $(e_n - e_{n+1})^\perp$ has signature $(n, n-1)$ and we think of it as a copy of $\RR^{n,n-1}$. More specifically, we embed $\RR^{2n-1}$ as a subspace of $\RR^{2n}$ by the linear map
\begin{align*}
f_i &\mapsto e_i \text{ for } 1 \leq i \leq n-1\\
f_i &\mapsto e_{i+1} \text{ for } n+1 \leq i \leq 2n-1\\
f_n &\mapsto \frac{1}{2}(e_n + e_{n+1}).
\end{align*} 
Then, the restriction of the form $\langle \cdot, \cdot \rangle_{n,n}$ to (the image of) $\RR^{2n-1}$ is precisely 
the (image of the) form $\langle \cdot, \cdot \rangle_{n,n-1}$. Hence we will write $\RR^{n,n} = \RR^{n,n-1} \oplus \RR^{0,1}$, where on the right-hand side $\RR^{n,n-1}$ is understood to be the image of $\RR^{2n-1}$ under the above map and $\RR^{0,1}$ is understood to be the span of $e_n - e_{n+1}$, and each is equipped with the restriction of $\langle \cdot, \cdot \rangle_{n,n}$.

\subsection{Grassmanians and Isotropic Grassmannians}\label{sec:grassmannian}
We introduce some natural compact homogeneous spaces associated to the main Lie groups of interest. 

For $1 \leq k \leq d-1$, let $\Gr_k(\RR^d)$, denote the space of $k$-dimensional vector subspaces in $\RR^d$, known as the \emph{Grassmannian} of $k$-planes in $\RR^d$. 

In the case $d = 2n$, and $1 \leq k \leq n$, let $\Gr_k(\RR^{n,n}) \subset \Gr_k(\RR^{2n})$ denote the space of \emph{null} $k$-planes:
\begin{align*}
\Gr_k(\RR^{n,n}) &:= \left\{ H \in \Gr_k(\RR^{2n}) : \langle x, x \rangle_{n,n} = 0 \text{ for all } x \in H \right\}
\end{align*}
For $n+1 \leq k \leq 2n$, denote
\begin{align*}
\Gr_k(\RR^{n,n}) &:= \left\{ H \in \Gr_k(\RR^{2n}): H^\perp \in \Gr_{2n-k}(\RR^{n,n}) \right\}
\end{align*}
where $H^\perp$ denotes the orthogonal space to $H$ with respect to $\langle \cdot, \cdot \rangle_{n,n}$.
Note that $\perp$ defines a canonical isomorphism $\Gr_k(\RR^{n,n}) \cong \Gr_{2n-k}(\RR^{n,n})$.

Similarly, for $1 \leq k \leq n-1$,
\begin{align*}
\Gr_k(\RR^{n,n-1}) &:= \left\{ H \in \Gr_k(\RR^{2n-1}) : \langle x, x \rangle_{n,n} = 0 \text{ for all } x \in H \right\}
\end{align*}
is the space of null $k$-planes in $\RR^{2n-1}$. For  for $n \leq k \leq 2n-1$, define
\begin{align*}
\Gr_k(\RR^{n,n-1}) &:= \left\{ H \in \Gr_k(\RR^{2n}): H^\perp \in \Gr_{2n-1-k}(\RR^{n,n}) \right\}
\end{align*}
where here $H^\perp$ denotes the orthogonal space to $H$ with respect to $\langle \cdot, \cdot \rangle_{n,n-1}$.
Note that $\perp$ defines a canonical isomorphism $\Gr_k(\RR^{n,n-1}) \cong \Gr_{2n-1-k}(\RR^{n,n-1})$.

\begin{proposition}\label{prop:transitivity}\
\begin{enumerate}
\item For all $1 \leq k \leq d-1$, $G := \PSL(d,\RR)$ acts transitively on $\Gr_k(\RR^d)$. Hence $\Gr_k(\RR^d) = G/P_k$ is a homogeneous space of $G$, where $P_k$ denotes the stabilizer of the $k$-plane $\spa \{e_1, \ldots, e_k\}$.
\item For all $1 \leq k \leq 2n-1$, $k \neq n$, $G' := \PSO(n,n)$  acts transitively on $\Gr_k(\RR^{n,n})$. Hence $\Gr_k(\RR^{n,n}) = G'/P'_k$ is a homogeneous space of $G'$, where $P_k'$ denotes the stabilizer of the null $k$-plane $\spa \{e_1, \ldots, e_k\}$.
\item For all $1 \leq k \leq 2n-2$, $G'' := \SO(n,n-1)$ acts transitively on $\Gr_k(\RR^{n,n-1})$. Hence, $\Gr_k(\RR^{n,n-1}) = G''/P''_k$ is a homogeneous space of $G''$, where $P_k''$ denotes the stabilizer of the null $k$-plane $\spa \{f_1, \ldots, f_k\}$.
\end{enumerate} 
\end{proposition}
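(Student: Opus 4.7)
The plan is to treat (1) directly by an adapted-basis argument, and to reduce both (2) and (3) to Witt's extension theorem for non-degenerate quadratic forms, with a short $\OO$-to-$\SO$ upgrade and a duality step at the end.

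For (1), given $V \in \Gr_k(\RR^d)$, I would pick any basis $v_1, \ldots, v_k$ of $V$, extend it to a basis $v_1, \ldots, v_d$ of $\RR^d$, and rescale $v_d$ so that $\det(v_1,\ldots,v_d) = 1$. The linear map sending $e_i \mapsto v_i$ then lies in $\SL(d,\RR)$ and carries $\spa\{e_1,\ldots,e_k\}$ onto $V$, so $\SL(d,\RR)$ (hence $\PSL(d,\RR)$) acts transitively; the homogeneous space description $G/P_k$ is then orbit-stabilizer.

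For (2) and (3) the main input is Witt's extension theorem: any linear isomorphism between subspaces of a non-degenerate quadratic space that preserves the restricted form extends to an element of the full orthogonal group. For two null $k$-planes $H, H'$ the restricted forms are both zero, so any linear bijection $H \to H'$ is a Witt isometry and extends to some $g \in \OO(n,n)$ (resp.\ $\OO(n,n-1)$) sending $H$ to $H'$. This handles $1 \leq k \leq n$ in $\RR^{n,n}$ and $1 \leq k \leq n-1$ in $\RR^{n,n-1}$ at the level of $\OO$. For the remaining ranges, namely $n+1 \leq k \leq 2n-1$ in case (2) and $n \leq k \leq 2n-2$ in case (3), I would use the $\perp$ bijection $\Gr_k \cong \Gr_{2n-k}$ (resp.\ $\Gr_{2n-1-k}$): since any $g$ in the orthogonal group satisfies $g(H^\perp) = (gH)^\perp$, transitivity in the smaller-index case immediately gives transitivity in the larger-index case.

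The remaining step is to upgrade $\OO$-transitivity to $\SO$-transitivity, i.e.\ to find an element of determinant $-1$ in the stabilizer of a reference null plane. In signature $(n,n-1)$ this is automatic because $-\Id$ has determinant $(-1)^{2n-1} = -1$ and stabilizes every subspace, so every $\OO(n,n-1)$-orbit coincides with an $\SO(n,n-1)$-orbit. In signature $(n,n)$ with $1 \leq k < n$, I would exhibit the involution $\sigma$ that swaps $e_n \leftrightarrow e_{n+1}$ and fixes all other $e_i$; a direct check shows $\sigma \in \OO(n,n)$, $\det \sigma = -1$, and $\sigma$ fixes $\spa\{e_1,\ldots,e_k\}$ pointwise whenever $k < n$. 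Combined with the $\perp$ reduction, this covers all $k \neq n$ in (2). The main obstacle, which is in fact the reason $k=n$ is excluded from the statement, is precisely that a maximal null $n$-plane in $\RR^{n,n}$ equals its own orthogonal complement and the two connected families of such planes are interchanged by any determinant $-1$ element, so no such $\sigma$ exists in their stabilizer; $\SO(n,n)$ therefore has two orbits on $\Gr_n(\RR^{n,n})$ rather than one. Once transitivity is established, the homogeneous-space identifications $G'/P'_k$ and $G''/P''_k$ are again immediate from orbit-stabilizer applied to the stated reference planes.
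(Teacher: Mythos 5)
Your proposal is correct and follows essentially the same route as the paper: reduce to small $k$ via the $\perp$ bijection, establish transitivity of the full orthogonal group, and repair the determinant using the swap $e_n \leftrightarrow e_{n+1}$ (which is exactly the orientation-reversing element the paper uses, and which fixes $\spa\{e_1,\ldots,e_k\}$ precisely because $k<n$). The only difference is cosmetic: where you cite Witt's extension theorem, the paper carries out the equivalent explicit construction of a hyperbolic basis adapted to the given null $k$-plane, which amounts to reproving the totally isotropic case of Witt's theorem by hand.
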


\begin{proof}
The proofs are well-known linear algebra exercises. Let us quickly recall a proof of~(2) to highlight what is different about the situation $k = n$, to be discussed after this proof. 

 By the isomorphism $\Gr_k(\RR^{n,n})\simeq \Gr_{2n-k}(\RR^{n,n})$, we may assume $1 \leq k \leq n-1$. Let $H \in \Gr_k(\RR^{n,n})$, a null $k$-plane. Let $v_1, \ldots, v_k$ be a basis of $H$. 
 Since the form $\langle \cdot, \cdot \rangle_{n,n}$ is non-degenerate, there exists vectors $v_1', \ldots, v_k'$ so that $\langle v_i, v_j' \rangle = \delta_{ij}$ for $1 \leq i,j \leq k$. By adjusting $v_1', \ldots, v_k'$ with elements of $H$ we may further arrange that $v_1', \ldots, v_k'$ span a null $k$-plane $H'$, which necessarily intersects $H$ trivially. Then $H \oplus H'$ is a non-degenerate subspace of $\RR^{n,n}$ which therefore has signature $(k,k)$. Its orthogonal complement $(H \oplus H')^\perp$ has signature $(n-k, n-k)$ and a basis $w_1, \ldots, w_{n-k}, w_1', \ldots, w_{n-k}'$ with the property that $w_1, \ldots, w_{n-k}$ and $w_1', \ldots, w_{n-k}'$ each span null $(n-k)$-planes and satisfy $\langle w_i, w_j' \rangle = \delta_{ij}$. Then the following defines an orthogonal transformation of $\RR^{n,n}$:
 \begin{align*}
e_i &\mapsto v_i &\text{ for all } 1 \leq i \leq k\\
e_{i} &\mapsto w_{i - k} &\text{ for all } k+1 \leq i \leq n\\
e_i &\mapsto w_{2n-k-i+1}' &\text{ for all } n+1 \leq i \leq 2n-k\\
e_i &\mapsto v_{2n+1 -i}' &\text { for all }  2n-k +1 \leq i \leq 2n
 \end{align*} 
 This automorphism maps the standard isotropic $k$-plane, spanned by $e_1, \ldots, e_k$ to $H$. However, this automorphism might not preserve orientation. To fix that issue, we may precompose with the orientation reversing automorphism which swaps $e_n$ and $e_{n+1}$ and leaves all other basis vectors fixed. Of course, since $k < n$, this does not change the fact that $\spa \{ e_1, \ldots, e_k \} \mapsto H$.
\end{proof}

By contrast to Proposition~\ref{prop:transitivity}.(2), the $\PSO(n,n)$ action on $\Gr_n(\RR^{n,n})$ has two orbits. Here is an invariant that distinguishes them (which can already be seen in the proof above). For any $H\in\Gr_n(\RR^{n,n})$, choose a basis $(v_1,\dots,v_n)$ of $H$. This extends \emph{uniquely} to a basis $(v_1, \ldots, v_n, v_n', \ldots, v_1')$ of $\RR^{2n}$ so that $H' = \spa \{v_1', \ldots, v_n'\}$ is also a null $n$-plane and $\langle v_i, v_j' \rangle_{n,n} = \delta_{ij}$ for $1 \leq i,j \leq n$. Then define
\begin{equation}\label{eqn:tau}\tau(H):=\frac{v_1\wedge\dots\wedge v_{n} \wedge v_n' \wedge \dots v_1'}{e_1 \wedge \dots \wedge e_{2n}}\in\{\pm1\},\end{equation}

In fact, $\tau(H)$ does not depend on the choice of the basis $(v_1,\dots,v_n)$ for $H$: In the procedure above, a change of the basis $v_1, \ldots, v_n$ of $H$, represented by a $n \times n$ matrix, leads to a change of the basis $v_1', \ldots, v_n'$ of $H'$ by the exact same matrix, hence any change of orientation in the basis for $H$ is canceled out by the same change of orientation in the basis for $H'$. We define:
\begin{align*}
\Gr_n^+(\RR^{n,n}) &:=\{H\in\Gr_n(\RR^{n,n}):\tau(H)=+1\},\\
\Gr_n^-(\RR^{n,n}) &:=\{H\in\Gr_n(\RR^{n,n}):\tau(H)=-1\}
\end{align*}
and refer to the first as the Grassmannian of \emph{positive} isotropic $n$-planes and to the second as the Grassmannian of \emph{negative} isotropic $n$-planes. See Figure~\ref{fig:hyperboloid} for the case $n=2$.

\begin{remark}\label{rem:switch orientation}
There is no intrinsic difference between   the space of positive isotropic $n$-planes and the space of negative isotropic $n$-planes. Indeed, an element of $\PO(n,n) \setminus \PSO(n,n)$ reverses the orientation of $\RR^{n,n}$ and hence takes the positive isotropic $n$-planes to the negative ones and vice versa. Hence any argument about $\Gr_n^+(\RR^{n,n})$ that does not use a particular choice of orientation on $\RR^{n,n}$ also applies to $\Gr_n^-(\RR^{n,n})$.
\end{remark}

\begin{proposition}\label{prop:projections}
The action of $G' = \PSO(n,n)$ on $\Gr_n(\RR^{n,n})$ has two orbits, $\Gr_n^+(\RR^{n,n})$ and $\Gr_n^-(\RR^{n,n})$. 
Each isotropic $(n-1)$-plane $H_0 \in \Gr_{n-1}(\RR^{n,n})$ is contained in a unique positive isotropic $n$-plane $H_+ \in \Gr_n^+(\RR^{n,n})$ and a unique negative isotropic $n$-plane $H_- \in \Gr_n^-(\RR^{n,n})$.
The maps 
\begin{align*}
\varpi_+&: \Gr_{n-1}(\RR^{n,n}) \to \Gr_n^+(\RR^{n,n}),\\
\varpi_-&: \Gr_{n-1}(\RR^{n,n}) \to \Gr_n^-(\RR^{n,n})
\end{align*}
defined respectively by $H_0 \mapsto H_+$ and $H_0 \mapsto H_-$ are $G'$-equivariant fiber bundle projections, with fiber 
a copy of $\RP^{n-1}$.
\end{proposition}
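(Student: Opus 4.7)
My plan is to establish the three assertions in order, reusing the linear algebra already developed in the proof of Proposition~\ref{prop:transitivity}. First I will show that $\OO(n,n)$ (not just $\SO(n,n)$) acts transitively on $\Gr_n(\RR^{n,n})$: the construction in the proof of Proposition~\ref{prop:transitivity}.(2) already produces an orthogonal automorphism sending $\spa\{e_1, \ldots, e_n\}$ to any given $H \in \Gr_n(\RR^{n,n})$, but the orientation-fixing adjustment at the end of that proof is no longer available when $k = n$, since the swap $e_n \leftrightarrow e_{n+1}$ would alter the standard domain $\spa\{e_1, \ldots, e_n\}$. Next I will verify that $\tau$ is $\PSO(n,n)$-invariant: if $g \in \SO(n,n)$ and $(v_1, \ldots, v_n, v_n', \ldots, v_1')$ is a basis of $\RR^{n,n}$ of the type used to define $\tau(H)$ in~\eqref{eqn:tau}, then applying $g$ gives a basis of $\RR^{n,n}$ of the same type for $gH$, and~\eqref{eqn:tau} yields $\tau(gH) = \det(g)\,\tau(H) = \tau(H)$; the same computation gives $\tau(gH) = -\tau(H)$ if $g$ reverses orientation. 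Combined with $\OO(n,n)$-transitivity, this identifies the two $\PSO(n,n)$-orbits with the level sets of $\tau$, both of which are non-empty: $\spa\{e_1, \ldots, e_n\}$ lies in $\Gr_n^+(\RR^{n,n})$, and applying the (orientation-reversing) swap $e_n \leftrightarrow e_{n+1}$ produces an element of $\Gr_n^-(\RR^{n,n})$.

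For the second assertion, I will observe that the null $n$-planes containing a fixed $H_0 \in \Gr_{n-1}(\RR^{n,n})$ correspond bijectively to null lines in the two-dimensional quotient quadratic space $H_0^\perp / H_0$, which has signature $(1,1)$ and therefore exactly two null lines, producing two $n$-planes $H_+, H_-$. To show these have opposite $\tau$, I will pick isotropic vectors $w_\pm \in H_\pm \setminus H_0$ normalized so $\langle w_+, w_- \rangle_{n,n} = 1$ and choose an isotropic $(n-1)$-plane $H_0^*$ that pairs nondegenerately with $H_0$ under $\langle \cdot, \cdot \rangle_{n,n}$ and is perpendicular to $\spa\{w_+, w_-\}$; the involution $\sigma$ defined to be the identity on $H_0 \oplus H_0^*$ and the swap $w_+ \leftrightarrow w_-$ on $\spa\{w_+, w_-\}$ is then orthogonal with determinant $-1$, and satisfies $\sigma(H_+) = H_-$. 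Combined with Step~1, this gives $\tau(H_+) = -\tau(H_-)$, so exactly one of $H_+, H_-$ lies in each of $\Gr_n^\pm(\RR^{n,n})$.

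For the final assertion, the maps $\varpi_\pm$ are $\PSO(n,n)$-equivariant by the canonical nature of the construction in Step~2. Since $\PSO(n,n)$ acts transitively on both $\Gr_{n-1}(\RR^{n,n})$ and $\Gr_n^\pm(\RR^{n,n})$, each $\varpi_\pm$ is identified with the natural projection of homogeneous spaces $G'/P'_{n-1} \to G'/Q'_\pm$, where $Q'_\pm$ denotes the stabilizer of the unique chosen $n$-plane in the target (and $P'_{n-1} \subset Q'_\pm$ is automatic from Step~2), which is automatically a smooth fiber bundle. I will identify the fiber over $H_\pm$ with $\Gr_{n-1}(H_\pm)$, using that every hyperplane of the isotropic $n$-plane $H_\pm$ is automatically null; this space is diffeomorphic to $\RP^{n-1}$. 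I expect the only point requiring real care is the determinant-sign computation underlying Step~2; once the explicit involution $\sigma$ is written down, the rest of the argument is formal.
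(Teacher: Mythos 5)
Your proposal is correct and follows essentially the same route as the paper: both identify the two null $n$-planes through $H_0$ via the two null lines of the signature-$(1,1)$ quotient $H_0^\perp/H_0$, and both obtain the bundle structure from transitivity of $G'$ on $\Gr_{n-1}(\RR^{n,n})$ together with the identification of the fiber with $\Gr_{n-1}(H_\pm)\cong\RP^{n-1}$. The only real difference is local: the paper checks $\tau(H_+)=+1$, $\tau(H_-)=-1$ by direct computation for the standard $H_0$ and transports by equivariance, whereas you deduce $\tau(H_+)=-\tau(H_-)$ from the identity $\tau(gH)=\det(g)\,\tau(H)$ applied to an explicit orientation-reversing involution swapping $H_+$ and $H_-$ — both are fine.
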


\begin{proof}
Consider $H_0:=\spa \{e_1,\dots,e_{n-1}\} \in \Gr_{n-1}(\RR^{n,n})$. Then $H_+ := \spa \{e_1, \ldots, e_n\}$ and $H_- := \spa \{e_1, \ldots, e_{n-1}, e_{n+1}\}$ are the unique isotropic $n$-planes containing $H_0$. We see this as follows. Let $H_0^\perp = \spa  \{e_1, \ldots, e_{n+1}\} \supset H_0$ denote the orthogonal space to $H_0$. Then the inner product $\langle \cdot, \cdot \rangle_{n,n}$ descends to a well-defined inner product on the quotient $H_0^\perp/H_0$ which has signature $(1,1)$. Hence $H_0^\perp/H_0$ contains exactly two null lines whose inverse images in $H_0^\perp$ are $H_+$ and $H_-$. Note also that $\tau(H_+) = +1$ and $\tau(H_-) = -1$.
By transitivity of the $G'$-action on $\Gr_{n-1}(\RR^{n,n})$ (Proposition~\ref{prop:transitivity}.(2)), the maps $\varpi_+$ and $\varpi_-$ may be expressed respectively as $g H_0 \mapsto g H_+$ and $g H_0 \mapsto g H_-$. Since every $(n-1)$-plane contained in an isotropic $n$-plane is also isotropic, its clear that $\varpi_+$ and $\varpi_-$ are surjective, and that the fiber above $H_+$ (resp. $H_-$) is the space $\PP(H_+^*)$ of $(n-1)$-planes in $H_+$ (resp. the space $\PP(H_-^*)$ of $(n-1)$-planes in $H_-$), a copy of $\RP^{n-1}$.
\end{proof}

\begin{figure}[h]
\includegraphics[height=4.0cm]{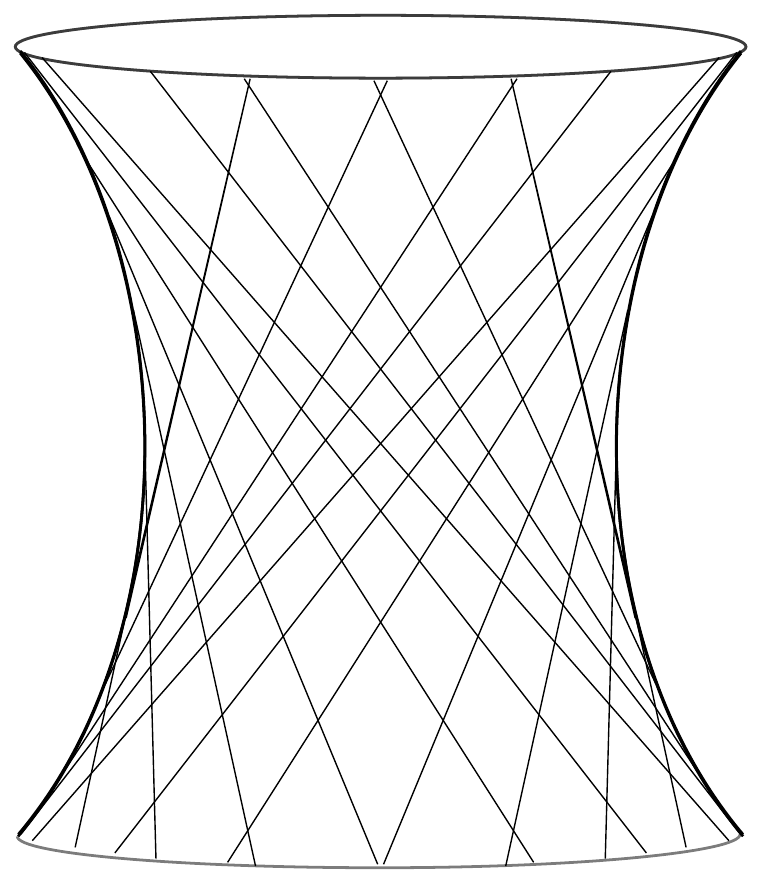}
\caption{The subset $\Gr_1(\RR^{2,2})$ in $\Gr_1(\RR^4) = \RP^3$ is the well-known doubly ruled hyperboloid. The lines of one of the rulings make up $\Gr_2^+(\RR^{2,2})$ while the lines of the other make up $\Gr_2^-(\RR^{2,2})$. The projection map $\varpi_+$ (resp. $\varpi_-$) simply maps a point of $\Gr_1(\RR^{2,2})$ to the line of the $+$ ruling (resp. the $-$ ruling) containing it.}\label{fig:hyperboloid}
\end{figure}

\subsection{Flag manifolds and parabolic subgroups}\label{sec:flags}

For each $1 \leq k < d$, the Grassmannian $\Gr_k(\RR^d) = G/P_k$ is a special example of a flag manifold of $G = \PSL(d,\RR)$ and the stabilizer $P_k < G$ of a $k$-plane in $\RR^d$ is an example of a parabolic subgroup. More generally, by a flag manifold of $G$, we will mean a compact homogeneous space of the form $G/P$ for some parabolic subgroup $P < G$.
Before discussing parabolic subgroups in general, let us first introduce the most important example, the Borel subgroup.

In general, a \emph{Borel subgroup} $\mathbf B$ of an algebraic group $\mathbf G$ is a maximal Zariski closed and Zariski connected solvable subgroup. The Lie groups $G$ that we will work with in this paper are unions of connected components (for the real topology) of the real points $\mathbf G(\RR)$ of some algebraic group $\mathbf G$, so we will understand the term Borel subgroup to mean a subgroup of the form $B = \mathbf B(\RR) \cap G$. 
In the case that $G = \PSL(d,\RR)$, a Borel subgroup $B < G$ is the stabilizer of a \emph{full flag} $F$, \ie a maximal increasing sequence of vector subspaces of $\RR^d$:
\[ F^{(1)} \subset F^{(2)} \subset \cdots \subset F^{(d-1)}\]
where for each $1 \leq k \leq d-1$, $F^{(k)} \in \Gr_k (\RR^d)$ is a $k$-subspace. For example, the \emph{standard full flag} is defined by 
\[ F^{(k)} = \spa \{e_1, \ldots, e_k\}\]
for all $1 \leq k \leq d-1$. The stabilizer of the standard flag is the subgroup of upper triangular matrices, which we will call the standard Borel subgroup. The action of $G$ on the space of full flags is transitive and all Borel subgroups are conjugate. Hence the space of full flags identifies with $\mathcal F_B = G/B$, where $B$ is any Borel subgroup.
For $G' = \PSO(n,n)$, a Borel subgroup is given by the subgroup $B' < G'$ of elements which are upper triangular (\ie the intersection with $G'$ of the standard Borel in $\PSL(2n,\RR)$). 
For $G'' = \SO(n,n-1)$, a Borel subgroup is again given by the subgroup $B'' < G''$ of elements which are upper triangular (\ie the intersection with $G''$ of the standard Borel in $\SL(2n-1, \RR) = \PSL(2n-1, \RR)$.) The associated flag manifolds will be described soon.

\begin{definition}
A \emph{parabolic subgroup} of a semi-simple Lie group $G$ is any subgroup $P$ which contains a Borel subgroup $B$. We call the homogeneous space $\mathcal F_P = G/P$ a \emph{flag manifold}. 
\end{definition}

Two parabolic subgroups $P,Q <G$ are said to be \emph{opposite} if $P \cap Q$ is a reductive subgroup of $G$. Given a parabolic subgroup $P < G$, all parabolic subgroups opposite to $P$ are conjugate to one another. 
For example, the parabolic subgroups opposite to a Borel subgroup $B$ are also Borel subgroups and are conjugate to $B$. 
For another example, if $G = \PSL(d,\RR)$ then the stabilizer $P_k < G$ of the standard $k$-plane $\spa \{e_1, \ldots, e_k\}$ is a parabolic subgroup, whose associated flag manifold is $G/P_k = \Gr_k(\RR^d)$. The stabilizer of any transverse $(d-k)$-plane, for example $\spa \{e_{k+1}, \ldots, e_d\}$, is an opposite parabolic to $P_k$, and any such subgroup is conjugate to $P_{d-k}$.

We shall consider mainly the case of a parabolic subgroup $P < G$ which is conjugate to an opposite of itself.
In this case, the action of $G$ on the product $\mathcal F_P \times \mathcal F_P$ admits a unique open orbit, which we may think of as the subspace of pairs of opposite parabolic subgroups, or alternatively as the pairs of \emph{transverse flags} in $\mathcal F_P$. 
Let us now give some examples of $P, \mathcal F_P,$ and $\mathcal O_P$ in the three settings of interest. The reader may easily verify the following claims.

\begin{example}\label{eg:tangent to grassmannian 1}
Let $G=\PSL(d,\RR)$, and recall that the subgroup $B$ of upper triangular matrices in $G$ is a Borel subgroup. As we saw above, the flag manifold $\mathcal F_B$ is naturally:
\[\mathcal{F}_B =\left\{F=\left(F^{(1)},\dots, F^{(d-1)}\right):
\begin{array}{l}
F^{(k)}\in\Gr_k(\RR^d) \text{ for all } 1 \leq k < d,\\
F^{(i)}\subset F^{(j)}\text{ for all } 1 \leq i \leq j \leq d-1
\end{array}\right\}.\]
The space of transverse flags $\mathcal{O}_B$ is 
\[ \mathcal O_B =\left\{(F_1,F_2)\in\mathcal{F}_B \times \mathcal F_B: F_1^{(k)}+F_2^{(d-k)}=\RR^d\text{ for all }1 \leq k < d\right\}.\]

More generally, let $\mathcal I \subset \{1, \ldots, d-1\}$ be a subset of indices and let $F$ be the standard flag of type $\mathcal I$, meaning $F$ contains the standard subspace $F^{(i)} = \spa \{e_1, \ldots, e_i\}$ of dimension $i$ if and only if $i \in \mathcal I$. Then the stabilizer $P_{\mathcal I}$ of $F$ is a parabolic subgroup of $G$ and $\mathcal F_{\mathcal I} = G/P_{\mathcal I}$ identifies with the space of flags of type $\mathcal I$. Further $P_{\mathcal I}$ is conjugate to its opposite parabolic subgroups if and only if $\mathcal I = \sigma \mathcal I$ for the involution $\sigma: i \mapsto d-i$. In this case the space of transverse pairs of flags is
\[ \mathcal O_{\mathcal I} =\left\{(F_1,F_2)\in\mathcal{F}_{\mathcal I} \times \mathcal F_{\mathcal I}: F_1^{(k)}+F_2^{(d-k)}=\RR^d\text{ for all } k \in \mathcal I\right\}.\]

For $d = 2n$ even, we highlight two important cases. First, if $\mathcal I = \{n\}$, then $\mathcal F_{\mathcal I} = \Gr_n(\RR^{2n})$ is the Grassmannian of $n$-planes in $\RR^{2n}$, and $P_{\mathcal I} = P_n$ is the stabilizer of an $n$-plane. Second, if $\mathcal I = \{ n-1, n+1\}$, then $\mathcal F_{\mathcal I} = \mathcal F_{n-1,n+1}$ is the space of pairs of an $(n-1)$-plane contained in an $(n+1)$-plane and $P_{\mathcal I} = P_{n-1, n+1}$ is the stabilizer of such a flag. 
For $d = 2n-1$ odd, an important case will be that of $\mathcal I = \{n-1, n\}$, for which $\mathcal F_{\mathcal I} = \mathcal F_{n-1,n}$ is the space of pairs of an $(n-1)$-plane contained in an $n$-plane and $P_{\mathcal I} = P_{n-1, n}$ is the stabilizer of such a flag. 
\end{example}

\begin{example}\label{eg:tangent to grassmannian 3}
Let $G'' = \SO(n,n-1)$. Then the subgroup $B'' < G''$ of upper triangular matrices is a Borel subgroup (i.e. $B'' = G'' \cap B$ for $B$ the standard Borel in $\PSL(2n-1, \RR) = \SL(2n-1,\RR)$). The space $\mathcal F_{B''} = G''/B''$ may be described as
\[\mathcal{F}_{B''} = \left\{F=\left(F^{(1)},\dots, F^{(2n-1)}\right):
\begin{array}{l}
F^{(k)}\in\Gr_k(\RR^{n,n-1}),\\
F^{(2n-1-k)} = (F^{(k)})^\perp,\\
F^{(k)}\subset F^{(k+1)}, \text{ for } 1 \leq k \leq 2n-2\\
\end{array}\right\}.
\]
In other words $\mathcal F_{B''}$ is the space of all full flags of $\RR^{2n-1}$ for which each subspace of dimension less than half is isotropic and each subspace of dimension greater than half is the orthogonal space to the isotropic subspace of complementary dimension. Note that all of the data specifying such a flag is contained in the subspaces of dimension less than half. Nonetheless, it is useful to keep track of the subspaces of dimension larger than half as well.
The space of transverse pairs is given by 
\[\mathcal{O}_{B''} = \left\{(F_1,F_2)\in\mathcal{F}_{B''} \times \mathcal F_{B''}:F_1^{(i)}+F_2^{(2n-1-i)}=\RR^{2n-1}\text{ for all }i\right\}.\]
Similarly to the above, a subset $\mathcal I \subset \{1, \ldots, 2n-1\}$ of indices specifies a flag manifold $\mathcal F_{\mathcal I}$ containing the flags of type $\mathcal I$ which obey the orthogonality rules above when applicable. The stabilizer of the standard flag of type $\mathcal I$ is the parabolic subgroup $P_{\mathcal I}$. Unlike above, $P_{\mathcal I} = P_{\overline{\mathcal I}}$, where $\overline{\mathcal I} = \mathcal I \cup \sigma \mathcal I$ denotes the symmetrization of $\mathcal I$ under the involution $\sigma:i \mapsto 2n-1-i$. Indeed all parabolic subgroups of $G''$ are conjugate to their opposites.
Hence, we will always assume $\mathcal I = \overline{\mathcal I}$ is symmetric. The maximal parabolic subgroups of $G''$ are of the form $P''_k := P_{\{k, 2n-1-k\}}$ for $1 \leq k \leq n-1$.
\end{example}

\begin{example}
Let\label{eg:tangent to grassmannian 4} $G'=\PSO(n,n)$. The subgroup $B'$ of upper triangular matrices in $G'$ is again an example of a Borel subgroup. 
Then the associated flag manifold $\mathcal F_{B'} = G/B'$ may be described as the space of flags $F$ of the form
\begin{align}\label{eqn:flagG'}
F^{(1)} \subset \cdots \subset F^{(n-1)} \subset F^{(n)}_+, F^{(n)}_- \subset F^{(n+1)} \subset \cdots \subset F^{(2n-1)},
\end{align}
where
\begin{itemize}
\item $F^{(k)} \in\Gr_k(\RR^{n,n})$ for all $1\leq k \leq 2n-1$, $k \neq n$,
\item $F^{(2n-k)} = (F^{(k)})^\perp$ for all $k\neq n$,
\item $F_+^{(n)}\in\Gr_n^+(\RR^{n,n})$ and $ F_-^{(n)}\in\Gr_n^-(\RR^{n,n})$.
\end{itemize}
As in the previous example, the information given in the flag $F$ is more than needed to specify the associated point of $\mathcal F_{B'}$. Indeed, the subspaces $F^{(1)} \subset \cdots \subset F^{(n-1)}$ entirely determine $F$. However, it will be useful to have notation for the other subspaces as well.

The other parabolic subgroups of $G'$ are each given by the stabilizer of an incomplete flag made up of a subset of the subspaces of~\eqref{eqn:flagG'}. Just as in the case $G'' = \SO(n,n-1)$, the parabolic subgroups of $G'$ are each conjugate to their opposites, and so it suffices to consider symmetric flags. 
However, we note one important difference between $G'$ and $G''$.
The stabilizer $P_n'^+ < G'$ of the positive isotropic $n$-plane $H_+ := \spa \{e_1, \ldots, e_n\}$ and the stabilizer and $P_n'^- < G'$ of the negative isotropic $n$-plane $H_- := \spa \{e_1, \ldots, e_{n-1}, e_{n+1}\}$ are each maximal parabolic subgroups of $G'$. Their intersection $P_n'^+ \cap P_n'^- = P_{n-1}'$ is the stabilizer of the isotropic $(n-1)$-plane $H_0 = \spa \{e_1, \ldots, e_{n-1} \}$, which is a parabolic subgroup, but not a maximal one.
\end{example}

\begin{remark}\label{rem:isotropic transverse}
 In the case $n$ is even, any two transverse isotropic $n$-planes have the same sign, hence $P_n'^+$ and $P_n'^-$ are each conjugate to their opposite parabolic subgroups. In the case $n$ is odd, however, any two transverse isotropic $n$-planes have opposite sign, hence any opposite parabolic subgroup to $P_n'^+$ is conjugate to $P_n'^-$ and vice versa. 
\end{remark}

\subsection{Affine charts for flag manifolds}\label{sec:affine chart}

The flag manifolds of $G = \PSL(d,\RR)$ admit natural affine coordinate charts, which will be useful for the computations  in Sections~\ref{sec:Anosov-repns} and~\ref{sec:main2}.

Let us start with the Grassmannian $\Gr_k(\RR^d) = G/P_k$. For any $Y\in\Gr_{d-k}(\RR^d)$, denote by $U_Y$ the space of $k$-planes which are transverse to $Y$:
\[U_Y:=\{X\in\Gr_k(\RR^d):X\cap Y=0\},\]
an open subset of $\Gr_k(\RR^d)$. Fix $X \in U_Y$. Then any linear map $\psi \in\Hom(X,Y)$ determines another element of $U_Y$, namely the \emph{graph of }$\psi$,
\[\mathcal{G}_\psi\:=\{x+\psi(x)\in\RR^d:x\in X\}.\]
Observe that $\mathcal G_\psi$ is also transverse to $Y$ since the decomposition $\RR^d = X + Y$ is a direct sum.
It is easy to verify that the map $\psi \mapsto \mathcal G_\psi$ is a homeomorphism 
\begin{align}\label{eqn:coords-Gr}
\Hom(X,Y) \cong U_Y.
\end{align}
This equips the chart $U_Y \subset \Gr_k(\RR^d)$ with a linear vector space structure, in which $X$ is the origin. 
This linear structure gives natural coordinates on the tangent space 
\begin{align}\label{eqn:tangent-Grassmannian}
T_X \Gr_k(\RR^d) = T_X U_Y \cong \Hom(X,Y).
\end{align}
Note that choosing a different basepoint, say $Z \in U_Y$, yields a different vector space structure $\Hom(Z,Y) \cong U_Y$, which differs from the first by an affine isomorphism. Hence, independent of basepoint, $U_Y$ is equipped with an affine structure and we call $U_Y$ an \emph{affine chart} of $\Gr_k(\RR^d)$. The affine charts $\{U_Y: Y \in \Gr_{d-k}\}$ cover $\Gr_k(\RR^d)$ and satisfy the invariance property that $gU_Y = U_{gY}$ for all $g \in G$ and $Y \in \Gr_{d-k}(\RR^d)$.

Next, consider the space $\mathcal F_\mathcal I$ of flags of type $\mathcal I = \{i_1, \ldots, i_p\} \subset \{1, \ldots, d\}$. Any flag transverse to a flag of $\mathcal F_{\mathcal I}$ has type $\sigma \mathcal I$, where recall that $\sigma$ denotes the involution $i \mapsto d-i$. Choose $Y \in \mathcal F_{\sigma \mathcal I}$ and let 
\[U_Y:= \{Z \in \mathcal F_{\mathcal I}: Z^{(i_k)} \cap Y^{(d-i_k)} = 0, \text{ for all } 1 \leq k \leq p\}.\]
Fix a basepoint $X \in U_Y$, and let $Z \in U_Y$ another point. Using the above recipe, we may realize each subspace $Z^{(i_k)}$ of $Z$ uniquely as the graph of a linear map $\psi_{i_k}: X^{(i_k)} \to Y^{(d-i_k)}$. 
Further the linear maps are related to one another as follows. Define subspaces $V_1 = X^{(i_1)}$, $V_k = X^{(i_k)} \cap Y^{(d-i_{k-1})}$ for all $1 < k < p$, and $V_p = Y^{(d-i_p)}$.
Note that the dimension of $V_k$ is $i_k - i_{k-1}$ and the subspaces form a direct sum decomposition 
\begin{align}\label{eqn:lumpy-decomp}
\RR^d = V_1 \oplus \cdots \oplus V_{p}
\end{align}
The $i_k$ subspace $X^{(i_k)}$ of $X$ is the direct sum $X^{(i_k)} = V_1 \oplus \cdots \oplus V_{k}$ and the $d-i_k$ subspace of $Y$ is the direct sum $Y^{(d-i_k)} = V_{k+1} \oplus \cdots \oplus V_{p}$. The condition that $Z^{(i_k)} \subset Z^{(i_\ell)}$ for $k < \ell$ is equivalent to the condition that for each $1 \leq i \leq k$ and each $\ell \leq j \leq p$, the projection to the $V_j$ factor of the restriction to $V_i$ is the same for $\psi_{i_\ell}$ as it is for $\psi_{i_k}$. Hence, $Z \in U_Y$ determines unique linear maps $\psi_{i,j}: V_i \to V_j$ for all $1 \leq i < j \leq p$, so that 
\[\psi_{i_k} = \bigoplus_{1 \leq i \leq k < j \leq p} \psi_{i,j}.\]
This gives a homeomorphism
\[ U_Y \cong \bigoplus_{1 \leq i < j \leq p}\Hom(V_i, V_j),\]
which equips $U_Y$ with a linear structure for which $X$ is the origin. This linear structure gives natural coordinates on the tangent space 
\[T_X \mathcal F_{\mathcal I} = T_X U_Y \cong \bigoplus_{1 \leq i  < j \leq p}\Hom(V_i, V_j).\]
As above, note that choosing a different basepoint, say $Z \in U_Y$, yields a different vector space structure on $U_Y$, which differs from the first by an affine isomorphism. Hence, independent of basepoint, $U_Y$ is equipped with an affine structure and we call $U_Y$ an \emph{affine chart} of $\mathcal F_{\mathcal I}$. The affine charts $\{U_Y: Y \in \mathcal F_{\sigma \mathcal I}\}$ cover $\mathcal F_{\mathcal I}$ and satisfy the invariance property that $gU_Y = U_{gY}$ for all $g \in G$ and $Y \in \mathcal F_{\sigma \mathcal I}$.

Let us remark on one special case of the above construction. If $\mathcal I = \{1, \ldots, d\}$, then $\mathcal F_{\mathcal I} = \mathcal F_B$ is the space of full flags. In this case $\sigma \mathcal I = \mathcal I$. Let $X,Y \in \mathcal F_B$ be any transverse pair of flags. Then the decomposition~\eqref{eqn:lumpy-decomp} takes the form
\[ \RR^d = L_1 \oplus \cdots \oplus L_d\]
where $L_i := X^{(i)} \cap Y^{(d-i+1)}$ are lines, so we use the letter $L$ rather than $V$. The linear coordinates above on $U_Y$ for which $X$ is the origin take the form 
\[ U_Y \cong \bigoplus_{1 \leq i  < j \leq d}\Hom(L_i, L_j)\]
and as before these coordinates give coordinates at the tangent space level:
\[T_X \mathcal F_B = T_X U_Y \cong \bigoplus_{1 \leq i  < j \leq d}\Hom(L_i, L_j).\]

Finally, we remark that the flag manifolds for $G' = \PO(n,n)$ and $G'' = \SO(n,n-1)$ do not admit affine coordinates as above. It will be natural for our purposes to embed those flag manifolds in flag manifolds of $G = \PSL(d,\RR)$ (for $d = 2n$ or $d = 2n-1$), and work in the above coordinates.
An important example is the following. The space $\Gr_{n-1}(\RR^{n,n})$ of isotropic $(n-1)$-planes is naturally a smoothly embedded submanifold of the Grassmannian $\Gr_{n-1}(\RR^{2n})$ of all $(n-1)$-planes. The tangent space $T_X \Gr_{n-1}(\RR^{n,n})$ is naturally a subspace of $T_X \Gr_{n-1}(\RR^{2n})$ and may be expressed in the coordinates~\eqref{eqn:tangent-Grassmannian}. In fact, $T_X \Gr_{n-1}(\RR^{n,n})$ corresponds to the homomorphisms $\psi \in \Hom(X,Y)$ which are anti-symmetric, in the sense that 
$\langle \psi(v), w \rangle_{n,n} = -\langle v, \psi(w) \rangle_{n,n}$ holds for all $v,w \in X$.
We conclude this section with a proposition that describes the fibers of the projections $\varpi^\pm: \Gr_n^\pm(\RR^{n,n}) \to \Gr_{n-1}(\RR^{n,n})$ of Proposition~\ref{prop:projections} in these coordinates.

\begin{proposition}\label{prop:isotropic tangent}
Let $X \in \Gr_{n-1}(\RR^{n,n})$ and let $M = \varpi^+(X) \in \Gr_n^+(\RR^{n,n})$. Let $Y\in\Gr_{n+1}(\RR^{n,n})$ be transverse to $X$. Then in the coordinates~\eqref{eqn:tangent-Grassmannian}, the tangent space $T_X \ell_M$ to the fiber $\ell_M = (\varpi^+)^{-1}(M)$ is given by the subspace
\begin{align*}
 \Hom(X, M \cap Y) \subset \Hom(X,Y).
\end{align*}
\end{proposition}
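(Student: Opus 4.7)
The plan is to first identify the fiber $\ell_M$ geometrically, then translate this description into the affine chart coordinates on $U_Y$. By Proposition~\ref{prop:projections}, every isotropic $(n-1)$-plane is contained in a unique positive isotropic $n$-plane, so $\ell_M = (\varpi^+)^{-1}(M)$ consists exactly of the $(n-1)$-dimensional subspaces of $M$ (each of which is automatically isotropic and has $M$ as its positive $n$-plane extension). In particular $X \subset M$, and $\ell_M$ is an $(n-1)$-dimensional submanifold of $\Gr_{n-1}(\RR^{n,n})$ canonically identified with $\PP(M^\ast)$.

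Next, I would translate the condition $X_t \subset M$ into the linear coordinates of~\eqref{eqn:tangent-Grassmannian}. A smooth curve $t \mapsto X_t$ in $U_Y$ with $X_0 = X$ corresponds to a smooth path $t \mapsto \psi_t \in \Hom(X,Y)$ with $\psi_0 = 0$, via $X_t = \mathcal{G}_{\psi_t} = \{x + \psi_t(x) : x \in X\}$. The curve lies in $\ell_M$ precisely when $x + \psi_t(x) \in M$ for every $x \in X$ and every $t$; since $x \in X \subset M$, this is equivalent to $\psi_t(x) \in M$, and combined with the defining inclusion $\psi_t(x) \in Y$ this gives $\psi_t \in \Hom(X, M \cap Y)$. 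Differentiating at $t = 0$ will yield the inclusion $T_X \ell_M \subseteq \Hom(X, M \cap Y)$.

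For the reverse inclusion, any $\psi \in \Hom(X, M \cap Y)$ will be realized as a tangent vector by the linear family $\psi_t := t\psi$: the subspace $X_t = \{x + t\psi(x) : x \in X\}$ is contained in $M$ (since $x, \psi(x) \in M$), hence $X_t \in \ell_M$. As a consistency check, transversality $X \cap Y = 0$ together with $X \subset M$ forces the splitting $M = X \oplus (M \cap Y)$, so $\dim(M \cap Y) = 1$ and $\dim \Hom(X, M \cap Y) = n-1 = \dim \PP(M^\ast) = \dim \ell_M$.

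I do not expect any genuinely hard step: the proof amounts to a direct translation of the geometric fiber $\PP(M^\ast)$ into the explicit coordinates on $U_Y$. The only subtlety worth flagging is that $\Hom(X, M \cap Y)$ must automatically land inside the subspace $T_X \Gr_{n-1}(\RR^{n,n}) \subset \Hom(X,Y)$ of anti-symmetric homomorphisms; this is trivially satisfied because isotropy of $M$ makes both sides of the relation $\langle \psi(v), w\rangle_{n,n} = -\langle v, \psi(w)\rangle_{n,n}$ vanish for all $v,w \in X \subset M$.
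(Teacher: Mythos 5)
Your proposal is correct and follows the same route as the paper: identify $\ell_M$ with the Grassmannian of $(n-1)$-planes in $M$ (using that every such subspace is automatically isotropic with $M$ as its unique positive extension), and then observe that in the graph coordinates on $U_Y$ this sub-Grassmannian corresponds exactly to $\Hom(X, M\cap Y)\subset\Hom(X,Y)$. Your added dimension count and the remark on anti-symmetry are correct but not needed beyond what the paper's two-line proof records.
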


\begin{proof}
Since every $(n-1)$-dimensional subspace in $M$ is isotropic, it follows that $\ell_M$ is simply the Grassmannian of $(n-1)$-dimensional subspaces in $M$. In the coordinates~\eqref{eqn:coords-Gr}, the smaller Grassmannian $\Gr_{n-1}(M) \subset \Gr_{n-1}(\RR^{2n})$ identifies with the subspace $\Hom(X, M \cap Y) \subset \Hom(X,Y)$. 
\end{proof}

\section{Hitchin representations and positivity}\label{sec:3}

Throughout the paper, we fix a closed surface $S$ of genus $g \geq 2$ and denote by $\Gamma = \pi_1 S$ the fundamental group and throughout this section, let $G$ be an adjoint, real split, semi-simple Lie group. 
In this section, we recall what it means for a representation $\Gamma \to G$ to be in the $G$-Hitchin component (Section~\ref{sec:Hitchin}) and explain the important positivity property that such representations satisfy. 
This positivity property was studied by Fock-Goncharov and is based on Lusztig's notion of positivity in~$G$. It will be used to obtain one key ingredient, namely Corollary~\ref{cor:transversality}, for the proof of Theorem~\ref{thm:main2}. A deep understanding of positivity is not needed for the rest of the paper, hence we will avoid giving the rather technical definition until Section~\ref{sec:Positivity} at the end of this section. 
A reader who is not familiar with positive representations may wish to treat Corollary~\ref{cor:transversality} (in Section~\ref{sec:Hitchin}) as a black box, and return to the details of Section~\ref{sec:Positivity}, which is entirely self-contained, after reading the rest of the paper.
Section~\ref{sec:Lie theory} gives some basic Lie theory prerequisites both for Section~\ref{sec:Positivity} and for Section~\ref{sec:Anosov-repns}.

\subsection{$G$-Hitchin representations}\label{sec:Hitchin}

 Let $\mathcal{X}(\Gamma,G):=\Hom(\Gamma,G)/G$, where $G$ acts on $\Hom(\Gamma,G)$ by conjugation.
For $G=\PSL(2,\RR)$, the discrete and faithful representations assemble into two connected components of $\Hom(\Gamma,\PSL(2,\RR))$, and their conjugacy classes form a union of two connected components in $\mathcal{X}(\Gamma,\PSL(2,\RR))$. A representation in either of these components, which are called the \emph{Teichm\"uller components}, corresponds to an oriented hyperbolic structure on the surface $S$, and the orientation distinguishes the two components.
Let us further equip $S$ with an orientation. Then we call the corresponding component of $\mathcal{X}(\Gamma,G)$, \emph{the} Teichm\"uller component of $S$ (and we ignore the other component of discrete faithful representations).
The $G$-Hitchin component is a generalization of the Teichm\"uller component to the setting where $\PSL(2,\RR)$ is replaced with any adjoint, real split, semi-simple Lie group~$G$. 

Let $\mathfrak g$ denote the Lie algebra of $G$. Recall that a \emph{$3$-dimensional subalgebra (TDS)} of $\mathfrak{g}$ is a Lie subalgebra that is isomorphic to $\mathfrak{s}\mathfrak{l}(2,\RR)$. A TDS $\mathfrak{h}\subset\mathfrak{g}$ is called \emph{principal} if every non-zero element $X\in\mathfrak{h}$ is regular, i.e. the dimension of the centralizer of $X$ is minimal among the centralizers of all elements in~$\mathfrak{g}$. By work of Kostant~\cite{Kos59}, $\mathfrak{g}$ contains a principal TDS, and any two principal TDS's are conjugate by an automorphism of $G$. 
Let
\[\tau_G:\PSL(2,\RR)\to G\] 
be a faithful homomorphism whose image is a subgroup of $G$ whose Lie algebra is a principal TDS in~$\mathfrak{g}$. This determines the map
\begin{eqnarray*}
i_G:\mathcal{X}(\Gamma,\PSL(2,\RR))&\to&\mathcal{X}(\Gamma,G)\\
{}[\rho]&\mapsto&[\tau_G\circ\rho].
\end{eqnarray*}
The component of $\mathcal{X}(\Gamma,G)$ containing the image of the Teichm\"uller component was studied by Hitchin~\cite{Hitchin}.
\begin{definition}
The connected component of $\mathcal{X}(\Gamma,G)$ containing the image of the Teichm\"uller component under $i_G$ is called the \emph{$G$-Hitchin component}. A representation whose conjugacy class lies in the $G$-Hitchin component is called a \emph{$G$-Hitchin representation}.
\end{definition}

Note that if $G=\PSL(2,\RR)$, the $G$-Hitchin component is exactly one of the Teichm\"uller components. If there is no ambiguity, we will sometimes refer to a $G$-Hitchin representation simply as a Hitchin representation.

\begin{remark}\label{rem:multiple Hitchin}
The Hitchin component is not quite well-defined. It depends on our choice of orientation on $S$ and also on a conjugacy class of homomorphism $\tau_G$ as above, of which there are finitely many corresponding to the outer automorphism group of $G$. Differing choices may give  distinct Hitchin components of $\mathcal X(\Gamma, G)$ which are mapped isomorphically to one another by pre and/or post composition by outer automorphisms. 	
\end{remark}

\begin{example}\label{eg:HitchinPSLd}
Consider $G = \PSL(d,\RR)$. Then $\tau_G: \PSL(2,\RR) \to \PSL(d,\RR)$ is the irreducible representation, unique up to automorphism of $\PSL(d,\RR)$, obtained from the action of $\SL(2,\RR)$ on the $(d-1)^{st}$ symmetric tensor power $\bigotimes_{sym}^{(d-1)}\RR^2 \cong \RR^d$ of $\RR^2$. It is easy to check that for $h \in \PSL(2,\RR)$ non-trivial, $\tau_G(h)$ is regular, which in this context means simply that $\tau_G(h)$ is diagonalizable with distinct eigenvalues. More specifically, if the eigenvalues of $h$ are $\lambda, \lambda^{-1}$ (well-defined up to $\pm 1$), then the eigenvalues of $\tau_G(h)$ are $\lambda^{d-1}, \lambda^{d-3}, \ldots, \lambda^{-(d-3)}, \lambda^{-(d-1)}$ (also well-defined up to $\pm 1$).
The $G$-Hitchin representations are the continuous deformations in $\Hom(\Gamma,G)$ of $\tau_G\circ j:\Gamma\to G$, where $j:\Gamma\to\PSL(2,\RR)$ is in the Teichm\"uller component.

Let $\omega$ denote the area form on $\RR^2$. Then $\omega$ defines a natural bilinear form $b$ on the tensor power $\bigotimes^{(d-1)}\RR^2$, which may be defined on simple tensors by the formula:
\[b(u_1\otimes \cdots \otimes u_{d-1}, v_1\otimes \cdots \otimes v_{d-1}) = \omega(u_1, v_1) \cdots \omega(u_{d-1},v_{d-1}).\]
Restricting to the subspace $\bigotimes_{sym}^{(d-1)}\RR^2$ of symmetric tensors in $\bigotimes^{(d-1)}\RR^2$ gives a non-degenerate bilinear form which is 
\begin{itemize}
\item anti-symmetric if $d = 2n$ is even, or
\item symmetric, if $d = 2n-1$ is odd, with 
\begin{itemize}
\item signature $(n-1, n)$ if $n $ is even, or
\item signature $(n, n-1)$ if $n$ is odd.
\end{itemize}
\end{itemize}
The image of $\tau_G$ preserves $b$, hence when $d = 2n$ is even, $\tau_G(\PSL(2,\RR))$ is contained in a conjugate of $\mathsf{PSp}(2n,\RR)$ and when $d = 2n-1$ is odd (so that $\PSL(d,\RR) = \SL(d,\RR)$), $\tau_G(\PSL(2,\RR))$ is contained in a conjugate of $\SO(n,n-1)$.
\end{example}

\begin{example}\label{eg:Hitchin(n,n-1)}
Consider $G'' =\SO(n,n-1)$. Thinking of $G'' < G = \PSL(2n-1,\RR) = \SL(2n-1,\RR)$, we may assume the irreducible representation $\tau_G$ from Example~\ref{eg:HitchinPSLd} takes values in $G''$. Further, for each non-trivial element $h \in \PSL(2,\RR)$, $\tau_G(h)$ is regular as an element of $G''$. Hence we may take $\tau_{G''} = \tau_G$. Hence, the natural inclusion $G'' \hookrightarrow G$ induces an inclusion of the $G''$-Hitchin component into the $G$-Hitchin component.
\end{example}

\begin{example}\label{eg:Hitchin(n,n)}
Consider $G'=\PSO(n,n) < G = \PSL(2n,\RR)$. 
Given an orthogonal splitting $\RR^{n,n} = \RR^{n,n-1} \oplus \RR^{0,1}$, the action of $h \in \mathsf{Aut}(\RR^{n,n-1}) = \SO(n,n-1) = G''$ on $\RR^{n,n-1}$ extends to $\RR^{n,n}$ by acting trivially in the $\RR^{0,1}$ factor. 
We denote by $\iota_{n,n} : G'' \to G'$ the composition of the natural inclusion $\SO(n,n-1) \to \SO(n,n)$ with the projection $\SO(n,n) \to \PSO(n,n)$ and note that $\iota_{n,n}$ is injective since the action of $h \in \SO(n,n-1)$ on $\RR^{n,n}$ is never $-1$.

Let $\tau_{G'} = \iota_{n,n} \circ \tau_{G''}$, where $\tau_{G''}$ is as in Example~\ref{eg:Hitchin(n,n-1)}.
Then the image of $\tau_{G'}$ is a principle $\PSL(2,\RR)$ in $G'$. Indeed, for each non-trivial element $h \in \PSL(2,\RR)$, the centralizer of $\tau_{G'}(h)$ in $G'$ is a Cartan subgroup $A' < G'$. Note that if the eigenvalues of $h$ are $\lambda, \lambda^{-1}$ (well-defined up to $\pm 1$), then the eigenvalues of $\tau_{G'}(h)$ are
\[\lambda^{2(n-1)}, \lambda^{2(n-2)}, \ldots, \lambda^2,1,1, \lambda^{-2}, \ldots, \lambda^{-2(n-2)}, \lambda^{-2(n-1)},\] and the eigenvalue $1$ has multiplicity two. Hence $\tau_{G'}(h)$ is \emph{not} regular as an element of $G = \PSL(2n,\RR)$. However, $\tau_{G'}(h)$ is regular in $G'$, since the $1$ eigenspace has signature $(1,1)$ and decomposes into a sum of two null lines which are preserved by (a finite index subgroup of) the centralizer. 

Since $\tau_{G'} = \iota_{n,n} \circ \tau_{G''}$, the inclusion $\iota_{n,n}: G'' \to G'$ induces an inclusion of the $G''$-Hitchin component into the $G'$-Hitchin component. However, the inclusion $\iota_{2n}: G' \to G = \PSL(2n,\RR)$ does not map the $G'$-Hitchin component to the $G$-Hitchin component.
\end{example}

Labourie~\cite{labourie2006}, Guichard~\cite{Guichard}, and Fock-Goncharov \cite{FockGoncharov} established the following characterization of $G$-Hitchin representations. We fix both a hyperbolic metric and an orientation on the surface $S$. The boundary of the group $\partial \Gamma$ then identifies with the visual boundary of the universal cover $\widetilde S \cong \HH^2$ of~$S$. The orientation on $S$ induces an orientation on $\widetilde S$ which in turn induces a cyclic ordering on $\partial\Gamma\cong S^1$. Let $B < G$ denote a Borel subgroup of $G$ and $\mathcal F_B = G/B$ the corresponding flag manifold. 

\begin{theorem}[Labourie, Guichard, Fock-Goncharov]\label{thm: Fock-Goncharov}
Let $\rho: \Gamma \to G$ be a representation. Then $\rho$ is a $G$-Hitchin representation if and only if there exists a continuous $\rho$-equivariant curve $\xi: \partial \Gamma \to \mathcal F_B$ which sends positive triples in $\partial \Gamma$ to positive triples in $\mathcal F_B$.
\end{theorem}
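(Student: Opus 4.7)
The plan is to prove the two implications separately. The forward direction (Hitchin implies positive equivariant curve) proceeds by establishing the statement first for Fuchsian representations and then propagating it through the Hitchin component by a connectedness argument. The converse direction (positive equivariant curve implies Hitchin) is the deeper statement, and its proof reduces to showing that the locus of representations admitting such a curve is a union of connected components of $\Hom(\Gamma,G)$, each of which meets the Fuchsian locus.

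For the forward direction, I would first handle the base case $\rho_0 = \tau_G \circ j$ where $j: \Gamma \to \PSL(2,\RR)$ is Fuchsian. The Fuchsian representation $j$ admits the standard boundary map $\hat\jmath: \partial\Gamma \to \RP^1$, a cyclic-order preserving homeomorphism. One then builds an equivariant smooth embedding $\widetilde{\tau_G}: \RP^1 \cong \PSL(2,\RR)/B_{\PSL(2,\RR)} \to \mathcal F_B$ coming from the principal homomorphism, and checks directly from the explicit form of $\tau_G$ (computed on $B_{\PSL(2,\RR)}$ and on a generator of the unipotent radical of $B_{\PSL(2,\RR)}$) that $\widetilde{\tau_G}$ sends positive triples in $\RP^1$ to positive triples in $\mathcal F_B$ in the sense of Lusztig. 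The composition $\xi_0 := \widetilde{\tau_G} \circ \hat\jmath$ is then a positive $\rho_0$-equivariant curve.

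To extend from Fuchsian to all Hitchin representations, I would invoke the Anosov property: by Labourie's theorem, every $G$-Hitchin representation $\rho$ is $B$-Anosov, which yields a continuous $\rho$-equivariant transverse boundary map $\xi_\rho: \partial\Gamma \to \mathcal F_B$ depending continuously on $\rho$. The image of any triple in $\partial\Gamma$ under $\xi_\rho$ lies in the open $G$-orbit of pairwise transverse triples in $\mathcal F_B^3$, which (after modding out by the $G$-action) is a disjoint union of finitely many connected components, of which the set of positive triples is one. Since the Fuchsian locus lies in the Hitchin component and positivity holds there, and since the Hitchin component is connected, a continuity/clopen argument applied to the assignment $\rho \mapsto \xi_\rho(x,y,z)$ (for a fixed positive triple $(x,y,z) \in \partial\Gamma^3$) forces positivity to persist throughout the Hitchin component.

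For the converse, suppose $\rho$ admits a positive equivariant curve $\xi$. Using positivity, one first shows that $\rho$ is $B$-Anosov: positive triples of flags are in particular pairwise transverse, and the positivity of quadruples can be leveraged (via the pinching dynamics on $\partial\Gamma$ induced by an element of $\Gamma$) to verify the transversality and contraction properties defining the Anosov condition. The set of representations $\rho$ admitting a positive equivariant boundary curve is then open (small deformations preserve positivity, since the positive triples form an open subset of transverse triples and the Anosov property is stable) and closed (limits of positive curves remain positive because positive triples form a closed subset of pairwise transverse triples, and Anosovness prevents degeneration of transversality), hence is a union of connected components of $\Hom(\Gamma, G)$. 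Since the Fuchsian locus lies in this union and also in the Hitchin component, and a dimension count or outer automorphism argument rules out the union being strictly larger than the Hitchin components, the two sets coincide up to the ambiguity of Remark~\ref{rem:multiple Hitchin}.

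The main obstacle is the converse direction, specifically the step that a representation admitting a positive equivariant curve is necessarily $B$-Anosov with appropriate contraction. The positivity of triples alone gives transversality of the boundary map but not the dynamical contraction required for Anosovness; one has to exploit the full strength of positivity on quadruples, combined with the north-south dynamics of elements of $\Gamma$ on $\partial\Gamma$, to deduce that eigenvalue gaps of $\rho(\gamma)$ grow with the translation length of $\gamma$. This is the technical heart of the Fock-Goncharov and Guichard approach, and in the present paper it is deferred by treating the statement as a black box (as indicated by the remark preceding the theorem).
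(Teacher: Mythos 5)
The paper does not prove this theorem: it is quoted verbatim from Labourie, Guichard, and Fock--Goncharov and used as a black box (only the downstream transversality statement, Proposition~\ref{prop:SO(n,n)transverse}, is proved here). So there is no internal proof to compare against, and your proposal must be judged as a reconstruction of the cited literature. Your forward direction is the standard argument and is essentially sound: verify positivity explicitly for $\tau_G \circ j$ with $j$ Fuchsian, then use that positive triples form a union of connected components of the space of pairwise transverse triples together with continuity of the Anosov limit map in $\rho$ to propagate positivity over the connected Hitchin component.

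The converse, however, contains two genuine gaps. First, your closedness argument is circular: you write that ``Anosovness prevents degeneration of transversality'' in a limit of positive representations, but Anosovness is an \emph{open} condition (Fact~\ref{fact:open}), not a closed one, and the Anosovness of the limiting representation is precisely part of what must be established. Positive triples are closed in the space of \emph{pairwise transverse} triples but not in the space of all triples of flags, so the danger in the limit is exactly that transversality collapses; Fock--Goncharov rule this out using uniform estimates extracted from positivity of configurations of four or more points (and discreteness/faithfulness of limits of positive representations), not from any Anosov stability. Second, and more seriously, your final step --- ``a dimension count or outer automorphism argument rules out the union being strictly larger than the Hitchin components'' --- cannot work. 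All components of $\Hom(\Gamma,G)$ containing irreducible representations have the same dimension, and there genuinely are non-Hitchin components (the paper itself recalls that $\Hom(\pi_1S,\PSL(d,\RR))$ has three or six components for $d\geq 3$), so nothing about dimension or outer automorphisms prevents a clopen set from containing an extra component. What is actually required is that every representation admitting a positive equivariant curve can be joined to a Fuchsian representation through such representations; this is the content of Guichard's theorem (hyperconvex implies Hitchin) and of Fock--Goncharov's connectivity results, and it is the technical heart of the converse, not a formality that can be dispatched by counting.
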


The curve $\xi: \partial \Gamma \to \mathcal F_B$ is called a \emph{positive curve} and turns out to be the same as the Anosov limit curve for $\rho$, see Theorem~\ref{thm:HitchinAnosov} in Section~\ref{sec:Anosov}.
We delay discussion of positivity until Section~\ref{sec:Positivity}, whose main purpose is to prove a transversality statement, Proposition~\ref{prop:SO(n,n)transverse}, about positive triples of flags in $\mathcal F_{B'} = G'/B'$ in the case $G' = \PSO(n,n)$. We remark that in this case, the positive curve of Theorem~\ref{thm: Fock-Goncharov} actually takes any distinct triple (not just a positive triple) to a positive triple of flags, see Appendix~\ref{app:negative triples}. The following result is a direct corollary of Proposition~\ref{prop:SO(n,n)transverse} and may be used as a black box in the rest of the paper.

\begin{corollary}\label{cor:transversality}
Let $G' = \PSO(n,n)$, let $\varrho:\Gamma\to G'$ be a $G'$-Hitchin representation. Then the $\varrho$-equivariant positive curve $\xi: \partial \Gamma \to \mathcal F_{B'}$ satisfies
\[\xi^{(n-1)}(x)+\left(\xi^{(n-1)}(z)\cap \xi^{(n+2)}(y)\right)+\xi_\pm^{(n)}(y)=\RR^{2n}.\]
for all pairwise distinct triples $(y,z,x)$ in $\partial\Gamma$. 
\end{corollary}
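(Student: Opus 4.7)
The plan is to reduce the claim to a direct-sum statement that will follow from Proposition~\ref{prop:SO(n,n)transverse}. First I would verify that $L := \xi^{(n-1)}(z)\cap \xi^{(n+2)}(y)$ is one-dimensional. Since $\xi$ is Anosov/positive, any two distinct points $y\neq z$ in $\partial\Gamma$ give transverse flags in $\mathcal F_{B'}$; in particular $\xi^{(n-1)}(z)\oplus \xi^{(n+1)}(y)=\RR^{2n}$ and $\xi^{(n-2)}(z)\oplus \xi^{(n+2)}(y)=\RR^{2n}$. The first of these forces $\dim L \leq 1$, and the standard inclusion-exclusion together with the second forces $\dim L \geq 1$, hence $\dim L = 1$. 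Consequently the three subspaces on the left-hand side of the claimed identity have dimensions summing to $(n-1)+1+n=2n$, so proving the equality is equivalent to proving that the sum is direct.

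Next I would use pairwise transversality again: since $x\neq y$, we have $\xi^{(n-1)}(x)\oplus\xi^{(n+1)}(y)=\RR^{2n}$. Because $\xi^{(n)}_\pm(y)\subset \xi^{(n+1)}(y)$ is a hyperplane, the sum $H := \xi^{(n-1)}(x)+\xi^{(n)}_\pm(y)$ has dimension $2n-1$. The claim therefore reduces to the single statement $L \not\subset H$. Equivalently, decomposing any nonzero $v\in L$ as $v=a+b$ with $a\in\xi^{(n-1)}(x)$ and $b\in\xi^{(n+1)}(y)$, we must rule out $b\in\xi^{(n)}_\pm(y)$.

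The decisive step is this last exclusion, which is the place where ordinary Anosov transversality is insufficient: one can easily build triples of isotropic flags for which an analogous $L$ does fall inside such a hyperplane, so positivity is genuinely needed. My plan is to apply Proposition~\ref{prop:SO(n,n)transverse} to the triple of flags $(\xi(y),\xi(z),\xi(x))$, which is a positive triple of flags in $\mathcal F_{B'}$ by the remark preceding the corollary (for a $\PSO(n,n)$-Hitchin representation the positive curve $\xi$ sends every distinct triple in $\partial\Gamma$, not only cyclically positive ones, to a positive triple of flags, as established in Appendix~\ref{app:negative triples}). The proposition is tailored precisely to produce direct-sum decompositions of $\RR^{2n}$ involving the components $F^{(n-1)}$, $F^{(n)}_\pm$, $F^{(n+2)}$ of positive triples, and reading off the relevant decomposition yields $L \not\subset H$ as required.

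The main obstacle is thus packaged into Proposition~\ref{prop:SO(n,n)transverse} itself; given that proposition, the derivation of the corollary is just the dimension-count reduction in the first two paragraphs followed by a direct invocation. I expect that the proof of Proposition~\ref{prop:SO(n,n)transverse} will in turn go through Lusztig's positivity in $\PSO(n,n)$ and the Fock--Goncharov characterization of positive triples (cited in Theorem~\ref{thm: Fock-Goncharov}), exploiting the fact that triples of positive flags can be placed in a standard form by the $\PSO(n,n)$-action in which the required direct sum decomposition is manifest.
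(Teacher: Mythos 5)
Your proposal is correct and matches the paper's argument: the corollary is obtained by applying Proposition~\ref{prop:SO(n,n)transverse} to the triple $(\xi(y),\xi(z),\xi(x))$, which is a positive triple of flags for \emph{every} pairwise distinct triple $(y,z,x)$ by Appendix~\ref{app:negative triples}. The dimension-count reduction in your first two paragraphs is harmless but unnecessary, since the proposition already asserts the full equality of the sum with $\RR^{2n}$ rather than only the exclusion $L\not\subset H$.
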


\subsection{Lie theory background}\label{sec:Lie theory}
Here we give some brief Lie Theory background needed in particular for Section~\ref{sec:Positivity}, but also for other parts later in the paper such as Section~\ref{sec:char}.

For any opposite pair of Borel subgroups $B^+,B^-\subset G$, let $U^\pm\subset B^\pm$ denote the unipotent radicals. The identity component of $B^+\cap B^-$, denoted $A$, is a maximal, connected, abelian Lie subgroup of $G$, i.e. a Cartan subgroup of $G$. Let $\mathfrak{g}$ be the Lie algebra of $G$, and let $\mathfrak{u}^\pm,\mathfrak{b}^\pm,\mathfrak{a}\subset\mathfrak{g}$ be the Lie subalgebras corresponding to the subgroups $U^\pm,B^\pm,A\subset G$. Then let $\mathfrak{a}^+\subset\mathfrak{a}$ denote the positive Weyl chamber so that the corresponding simple root spaces all lie in $\mathfrak{b}^+$, and let $\Delta$ denote the set of simple roots corresponding to $\mathfrak{a}^+$. For every simple root $\alpha:\mathfrak{a}\to\RR$, let $H_\alpha:\RR\to\mathfrak{a}$ denote the corresponding simple coroot.

Before continuing we give some concrete examples of the Lie theoretic objects defined above in the special cases of interest throughout this paper, namely for the Lie groups $G=\PSL(d,\RR)$, $G' = \PSO(n,n)$, and $G'' = \SO(n,n-1)$. Let $\delta_{i,j}=\delta_{i,j;d}$ denote the $d\times d$ square matrix with $1$ as its $(i,j)$-entry and all other entries are $0$. We will also denote $\delta_i:=\delta_{i,i}$. 

\begin{example}\label{eg:SL(n,R)}
Let $G=\PSL(d,\RR)$. Then the Lie algebra $\mathfrak{g} = \mathfrak{psl}(d,\RR)$ is the set of traceless $d\times d$ real-valued matrices. Let $B^+ < G$ (resp. $B^-<G$) be the subgroup of upper (resp. lower) triangular matrices in $G$, and let $U^\pm<B^\pm$ be the subgroups whose diagonal entries are $1$. Then $(B^+,B^-)$ is an opposite pair of Borel subgroups in $G$, $U^\pm$ is the unipotent radical of $B^\pm$, and $A$ is the set of diagonal matrices in $G$ with positive diagonal entries. The abelian Lie algebra $\mathfrak{a} \subset \mathfrak g$ is the space of traceless diagonal matrices, and $\mathfrak{a}^+$ is the subset of $\mathfrak{a}$ consisting of matrices whose diagonal entries are in weakly decreasing order going down the diagonal. The simple roots are $\Delta=\{\alpha_1,\dots,\alpha_{n-1}\}$ where $\alpha_i:\diag(a_1,\dots,a_n)\mapsto a_i-a_{i+1}$, and the corresponding co-roots are $H_{\alpha_i}(t)=t(\delta_i-\delta_{i+1})$. 
\end{example}

\begin{example}\label{eg:SO(n,n)}
Let $G'=\PSO(n,n) < \PSL(2n,\RR) = G$. The Lie algebra is given by
\[\mathfrak g' = \mathfrak{pso}(n,n):=\{X\in\mathfrak{sl}(2n,\RR):X^T\cdot J_{2n}+J_{2n}\cdot X=0\}.\] 
Again, let $B'^+ < G'$ (resp. $B'^- < G$) be the subgroup of upper (resp. lower) triangular matrices in $G'$ and let $U'^\pm$ be the subgroup of $B'^\pm$ whose diagonal entries are $1$. Then $(B'^+,B'^-)$ is an opposite pair of Borel subgroups, $U'^\pm<B'^\pm$ is the unipotent radical, and
\[A'=\left\{\diag\left(a_1,\dots,a_n,\frac{1}{a_n},\dots,\frac{1}{a_1}\right):a_i>0\right\}\]
is a Cartan subgroup of $G'$.
The Lie algebra of $A'$ is 
\[\mathfrak{a}'=\left\{\diag\left(a_1,\dots,a_n,-a_n,\dots,-a_1\right):a_i\in\RR\right\},\] and the positive Weyl chamber is 
\[\mathfrak{a}'^+=\left\{
\diag(a_1,\dots,a_n,-a_n,\dots,-a_1)\in\mathfrak{a}:a_1\geq\dots\geq a_n,-a_n\geq\dots\geq -a_1\right\},\]
where note that $a_n$ may be either positive, negative, or zero. The simple roots are $\Delta'=\{\alpha'_1,\dots,\alpha'_n\}$ where 
\[\alpha'_i:\diag(a_1,\dots,-a_1)\mapsto 
\left\{\begin{array}{ll}
a_i-a_{i+1}&\text{ if }i=1,\dots,n-1\\
a_{n-1}+a_n&\text{ if }i=n
\end{array}\right.,\] 
and the corresponding co-roots are
\[H_{\alpha'_i}(t)=\left\{\begin{array}{ll}
t(\delta_i-\delta_{i+1}+\delta_{2n-i}-\delta_{2n+1-i})&\text{ if }i=1,\dots,n-1\\
t(\delta_{n-1}+\delta_n-\delta_{n+1}-\delta_{n+2})&\text{ if }i=n.
\end{array}\right..\] 
\end{example}

\begin{example}\label{eg:SO(n,n-1)}
Let $G''=\SO(n,n-1) < \SL(2n-1, \RR) = \PSL(2n-1, \RR)$. The Lie algebra is given by
\[\mathfrak g'' = \mathfrak{so}(n,n-1):=\{X\in\mathfrak{psl}(2n-1,\RR):X^T\cdot J_{2n-1}+J_{2n-1}\cdot X=0\}.\]
Let $B''^+< G$ (resp. $B''^- < G$) be the subgroup of upper (resp. lower) triangular matrices in $G''$, and let $U''^\pm< B''^\pm$ be the subgroups whose diagonal entries are $1$. As before, $(B''^+,B''^-)$ is an opposite pair of Borel subgroups in $G''$, $U''^\pm$ is the unipotent radical of $B''^\pm$, and the Cartan subgroup is given by:
\[A''=\left\{\diag\left(a_1,\dots,a_{n-1},1,\frac{1}{a_{n-1}},\dots,\frac{1}{a_1}\right):a_i>0\right\}.\]
Then the Lie algebra of $A''$ is 
\[\mathfrak{a}''=\left\{\diag\left(a_1,\dots,a_{n-1},0,-a_{n-1},\dots,-a_1\right):a_i\in\RR\right\},\] and the positive Weyl chamber $\mathfrak{a}''^+$ is again the subset of $\mathfrak{a}''$ whose entries are in weakly decreasing order, going down the diagonal. The simple roots are $\Delta''=\{\alpha_1'',\dots,\alpha''_{n-1}\}$ where 
\[\alpha''_i:\diag(a_1,\dots,-a_1)\mapsto 
\left\{\begin{array}{ll}
a_i-a_{i+1}&\text{ if }i=1,\dots,n-2\\
a_{n-1}&\text{ if }i=n-1
\end{array}\right.,\] 
and the corresponding co-roots are give by $H_{\alpha''_i}(t)=t(\delta_i-\delta_{i+1}+\delta_{2n-1-i}-\delta_{2n-i})$. 
\end{example}
\subsection{Positivity}\label{sec:Positivity}
In this section, we recall Lusztig's notion of positivity in an adjoint, real split, semi-simple Lie group $G$, and give some of the basic properties. For more details, refer to Fock-Goncharov \cite{FockGoncharov}, Lusztig \cite{Lusztig}, or Guichard-Wienhard~\cite{GW16}. The theory is easiest to understand in the context of $G = \PSL(d,\RR)$; indeed this is usually the main example given in an introduction to the topic. However, since our goal is Corollary~\ref{cor:transversality}, we will focus here on the lesser known case of $G' = \PSO(n,n)$.

Recall that a \emph{$3$-dimensional subalgebra (TDS)} of $\mathfrak{g}$ is a Lie subalgebra that is isomorphic to $\mathfrak{s}\mathfrak{l}(2,\RR)$. We begin with the following standard fact. 

\begin{proposition}\label{prop:Kostant}
For every $\alpha\in\Delta$, there are linear maps $X_\alpha^+:\RR\to\mathfrak{u}^{+}$, $X_\alpha^-:\RR\to\mathfrak{u}^{-}$ so that
\begin{align}\label{eqn:tds}
[H_\alpha(1),X_\alpha^+(1)] &= 2X_\alpha^+(1),\\ 
[H_\alpha(1),X_\alpha^-(1)] &= -2X_\alpha^-(1), \text{ and }\nonumber\\
[X_\alpha^+(1),X_\alpha^-(1)] &=H_\alpha(1).\nonumber
\end{align}
In particular, $\{H_\alpha(t)+X_\alpha^+(a)+X_\alpha^-(b)\in\mathfrak{g}:a,b,t\in\RR\}$ is a TDS.
\end{proposition}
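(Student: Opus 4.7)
The plan is to build $X_\alpha^\pm$ directly from the restricted root space decomposition of $\mathfrak{g}$. Since $G$ is adjoint and split, $\mathfrak{a} \subset \mathfrak{g}$ is a maximally split Cartan subalgebra, so $\mathfrak{g} = \mathfrak{a} \oplus \bigoplus_{\beta \in \Phi} \mathfrak{g}_\beta$ with each restricted root space $\mathfrak{g}_\beta$ one-dimensional. The positive Weyl chamber $\mathfrak{a}^+$ singled out by the opposite pair $(B^+, B^-)$ gives $\mathfrak{u}^+ = \bigoplus_{\beta > 0} \mathfrak{g}_\beta$ and $\mathfrak{u}^- = \bigoplus_{\beta > 0} \mathfrak{g}_{-\beta}$, so for each simple root $\alpha \in \Delta$ the line $\mathfrak{g}_\alpha$ sits inside $\mathfrak{u}^+$ and the line $\mathfrak{g}_{-\alpha}$ sits inside $\mathfrak{u}^-$.

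I would then fix arbitrary nonzero vectors $E_\alpha \in \mathfrak{g}_\alpha$ and $E_{-\alpha} \in \mathfrak{g}_{-\alpha}$ and set $X_\alpha^+(t) := tE_\alpha$, $X_\alpha^-(t) := tE_{-\alpha}$. The first two relations in \eqref{eqn:tds} are then automatic from the defining property $\alpha(H_\alpha(1)) = 2$ of the coroot, since $E_\alpha$ and $E_{-\alpha}$ are $\ad(H_\alpha(1))$-eigenvectors with eigenvalues $+2$ and $-2$ respectively. What remains is a single normalization step: rescale $E_{-\alpha}$ so that $[E_\alpha, E_{-\alpha}] = H_\alpha(1)$ on the nose.

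Justifying that such a rescaling is possible is the only substantive step, and amounts to checking that $[E_\alpha, E_{-\alpha}]$ is already a \emph{nonzero} scalar multiple of $H_\alpha(1)$. Since this bracket is killed by $\ad(\mathfrak{a})$ and so lies in $\mathfrak{a}$, and since invariance of the Killing form $\kappa$ gives $\kappa(H, [E_\alpha, E_{-\alpha}]) = \alpha(H)\,\kappa(E_\alpha, E_{-\alpha})$ for all $H \in \mathfrak{a}$, the bracket is $\kappa$-dual to $\kappa(E_\alpha, E_{-\alpha}) \cdot \alpha$. Non-degeneracy of $\kappa$ on the subspace $\mathfrak{g}_\alpha \oplus \mathfrak{g}_{-\alpha}$ forces $\kappa(E_\alpha, E_{-\alpha}) \neq 0$, and the $\kappa$-dual of $\alpha$ in $\mathfrak{a}$ is a nonzero multiple of the coroot $H_\alpha(1)$ by the very definition of the latter. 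Hence the rescaling exists, all three relations of \eqref{eqn:tds} hold, and $\RR H_\alpha(1) \oplus \RR E_\alpha \oplus \RR E_{-\alpha}$ is a TDS isomorphic to $\mathfrak{sl}(2,\RR)$.

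I do not expect any real obstacle here: the statement is the classical construction of a standard $\mathfrak{sl}_2$-triple from a simple root, adapted to the fixed opposite Borel pair $(B^+, B^-)$, and the only delicate point is the non-vanishing $[E_\alpha, E_{-\alpha}] \neq 0$, which is handled by the Killing form computation above.
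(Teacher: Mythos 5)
The paper states this proposition as a ``standard fact'' and offers no proof, so there is nothing to compare against; your argument is precisely the classical construction of an $\mathfrak{sl}_2$-triple from a simple root that the authors are taking for granted, and it is correct (the only implicit inputs are that $\mathfrak{g}_0=\mathfrak{a}$ and that $\kappa|_{\mathfrak a}$ is definite, both of which hold since $G$ is split, so the real rescaling of $E_{-\alpha}$ goes through).
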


This motivates the following definition.

\begin{definition}For any $\alpha\in\Delta$, let $x^\pm_\alpha:=\exp\circ X^\pm_\alpha$. The data 
\[\big(B^+,B^-,\{x_\alpha^+\}_{\alpha\in\Delta},\{x_\alpha^-\}_{\alpha\in\Delta}\big)\] 
is a \emph{pinning} of $G$.
\end{definition}

\begin{example}\label{eg:pinning}
Let $G'=\PSO(n,n)$. Choose $B'^\pm$ as in Example~\ref{eg:SO(n,n)}, with $\Delta'=\{\alpha'_1,\dots,\alpha'_n\}$ the corresponding set of simple roots. Then, for all $i=1,\dots,n-1$, define
\begin{align*}
X_{\alpha'_i}^+(t) &=t(\delta_{i,i+1}-\delta_{2n-i,2n+1-i}),\\
 X_{\alpha'_i}^-(t)&=t(\delta_{i+1,i}-\delta_{2n+1-i,2n-i})
 \end{align*}
and for $i= n$, define
\begin{align*}
X_{\alpha'_n}^+(t) &=t(\delta_{n-1,n+1}-\delta_{n,n+2}),\\
X_{\alpha'_n}^-(t) &=t(\delta_{n+1,n-1}-\delta_{n+2,n}).
\end{align*} 
For all $i = 1, \ldots, n$, it is elementary to check that~\eqref{eqn:tds} holds with $\alpha = \alpha'_i$. Let 
\[x_{\alpha'_i}^\pm(t)= \exp(X_{\alpha'_i}^\pm(t)) = \Id_{2n}+X_{\alpha'_i}^\pm(t)\]
where here $\Id_{2n}$ denotes the $(2n)\times (2n)$ identity matrix. 
Then, the data $(B'^+,B'^-,\{x_{\alpha'_i}^+\}_{i=1}^n,\{x_{\alpha'_i}^-\}_{i=1}^n)$ is an example of a pinning of $G'$.
\end{example}

Choose a pinning $(B^+,B^-,\{x_{\alpha}^+\}_{\alpha\in\Delta},\{x_{\alpha}^-\}_{\alpha\in\Delta})$ of $G$, and let $\mathfrak{a}^+\subset\mathfrak{a}$ be the positive Weyl chamber and $\Delta$ the simple roots determined by $B^+, B^-$. For any $\alpha\in\Delta$, let $s_\alpha\in\GL(\mathfrak{a})$ be the reflection about the kernel of $\alpha$ (using the Killing form restricted to $\mathfrak{a}$). Recall that the \emph{Weyl group} $W=W(\mathfrak{a})$ is the subgroup of $\GL(\mathfrak{a})$ generated by $Q:=\{s_\alpha:\alpha\in\Delta\}$. It is well-known that $W(\mathfrak{a})$ is a finite group, and that there is a unique element $w_0\in W(\mathfrak{a})$, usually called the \emph{longest word element}, so that $w_0(\mathfrak{a}^+)=-\mathfrak{a}^+$. Write $w_0=s_{\alpha_{i_1}}\cdots s_{\alpha_{i_m}}$ as a reduced word in $Q$, and define the subset $U^+_{>0}$ by
\begin{align}
U^+_{>0}&:=\{x_{\alpha_{i_1}}^+(t_1)\cdot\dots\cdot x_{\alpha_{i_m}}^+(t_m):t_i>0 \ \forall i=1,\dots,m\} \label{eqn:positive}
\end{align}
The subset $U^+_{>0}\subset U^+$ does not depend on the choice of reduced word representative for $w_0$. In fact $U^+_{> 0}$ is a semi-group (although this is not obvious).

\begin{example}\label{eg:Mkl}
Let $G'=\PSO(n,n)$, and let $(B'^+,B'^-,\{x_{\alpha'_i}^+\}_{i=1}^n,\{x_{\alpha'_i}^-\}_{i=1}^n)$ be the pinning described in Example~\ref{eg:pinning}. The unipotent radicals $U'^\pm\subset B'^\pm$, the corresponding positive Weyl chamber $\mathfrak{a}'^+\subset\mathfrak{a}'$ and the simple roots $\Delta'=\{\alpha'_1,\dots,\alpha'_{n}\}$ are as described in Example~\ref{eg:SO(n,n)}. To simplify notation, let $s_i:=s_{\alpha'_i}$. Then define $\mu_1:=s_{n-1}\cdot s_n\in W(\mathfrak{a}')$, and for all $k=2,\dots,n-1$, define
\[\mu_k:=s_{n-k}\cdot\mu_{k-1}\cdot s_{n-k}.\] 
Then $\mu_1\cdot\mu_2\cdot\dots\cdot \mu_{n-1}$ is a reduced word expression of $w_0$.
Using this expression, one may describe the positive elements $U'^+_{>0}$ of $U'^+$.
For example, if $n=2$, then $w_0 = s_1 s_2$ and hence a typical element of $U'^+_{>0}$ has the form:
\begin{align*}
x^+_{\alpha'_1}(t_1)x^+_{\alpha'_2}(t_2) &= \left(\begin{array}{cccc}
1&t_1&0&0\\
0&1&0&0\\
0&0&1&-t_1\\
0&0&0&1
\end{array}\right)
\left(\begin{array}{cccc}
1&0&t_2&0\\
0&1&0&-t_2\\
0&0&1&0\\
0&0&0&1
\end{array}\right)\\
&= \left(\begin{array}{cccc}
1&t_1&t_2&-t_1t_2\\
0&1&0&-t_2\\
0&0&1&-t_1\\
0&0&0&1
\end{array}\right).
\end{align*}
For larger $n$, we give an inductive formula describing $U'^+_{>0}$ in Appendix \ref{app:matrices}, but the formula is somewhat messy. Luckily, we will be able to avoid working with an explicit description of $U'^+_{>0}$.
\end{example}

Next, we give the definition of positive triple of flags.
\begin{definition}\label{def:positive}
Let $F^+\in\mathcal{F}_B(G)$ (resp. $F^-\in\mathcal{F}_B(G)$) be the flag stabilized by $B^+$ (resp. by $B^-)$. A triple of flags $(F_1,F_2,F_3)\in\mathcal{F}_B(G)^3$ is called \emph{positive} if there is some $g\in G$ so that $g\cdot (F_1,F_2,F_3)=(F^+,u\cdot F^-, F^-)$, for some $u\in U^+_{>0}$.
\end{definition}

\begin{remark}\label{rem: multiple pinnings}
The notion of positivity in Definition~\ref{def:positive} depends on a choice of pinning. Any two conjugate pinnings give the same notion of positivity. However, in this setting it is not always the case that two different pinnings are conjugate (this is exactly the same subtlety, often ignored, that leads to multiple isomorphic Hitchin components in Remark~\ref{rem:multiple Hitchin}). In order for Theorem~\ref{thm: Fock-Goncharov} to hold as stated, one must choose a pinning for $G$ which is compatible with the  choice of representation $\tau_G$ defining the notion of $G$-Hitchin representation. 
On the other hand, since any two pinnings differ by some automorphism of $G$, we may, after applying such an automorphism, assume that a $G$-Hitchin representation satisfies Theorem~\ref{thm: Fock-Goncharov} for the notion of positive determined by any particular pinning we chose to work with. \end{remark}

Note that if $(F_1,F_2,F_3)$ is a positive triple, then in particular the three flags $\{F_1,F_2,F_3\}$ are pairwise transverse. We now prove a stronger transversality result, which is the main technical result of this section. Let $G' = \PSO(n,n)$ and let $B' < G'$ be a Borel subgroup with $\mathcal F_{B'} = G'/B'$ the associated flag manifold. Recall from Example \ref{eg:tangent to grassmannian 4} that an element $F \in \mathcal F_{B'}$ may be regarded as a flag 
\begin{align*}
F^{(1)} \subset \cdots \subset F^{(n-1)} \subset F^{(n)}_+, F^{(n)}_- \subset F^{(n+1)} \subset \cdots \subset F^{(2n-1)},
\end{align*}
where $F^{(i)} \in \Gr_{i}(\RR^{n,n})$ is a null $i$-plane and $F^{(2n-i)} = (F^{(i)})^\perp$ for $1 \leq i \leq n-1$ and $F^{(n)}_+ \in \Gr_{n}^+(\RR^{n,n})$ (resp. $F^{(n)}_- \in \Gr_{n}^-(\RR^{n,n})$) is the unique null $n$-plane which contains $F^{(n-1)}$ and which has positive signature $\tau(F^{(n)}_+) = 1$ (resp. has negative signature $\tau(F^{(n)}_-) = -$). See Section~\ref{sec:grassmannian}. 

\begin{proposition}\label{prop:SO(n,n)transverse}
Let $G' = \PSO(n,n)$ and let $B' < G'$ be a Borel subgroup with $\mathcal F_{B'} = G'/B'$ the associated flag manifold. Then for any positive triple $(Y,Z,X) \in (\mathcal{F}_{B'})^3$, we have:
\begin{align}
X^{(n-1)}+\left(Z^{(n-1)}\cap Y^{(n+2)}\right)+Y_+^{(n)} &=\RR^{2n}, \text{ and } \label{eqn:transversality+}\\
X^{(n-1)}+\left(Z^{(n-1)}\cap Y^{(n+2)}\right)+Y_-^{(n)} &=\RR^{2n}.\label{eqn:transversality-}
\end{align}
\end{proposition}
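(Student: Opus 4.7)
The plan is to use $G'$-equivariance to reduce to a standard normal form, then exploit the upper-triangular structure of a positive unipotent element to reduce the two transversality claims to the nonvanishing of two specific matrix entries of $u$, which is then verified by direct computation in the pinning.

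By $G'$-equivariance of~\eqref{eqn:transversality+} and~\eqref{eqn:transversality-}, Definition~\ref{def:positive} lets us assume $(Y,Z,X)=(F^+,u\cdot F^-,F^-)$ for some $u\in U'^+_{>0}$, where $F^\pm$ is the flag stabilized by $B'^\pm$. In the basis of Example~\ref{eg:SO(n,n)}, we have $F^{+(k)}=\spa\{e_1,\dots,e_k\}$ for $k\le n-1$, $F^{+(n)}_+=\spa\{e_1,\dots,e_n\}$, $F^{+(n)}_-=\spa\{e_1,\dots,e_{n-1},e_{n+1}\}$, $F^{+(n+2)}=\spa\{e_1,\dots,e_{n+2}\}$, and $F^{-(n-1)}=\spa\{e_{n+2},\dots,e_{2n}\}$.

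The three summands in~\eqref{eqn:transversality+} (respectively~\eqref{eqn:transversality-}) have dimensions $n-1$, at most $1$, and $n$, so it suffices to show the sum is direct of dimension $2n$. The middle summand has dimension at most $1$ because $Y$ and $Z$ are transverse (positive triples are pairwise transverse), hence $Z^{(n-1)}\cap Y^{(n+1)}=0$ by a dimension count, while $\dim Y^{(n+2)}/Y^{(n+1)}=1$. Since $u$ is upper-triangular unipotent, an inductive argument starting from the $e_{2n}$-coefficient and working down shows that a vector $v=\sum_{j=n+2}^{2n}a_j\,ue_j$ of $uF^{-(n-1)}$ lies in $F^{+(n+2)}$ only when $a_{n+3}=\cdots=a_{2n}=0$. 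Consequently,
\[uF^{-(n-1)}\cap F^{+(n+2)}=\spa\{ue_{n+2}\}.\]

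To conclude, observe that
\[F^{-(n-1)}+F^{+(n)}_+=\spa\{e_i:i\neq n+1\} \quad\text{and}\quad F^{-(n-1)}+F^{+(n)}_-=\spa\{e_i:i\neq n\},\]
so~\eqref{eqn:transversality+} holds if and only if the $(n+1,n+2)$-entry $u_{n+1,n+2}$ of $u$ is nonzero, and~\eqref{eqn:transversality-} holds if and only if $u_{n,n+2}\neq 0$. Using the reduced expression $w_0=\mu_1\cdots\mu_{n-1}$ from Example~\ref{eg:Mkl} together with the one-parameter subgroups $x^+_{\alpha'_i}$ from Example~\ref{eg:pinning}, one verifies---as illustrated for $n=2$ in Example~\ref{eg:Mkl}, where the relevant entries are $-t_2$ and $-t_1$---that for general $n$ the entries $u_{n,n+2}$ and $u_{n+1,n+2}$ are signed monomials in the positive parameters $t_i$ parametrizing $u$ via~\eqref{eqn:positive}, hence nonzero. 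This final matrix calculation, carried out via the inductive description of $U'^+_{>0}$ given in Appendix~\ref{app:matrices}, is the main technical obstacle, but is essentially mechanical once the explicit reduced word for $w_0$ is fixed.
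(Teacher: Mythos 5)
Your reduction is correct and is a genuinely different route from the paper's. After normalizing to $(Y,Z,X)=(F^+,u F^-,F^-)$, your identification $Z^{(n-1)}\cap Y^{(n+2)}=\spa\{u e_{n+2}\}$ and the resulting equivalence of \eqref{eqn:transversality+} and \eqref{eqn:transversality-} with $u_{n+1,n+2}\neq 0$ and $u_{n,n+2}\neq 0$ are both right. The paper avoids ever looking at the entries of a general element of $U'^+_{>0}$: it first deduces \eqref{eqn:transversality-} from \eqref{eqn:transversality+} via an orientation-reversing element of $\PO(n,n)\setminus\PSO(n,n)$ that swaps $\alpha'_{n-1}$ and $\alpha'_n$ while preserving $U'^+_{>0}$, and then conjugates $u$ by a path $a^s$ in the Cartan subgroup so that $u_s=a^s u (a^s)^{-1}\to\Id$, reducing \eqref{eqn:transversality+} to a first-order statement whose only input is the derivative $\sum_{i}X^+_{\alpha'_i}(r_i)$ with all $r_i>0$. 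Your approach buys a concrete criterion in terms of two matrix entries; the paper's buys freedom from any explicit description of $U'^+_{>0}$.

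The gap is in your last step. The claim that $u_{n,n+2}$ and $u_{n+1,n+2}$ are signed monomials in the positive parameters is false for $n\geq 3$, so the extrapolation from the $n=2$ computation does not establish nonvanishing, and since elements of $U'^+_{>0}$ are \emph{not} totally positive as $2n\times 2n$ matrices (negative entries do occur), one cannot dismiss possible cancellation as "mechanical." What is actually true, and what you need to prove, is the following: writing $u=x^+_{\alpha_{i_1}}(t_1)\cdots x^+_{\alpha_{i_m}}(t_m)$ as in \eqref{eqn:positive}, the only generator with a nonzero $(n+1,n+2)$ entry is $x^+_{\alpha'_{n-1}}(t)$ (entry $-t$), and superdiagonal entries of a product of unipotent upper-triangular matrices simply add, so $u_{n+1,n+2}=-\sum_{j:\,i_j=n-1}t_j<0$; similarly, no generator has a nonzero $(n,n+1)$ entry, so the only contributions to $u_{n,n+2}$ come from a single $(n,n+2)$ step in some factor, i.e.\ from $x^+_{\alpha'_n}$, giving $u_{n,n+2}=-\sum_{j:\,i_j=n}t_j<0$. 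For $n\geq 3$ the reduced word for $w_0$ contains $s_{n-1}$ and $s_n$ more than once, so these entries are sums of several monomials, not single monomials — consistent with being nonzero, but not for the reason you gave. With this lemma supplied, your proof is complete.
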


\begin{proof}  
By Remark~\ref{rem: multiple pinnings}, we may work with any pinning which is convenient. We choose the pinning $(B'^+,B'^-,\{x_{\alpha'_i}^+\}_{i=1}^n,\{x_{\alpha'_i}^-\}_{i=1}^n)$ of Example~\ref{eg:pinning}.
We will prove~\eqref{eqn:transversality+} directly. The other statement~\eqref{eqn:transversality-} is equivalent. To see this, consider the orientation reversing element $g \in \PO(n,n) \setminus \PSO(n,n)$ which exchanges the $n^{th}$ and $(n+1)^{th}$ basis vectors and fixes the other basis vectors. Then, on the one hand, $g$ flips the sign of the isotropic $n$-planes. On the other hand $g$ exchanges the roots $\alpha'_{n-1}$ and $\alpha'_n$ and exchanges the elements $x^+_{\alpha_{n-1}'}(t)$ and $x^+_{\alpha_{n}'}(t)$, leaving all other one parameter subgroups $x_{\alpha_i'}(t)$ of the pinning pointwise fixed. Noting that $x^+_{\alpha_{n-1}'}(t)$ and $x^+_{\alpha_{n}'}(s)$ commute for any $s,t > 0$, we observe that the action of $g$ fixes each point of the set $U'^+_{> 0}$ of Example~\ref{eg:Mkl}. Hence $g$ takes positive triples of flags to positive triples of flags but exchanges the positive and negative isotropic $n$-planes of each flag. Hence~\eqref{eqn:transversality+} implies~\eqref{eqn:transversality-}.

We now prove~\eqref{eqn:transversality+}.
Since the transversality condition~\eqref{eqn:transversality+} is unchanged by multiplication by $g \in G'$, we may assume that $Y = F^+$ and $X = F^-$ are the flags fixed by $B'^+$ and $B'^-$ respectively, and that $Z = u F^-$ for $u \in U'^+_{> 0}$. The element $u$ has the form
\begin{align}\label{eqn:u}
u= x^+_{\alpha_{i_1}}(t_1)\cdot\dots\cdot x^+_{\alpha_{i_m}}(t_m),
\end{align} 
which is difficult to work with directly. We will use a conjugation trick to simplify the proof.

The Cartan subgroup $A' < G'$ stabilizes both $Y$ and $X$, hence for $a \in A'$:
\[a(Y,Z,X) = a(F^+, u F^-, F^-) = (F^+, aua^{-1} F^-, F^-).\]
We shall use a carefully chosen (path of such) element(s) to simplify the situation. Observe that if $1 \leq i \leq n-1$, then
\begin{align*}
ax^+_{\alpha'_i}(t)a^{-1} &= a(\id_{2n} + X^+_{\alpha'_i}(t))a^{-1}\\
&= \id_{2n} + t\left(\frac{a_i}{a_{i+1}}\delta_{i,i+1} - \frac{a_{2n-i}}{a_{2n+1-i}}\delta_{2n-i,2n+1-i}\right)\\
&=  x^+_{\alpha'_i}\left(\frac{a_i}{a_{i+1}} t\right).
\end{align*}
where here $a = \diag\left(a_1,\dots,a_n,\frac{1}{a_n},\dots,\frac{1}{a_1}\right)$.
Similarly, if $i = n$,
\begin{align*}
ax^+_{\alpha'_n}(t)a^{-1} &= a(\id_{2n} + X^+_{\alpha'_n}(t))a^{-1}\\
&= \id_{2n} + t\left(\frac{a_{n-1}}{a_{n+1}}\delta_{n-1,n+1} - \frac{a_{n}}{a_{n+2}}\delta_{n,n+2}\right)\\
&=  x^+_{\alpha'_n}\left(a_na_{n-1} t\right).
\end{align*}
In particular, $aua^{-1}\in U^+_{>0}$. Also, observe that by choosing $a\in A'$ so that $a_1 << a_{2} << \cdots << a_n \leq 1$, we can make each of the finitely many terms $ax^+_{\alpha'_{i}}(t)a^{-1}$ of~\eqref{eqn:u} arbitrarily close to the identity. 
In fact, we may define a path $s \mapsto a^s \in A'$ so that $a^s u (a^{s})^{-1}$ smoothly converges to the identity as $s \to 0$, as follows.
For $s > 0$, let $a^s = \diag(s^n, s^{n-1}, \ldots, s, 1, 1, s^{-1}, \ldots, s^{-n})$.
Then for all $1 \leq i \leq n$,
\begin{align*}
a^s x^+_{\alpha'_i}(t) (a^s)^{-1}  &= x^+_{\alpha'_i}(st),
\end{align*}
and hence
\begin{align*}
u_s := a^s u (a^s)^{-1} &= x_{\alpha_{i_1}}^+(s t_1)\cdot\dots\cdot x_{\alpha_{i_m}}^+(s t_m)
\end{align*}
is a path smoothly converging to the identity element in $G'$ as $s \to 0$. The tangent vector to this path is
\begin{align}\label{eqn:du}
\left.\frac{\D}{\D s}\right|_{s = 0} u_s = X^+_{\alpha'_1}(r_1) + \cdots + X^+_{\alpha'_n}(r_n)
\end{align}
where here $r_j = \sum_{i_k = j} t_k > 0$ for all $1 \leq j \leq n$.

It is sufficient to show that~\eqref{eqn:transversality+} holds to first order for the path $(Y,Z,X) = (Y, Z_s, X) :=  (F^+, u_sF^-, F^-)$, as this will mean that~\eqref{eqn:transversality+} will hold for all $s > 0$ sufficiently small, and hence for $s=1$. We must simply show that the tangent vector to the path $s \mapsto V_s := \left(u_sX^{(n-1)}\right) \cap Y^{(n+2)}$ in $\Gr_1(\RR^{2n}) = \PP(\RR^{2n})$ is transverse to the hyperplane $\PP\left(X^{(n-1)} \oplus Y_+^{(n)}\right)$.
This is straightforward in coordinates:
\begin{align*}
Y_+^{(n)} &= \RR e_1 + \cdots + \RR e_n, \text{ and }\\
X^{(n-1)} &= \RR e_{n+2} + \cdots + \RR e_{2n}, \text{ hence }\\
X^{(n-1)} \oplus Y_+^{(n)} &= \RR e_1 + \ldots + \RR e_n + \RR e_{n+2} + \cdots + \RR e_{2n}.\\
\end{align*}
From~\eqref{eqn:du}, we read off that, 
\begin{align*}
\left.\frac{\D}{\D s} \right|_{s = 0} u_s e_{n+2} = - r_n e_n -  r_{n-1} e_{n+1}.
\end{align*}
Hence, in terms of the identification
\begin{align*}
T_{X^{(n-1)}} \Gr_{n-1}(\RR^{2n}) &= \Hom(X^{(n-1)}, Y^{(n+1)})\\ &= \bigoplus_{2n \geq  i \geq n+2 > j \geq 1} \Hom(\RR e_i, \RR e_j),
\end{align*}
from Section~\ref{sec:affine chart}, we see that $\left. \frac{\D}{\D s} \right|_{s = 0} u_sX^{(n-1)}$ has a non-trivial component in $\Hom(\RR e_{n+2}, \RR e_{n+1})$.
It then follows that our path $s \mapsto V_s$, based at $V_0 = X^{(n-1)} \cap Y^{(n+2)} = \RR e_{n+2}$, has tangent 
vector $\left. \frac{\D V_s}{\D s}\right|_{s=0} \in T_{V_0} \PP(\RR^{2n})$ which is transverse to the hyperplane $\PP\left(X^{(n-1)} \oplus Y_+^{(n)}\right)$.

\end{proof}

Corollary~\ref{cor:transversality} follows immediately from Proposition~\ref{prop:SO(n,n)transverse}.

\section{Anosov representations}\label{sec:Anosov-repns}

Here we review Anosov representations and prove several useful lemmas about them.
Labourie~\cite{labourie2006} introduced the notion of Anosov representation in order to characterize the good dynamical behavior of the representations in the $\PSL(d,\RR)$-Hitchin component. Guichard-Wienhard~\cite{GW} generalized the notion to the setting of representations of word hyperbolic groups in reductive Lie groups and developed the general theory in this setting.  
The quick review of Anosov representations presented here will focus on the more specialized setting of interest, namely representations from a surface group $\Gamma = \pi_1 S$ to an adjoint, real split, semisimple Lie group $G$. As above, there are three Lie groups of interest for our purposes, namely $G = \PSL_{d} \RR$, $G'=\PSO(n,n)$, and $G''=\SO(n,n-1)$.

\subsection{The definition}\label{sec:Anosov}

Throughout, we fix a hyperbolic metric on the surface $S$ and denote by $T^1 S$ the unit tangent bundle of $S$. The boundary $\partial \Gamma$ of the group $\Gamma$ identifies with the visual boundary of the universal cover $\widetilde S \cong \HH^2$ of~$S$.
We choose an orientation on $S$ which induces an orientation on $\widetilde S$ which in turn induces a cyclic ordering on $\partial\Gamma\cong S^1$. We identify the unit tangent bundle of $\widetilde S$ with the space of cyclically ordered triples in $\partial \Gamma$ in the usual way:
\begin{align*}T^1 \widetilde S =\{(y,z,x)\in\partial\Gamma^3:y< z< x < y\}.
\end{align*}
Specifically, if $y < z < x < y$ in $\partial \Gamma$, then there is a unique unit tangent vector $v$ based at a point $p$ of $\widetilde S$ so that $v$ is tangent to the geodesic $(y,x)$ connecting $y$ to $x$, $v$ points away from $y$ toward $x$, and the geodesic ray $[p,z)$ meets $(y,x)$ orthogonally.
The geodesic flow $\varphi_t$ on $T^1 S$ lifts to the geodesic flow $\widetilde \varphi_t$ on $T^1 \widetilde S$, which in these coordinates has the form $\varphi_t(y,z,x)=(y,z(t),x)$, where $z:\RR\to\partial\Gamma$ is a continuous, injective map so that $z(0)=z$, $\lim_{t\to\infty}z(t)=x$ and $\lim_{t\to-\infty}z(t)=y$. Although $\partial \Gamma$ does not have any canonical smooth structure, if $\partial \Gamma$ is endowed with the smooth structure induced by the hyperbolic structure on $\widetilde S$, then the function $z(t)$ is smooth. 
The geodesic flow for a different hyperbolic metric on $S$, written in the same coordinates, is simply a continuous reparameterization of $\varphi_t$, meaning that the flow lines are the same, but the function $z(t)$ is altered by an orientation-preserving homeomorphism of $\RR$. We ignore such subtleties and simply remark that the choice of hyperbolic metric has no meaningful effect on the coming definitions. 
 
The notion of Anosov representation depends on a (conjugacy class) of parabolic subgroup $P < G$. We restrict here to the case that the parabolic subgroup $P$ is conjugate to any opposite parabolic subgroup. 
 This will be the case in the settings of interest and it slightly simplifies the setup.  Recall the flag space $\mathcal{F}_P:=G/P$ defined in Section \ref{sec:flags}. There is a unique open $G$-orbit in the product $\mathcal F_P \times \mathcal F_P$, namely the subspace of transverse pairs, which we denote by $\mathcal{O} \subset \mathcal F_P \times \mathcal F_P$. 
Let $\rho:\Gamma\to G$ a representation. Associated to $\rho$ is the space
\[\mathcal{Y}_\rho:=(T^1\widetilde{S}\times\mathcal O)/\Gamma,\]
where the action on $\mathcal O \subset \mathcal{F}_P\times\mathcal{F}_P$ is the diagonal action by $\rho$ and the action on $T^1 \widetilde S$ is by deck translations. The smooth manifold $\mathcal{Y}_\rho$ is naturally a flat $G$-bundle over $T^1S$ whose fibers are isomorphic to $\mathcal O$ as $G$-sets. 

Now, let \[T^{\mathrm{v}} \mathcal Y_\rho = (T^1\widetilde{S}\times T \mathcal O)/\Gamma\] denote the vertical tangent bundle to $\mathcal Y_\rho$.
The local product structure on $\mathcal O \subset \mathcal F_P \times \mathcal F_P$ determines a splitting $T \mathcal O = E^+ \oplus E^-$ of $T \mathcal O$ into two isomorphic sub-bundles $E^+, E^-$. This splitting induces a splitting \[T^{\mathrm{v}}  \mathcal Y_\rho = \mathcal E_\rho^+ \oplus \mathcal E_\rho^-\] of the vertical tangent bundle into two sub-bundles $\mathcal E_\rho^+ ,\mathcal E_\rho^-$ over $\mathcal Y_\rho$.

The geodesic flow $\widetilde \varphi_t$ on $T^1 \widetilde S$ lifts to a flow, again denoted $\widetilde \varphi_t$ on the product bundle $T^1 \widetilde S \times \mathcal O$ by acting trivially in the second factor: $\widetilde \varphi_t(\nu, o) := (\widetilde \varphi_t \nu, o)$. The differential $\D \widetilde \varphi_t$ defines a lift of the flow $\widetilde \varphi_t$ to the vertical tangent bundle $T^1 \widetilde S \times T\mathcal O$, which is again trivial in the second factor and in particular preserves the product structure on each tangent space of $\mathcal O$. Both flows descend to the bundles $\mathcal Y_\rho$ and $T^{\mathrm{v}}\mathcal Y_\rho$ over $T^1 S$, and are denoted $\varphi_t$ and $\D \varphi_t $ respectively. Indeed, the flow $\varphi_t$ is simply the lift of the geodesic flow on $T^1 S$ to $\mathcal Y_\rho$ using the flat connection, and the flow $\D \varphi_t$ on $T^\mathrm{v} \mathcal Y_\rho$ is its differential.

\begin{definition}\label{def:Anosov}
The representation $\rho:\Gamma\to G$ is \emph{Anosov with respect to $P$} or alternatively, \emph{$P$-Anosov},  if there exists a continuous section $\sigma:T^1S\to\mathcal{Y}_\rho$ of $\mathcal Y_\rho$ that is parallel under the flow $\varphi_t$ and which satisfies the following.
\begin{enumerate}
\item The flow $\D \varphi_t$ \emph{expands} $\mathcal E^+_\rho$ along the section $\sigma(T^1 S)$: There exist constants $a,c\in\RR^+$ so that for any $\nu\in T^1S$ and any non-zero vector $f^+$ in the fiber of $\mathcal E^+_\rho$ over $\sigma(\nu)$,
\[ \| \D \varphi_t(f^+) \|_{\varphi_t \nu} \geq a e^{c t} \| f^+ \|_\nu. \]
\item The flow $\D \varphi_t$ \emph{contracts} $\mathcal E^-_\rho$ along the section $\sigma(T^1 S)$: There exist constants $b,d\in\RR^+$ so that for any $\nu\in T^1S$ and any non-zero vector $f^-$ in the fiber of $\mathcal E^-_\rho$ over $\sigma(\nu)$, 
\[ \| \D \varphi_t(f^-) \|_{\varphi_t \nu} \leq b e^{-d t} \| f^- \|_\nu. \]
\end{enumerate}
\end{definition}
In the above definition $\| \cdot \|$ is any continuously varying family of norms on the (fibers of the) vertical tangent bundle $T^{\mathrm{v}} \mathcal Y_\rho$. Since $T^1 S$ is compact, any two families of norms on $T^{\mathrm{v}} \mathcal Y_\rho$ are equivalent along the section $\sigma(T^1S)$ and the notion of $P$-Anosov does not depend on the choice of norm (although the constants $a,b,c,d$ do). We will often work in the universal cover, where such a family of norms $\| \cdot \|$ lifts to a family of norms, also denoted $\| \cdot \|$, on the product bundle $T^1 \widetilde S \times T \mathcal O$, which is $\rho$-equivariant, meaning $\| \D \rho(\gamma) f \|_{\gamma \cdot \nu} = \| f \|_\nu$ for any $\nu \in T^1 S$ and $f \in T\mathcal O$.
\begin{remark}
In Definition~\ref{def:Anosov}, the contraction condition~(2) follows from the expansion condition~(1) and vice versa, see~\cite{GW}. We will typically work only with condition~(1) here.
\end{remark}

In Definition~\ref{def:Anosov}, the section $\sigma$ is unique~\cite{labourie2006}, and is usually called the \emph{Anosov section}.
The data of the Anosov section can also be captured by a $\rho$-equivariant map to the flag space $\mathcal{F}_P$.

\begin{definition}
Let $\rho:\Gamma\to G$ be a representation, and $\xi:\partial\Gamma\to\mathcal{F}_P$ be a $\rho$-equivariant continuous map. We say $\xi$ is \emph{dynamics preserving} if for every $\gamma\in\Gamma\setminus\{\id\}$, $\xi$ maps the attracting fixed point $\gamma^+ \in \partial \Gamma$  of $\gamma$ to the unique attracting fixed point of $\rho(\gamma)$ in $\mathcal{F}_P$. 
\end{definition}

\begin{fact}[Labourie, Guichard-Wienhard]\label{thm:flag maps}
Let $\rho:\Gamma\to G$ be a $P$-Anosov representation. Then there is a continuous, $\rho$-equivariant, dynamics preserving map $\xi:\partial\Gamma\to\mathcal{F}_P$. Furthermore, the Anosov section $\sigma$ lifts to an equivariant map
\[\widetilde{\sigma}:T^1\widetilde{S}\to T^1\widetilde{S}\times \mathcal{F}_P\times\mathcal{F}_P,\]
which is given by the formula $\widetilde{\sigma}(y,z,x)=(y,z,x,\xi(x),\xi(y))$.
\end{fact}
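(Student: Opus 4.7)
The strategy is to unpack the Anosov section $\sigma$ into its two flag components, use flow-invariance to reduce each to a function on pairs of boundary points, and then use the expansion-contraction dichotomy of Definition~\ref{def:Anosov} to collapse each to a function of a single endpoint. First, I would lift $\sigma$ to a $\rho$-equivariant continuous map $\widetilde{\sigma} \colon T^1 \widetilde{S} \to T^1\widetilde{S} \times \mathcal{O}$ of the form $\widetilde{\sigma}(\nu) = (\nu, \xi_+(\nu), \xi_-(\nu))$; this exists because $T^1 \widetilde{S} \times \mathcal{O} \to \mathcal{Y}_\rho$ is a covering. The parallel condition for $\sigma$ along the fiberwise-trivial flow $\varphi_t$ translates to $\xi_\pm \circ \widetilde\varphi_t = \xi_\pm$, and since $\widetilde{\varphi}_t(y,z,x) = (y, z(t), x)$ sweeps out the full $z$-coordinate, this forces $\xi_\pm$ to descend to continuous maps $\xi_\pm \colon \partial^{(2)}\Gamma \to \mathcal{F}_P$, where $\partial^{(2)}\Gamma := \{(y,x) \in \partial\Gamma^2 : y \neq x\}$.

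Next, for each infinite-order $\gamma \in \Gamma$ with attracting/repelling fixed points $\gamma^\pm \in \partial\Gamma$, I would apply the expansion estimate of Definition~\ref{def:Anosov} over the closed $\varphi_t$-orbit in $T^1 S$ associated to $\gamma$. After one period $T_\gamma$, the holonomy $\rho(\gamma)$ fixes the pair $(\xi_+(\gamma^-,\gamma^+), \xi_-(\gamma^-,\gamma^+)) \in \mathcal{O}$, and its differential at the first flag expands $\mathcal{E}_\rho^+$ by at least $a\,e^{c T_\gamma}$. Using the self-opposedness of $P$, a Perron-Frobenius argument then identifies $\xi_+(\gamma^-,\gamma^+)$ as the unique attracting fixed point of $\rho(\gamma)$ on $\mathcal{F}_P$ and $\xi_-(\gamma^-,\gamma^+)$ as the unique repelling fixed point; in particular $\rho(\gamma)$ has proximal north-south dynamics on $\mathcal{F}_P$ relative to these two flags.

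To deduce that $\xi_+(y,x)$ depends only on $x$ for arbitrary $(y,x) \in \partial^{(2)}\Gamma$, I would approximate $(y,x)$ by a sequence $(\gamma_n^-, \gamma_n^+) \to (y,x)$, so that $\xi_+(y,x)$ equals the limit of the attracting fixed points of $\rho(\gamma_n)$. If $(\gamma_n'^-, \gamma_n'^+) \to (y', x)$ is a second such sequence with $y' \neq y$, a north-south dynamics argument applying $\rho(\gamma_n)^{k_n}$ (for $k_n \to \infty$) to the attracting flag of $\rho(\gamma_n')$, which in the limit is transverse to the repelling flag of $\rho(\gamma_n)$, forces the two limiting attracting flags to coincide. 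Therefore $\xi_+(y,x) = \xi(x)$ for a well-defined continuous $\xi \colon \partial\Gamma \to \mathcal{F}_P$, and the parallel argument for $\xi_-$ (via contraction, or equivalently by replacing $\gamma$ with $\gamma^{-1}$) yields $\xi_-(y,x) = \xi(y)$. Equivariance of $\xi$ is inherited from that of $\widetilde\sigma$, and the dynamics-preserving property is immediate from the identification with attracting fixed points.

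The main obstacle is the density-and-continuity step in the third paragraph: one must show that the limit of the attracting flags of $\rho(\gamma_n)$ as $\gamma_n^+ \to x$ is independent of the asymptotic behavior of $\gamma_n^-$. This requires the full force of the Anosov hypothesis through uniform transversality, together with the proximality established in the second step; it is not formal from mere existence of an equivariant boundary map. The first two paragraphs, by contrast, are essentially bookkeeping with flow-invariance and equivariance.
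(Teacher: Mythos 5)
This statement is quoted in the paper as a \emph{Fact} attributed to Labourie and Guichard--Wienhard; the paper itself gives no proof, so there is nothing internal to compare your argument against. Judged on its own, your first two paragraphs are sound bookkeeping: the lift exists because $T^1\widetilde S\times\mathcal O\to\mathcal Y_\rho$ is a covering, flow-parallelism kills the $z$-dependence, and the expansion of $\mathcal E_\rho^+$ over a closed orbit does identify $\xi_+(\gamma^-,\gamma^+)$ with an attracting fixed point of $\rho(\gamma)$ at which the differential is a contraction, whence (by a standard fact about algebraic actions on flag manifolds) the unique attracting fixed point.

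The genuine gap is the step you yourself flag: showing $\xi_+(y,x)$ is independent of $y$. As written, the north--south argument does not close. Applying $\rho(\gamma_n)^{k_n}$ to the attracting flag $F_n'$ of $\rho(\gamma_n')$ only shows that $\rho(\gamma_n)^{k_n}F_n'\to F_n$ as $k_n\to\infty$ \emph{for each fixed $n$}; it gives no relation between $F_n$ and $F_n'$ themselves, and passing to the limit in $n$ would require proximality estimates for $\rho(\gamma_n)$ that are uniform in $n$ --- which is essentially the content of the theorem being proved, not a tool available at this stage. The standard way to close this step avoids fixed-point density altogether: take $\nu=(y,z,x)$ and $\nu'=(y',z',x)$ with basepoints chosen so that $\widetilde\varphi_t\nu$ and $\widetilde\varphi_t\nu'$ are forward-asymptotic in $T^1\widetilde S$. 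The section is continuous over the compact $T^1S$, hence uniformly continuous, so the ``difference'' between $\xi_+(\nu)$ and $\xi_+(\nu')$, measured in the fiber over $\widetilde\varphi_t\nu$ with the $\rho$-equivariant family of norms, stays bounded as $t\to\infty$; but condition (1) of Definition~\ref{def:Anosov} expands any nonzero such difference by $ae^{ct}$. (One must first check that $\xi_+(\nu')$ lies in the affine chart determined by $\xi_-(\nu)$ for $t$ large, which follows from the same asymptotics and the openness of transversality.) The contradiction forces $\xi_+(\nu)=\xi_+(\nu')$, and the argument for $\xi_-$ is symmetric. Your proposal would be complete if the third paragraph were replaced by, or supplemented with, this bounded-versus-expanded contradiction.
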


The map $\xi$ given in Fact~\ref{thm:flag maps} is called the \emph{Anosov limit map} or \emph{Anosov boundary map} of $\rho$. The dynamics preserving and continuity properties ensure that such a map is necessarily unique. Since the (lift of the) Anosov section $\widetilde \sigma$ takes values in the transverse pairs $\mathcal O \subset \mathcal F_P \times \mathcal F_P$, the Anosov limit map $\xi$ is necessarily \emph{transverse}, meaning that for all $x,y\in\partial\Gamma$ distinct, $\xi(x)$ and $\xi(y)$ are transverse points of $\mathcal F_P$, i.e. $(\xi(x),\xi(y))\in \mathcal O$.

There are many examples of Anosov representations of surface groups, including both maximal representations and Hitchin representations, see~\cite{GW}. For recent examples of Anosov representations of right-angled Coxeter groups, see~\cite{DGK5, DGK6}.
Hitchin representations are the main examples of Anosov representations of concern in this paper.

\begin{theorem}[Labourie, Fock-Goncharov]\label{thm:HitchinAnosov}
Every $G$-Hitchin representation $\rho:\Gamma\to G$ is Anosov with respect to the Borel subgroup $B\subset G$, and the $\rho$-equivariant positive curve of Theorem~\ref{thm: Fock-Goncharov} is the Anosov limit map.
\end{theorem}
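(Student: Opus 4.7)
My plan is to use the positive curve $\xi\colon\partial\Gamma\to\mathcal F_B$ provided by Theorem~\ref{thm: Fock-Goncharov} as the candidate Anosov limit map, and then verify all the axioms of Definition~\ref{def:Anosov} by leveraging positivity and compactness of $T^1S$. The base case to keep in mind is a Fuchsian representation $\tau_G\circ j$: the representation $j\colon\Gamma\to\PSL(2,\RR)$ is $B_{\PSL(2,\RR)}$-Anosov because the geodesic flow on the compact hyperbolic surface $S$ has honest hyperbolic splitting of its tangent bundle, and the principal embedding $\tau_G$ sends the standard opposite Borels of $\PSL(2,\RR)$ to opposite Borels of $G$ (by construction of $\tau_G$ via a principal TDS). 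Hence $\tau_G\circ j$ is $B$-Anosov, with limit map the composition of $\tau_G$ with the boundary map of $j$.

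For a general Hitchin representation $\rho$, let $\xi\colon\partial\Gamma\to\mathcal F_B$ be the $\rho$-equivariant positive curve of Theorem~\ref{thm: Fock-Goncharov}. Because positive triples are in particular pairwise transverse, the map $\partial^{(2)}\Gamma\to\mathcal O$ given by $(x,y)\mapsto(\xi(x),\xi(y))$ is well-defined and $\rho$-equivariant, so the formula $\widetilde\sigma(y,z,x):=(y,z,x,\xi(x),\xi(y))$ descends to a continuous section $\sigma\colon T^1S\to\mathcal Y_\rho$. This section is manifestly invariant under the reparameterization $z\mapsto z(t)$ of the third coordinate, so it is parallel under the flow $\varphi_t$.

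The main work is to verify the uniform expansion and contraction of $\D\varphi_t$ on $\mathcal E^+_\rho,\mathcal E^-_\rho$ along $\sigma(T^1S)$, and this is where I expect the essential difficulty. The first step is proximality of each $\rho(\gamma)$, $\gamma\neq 1$: if $\gamma^\pm\in\partial\Gamma$ are the attracting/repelling fixed points of $\gamma$, then for any $z\neq\gamma^-$ one has $\gamma^n z\to\gamma^+$, hence by equivariance and continuity $\rho(\gamma)^n\xi(z)=\xi(\gamma^n z)\to\xi(\gamma^+)$; positivity and transversality upgrade this to the statement that $\rho(\gamma)$ is regular with simple real eigenvalues whose eigenflag is $\xi(\gamma^+)$ (with opposite eigenflag $\xi(\gamma^-)$). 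Infinitesimally, this means that at each periodic orbit of $\varphi_t$ in $T^1S$, the derivative $\D\varphi_T$ acts on the fibers of $\mathcal E^\pm_\rho$ above $\sigma$ by an operator with the desired strict expansion/contraction spectrum. To upgrade pointwise contraction to the uniform estimate required by Definition~\ref{def:Anosov}, one uses compactness of $T^1S$ together with the continuity (and transversality on all triples) of $\xi$ to produce a continuous family of cross-ratio-like comparison functions on the fibers of $\mathcal E^\pm_\rho$ over $\sigma(T^1S)$ which strictly decrease (resp.\ increase) under $\varphi_t$; compactness of $T^1S$ then produces uniform constants $a,b,c,d>0$. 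This is essentially the strategy of Labourie~\cite{labourie2006} in the $\PSL(d,\RR)$ case using Frenet curves, generalized by Fock-Goncharov~\cite{FockGoncharov} via positivity.

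An alternative I would keep in reserve is a deformation-theoretic argument: Anosov representations form an open subset of $\Hom(\Gamma,G)$ by the general stability theorem of \cite{labourie2006,GW}, the Fuchsian locus is contained in this open set by the first paragraph, and the Hitchin component is connected, so it suffices to prove the Anosov locus is closed in the Hitchin component. The hard input in this approach is exactly the closedness, which one typically proves by extracting a limit transverse boundary map from the positive curves (using equicontinuity estimates coming from positivity) and appealing to the fact that a representation admitting a continuous transverse dynamics-preserving boundary map, together with enough contraction, is Anosov.
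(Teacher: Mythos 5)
This theorem is not proved in the paper at all: it is imported as a black box from Labourie~\cite{labourie2006} and Fock--Goncharov~\cite{FockGoncharov}, so there is no internal argument to compare yours against. Judged on its own terms, your sketch correctly identifies the architecture of the known proofs (Fuchsian base case, the positive curve as candidate limit map, flow-invariance of the section, and the open-closed alternative), but it has a genuine gap exactly where the content of the theorem lives: the uniform expansion and contraction estimates of Definition~\ref{def:Anosov}. Your argument establishes, at best, that each $\rho(\gamma)$ is proximal with eigenflags $\xi(\gamma^{\pm})$ (and even the claim that transversality plus positivity forces $\rho(\gamma)$ to be regular with simple real eigenvalues is itself a nontrivial theorem of Labourie, not a one-line consequence of $\rho(\gamma)^n\xi(z)\to\xi(\gamma^+)$). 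Pointwise strict contraction over the periodic orbits, plus compactness of $T^1S$, does \emph{not} yield the uniform constants $a,c>0$: one needs to know that \emph{every} vector in $\mathcal E^-_\rho$ over $\sigma(T^1S)$ is eventually contracted by a definite factor in bounded time, and this is precisely the quantitative input that the phrase ``cross-ratio-like comparison functions which strictly decrease'' is standing in for without constructing. The paper's own Theorem~\ref{thm: GGKW} makes the missing ingredient explicit: a continuous, equivariant, transverse, dynamics-preserving boundary map is \emph{not} sufficient for Anosovness; one must additionally show $\alpha(\mu(\rho(\gamma)))\to\infty$ (or the linear lower bound on $\alpha(\lambda(\rho(\gamma)))$ in terms of $|\gamma|_\infty$), and nothing in your sketch produces that divergence.

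The same gap recurs in your reserve strategy: openness of the Anosov condition and connectedness of the Hitchin component are fine, but closedness of the Anosov locus inside the Hitchin component is equivalent to extracting, from a degenerating sequence, a limit boundary map that is still transverse \emph{and} still satisfies the uniform gap condition; ``equicontinuity estimates coming from positivity'' is again the entire difficulty, not a lemma you can invoke. To make either route rigorous you would need either Labourie's Frenet-curve machinery (Property~(H) and hyperconvexity) or the Kapovich--Leeb--Porti / GGKW-type singular value estimates; as written, the proposal assumes the hard step rather than proving it.
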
 

One important property of Anosov representations is that the condition is stable under small deformation.

\begin{fact}[Labourie]\label{fact:open}
Let $\rho\in\Hom(\Gamma, G)$ be a $P$-Anosov representation. Then there is an open neighborhood $U\subset\Hom(\Gamma, G)$ of $\rho$ so that every representation in $U$ is also $P$-Anosov. 
\end{fact}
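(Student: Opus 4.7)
The plan is to establish openness via structural stability for the hyperbolic invariant section $\sigma$ provided by the Anosov condition. First, I would fix a compact fundamental domain $D \subset T^1 \widetilde S$ for the $\Gamma$-action and use it to identify the flat bundles $\mathcal Y_\rho$ and $\mathcal Y_{\rho'}$ over $T^1 S$ for representations $\rho'$ close to $\rho$ in $\Hom(\Gamma, G)$. Under this identification the lifted geodesic flows $\varphi_t^{\rho}$ and $\varphi_t^{\rho'}$ become two flows on the same smooth bundle, and $\varphi_t^{\rho'} \to \varphi_t^\rho$ in the $C^1$-topology, uniformly on compact sets, as $\rho' \to \rho$.

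Next, I would interpret the Anosov condition as saying that the image of $\sigma$ is a compact $\varphi_t^\rho$-invariant normally hyperbolic submanifold of $\mathcal Y_\rho$: the vertical tangent bundle splits as $\mathcal E^+_\rho \oplus \mathcal E^-_\rho$ with uniform exponential expansion and contraction under $\D \varphi_t$, as in Definition~\ref{def:Anosov}. I would then apply the Hirsch--Pugh--Shub persistence theorem for normally hyperbolic invariant manifolds — equivalently, a graph-transform contraction-mapping argument on the space of continuous sections of $\mathcal Y_{\rho'}$ close to $\sigma$ — to produce, for every sufficiently nearby $\rho'$, a unique continuous $\varphi_t^{\rho'}$-invariant section $\sigma'$ near $\sigma$. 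Invariance under the flow is precisely parallelism with respect to the flat connection on $\mathcal Y_{\rho'}$, and the splitting $\mathcal E^\pm_\rho$ perturbs continuously to a hyperbolic splitting $\mathcal E^\pm_{\rho'}$ along $\sigma'$ with slightly weakened expansion/contraction constants.

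To conclude, I would verify that $\sigma'$ yields a $\rho'$-equivariant Anosov boundary map $\xi': \partial \Gamma \to \mathcal F_P$. Lifting $\sigma'$ to $\widetilde{\sigma'}: T^1 \widetilde S \to T^1 \widetilde S \times \mathcal O$, flow-invariance under $\widetilde \varphi_t$ forces $\widetilde{\sigma'}(y,z,x)$ to be independent of the transverse coordinate $z$, and by the parametrization of Fact~\ref{thm:flag maps} this identifies $\widetilde{\sigma'}$ with a continuous, transverse, $\rho'$-equivariant map $\xi': \partial \Gamma \to \mathcal F_P$, which is then the Anosov limit curve for $\rho'$.

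The main obstacle is that the fibers $\mathcal O$ of $\mathcal Y_\rho$ are non-compact, so off-the-shelf persistence theorems for normally hyperbolic sets do not apply verbatim. The resolution is to restrict attention to sections lying in a uniform $C^0$-neighborhood of $\sigma$, where the linearization has genuine exponential rates and the graph transform is a uniform contraction on a closed ball of sections; uniqueness within this neighborhood combined with $\Gamma$-equivariance then pins down $\sigma'$ globally. An alternative and conceptually cleaner route, which I would consider if the dynamics argument proves awkward, is to invoke the characterization of $P$-Anosov representations via uniform gaps in Cartan projections due to Kapovich--Leeb--Porti and Guichard--Guéritaud--Kassel--Wienhard: the strict gap inequality is an open condition on a finite generating set, and a standard hyperbolicity argument propagates the gap from generators to all elements of $\Gamma$, making openness nearly immediate.
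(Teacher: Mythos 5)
The paper states this as a known result and gives no proof of its own, citing Labourie (the general case is due to Labourie and to Guichard--Wienhard); your structural-stability argument via a graph-transform/contraction on sections near $\sigma$, with the non-compactness of the fibers $\mathcal O$ handled by working in a compact $C^0$-neighborhood of $\sigma(T^1S)$, is essentially the standard proof from those references, and note that once the flow-parallel section $\sigma'$ with its hyperbolic splitting is produced, Definition~\ref{def:Anosov} is already verified, so your third paragraph is not needed. One small caution on your alternative route: openness of the Cartan-projection gap condition is not literally a condition on a generating set but requires the Kapovich--Leeb--Porti local-to-global principle (a gap with good constants on a sufficiently large finite ball propagates to all of $\Gamma$).
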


\subsection{$B$-Anosov representations in $\PSL(d,\RR)$}\label{subsec:BSLd}
Let $\rho:\Gamma\to\PSL(d,\RR)$ be Anosov with respect to the Borel subgroup $B \subset \PSL(d,\RR)$ and let $\xi:\partial\Gamma\to \mathcal{F}_B$ be the Anosov limit map. 
We follow Section~\ref{sec:affine chart} to obtain natural coordinates on the fibers of $\mathcal E_\rho^+$.
Let $x,y\in\partial\Gamma$ be distinct. 
For each $1 \leq i \leq d$, let 
\begin{align*}
L_i(x,y) &:=\xi^{(i)}(x)\cap\xi^{(n-i+1)}(y)
\end{align*}
which is a line, since the flags $\xi(x)$ and $\xi(y)$ are transverse (see Example \ref{eg:tangent to grassmannian 1}). The line decomposition $\RR^d = \bigoplus_{i=1}^d L_i(x,y)$ varies continuously as $x,y$ vary and is $\rho$-equivaraint, in the sense that $L_i(\gamma\cdot x,\gamma\cdot y) = \rho(\gamma)L_i(x,y)$ for all $\gamma \in \Gamma$, and distinct points $x,y \in \partial \Gamma$.
Next, for each $x \neq y$ in $\partial \Gamma$, the one-dimensional vector space
\[ \Hom(L_i,L_j)(x,y) := \Hom(L_i(x,y), L_j(x,y))\]
 may be regarded as a subspace of $\mathsf{End}(\RR^d)$ which varies continuously in $x,y$. This gives a $\rho$-equivariant decomposition of the product bundle $T^1 \widetilde S \times \mathsf{End}(\RR^d)$ which descends to the flat $\mathsf{End}(\RR^d)$ bundle over $T^1S$ associated to~$\rho$:
\[\Gamma \backslash (T^1 \widetilde S \times \mathsf{End}(\RR^d)) = \bigoplus_{i,j = 1}^d \Hom(L_i,L_j),\]
where here, by abuse, $\Hom(L_i,L_j)$ denotes the line bundle over $T^1 S$ whose fibers are locally $\Hom(L_i(x,y),L_j(x,y))$.
Next, let $x,y \in \partial \Gamma$ be distinct and let $U_{\xi(y)}$ be the affine chart for $\mathcal F_B$ defined in Section~\ref{sec:affine chart}. We have the identification 
$\bigoplus_{i < j} \Hom(L_i,L_j)(x,y) \xrightarrow[]{\simeq} U_{\xi(y)} \subset \mathcal F_B$ with the origin mapping to $\xi(x)$. In particular, we identify
\begin{align}T_{\xi(x)} \mathcal F_B &= \bigoplus_{i < j} \Hom(L_i,L_j)(x,y). \label{eqn:splitting}
\end{align}
This gives coordinates on $\mathcal E_\rho^+$ along the section $\sigma(T^1S)$: 
\[\sigma^* \mathcal E_\rho^+ = \bigoplus_{i < j} \Hom(L_i,L_j).\]
All of the splittings described above are invariant under the geodesic flow, and the Anosov expansion condition~(1) of Definition~\ref{def:Anosov} 
is satisfied if and only if it is satisfied on each line bundle factor of~\eqref{eqn:splitting}. Therefore condition~(1) of Definition~\ref{def:Anosov} 
is equivalent to the existence of constants $a,c > 0$ so that for any $i < j$, any $\nu \in T^1 S$, and any $f$ in the fiber $\Hom(L_i,L_j)_\nu$ above a point $\nu \in T^1S$ of the bundle $\Hom(L_i,L_j)$, we have
\[\|\D \varphi_t f\|_{\varphi_t\nu}   \geq a e^{c t} \| f \|_{\nu}\]
where here $\| \cdot \|$ is any continuous family of norms on the fibers of $\Hom(L_i,L_j)$ (for example, coming from the restriction of a continuous family of norms on the flat $\mathsf{End}(\RR^d)$ bundle associated to $\rho$).
Equivalently, in the lift to $T^1 \widetilde S$, the condition becomes: there exists $a,c > 0$ so that for any $y < z < x < y$ in $\partial \Gamma$ and any $f \in \Hom(L_i,L_j)(x,y)$,
 \begin{align} \|\D \varphi_t f\|_{\varphi_t(y,z,x)}  = \| f \|_{(y,z(t),x)} \geq a e^{c t} \| f \|_{(y,z,x)}, \label{eqn:expand}
 \end{align}
 where now $\| \cdot \|$ denotes a continuously varying, $\rho$-equivariant family of norms on $\Hom(L_i,L_j)(x,y)$.

Let us now prove a useful proposition.
\begin{proposition}\label{prop:ratio of norms}
Let $\rho \in \Hom(\Gamma, \PSL(d, \RR)$ be $B$-Anosov and define the bundles $\Hom(L_i, L_j)$ as above. Let $1\leq p\leq q<r\leq s\leq d$ be positive integers, with $(p,s) \neq (q,r)$.
 Then there are constants $A,C > 0$, so that for any $(y,z,x) \in T^1 \widetilde S$, where $y < z < x < y$ in $\partial \Gamma$, and any non-zero $f \in \Hom(L_p, L_s)(x,y)$ and $f' \in \Hom(L_{q}, L_{r})(x,y)$, and any $t > 0$ we have: 
\[\frac{\| f \|_{(y,z(t),x)}}{\|f'\|_{(y,z(t),x)}}\geq \frac{ \| f \| _{(y,z,x)}}{\| f'\|_{(y,z,x)}}Ae^{Ct}.\] 
\end{proposition}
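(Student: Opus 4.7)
The plan is to reduce the proposition to Anosov expansion applied to the bundles $\Hom(L_p,L_q)$ and $\Hom(L_r,L_s)$ via a composition factorization in the one-dimensional line bundles. Since $p \leq q < r \leq s$ and each $L_i(x,y)$ is one-dimensional, composition furnishes a canonical fiberwise isomorphism of line bundles
\[ \Hom(L_p, L_q) \otimes \Hom(L_q, L_r) \otimes \Hom(L_r, L_s) \;\xrightarrow[]{\sim}\; \Hom(L_p, L_s),\quad \phi \otimes \psi \otimes \chi \mapsto \chi\psi\phi. \]
Given the non-zero $f'$, which is therefore invertible as a map between one-dimensional spaces, I would factor $f = \chi \circ f' \circ \phi$ with $\phi \in \Hom(L_p, L_q)(x,y)$ and $\chi \in \Hom(L_r, L_s)(x,y)$; in the degenerate cases $p = q$ or $r = s$, take $\phi = \mathrm{id}_{L_p}$ or $\chi = \mathrm{id}_{L_s}$ respectively.

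Since any two continuous, $\rho$-equivariant families of norms on the flat $\End(\RR^d)$-bundle over $T^1S$ are equivalent up to bounded factors on the compact base $T^1S$, I may as well assume that $\|\cdot\|$ is the operator norm on each $\Hom(L_i,L_j)$ induced by some fixed continuous, $\rho$-equivariant family of norms on the line bundles $L_i$; this change of norm alters the sought constants $A, C$ by at most a bounded multiplicative factor. With operator norms on one-dimensional fibers, composition is exactly multiplicative. Moreover, the line decomposition $L_i(x,y)$ depends only on the endpoints $x,y$ and not on the interior point $z$, so the same vectors $\phi,\chi$ serve at time $t$, giving
\[ \frac{\|f\|_{(y,z(t),x)}/\|f'\|_{(y,z(t),x)}}{\|f\|_{(y,z,x)}/\|f'\|_{(y,z,x)}}
= \frac{\|\phi\|_{(y,z(t),x)}}{\|\phi\|_{(y,z,x)}} \cdot \frac{\|\chi\|_{(y,z(t),x)}}{\|\chi\|_{(y,z,x)}}. \]

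At this point the proposition reduces to Anosov expansion. By~\eqref{eqn:expand} there exist $a,c>0$, uniform in the index pair, such that the factor $\|\phi\|_{(y,z(t),x)}/\|\phi\|_{(y,z,x)}$ is bounded below by $a e^{ct}$ whenever $p < q$, and analogously for $\|\chi\|_{(y,z(t),x)}/\|\chi\|_{(y,z,x)}$ whenever $r < s$. If $p = q$ (resp. $r = s$), the corresponding factor equals $1$ since an identity morphism has operator norm $1$. The hypothesis $(p,s) \neq (q,r)$ together with $p \leq q < r \leq s$ guarantees that at least one of $p < q$ or $r < s$ holds, so the right-hand side is bounded below by $Ae^{Ct}$ with, e.g., $A = a$ and $C = c$ (and by $a^2 e^{2ct}$ when both inequalities are strict). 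I do not anticipate any serious obstacle beyond this routine bookkeeping of the degenerate cases; the only conceptual step is recognizing that the composition isomorphism above converts the claim into a product of two factors, each governed by the standard Anosov expansion estimate.
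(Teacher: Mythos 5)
Your proposal is correct and follows essentially the same route as the paper: both factor $f = \chi \circ f' \circ \phi$ through $f'$, pass to a family of norms that is multiplicative under composition of maps between the line bundles (the paper builds such norms as products over consecutive $\Hom(L_k,L_{k+1})$, you use operator norms induced by norms on the $L_i$ — equivalent choices justified by compactness of $T^1S$), and then apply the expansion estimate~\eqref{eqn:expand} to at least one of the outer factors, with the degenerate cases $p=q$ or $r=s$ handled by taking the corresponding factor to be the identity of norm one.
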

For the above proposition, recall the notation $(y,z(t),x) := \varphi_t(y,z,x)$ and that the norms $\| \cdot \|$ on each $\Hom(L_i, L_j)(x,y)$ are a fixed family of $\rho$-equivariant norms varying continuously over $T^1 \widetilde S$.

\begin{proof}
Since $T^1 S$ is compact, all norms on any of the line bundles $\Hom(L_i, L_j)$ are equivalent. Hence, we may choose norms that are convenient to work with.
For each $k$, let $\| \cdot \|^{k,k+1}_{(y,z,x)}$ be a family of norms on the line bundle $\Hom(L_k, L_{k+1})$. Then, for any $i < j$, we may define a family of norms $\| \cdot \|^{i,j}_{(y,z,x)}$ on $\Hom(L_i, L_{j})$ as follows. For any $f \in \Hom(L_i,L_{j})(x,y)$, factor $f$ as a composition $f = f_i \circ f_{i+1} \circ \cdots \circ f_{j-1}$, where each $f_k \in \Hom(L_k, L_{k+1})(x,y)$ and define
\[ \| f\|^{i,j}_{(y,z,x)} := \prod_{k=i}^{j-1} \| f_k \|^{k,k+1}_{(y,z,x)}. \]
Note that, since the $\Hom(L_k, L_{k+1})$ are line bundles, the choice of the factorization of $f$ amounts to choosing scalars in each factor and does not affect the result, hence $\| \cdot \|^{i,j}_{(y,z,x)}$ is well-defined. We now prove the result using these norms.

Let $y < z < x < y$ in $\partial \Gamma$, and let $f \in \Hom(L_p, L_s)(x,y)$ and $f' \in \Hom(L_{q}, L_{r})(x,y)$ both be non-zero.
Next, factor $f$ as the composition 
\begin{align*}
f &= g \circ f' \circ h
\end{align*}
where $g \in \Hom(L_{r}, L_{s})$ and $h \in \Hom(L_p, L_{q})$.
Then, by the definition of the norm above we have that $\|f \|_{(y,z,x)} = \|g \|_{(y,z,x)} \| f' \|_{(y,z,x)} \| h \|_{(y,z,x)}$ at each point $(y,z,x) \in T^1 \widetilde S$. In the case that $p = q$, we may assume $\| h \|_{(y,z,x)} = 1$ constant. Similarly, if $r = s$, we may assume $\| g \|_{(y,z,x)} = 1$ constant.
Then:
\begin{align*}
\frac{\| f \|_{(y,z(t),x)}}{\|f'\|_{(y,z(t),x)}} &=  \|g \|_{(y,z(t),x)} \| h \|_{(y,z(t),x)}\\
& \geq \|g \|_{(y,z,x)} \|h \|_{(y,z,x)} A e^{Ct}
\end{align*}
where we use that  $p < q$ or $r < s$ so that at least one of $\|g \|_{(y,z(t),x)}$,  $\| h \|_{(y,z(t),x)}$ expands as in~\eqref{eqn:expand}, while the other may also expand or otherwise is constant by the discussion above. 
Hence
\begin{align*}
\frac{\| f \|_{(y,z(t),x)}}{\|f'\|_{(y,z(t),x)}} &\geq  \frac{\|g \|_{(y,z,x)} \|f'\|_{(y,z,x)}  \|h \|_{(y,z,x)}}{\|f'\|_{(y,z,x)}} Ae^{Ct}\\
&=  \frac{ \|f\|_{(y,z,x)}}{\| f'\|_{(y,z,x)}} A e^{Ct}.
\end{align*} 
\end{proof}

\subsection{Characterizations of Anosov in terms of Cartan and Lyapunov projections}\label{sec:char}
In this section, we discuss a recent characterization of Anosovness, due independently to Kapovich-Leeb-Porti~\cite{KLP2, KLPa, KLPb} (see also~\cite{KLPc} for a survey of this work) and Guichard-Gueritaud-Kassel-Wienhard~\cite{GGKW}, which will be essential for the main result. We follow~\cite{GGKW}. 
In this section we assume that $G$ is a semi-simple Lie group whose adjoint group $\Ad(G)$ is contained in the group of inner automorphisms of the complexification $\mathfrak g_\CC$ of the Lie algebra $\mathfrak g$. This assumption, which holds in particular for the three groups of primary concern here ($\PSL(d,\RR)$, $\PSO(n,n)$, and $\SO(n,n-1)$, will guarantee that the Cartan projection (defined below) is well-defined, see~\cite[Ch. 7]{Knapp}.

Let $G = K \exp(\mathfrak a^+) K$ be a Cartan decomposition of $G$, where here $K < G$ is a maximal compact subgroup and $\mathfrak a^+$ is a choice of closed positive Weyl chamber contained in a Cartan subalgebra $\mathfrak a$ of the Lie algebra $\mathfrak g$. Then each $g \in G$ may be factored as $g = k \exp(a) k'$, where $k,k' \in K$ and the element $a \in \mathfrak a^+$ is \emph{unique}.
 The associated \emph{Cartan projection} $\mu: G \to \mathfrak a^+$ is the map defined by $\mu(g) = a$. 
  Let $\lambda: G \to \mathfrak a^+$ denote the \emph{Lyapunov projection}, which satisfies $\lambda(g) = \lim_n \mu(g^n)/n$ (and which is defined independent of the choice of Cartan decomposition in the definition of $\mu$). Before stating the alternative characterizations of Anosov, let us give the relevant examples of the two projections. 
  
  \begin{remark}
Note that while the Cartan projection is well-defined for $\PSO(n,n)$, it is not well-defined for the index two supergroup $\PO(n,n)$. Indeed the decomposition $\PO(n,n) = K \exp(\mathfrak{a}^+) K$ holds, but the $\mathfrak a^+$ part is not unique. The reason is that there is an orientation reversing element in the maximal compact $K$ for $\PO(n,n)$ which preserves $\mathfrak a^+$ but acts non-trivially on it. 
  \end{remark}

\begin{example}\label{eg:Jordan projection1}
Let $G=\PSL(d,\RR)$. Recall from Example \ref{eg:SL(n,R)} that the Lie algebra of $G$ is $\mathfrak{psl}(d, \RR)$, the algebra of traceless $d \times d$ real-valued matrices, and we may choose its positive Weyl chamber $\mathfrak a^+$ to be the diagonal matrices of the form $\operatorname{diag}(a_1, \ldots, a_d)$ with $\sum a_i = 0$ and $a_i \geq a_{i+1}$ for all $1 \leq i \leq d-1$. Then for $g \in G$, the Lyapunov projection $\lambda(g) = \operatorname{diag}(\lambda_1(g), \ldots, \lambda_d(g))$ where the diagonal entries $\lambda_i(g)$ are the logarithms of the absolute value of the eigenvalues of $g$ listed in weakly decreasing order. The entries of the Cartan projection $\mu(g) = \operatorname{diag}(\mu_1(g), \ldots, \mu_d(g))$ are the singular values of $g$ listed in weakly decreasing order. 
The simple roots $\alpha_1, \ldots, \alpha_{d-1} \in \Delta$ measure the difference in consecutive singular values: $\alpha_i(\mu(g)) = \mu_i(g) - \mu_{i+1}(g)$. Similarly $\alpha_i(\lambda(g)) = \lambda_i(g) - \lambda_{i+1}(g)$.
\end{example}

\begin{example}\label{eg:Jordan projection}
Let $G' =\PSO(n,n)\subset G = \PSL(2n,\RR)$. Recall from Example \ref{eg:SO(n,n)} that the Cartan subalgebra $\mathfrak a'\subset\mathfrak{pso}(n,n)$ consists of all diagonal matrices of the form $\operatorname{diag}(a_1, \ldots, a_n, -a_n, \ldots, -a_1)$, and is thus realized as a subspace of the Cartan subalgebra $\mathfrak{a}\subset\mathfrak{psl}(n,\RR)$. The postive Weyl chamber $\mathfrak a'^+$, however, may \emph{not} be chosen as a subset of the positive Weyl chamber~$\mathfrak a^+$; observe that the restriction to $\mathfrak a'$ of the simple root $\alpha_n: \mathfrak a \to \RR$ given by $\alpha_n: \operatorname{diag}(a_1,\dots,a_{2n})\mapsto a_n-a_{n+1}$ is \emph{not} a root in $G'$.
Indeed, taking the simple roots $\Delta' = \{\alpha'_1, \ldots, \alpha'_n\}$ for $G'$ as in Example~\ref{eg:SO(n,n)}, the positive Weyl chamber $\mathfrak a'^+$ is the subset of diagonal matrices $\operatorname{diag}(a_1, \ldots, a_n, -a_n, \ldots, -a_1)$ for which $a_i \geq a_{i+1}$ for all $1 \leq i \leq n-2$, and $a_{n-1} \geq, a_n, -a_n$. In particular, for $a_1 \geq \cdots \geq a_n$, the two matrices 
\begin{align*}
\exp \diag(a_1, \ldots, a_{n-1},a_n, -a_n,-a_{n-1}, \ldots, -a_1),\\
\exp \diag(a_1, \ldots, a_{n-1},  -a_n,a_n, -a_{n-1}, \ldots, -a_1)
\end{align*} are not conjugate in $G'$, though they are conjugate in $G$.
For $g \in G'$, if $\lambda'(g) = \operatorname{diag}(\lambda'_1(g), \ldots, \lambda'_n(g), -\lambda'_n(g), \ldots, -\lambda'_1(g))$ is the Lyapunov projection of $g$ in $G'$ and $\lambda(g) = \operatorname{diag}(\lambda_1(g), \ldots, \lambda_{2n}(g))$ is the Lyapunov projection of $g$ in $G$, then 
\begin{itemize}
\item $\lambda'_i(g) = \lambda_i(g)=\lambda_{2n+1-i}(g)$ for all $1 \leq i \leq n-1$,  
\item $\lambda_n(g)=-\lambda_{n+1}(g)$, and $\lambda'_n(g) = \lambda_n(g)$ or $\lambda'_n(g) =- \lambda_n(g)$. 
\end{itemize}
To determine whether $\lambda'_n(g) = \lambda_n(g)$ or $\lambda'_n(g) = -\lambda_n(g)$, let $H$ be the sum of the eigenspaces corresponding to the eigenvalues $\lambda_1(g), \ldots, \lambda_n(g)$. Then $\lambda'_n(g) = \lambda_n(g)$ if $\tau(H) = +1$, \ie $H$ is a positive isotropic $n$-plane, and $\lambda'_n(g) = -\lambda_n(g)$ if $\tau(H) = -1$.
A similar statement holds for the Cartan projections. 
\end{example}

\begin{example}\label{eg:Jordan projection2}
Let $G'' = \SO(n,n-1)$. As we did in Section \ref{sec:grass}, we use the orthogonal splitting $\RR^{n,n} = \RR^{n,n-1} \oplus \RR^{0,1}$ to embed $G''$ as a subgroup of $G' = \PSO(n,n)$ to get $G'' \subset G'\subset G = \PSL(2n, \RR)$. 
Then the Cartan subalgebra $\mathfrak a''$ for $G''$ as described in Example~\ref{eg:SO(n,n-1)} embeds in the Cartan subalgeba $\mathfrak a$ for $G$ as the subspace of $2n \times 2n$ diagonal matrices of the form 
\[\mathfrak a'' = \{\diag (a_1, \ldots, a_{n-1}, 0, 0, -a_{n-1}, \ldots, -a_1)\}.\] The choice of simple roots $\Delta'' = \{\alpha_1'', \ldots, \alpha_{n-1}''\}$ for $G''$ described in Example~\ref{eg:SO(n,n-1)}  are precisely the restrictions of the first $n-1$ simple roots for $G$ as described in Example \ref{eg:SL(n,R)} to $\mathfrak a''$. Hence, the positive Weyl chamber $\mathfrak a''^+$ embeds in the intersection of the positive Weyl chambers $\mathfrak a'^+ \cap \mathfrak a^+$ for $G'$ and $G$, as the subset with $a_1 \geq \cdots \geq a_{n-1} \geq 0 = a_n$. For $g \in G''$, if $(\lambda_1''(g),\dots,\lambda_{n-1}''(g),-\lambda_{n-1}''(g),-\lambda_1''(g))$ is the Lyapunov projection in $G''$ and $(\lambda_1(g),\dots,\lambda_{2n}(g))$ is the Lyapunov projection in $G$, then $\lambda_n(g)=\lambda_{n+1}(g)=0$ and $\lambda_i'(g)=\lambda_i(g)=-\lambda_{2n+1-i}(g)$ for all $i=1,\dots,n-1$. A similar statement holds for the Cartan projections.
\end{example}

Here is the recent characterization of Anosov representations that we will use. It was independently shown by Kapovich-Leeb-Porti~\cite{KLP2,KLPa,KLPb} and by Gu\'eritaud-Guichard-Kassel-Wienhard~\cite{GGKW}. 
First, some brief setup:
There is a well-known bijection between non-empty subsets $\theta_P\subset\Delta$ and conjugacy classes of parabolic subgroups $[P]$ of $G$. Specifically, for any conjugacy class $[P]$ of parabolic subgroups, $\theta_P\subset\Delta$ is the subset with the following property: There is a (necessarily unique) representative $P$ in that conjugacy class $[P]$, called the \emph{standard representative}, whose Lie algebra is spanned by the centralizer $\mathfrak g_0$ of the Cartan subalgebra $\mathfrak a$ (in the cases of interest here $\mathfrak g_0 = \mathfrak a$), each of the positive root spaces, and by the root spaces $\mathfrak g_{-\alpha}$ for all positive roots $\alpha$ not in the span of $\Delta \setminus \theta_P$. 

We state the following result in the special case that $P$ is conjugate to its opposite, in which case the corresponding set of roots $\theta_P$ is invariant under the opposition involution. 
In the following, $| \gamma |$ denotes the word-length of $\gamma \in \Gamma$ with respect to some fixed generating set and $| \gamma|_\infty = \lim_n|\gamma^n|/n$ denotes the stable length, or alternatively the translation length in the Cayley graph of $\Gamma$.

\begin{theorem}\label{thm: GGKW}
Let $G$ be a reductive Lie group, $P$ be a parabolic subgroup, and $\theta_P \subset \Delta$ be the corresponding subset of simple roots. Assume $P$ is conjugate to its opposites.
Then for any hyperbolic group $\Gamma$ and any representation $\rho: \Gamma \to G$, the following are equivalent.
\begin{enumerate}
\item $\rho$ is $P$-Anosov 
\item[(2)] There is a continuous, $\rho$-equivariant, transverse, dynamics preserving map $\xi:\partial\Gamma\to \mathcal{F}_P$, and for any $\alpha\in\theta_P$, $\alpha(\mu(\rho(\gamma))) \to \infty$ as $\gamma \to \infty$ in $\Gamma$.
\item[(2')] There is a continuous, $\rho$-equivariant, transverse, dynamics preserving map $\xi:\partial\Gamma\to \mathcal{F}_P$, and constants $c, C > 0$ so that for any $\alpha\in\theta_P$ and $\gamma \in \Gamma$, $\alpha(\mu(\rho(\gamma))) \geq c| \gamma| - C$.
\item[(3)] There is a continuous, $\rho$-equivariant, transverse, dynamics preserving map $\xi:\partial\Gamma\to \mathcal{F}_P$, and for any $\alpha\in\theta_P$, $\alpha(\lambda(\rho(\gamma))) \to \infty$ as $|\gamma|_\infty \to \infty$ in $\Gamma$.
\item[(3')] There is a continuous, $\rho$-equivariant, transverse, dynamics preserving map $\xi:\partial\Gamma\to \mathcal{F}_P$, and a constant $c > 0$ so that for any $\alpha\in\theta_P$ and $\gamma \in \Gamma$, $\alpha(\lambda(\rho(\gamma))) \geq c| \gamma|_\infty$.
\end{enumerate}
\end{theorem}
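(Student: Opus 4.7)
My plan is to establish the equivalences through the cycle $(1) \Rightarrow (2') \Rightarrow (3') \Rightarrow (1)$, observing the trivial implications $(2') \Rightarrow (2)$ and $(3') \Rightarrow (3)$, and then closing the remaining gaps by upgrading $(2) \Rightarrow (2')$ and $(3) \Rightarrow (3')$, which is where Gromov hyperbolicity of $\Gamma$ enters essentially. For $(1) \Rightarrow (2')$, the boundary map $\xi$ and its transversality are provided by Fact~\ref{thm:flag maps}. To extract the linear lower bound $\alpha(\mu(\rho(\gamma))) \geq c|\gamma| - C$ for $\alpha \in \theta_P$, I would translate the uniform exponential expansion of $\mathcal{E}_\rho^+$ along $\sigma(T^1S)$ into a displacement estimate in the symmetric space $G/K$. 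Fixing a basepoint $\tilde{x}_0 \in \widetilde{S}$, the geodesic segment from $\tilde{x}_0$ to $\gamma \cdot \tilde{x}_0$ has length comparable to $|\gamma|$ by \v{S}varc-Milnor; refining the splitting of Section~\ref{subsec:BSLd} to the parabolic $P$, the $\alpha$-component of $\mathcal{E}_\rho^+$ is a line subbundle on which $\varphi_t$ expands uniformly, and integrating that expansion over time proportional to $|\gamma|$ yields the linear lower bound.

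For the Cartan-Lyapunov bridge $(2') \Leftrightarrow (3')$ I would use the identity $\lambda(g) = \lim_n \mu(g^n)/n$, which combined with linearity of $\alpha$ gives $\alpha(\lambda(\rho(\gamma))) = \lim_n \alpha(\mu(\rho(\gamma^n)))/n$. Coarse subadditivity $\alpha(\mu(gh)) \leq \alpha(\mu(g)) + \alpha(\mu(h)) + O(1)$, together with $|\gamma^n| = n|\gamma|_\infty + O(1)$ from hyperbolicity, delivers both directions.

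For $(3') \Rightarrow (1)$, define $\widetilde{\sigma}(y,z,x) := (y,z,x,\xi(x),\xi(y))$. This is $\rho$-equivariant, lands in transverse pairs, and descends to a flow-parallel continuous section $\sigma:T^1S \to \mathcal{Y}_\rho$. The dynamical expansion condition on $\mathcal{E}_\rho^+$ reduces, via density of periodic orbits and compactness of $T^1S$, to a lower bound on the expansion rate along periodic orbits: on the orbit corresponding to $[\gamma]$, the expansion rate on the $\alpha$-line subbundle is precisely $\alpha(\lambda(\rho(\gamma)))/|\gamma|_\infty$, bounded below by a positive constant by $(3')$. A continuity/structural-stability argument then transfers the bound from periodic orbits to all of $T^1S$, and the contraction statement on $\mathcal{E}_\rho^-$ follows by a dual argument (or, as remarked after Definition~\ref{def:Anosov}, automatically from the expansion condition).

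The main obstacle is $(2) \Rightarrow (2')$ (equivalently $(3) \Rightarrow (3')$): promoting ``$\alpha(\mu(\rho(\gamma))) \to \infty$'' to linear growth in $|\gamma|$. The coarse subadditivity of $\alpha \circ \mu$ means that $\gamma \mapsto \alpha(\mu(\rho(\gamma)))$ behaves like a proper quasi-morphism up to bounded error. The strategy is a Morse-lemma analysis in $G/K$: decompose a geodesic word for $\gamma$ into bounded-length segments, push these through $\rho$ to obtain a quasi-geodesic in $G/K$ whose successive Cartan projections cannot coarsely cancel in the $\alpha$-direction by more than a fixed amount, and use Gromov hyperbolicity of $\Gamma$ to ensure that the word-geodesic tracks this quasi-geodesic up to bounded Hausdorff distance. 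This step is the technical heart of both the Kapovich-Leeb-Porti and Gu\'eritaud-Guichard-Kassel-Wienhard approaches, and is precisely where hyperbolicity of $\Gamma$ cannot be dispensed with.
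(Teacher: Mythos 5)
You should first note a mismatch of expectations: the paper does not prove Theorem~\ref{thm: GGKW} at all --- it is imported as a black box from \cite{KLPa,KLPb} and \cite{GGKW} --- so there is no internal proof to compare your sketch against. What you are proposing is a compressed reproof of the main theorems of those papers, and as written it has genuine gaps at exactly the load-bearing points.

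The most concrete gap is the claimed coarse subadditivity $\alpha(\mu(gh)) \le \alpha(\mu(g)) + \alpha(\mu(h)) + O(1)$, on which your bridge $(2')\Leftrightarrow(3')$ rests. For a single simple root this is false in higher rank: in $\SL(3,\RR)$ take $g=\diag(e^t,e^t,e^{-2t})$ and $h=\diag(e^{-2t},e^t,e^t)$; then $\alpha_1(\mu(g))=\alpha_1(\mu(h))=0$ while $gh=\diag(e^{-t},e^{2t},e^{-t})$ has $\alpha_1(\mu(gh))=3t$, which is unbounded. (What is subadditive are the quantities $\omega(\mu(\cdot))$ for dominant weights $\omega$, i.e.\ logarithms of operator norms in representations, not the simple-root gaps.) In \cite{GGKW} the passage between the Cartan and Lyapunov conditions in the presence of the boundary map is one of the delicate points and is not obtained this way. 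A second gap is in $(3')\Rightarrow(1)$: you transfer a lower bound on expansion rates over periodic orbits to uniform exponential expansion over all of $T^1S$ by ``continuity/structural stability'', but Lyapunov exponents of a continuous cocycle are only upper semicontinuous in the invariant measure, so positivity on periodic-orbit measures does not pass to arbitrary invariant measures, let alone to uniform constants, for free; making this rigorous is precisely the content of the Morse-quasigeodesic machinery of Kapovich--Leeb--Porti and the singular-value estimates of \cite{GGKW}. The same issue hides in your $(1)\Rightarrow(2')$ step, where ``integrating the expansion over time proportional to $|\gamma|$'' must be converted from the flow-adapted family of norms into a statement about $\mu(\rho(\gamma))$, i.e.\ singular values with respect to a fixed norm. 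You do correctly identify the upgrade from divergence to linear growth as the technical heart, but you leave it (and these two steps) unproved, and note finally that the theorem concerns an arbitrary word hyperbolic group $\Gamma$, whereas your whole argument runs on $T^1S$ and the surface-group geodesic flow; for general $\Gamma$ one needs the Gromov flow space or a flow-free formulation.
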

\begin{remark}
We will, in this paper, use Conditions~(2) and~(3) to show Anosovness (Condition~(1)) of representations. We included the strengthened versions (2') and (3') of Conditions~(2) and (3) respectively for reference. We also mention that Kapovich-Leeb-Porti \cite{KLP1} proved an even stronger version of the equivalence (1) $\iff$ (2'), namely that for a representation $\rho: \Gamma \to G$ of a finitely generated group $\Gamma$, the group $\Gamma$ is word hyperbolic and $\rho$ is $P$-Anosov if and only if
\begin{enumerate}
\item[(2'')] There are constants $c,C > 0$ so that for any $\alpha\in\theta_P$ and $\gamma \in \Gamma$, $\alpha(\mu(\rho(\gamma))) \geq c| \gamma|-C$.
\end{enumerate}
\end{remark}

As an immediate consequence of Theorem \ref{thm: GGKW}, we have the following useful property that was originally due to Guichard-Wienhard \cite{GW}.
\begin{fact}\label{fact:larger-parabolic}
Let $P,Q\subset G$ be parabolic subgroups, and let $P'$ and $Q'$ be the standard representatives in the conjugacy classes $[P]$ and $[Q]$. Then $P'\cap Q'\subset G$ is a parabolic subgroup, and $\rho
\in\Hom(\Gamma,G)$ is $P'\cap Q'$-Anosov if and only if it is $P$-Anosov and $Q$-Anosov.
\end{fact}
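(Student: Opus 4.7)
The plan is to reduce the statement to Theorem~\ref{thm: GGKW}, which characterizes $P$-Anosovness by two ingredients: the Cartan projection condition $\alpha(\mu(\rho(\gamma))) \to \infty$ for every simple root $\alpha \in \theta_P$, and the existence of a continuous, $\rho$-equivariant, transverse, dynamics-preserving boundary map $\xi:\partial\Gamma\to\mathcal{F}_P$. First I would check that $P'\cap Q'$ is itself a parabolic subgroup: since $P'$ and $Q'$ are the \emph{standard} representatives of their conjugacy classes, each contains the standard Borel, hence so does $P'\cap Q'$, making it parabolic. A direct computation from the root-space description of standard parabolic Lie algebras then identifies $\theta_{P'\cap Q'} = \theta_P \cup \theta_Q$.

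Given this identification, the Cartan projection half of the characterization combines cleanly: $\alpha(\mu(\rho(\gamma)))\to\infty$ for every $\alpha\in\theta_P\cup\theta_Q$ if and only if this holds separately on $\theta_P$ and on $\theta_Q$. For the boundary-map half, the key tool is the natural $G$-equivariant embedding $\mathcal{F}_{P'\cap Q'}\hookrightarrow \mathcal{F}_{P'}\times\mathcal{F}_{Q'}$ coming from the two quotients $G/(P'\cap Q')\to G/P'$ and $G/(P'\cap Q')\to G/Q'$. The direction $(\Leftarrow)$ is then straightforward: starting from a boundary map $\xi:\partial\Gamma\to\mathcal{F}_{P'\cap Q'}$, post-compose with the two projections to obtain $\xi_P$ and $\xi_Q$, which inherit continuity, $\rho$-equivariance, and dynamics-preservation, and which are transverse because a transverse pair in $\mathcal{F}_{P'\cap Q'}$ projects to a transverse pair in each factor.

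The harder direction $(\Rightarrow)$ requires assembling $\xi_P$ and $\xi_Q$ into a boundary map $\xi$ into $\mathcal{F}_{P'\cap Q'}$. I would define $x\mapsto(\xi_P(x),\xi_Q(x))\in\mathcal{F}_{P'}\times\mathcal{F}_{Q'}$ and argue that it factors through the embedded $\mathcal{F}_{P'\cap Q'}$. For each non-trivial $\gamma\in\Gamma$, Theorem~\ref{thm: GGKW} applied to both $P$-Anosov and $Q$-Anosov (using the implication $(1)\Rightarrow(3')$) gives $\alpha(\lambda(\rho(\gamma)))>0$ for every $\alpha\in\theta_P\cup\theta_Q$, so $\rho(\gamma)$ is proximal with respect to $\theta_{P'\cap Q'}$ and hence has a unique attracting fixed point in $\mathcal{F}_{P'\cap Q'}$; by dynamics-preservation of $\xi_P$ and $\xi_Q$ this fixed point must project to $\xi_P(\gamma^+)$ and $\xi_Q(\gamma^+)$, placing $(\xi_P(\gamma^+),\xi_Q(\gamma^+))$ in the image of the embedding. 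Density of $\{\gamma^+\}$ in $\partial\Gamma$ and closedness of the image then extend the conclusion to all of $\partial\Gamma$ by continuity, producing $\xi$. Transversality at pairs $(\gamma^-,\gamma^+)$ follows since the attracting and repelling fixed points of $\rho(\gamma)$ in $\mathcal{F}_{P'\cap Q'}$ are automatically opposite, with density and continuity extending transversality to all distinct pairs; equivariance and dynamics-preservation of $\xi$ are immediate from those of $\xi_P,\xi_Q$. I expect the main obstacle to be precisely this compatibility argument—that the attracting fixed points in the three flag manifolds line up coherently—after which Theorem~\ref{thm: GGKW} delivers $P'\cap Q'$-Anosovness.
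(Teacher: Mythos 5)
Your overall strategy is exactly the one the paper intends: it presents this fact as an immediate consequence of Theorem~\ref{thm: GGKW} (citing Guichard--Wienhard for the original argument), and your reduction --- identifying $\theta_{P'\cap Q'}=\theta_P\cup\theta_Q$ from the root-space description, splitting the Cartan-projection condition over the union, and assembling the boundary map via the $G$-equivariant embedding $\mathcal F_{P'\cap Q'}\hookrightarrow\mathcal F_{P'}\times\mathcal F_{Q'}$ --- is the standard way to carry this out. The fixed-point/closedness argument showing that $x\mapsto(\xi_P(x),\xi_Q(x))$ lands in the image of $\mathcal F_{P'\cap Q'}$ is fine: that image is compact, hence closed, and the embedding is a homeomorphism onto it. (One small point worth recording: since the paper states Theorem~\ref{thm: GGKW} only for $P$ conjugate to its opposite, you should note that $\theta_P\cup\theta_Q$ is invariant under the opposition involution whenever $\theta_P$ and $\theta_Q$ are, so $P'\cap Q'$ is again conjugate to its opposite.)

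The one step that does not work as written is your derivation of transversality of $\xi$. Transversality is an \emph{open} condition on pairs of flags, so knowing it on the dense set of pairs $(\gamma^-,\gamma^+)$ and invoking continuity does not propagate it to all distinct pairs --- a limit of transverse pairs can perfectly well fail to be transverse. The correct route is pointwise and needs no density: a pair in $\mathcal F_{P'\cap Q'}$ lies in the open $G$-orbit if and only if both of its projections to $\mathcal F_{P'}$ and to $\mathcal F_{Q'}$ do. This is a statement about relative positions of parabolics (double cosets of the Weyl group); in the $\PSL(d,\RR)$ instances used in this paper it is simply the observation that transversality of flags of type $\mathcal I_1\cup\mathcal I_2$ is the conjunction of the dimension conditions for $\mathcal I_1$ and for $\mathcal I_2$. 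Granting that, transversality of $\xi$ at \emph{every} distinct pair follows directly from transversality of $\xi_P$ and $\xi_Q$, which you already have from the $P$- and $Q$-Anosov hypotheses. With that replacement, the rest of your argument goes through and Theorem~\ref{thm: GGKW} delivers the conclusion.
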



\subsection{The Anosov property under inclusions of Lie groups}\label{sec:Anosovinclusion}
Let $\iota: G' \hookrightarrow G$ be an inclusion of reductive Lie groups. Consider a parabolic subgroup $P'\subset G'$ and a $P'$-Anosov representation $\rho: \Gamma \to G'$. Guichard--Wienhard~\cite[Prop. 4.4]{GW} give a recipe for determining the parabolic subgroups $P$ of $G$ (if any) for which the composition $\iota \circ \rho$ is $P$-Anosov. In particular, it is clear, e.g. from Theorem~\ref{thm: GGKW}, that if the roots in $\theta_P$ restrict to roots in $\theta_{P'}$, then $\iota \circ \rho$ is $P$-Anosov. 

\begin{example}
Consider the inclusion $\iota_{2n-1} : G'' \hookrightarrow G$ for $G'' = \SO(n,n-1)$ and $G = \PSL(2n-1,\RR)$ as in Example~\ref{eg:Jordan projection2} above. Let $B''$ denote the Borel subgroup in $G''$, with corresponding collection of roots $\theta_{B''} = \Delta''$ the full collection of simple roots. Then for our choice of simple roots $\Delta$ and $\Delta''$ for $G$ and $G''$ as in  Example \ref{eg:SL(n,R)} and Example \ref{eg:SO(n,n-1)} respectively, we see that for all simple roots $\alpha_i \in \Delta$, the restriction of $\alpha_i$ to $\mathfrak a''$ is a simple root of $\Delta''$. Specifically, the restriction of $\alpha_i$ to $\mathfrak a''$ is $\alpha''_i$ if $1 \leq i \leq n-1$ or $\alpha''_{2n-1-i}$ if $n \leq i \leq 2n-2$.
Hence the subsets $\theta''$ of $\Delta''$ are in one-one correspondence with the subsets $\theta$ of $\Delta$ which are invariant under the opposition involution. A representation $\rho: \Gamma \to G''$ is $P''$-Anosov if and only if $\iota_{2n-1} \circ \rho$ is $P$-Anosov, where $P'' < G''$ and $P < G$ are the parabolic subgroups whose associated subsets of simple roots $\theta$ and $\theta''$ correspond as above.
\end{example}

\begin{example}\label{eg:includePSOnn}
Consider the inclusion $\iota_{2n} : G' \hookrightarrow G$ for $G = \PSL(2n,\RR)$ and $G' = \PSO(n,n)$ as in Example~\ref{eg:Jordan projection} above. Let $B'$ denote the Borel subgroup in $G'$, so that $\theta_{B'} = \Delta'$ is the full collection of simple roots.
 Then for our choice of simple roots $\Delta$ and $\Delta'$ for $G$ and $G'$ as in Example \ref{eg:SL(n,R)} and Example \ref{eg:SO(n,n)} respectively, we see that the simple roots in $\Delta$ whose restriction to $\mathfrak a'$ are simple roots in $\Delta'$ are $\alpha_1, \ldots, \alpha_{n-1}, \alpha_{n+1}, \ldots, \alpha_{2n-1}$. Here, note that the restriction of $\alpha_{2n-i}$ agrees with that of $\alpha_i$ for all $i=1,\dots,n-1$. Hence, if $\varrho: \Gamma \to G'$ is a $B'$-Anosov representation, then $\iota_{2n} \circ \varrho: \Gamma \to G$ is $P_{\hat n}$-Anosov for $P_{\hat n} \subset G$, the parabolic subgroup whose associated collection of simple roots is $\theta_{P_{\hat n}} = \{\alpha_1, \ldots, \alpha_{n-1}, \alpha_{n+1}, \ldots, \alpha_{2n-1}\}$. This is precisely the stabilizer of a flag which is nearly complete but misses the $n$-dimensional subspace. However, since the restriction of the middle root $\alpha_n$ to $\mathfrak a'$ is equal to $\alpha'_{n} - \alpha'_{n-1}$ which is not a root in $\Delta'$ (and this can not be fixed even with freedom to adjust using the Weyl group), $\iota_{2n} \circ \varrho$ need not be Anosov with respect to the Borel subgroup $B$ in $G$. 
\end{example}

\begin{example}\label{eg:include}
Consider the inclusion $\iota_{n,n}:G'' \to G'$ for $G'' = \SO(n,n-1)$ and $G'= \PSO(n,n)$ as described in Example~\ref{eg:Jordan projection2}. The stabilizer $P_{n-1}'' < G''$ of an isotropic $(n-1)$-plane in $\RR^{n,n-1}$ corresponds to the subset $\theta_{P_{n-1}''}=\{\alpha_{n-1}''\}\subset\Delta''$ of the set of simple roots of $G''$. Similarly, the stabilizer $P_{n-1}' < G'$ of an isotropic $(n-1)$-plane in $\RR^{n,n}$ corresponds to the subset $\theta_{P_{n-1}'}=\{\alpha_{n-1}', \alpha_n'\}\subset\Delta'$ of the set of simple roots of $G''$. From Example~\ref{eg:Jordan projection2}, the $\iota_{n,n}$ embeds the Cartan sub-algebra $\mathfrak a''$ for $G''$ into the Cartan subalgebra $\mathfrak a'$ for $G'$ and we observe that the restrictions of $\alpha'_{n-1}$ and $\alpha'_n$ to $\mathfrak a''$ each coincide with $\alpha''_{n-1}$. Hence $\rho: \Gamma \to G''$ is $P_{n-1}''$-Anosov if and only if $\iota_{n,n} \circ \rho$ is $P_{n-1}'$-Anosov. 
\end{example}

The following proposition gives the condition under which a $B'$-Anosov representation in $G' = \PSO(n,n)$ becomes Anosov with respect to the Borel subgroup $B$ in $G = \PSL(2n, \RR)$ under inclusion. In the following denote by $P_n \subset G$ the stabilizer of an $n$-plane, whose corresponding collection of simple roots $\theta_{P_n} = \alpha_n$.

\begin{proposition}\label{prop:Anosov relations}
Let $\varrho:\Gamma\to G' = \PSO(n,n)$ and let $\iota_{2n}: G'\hookrightarrow G= \PSL(2n,\RR)$ be the inclusion. Suppose that $\rho$ is $B'$-Anosov and that $\iota_{2n} \circ \varrho$ is $P_n$-Anosov. Then
\begin{enumerate} 
\item the $\varrho$-equivariant limit curve $\xi':\partial\Gamma\to\mathcal{F}_{B'}$ and the $\iota_{2n}\circ\varrho$-equivariant curve $\xi:\partial\Gamma\to\mathcal{F}_{P_n} =\Gr_n(\RR^{2n})$ satisfy that $\xi=\xi'^{(n)}_+$  or $\xi=\xi'^{(n)}_-$ (where $\xi'^{(n)}_\pm(x)=(\xi'(x))^{(n)}_\pm$ are as in Example~\ref{eg:tangent to grassmannian 4}).
\item $\iota_{2n} \circ \varrho$ is $B$-Anosov, and the associated $\iota_{2n} \circ \varrho$-equivariant limit curve $\xi'':\partial\Gamma\to\mathcal{F}_B$ in the space of complete flags, satisfies that $\xi''^{(i)}=\xi'^{(i)}$ for all $i\neq n$, and either $\xi''^{(n)}=\xi'^{(n)}_+$ or $\xi''^{(n)}=\xi'^{(n)}_-$.
\end{enumerate}
\end{proposition}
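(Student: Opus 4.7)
The plan is to prove part~(1) using the characterization of the Anosov limit map as the unique continuous, equivariant, dynamics-preserving, transverse map, and then to deduce part~(2) from Example~\ref{eg:includePSOnn} together with Fact~\ref{fact:larger-parabolic}.

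For part~(1), I would first identify, for each non-trivial $\gamma \in \Gamma$, the attracting fixed point $\xi(\gamma^+) \in \Gr_n(\RR^{2n})$ of $\iota_{2n}(\varrho(\gamma))$. The $P_n$-Anosov hypothesis combined with Theorem~\ref{thm: GGKW} gives $\alpha_n(\lambda(\iota_{2n}(\varrho(\gamma)))) > 0$, so the direct sum $H_\gamma$ of the generalized eigenspaces of $\iota_{2n}(\varrho(\gamma))$ for the top $n$ eigenvalues (by modulus) is a well-defined $n$-plane, equal to $\xi(\gamma^+)$ by dynamics preservation. Using Example~\ref{eg:Jordan projection} to relate the $G$- and $G'$-Lyapunov projections, the $B'$- and $P_n$-Anosov hypotheses together force these top $n$ eigenvalues to all have modulus strictly greater than $1$. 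The standard fact that, for an isometry of $\langle\cdot,\cdot\rangle_{n,n}$, the generalized eigenspaces $V_\mu$ and $V_\nu$ are $\langle\cdot,\cdot\rangle_{n,n}$-orthogonal whenever $\mu\nu \neq 1$ then forces $H_\gamma$ to be isotropic, so $\xi(\gamma^+) = H_\gamma \in \Gr_n(\RR^{n,n})$. A similar eigenvalue bookkeeping, now using $B'$-Anosovness, identifies $\xi'^{(n-1)}(\gamma^+)$ as the sum of generalized eigenspaces of $\iota_{2n}(\varrho(\gamma))$ for the top $n-1$ eigenvalues, which is contained in $H_\gamma$. Hence, by Proposition~\ref{prop:projections}, $H_\gamma$ must equal either $\xi'^{(n)}_+(\gamma^+)$ or $\xi'^{(n)}_-(\gamma^+)$.

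To globalize this pointwise statement, I would use a connectedness argument. Density of attracting fixed points in $\partial\Gamma$, continuity of $\xi$, and closedness of $\Gr_n(\RR^{n,n}) \subset \Gr_n(\RR^{2n})$ imply that $\xi(\partial \Gamma) \subset \Gr_n(\RR^{n,n})$. Since $\Gr_n(\RR^{n,n}) = \Gr_n^+(\RR^{n,n}) \sqcup \Gr_n^-(\RR^{n,n})$ is disconnected while $\partial\Gamma$ is connected, $\xi(\partial\Gamma)$ lies entirely in one component. Combined with the pointwise equalities above and continuity of $\xi'^{(n)}_\pm$, this yields a global equality $\xi = \xi'^{(n)}_+$ or $\xi = \xi'^{(n)}_-$.

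For part~(2), Example~\ref{eg:includePSOnn} together with the $B'$-Anosov hypothesis on $\varrho$ forces $\iota_{2n} \circ \varrho$ to be $P_{\hat n}$-Anosov, where $P_{\hat n}$ is the standard parabolic in $G$ with $\theta_{P_{\hat n}} = \Delta \setminus \{\alpha_n\}$. A direct Lie algebra computation shows the intersection $P_{\hat n} \cap P_n$ of standard representatives is precisely the Borel $B$, so Fact~\ref{fact:larger-parabolic} combined with the $P_n$-Anosov hypothesis delivers $B$-Anosovness of $\iota_{2n} \circ \varrho$. The description of $\xi''$ then follows from uniqueness of Anosov limit maps: projecting $\xi''$ to the $i$-th Grassmannian factor of $\mathcal{F}_B$ produces the $P_i$-Anosov limit map, which by dynamics preservation must agree with $\xi'^{(i)}$ for $i \neq n$ (via Example~\ref{eg:includePSOnn}) and with the map $\xi$ from part~(1) for $i = n$. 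The main subtlety I expect lies in part~(1), namely in justifying the isotropy of $H_\gamma$ and the containment $\xi'^{(n-1)}(\gamma^+) \subset H_\gamma$ when the top $n$ eigenvalues of $\iota_{2n}(\varrho(\gamma))$ may contain complex conjugate pairs or Jordan blocks within equal-modulus clusters; both points reduce cleanly to standard orthogonality properties of generalized eigenspaces for isometries of $\langle\cdot,\cdot\rangle_{n,n}$, but must be set up carefully rather than argued for pure eigenvectors.
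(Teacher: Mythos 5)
Your proposal is correct and follows essentially the same route as the paper: identify $\xi(\gamma^+)$ with the attracting $n$-plane, use eigenvalue-modulus gaps from the two Anosov hypotheses to force isotropy and the containment $\xi'^{(n-1)}(\gamma^+)\subset\xi(\gamma^+)$, globalize by density and continuity, and deduce (2) from $P_{\hat n}$-Anosovness plus Fact~\ref{fact:larger-parabolic}. Your extra care with generalized eigenspaces (rather than pure eigenvectors) in the isotropy step is a sound refinement of the paper's terser argument, not a departure from it.
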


\begin{proof}
Proof of (1): Let $\gamma \in \Gamma$ non-trivial, and let $\gamma^+ \in \partial \Gamma$ be the attracting fixed point for the action of $\gamma$ on $\partial \Gamma$. Since $\iota_{2n}\circ\varrho$ is $P_n$-Anosov,  $\xi(\gamma^+)$ is the unique attracting $n$-plane for the action of $\iota_{2n}\circ\varrho(\gamma)=\varrho(\gamma)$ on $\Gr_n(\RR^{2n})$ and we observe that  $\xi(\gamma^+)$ must be isotropic, since the eigenvectors of $\varrho(\gamma)$ for eigenvalues larger than one are null and pairwise orthogonal. Further, again by simple eigenvalue considerations, the attracting fixed point $\xi'^{(n-1)}(\gamma^+)$ of $\varrho(\gamma)$ in the isotropic Grassmannian $\Gr_{n-1}(\RR^{n,n})$ is also the unique attracting fixed point in the full Grassmannian $\Gr_{n-1}(\RR^{2n})$. It follows that $\xi'^{(n-1)}(\gamma^+) \subset \xi(\gamma^+)$. By density of the points $\gamma^+$ in $\partial \Gamma$, it follows that $\xi'^{(n-1)}(\eta) \subset \xi(\eta)$ for all $\eta \in \partial \Gamma$. Further, for each $\eta \in \partial \Gamma$, $\xi(\eta)$ is an isotropic $n$-plane containing the isotropic $(n-1)$-plane $\xi'^{(n-1)}(\eta)$, so $\xi(\eta) = \xi'^{(n)}_+(\eta)$ or $\xi(\eta) = \xi'^{(n)}_-(\eta)$ and hence by continuity of $\xi$, we have that $\xi = \xi'^{(n)}_+$ or $\xi = \xi'^{(n)}_-$ on all of $\partial \Gamma$.

Proof of (2): By Example \ref{eg:includePSOnn}, we see that $\iota_{2n}\circ\varrho$ is $P_{\hat{n}}$-Anosov. Also, note that $\theta_{P_{\hat{n}}}\cup\theta_{P_n}=\Delta$, so the intersection of the standard representatives of $[P_{\hat{n}}]$ and $[P_n]$ is a Borel subgroup. Since $\iota_{2n}\circ\varrho$ is assumed to be $P_n$-Anosov, Fact \ref{fact:larger-parabolic} implies (2).
\end{proof}

\section{Proof of Theorem~\ref{thm:main2}}\label{sec:main2}

We now prove Theorem~\ref{thm:main2}. Let $\varrho:\Gamma\to G'=\PSO(n,n)$ be a $\PSO(n,n)$-Hitchin representation, and let $\iota_{2n}: \PSO(n,n) \hookrightarrow \PSL(2n,\RR)$ be the inclusion. 
Assume for contradiction that $\iota_{2n} \circ \varrho$ is Anosov with respect to the stabilizer $P_n < \PSL(2n,\RR)$ of an $n$-plane in $\RR^{2n}$.
By Theorem~\ref{thm:HitchinAnosov}, $\varrho$ is Anosov with respect to the Borel subgroup $B'$ of $\PSO(n,n)$. Let $\xi': \partial \Gamma \to \mathcal F_{B'}$ denote the Anosov limit map. 
By Proposition~\ref{prop:Anosov relations}, $\iota_{2n} \circ \varrho$ is Anosov with respect to the Borel subgroup $B$ of $\PSL(2n,\RR)$. Further, the Anosov limit map $\xi:\partial\Gamma\to\mathcal{F}_B$ satisfies that $\xi^{(i)}= \xi'^{(i)}$ for all $i\neq n$, and either $\xi^{(n)}={\xi'}^{(n)}_+$ or $\xi^{(n)}=\xi'^{(n)}_-$. Assume without loss of generality (see Remark \ref{rem:switch orientation}) that the former holds. First note that if $n$ is odd, then Remark \ref{rem:isotropic transverse} implies that for any $x,y\in\partial\Gamma$, the $n$-planes $\xi^{(n)}_+(x)$ and $\xi^{(n)}_+(y)$ fail to be transverse, a contradiction which completes the proof in the case $n$ is odd. We now give the proof in the more interesting case that $n$ is even.

The strategy of the proof will be to use the Anosov dynamics, plus the extra transversality condition provided by Corollary~\ref{cor:transversality}, to show:
\begin{lemma}\label{lem:tangent}
The subset $\xi^{(n-1)}(\partial\Gamma)\subset\Gr_{n-1}(\RR^{n,n})$ is a differentiable sub-manifold 
that is everywhere tangent to the fibers of the natural projection $\varpi^+: \Gr_{n-1}(\RR^{n,n}) \to \Gr_{n}^+(\RR^{n,n})$ from Proposition~\ref{prop:projections}, and is therefore contained in a single fiber.
\end{lemma}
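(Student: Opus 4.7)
The plan is to pick a point $x \in \partial\Gamma$ of the form $\gamma^+$ for some nontrivial $\gamma \in \Gamma$, set $y = \gamma^-$, and show that in the affine chart around $X := \xi^{(n-1)}(x)$ determined by $Y := \xi^{(n+1)}(y)$, the curve $z \mapsto \xi^{(n-1)}(z)$ has a well-defined tangent direction at $x$ lying in $T_X \ell_M$, where $M := \xi^{(n)}_+(x) = \varpi^+(X)$. To that end, I would work with the Anosov line decomposition $L_i := L_i(x, y) = \xi^{(i)}(x) \cap \xi^{(2n+1-i)}(y)$, so that
\[X = L_1 \oplus \cdots \oplus L_{n-1}, \quad Y = L_n \oplus \cdots \oplus L_{2n}, \quad M \cap Y = L_n,\]
and the identifications of Section~\ref{sec:affine chart} together with Proposition~\ref{prop:isotropic tangent} give
\[T_X \Gr_{n-1}(\RR^{2n}) = \Hom(X, Y) = \bigoplus_{i \leq n-1,\, j \geq n} \Hom(L_i, L_j), \quad T_X \ell_M = \Hom(X, L_n).\]
For each $z \in \partial\Gamma \setminus \{x, y\}$, the image $\xi^{(n-1)}(z)$ is the graph of a unique $\psi_z \in \Hom(X, Y)$ with components $\psi_z^{i, j} \in \Hom(L_i, L_j)$.

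The first step is to translate the transversality from Corollary~\ref{cor:transversality} into information about $\psi_z$. A direct check gives $\xi^{(n+2)}(y) = L_{n-1} \oplus Y$, so the line $\xi^{(n-1)}(z) \cap \xi^{(n+2)}(y)$ coincides with the graph $\{v + \psi_z(v) : v \in L_{n-1}\}$. Since the complementary sum $\xi^{(n-1)}(x) + \xi^{(n)}_+(y)$ is precisely $\bigoplus_{i \neq n} L_i$, the transversality
$\xi^{(n-1)}(x) + (\xi^{(n-1)}(z) \cap \xi^{(n+2)}(y)) + \xi^{(n)}_+(y) = \RR^{2n}$
of Corollary~\ref{cor:transversality} is exactly the condition $\psi_z^{n-1,n} \neq 0$, and it holds for every admissible $z$. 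This nonvanishing, which ultimately comes from Fock-Goncharov positivity, is the crucial extra input that the Anosov hypothesis alone does not supply.

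Next I would exploit the $\varrho(\gamma)$-dynamics on $\Hom(X, Y)$. Let $\lambda_1 > \cdots > \lambda_{2n}$ be the eigenvalues of $\varrho(\gamma)$, with $\lambda_n > \lambda_{n+1}$ by the hypothesis that $\iota_{2n} \circ \varrho$ is $P_n$-Anosov. The element $\varrho(\gamma)$ preserves $X$, $Y$, and each $L_i$, and scales $\Hom(L_i, L_j)$ by $\lambda_j/\lambda_i < 1$; on the index range $\{(i, j) : i \leq n-1,\, j \geq n\}$, this ratio attains its unique maximum at $(n-1, n)$, so $\Hom(L_{n-1}, L_n) \subset \Hom(X, L_n)$ is the unique slowest-contracting eigenline of $\varrho(\gamma)_*$. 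Given any sequence $z_k \to x$ in $\partial\Gamma$, pick a compact fundamental domain $K \subset \partial\Gamma \setminus \{x, y\}$ for $\langle\gamma\rangle$ and write $z_k = \gamma^{m_k} w_k$ with $w_k \in K$ and $m_k \to \infty$; then $\psi_{z_k}^{i,j} = (\lambda_j/\lambda_i)^{m_k}\,\psi_{w_k}^{i,j}$. By continuity on $K$ together with the nonvanishing from the previous paragraph, $\|\psi_{w_k}^{n-1,n}\|$ stays uniformly bounded below and each $\|\psi_{w_k}^{i,j}\|$ stays uniformly bounded above, so for every $(i, j) \neq (n-1, n)$ with $i \leq n-1$ and $j \geq n$,
\[\frac{\|\psi_{z_k}^{i,j}\|}{\|\psi_{z_k}^{n-1,n}\|} = \left(\frac{\lambda_j\,\lambda_{n-1}}{\lambda_i\,\lambda_n}\right)^{m_k} \frac{\|\psi_{w_k}^{i,j}\|}{\|\psi_{w_k}^{n-1,n}\|} \longrightarrow 0,\]
and therefore every subsequential limit of $[\psi_{z_k}]$ in $\PP(\Hom(X, Y))$ is the unique point of $\PP(\Hom(L_{n-1}, L_n))$.

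From this, $\xi^{(n-1)}$ has a well-defined tangent line inside $T_X \ell_M$ at every $x$ of the form $\gamma^+$, and this line varies continuously on the dense subset of attracting fixed points. Standard tangent-cone considerations combined with the continuity of $\xi^{(n-1)}$ then promote $\xi^{(n-1)}(\partial\Gamma)$ to a $C^1$ submanifold of $\Gr_{n-1}(\RR^{n,n})$ whose tangent at every point lies in the fibers of $\varpi^+$, and connectedness of $\partial\Gamma$ forces it to lie in a single leaf. The main obstacle is the third paragraph: the Anosov eigenvalue-ratio estimate by itself would not prevent $[\psi_{z_k}]$ from drifting into directions $\Hom(L_i, L_j)$ with $j > n$, which lie outside $T_X \ell_M$, and it is precisely the positivity-based nonvanishing $\psi_z^{n-1, n} \neq 0$ from Corollary~\ref{cor:transversality} that pins the limit direction down to the fiber tangent.
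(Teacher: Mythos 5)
Your first two steps are essentially the paper's: translating Corollary~\ref{cor:transversality} into the nonvanishing of the $(n-1,n)$-component of $\psi_z$ is exactly Lemma~\ref{lem:transversality}, and identifying $\Hom(L_{n-1},L_n)$ as the slowest-contracting direction among the $\Hom(L_i,L_j)$ with $i\leq n-1\leq j-1$ is the right mechanism. The problem is how you globalize. Your contraction estimate is run only at points $x=\gamma^+$, using the eigenvalues of the single element $\varrho(\gamma)$ and a compact fundamental domain for $\langle\gamma\rangle$; this controls the tangent cone of $\xi^{(n-1)}(\partial\Gamma)$ only at the countable dense set of attracting fixed points, with constants depending on $\gamma$. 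The concluding step, that ``standard tangent-cone considerations'' promote this to a $C^1$ submanifold everywhere tangent to the fibers of $\varpi^+$, is not valid: a compact curve can have prescribed tangent directions on a dense set and fail to be differentiable elsewhere, and tangency to a foliation on a dense subset does not force containment in a single leaf --- the graph of the Cantor function is tangent to the horizontal foliation on a dense open set yet lies in no horizontal line. Since the whole point of Lemma~\ref{lem:tangent} is the ``therefore contained in a single fiber'' conclusion, this is a genuine gap, not a routine omission.

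What the paper does instead is make the contraction estimate uniform over \emph{all} of $\partial\Gamma$: Proposition~\ref{prop:ratio of norms} gives a uniform exponential lower bound for the ratio $\|f\|_{(y,z(t),x)}/\|f'\|_{(y,z(t),x)}$ of sections of $\Hom(L_p,L_s)$ versus $\Hom(L_q,L_r)$ along the geodesic flow on the compact unit tangent bundle $T^1S$, and Lemma~\ref{lem:bounds} uses compactness of $T^1S$ together with the nowhere-vanishing of $u_{n-1,n}$ to bound all coefficients above and $u_{n-1,n}$ below, uniformly in $(y,z,x)$. Combining these yields Lemma~\ref{lem:C1}, i.e.\ control of the tangent cone at \emph{every} boundary point, which is exactly the uniformity your argument lacks. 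You also omit the step (the paper's Lemma~\ref{lem:inj}) showing the tangent cone is the full line $\Hom(L_{n-1},L_n)$ rather than a ray, which is needed to conclude that the image is a differentiable one-dimensional submanifold.
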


Lemma~\ref{lem:tangent} immediately gives a contradiction which completes the proof of  Theorem~\ref{thm:main2} as follows.

\begin{proof}[Proof of Theorem~\ref{thm:main2}]
Assuming Lemma~\ref{lem:tangent}, we have that $\xi^{(n-1)}(\partial\Gamma)$ is contained in a single fiber of $\varpi^+$ and it follows that $\xi'^{(n)}_+(x_1)=\xi'^{(n)}_+(x_2)$ for all $x_1,x_2\in\partial\Gamma$. We assumed that $\xi^{(n)}=\xi'^{(n)}_+$, so
\[\xi^{(n)}(x_1) = \varpi^+(\xi^{(n-1)}(x_1)) = \varpi^+(\xi^{(n-1)}(x_2))= \xi^{(n)}(x_2),\] 
which contradicts the injectivity of $\xi^{(n)}$ (and the transversality of $\xi^{(n-1)}$).
This concludes the proof of Theorem \ref{thm:main2}.
\end{proof}

We now focus on proving Lemma~\ref{lem:tangent}. Recall from Section~\ref{subsec:BSLd} the line decomposition $\RR^{2n} = \bigoplus_{i=1}^{2n} L_i(x,y)$ associated to a pair of distinct points $x,y \in \partial \Gamma$. Observe that 
\begin{align*}
\xi^{(n-1)}(x) &= L_1(x,y) \oplus \cdots \oplus L_{n-1}(x,y)\\
\xi^{(n+1)}(y) &= L_n(x,y) \oplus \cdots \oplus L_{2n}(x,y)
\end{align*}
Let $U_{\xi^{(n+1)}(y)}$ denote the affine chart of $\Gr_{n-1}(\RR^{2n})$ consisting of all $(n-1)$-planes transverse to $\xi^{(n+1)}(y)$. We use the identification in Section \ref{sec:affine chart} to obtain local coordinates for $U_{\xi^{(n+1)}(y)}$:
\begin{align}\label{eqn:coords}
U_{\xi^{(n+1)}(y)} &\xrightarrow[]{\simeq} \Hom(\xi^{(n-1)}(x), \xi^{(n+1)}(y)) = \bigoplus_{1 \leq i < n \leq j \leq 2n} \Hom(L_i, L_j)(x,y)
\end{align} 
Then for each $y < z \leq x < y$ in $\partial \Gamma$ we observe that $\xi^{(n-1)}(z) \in U_{\xi^{(n+1)}(y)}$ and we express it in coordinates as
\begin{align*}
\xi^{(n-1)}(z) \mapsto \left( u_{ij}(y,z,x) \right)_{1 \leq i < n \leq j \leq 2n}.
\end{align*}
Then $u_{ij}(y,x,x) = 0$ for all $1 \leq i < n \leq j \leq 2n$ and we wish to calculate the ``derivatives" of the $u_{ij}(y,z,x)$ as $z \to x$. 

\begin{lemma}\label{lem:transversality}
For all $y<z<x<y$ in $\partial \Gamma$, we have that $u_{n-1,n}(y,z,x) \neq 0$.
\end{lemma}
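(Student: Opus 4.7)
The plan is to deduce Lemma~\ref{lem:transversality} directly from the positivity-driven transversality statement in Corollary~\ref{cor:transversality}, by translating that statement into the affine coordinates on $U_{\xi^{(n+1)}(y)}$. Using the convention $\xi^{(n)} = \xi'^{(n)}_+$ fixed at the outset of Section~\ref{sec:main2}, and noting that $\xi^{(i)} = \xi'^{(i)}$ for $i \neq n$ by Proposition~\ref{prop:Anosov relations}(2), the Corollary applied to the distinct triple $(y,z,x)$ yields
\[\xi^{(n-1)}(x)+\bigl(\xi^{(n-1)}(z)\cap \xi^{(n+2)}(y)\bigr)+\xi^{(n)}(y)=\RR^{2n}.\]

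First I would compute the intersection $\xi^{(n-1)}(z) \cap \xi^{(n+2)}(y)$ explicitly in the coordinates~\eqref{eqn:coords}. With respect to the decomposition $\RR^{2n} = \bigoplus_i L_i(x,y)$, we have $\xi^{(n+2)}(y) = L_{n-1}(x,y) \oplus L_n(x,y) \oplus \cdots \oplus L_{2n}(x,y)$, while $\xi^{(n-1)}(z)$ is the graph of the linear map $\psi = \bigoplus u_{ij}: \xi^{(n-1)}(x) \to \xi^{(n+1)}(y)$. Writing $v = v_1 + \cdots + v_{n-1}$ with $v_i \in L_i(x,y)$, a graph vector $v + \psi(v)$ lies in $\xi^{(n+2)}(y)$ precisely when $v_1 = \cdots = v_{n-2} = 0$. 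Hence $\xi^{(n-1)}(z) \cap \xi^{(n+2)}(y)$ is $1$-dimensional, spanned by
\[w := v_{n-1} + u_{n-1,n}(v_{n-1}) + u_{n-1,n+1}(v_{n-1}) + \cdots + u_{n-1,2n}(v_{n-1})\]
for any fixed nonzero $v_{n-1} \in L_{n-1}(x,y)$, matching the expected dimension $(n-1) + (n+2) - 2n = 1$.

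Finally, I would observe that $\xi^{(n-1)}(x) + \xi^{(n)}(y) = \bigoplus_{i \neq n} L_i(x,y)$ is a hyperplane in $\RR^{2n}$ whose unique missing direction is $L_n(x,y)$. Therefore the Corollary's spanning conclusion is equivalent to the statement that $w$ has a nonzero component along $L_n(x,y)$. That component is exactly $u_{n-1,n}(v_{n-1})$, so we get $u_{n-1,n}(v_{n-1}) \neq 0$; since $v_{n-1}$ was an arbitrary nonzero element of the line $L_{n-1}(x,y)$, this forces the linear map $u_{n-1,n}:L_{n-1}(x,y) \to L_n(x,y)$ to be nonzero, which is the claim. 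The hard part was really Corollary~\ref{cor:transversality} itself, which rested on Fock-Goncharov positivity of $\PSO(n,n)$-Hitchin representations; once that is in hand, Lemma~\ref{lem:transversality} is a mechanical unpacking of the graph coordinates with no further substantive input.
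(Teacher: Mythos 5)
Your proof is correct and follows essentially the same route as the paper: both arguments identify $\xi^{(n-1)}(z)\cap\xi^{(n+2)}(y)$ as the graph line of $\bigoplus_{n\le j\le 2n}u_{n-1,j}$ over $L_{n-1}(x,y)$ and observe that the spanning statement of Corollary~\ref{cor:transversality} is precisely the condition that its $L_n(x,y)$-component, namely $u_{n-1,n}$, is nonzero. Your version just makes the coordinate bookkeeping slightly more explicit; there is no substantive difference.
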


\begin{proof}
Note that 
\[ L_{n-1}(z,y) = \xi^{(n-1)}(z) \cap \xi^{(n+2)}(y) = \xi^{(n-1)}(z) \cap (L_{n-1}(x,y) \oplus \xi^{(n+1)}(y)) \]
is exactly the graph of the linear map 
\[\bigoplus_{n \leq j \leq 2n}u_{n-1,j}(y,z,x) : L_{n-1}(x,y) \to \xi^{(n+1)}(y).\]
The condition that $u_{n-1,n}(y,z,x) = 0$ is exactly the condition that $L_{n-1}(z,y) \subset L_{n-1}(x,y) \oplus \xi^{(n)}(y)$. However, this cannot happen, since it would violate the transversality statement of Corollary~\ref{cor:transversality} (a consequence of positivity of the limit curve $\xi'$):
\[ \xi^{(n-1)}(x) + L_{n-1}(z,y) + \xi^{(n)}(y) = \RR^{2n}.\]
\end{proof}

Next, for each $i,j$, choose continuously varying norms $\| \cdot \|$ on the fibers of the bundle $\Hom(L_i,L_j)$, defined in Section~\ref{subsec:BSLd}.  The lift of these norms, again denoted $\| \cdot \|$, gives a continuously varying, $\rho$-equivariant family of norms on $\Hom(L_i,L_j)(x,y)$ depending on a cyclically ordered triple $y < z< x<y$.

\begin{lemma}\label{lem:bounds}
There exists $C>0$ so that the following hold for all $y < z < x < y$ in $\partial\Gamma$.
\begin{enumerate}
\item $\| u_{i,j}(y,z,x) \|_{(y,z,x)} \leq C$ for all $1 \leq i < n \leq j \leq 2n$.
\item $\displaystyle\frac{1}{C}\leq \| u_{n-1,n}(y,z,x)\|_{(y,z,x)}\leq C$.
\end{enumerate}
\end{lemma}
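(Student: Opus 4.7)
The plan is to derive both bounds from compactness of the unit tangent bundle $T^1 S$. First, I would observe that for each pair $1\leq i<n\leq j\leq 2n$, the coordinate-valued map
\[(y,z,x)\ \longmapsto\ u_{i,j}(y,z,x)\ \in\ \Hom(L_i,L_j)(x,y)\]
is well-defined and continuous on all of $T^1\widetilde S$. Well-definedness needs $\xi^{(n-1)}(z)$ to lie in the affine chart $U_{\xi^{(n+1)}(y)}$, i.e.\ $\xi^{(n-1)}(z)\cap \xi^{(n+1)}(y)=0$; this is guaranteed by transversality of the Anosov limit map $\xi$, since $y\neq z$ on $T^1\widetilde S$. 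Continuity then follows from continuity of $\xi$ together with continuity of the affine chart coordinates constructed in Section~\ref{sec:affine chart}.

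Next I would verify that the scalar function $(y,z,x)\mapsto \|u_{i,j}(y,z,x)\|_{(y,z,x)}$ is $\Gamma$-invariant. The coordinate assignment $u_{i,j}$ is $\rho$-equivariant, since it is computed entirely from the $\rho$-equivariant map $\xi$ and its associated line decomposition; and by construction the norms on $\Hom(L_i,L_j)$ are $\rho$-equivariant (they are the lifts of norms on a bundle over $T^1 S$). Thus the conjugation by $\rho(\gamma)$ acting on the vector cancels with the equivariance of the norm, and the scalar function descends to a continuous function on the compact space $T^1 S$. It therefore attains a finite maximum; taking the maximum over the finitely many pairs $(i,j)$ gives a uniform upper bound, yielding~(1) and the upper half of~(2).

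For the lower bound in~(2), I would invoke Lemma~\ref{lem:transversality}: it guarantees $u_{n-1,n}(y,z,x)\neq 0$ for every $(y,z,x)\in T^1\widetilde S$. Consequently $\|u_{n-1,n}(\cdot)\|$ is a strictly positive continuous function on the compact quotient $T^1 S$, and so attains a positive minimum. Setting $C$ to be the maximum of the finitely many upper bounds and the reciprocal of this positive minimum gives the desired constant. The only place where any genuine geometric input is needed is the appeal to Lemma~\ref{lem:transversality} (which itself rests on Corollary~\ref{cor:transversality}, the positivity transversality result); everything else is compactness plus equivariance, so no serious obstacle is anticipated.
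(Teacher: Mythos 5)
Your proposal is correct and follows essentially the same argument as the paper: $\rho$-equivariance makes $u_{i,j}$ a continuous section of the line bundle $\Hom(L_i,L_j)$ over the compact space $T^1S$, which gives the upper bounds, and the lower bound in (2) comes from combining this compactness with the nonvanishing of $u_{n-1,n}$ provided by Lemma~\ref{lem:transversality}. No issues.
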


\begin{proof}
Observe that the definition of $u_{i,j}(y,z,x)$ is $\rho$-equivariant, hence $u_{i,j}$ defines a section of the bundle $\Hom(L_i, L_j)$ over $T^1 S$. Statement~(1) then follows from the compactness of $T^1 S$. The lower bound of Statement~(2) also follows from the compactness of $T^1 S$ in light of the fact that the section $u_{n-1,n}$ is nowhere zero by Lemma~\ref{lem:transversality}.
\end{proof}

Next, fix the points $x,y \in \partial \Gamma$ and let $C_x \subset T_{\xi^{(n-1)}(x)} \Gr_{n-1}(\RR^{2n}) $ denote the tangent cone to the curve $\xi^{(n-1)}(\partial \Gamma)$.
The linear coordinates on the patch $U_{\xi^{(n+1)}(y)}\subset \Gr_{n-1}(\RR^{2n})$ give coordinates for $T_{\xi^{(n-1)}(x)}\Gr_{n-1}(\RR^{2n})$, and in those coordinates $C_x$ consists of all $(v_{i,j})_{1 \leq i < n \leq j \leq 2n}$ so that there exists a sequence $z_k \to x$ in $\Gamma$ and $s_k \to \infty$ in $\RR$ so that 
\begin{align}\label{eqn:converge}
s_k{u_{i,j}(y,z_k,x)} &\to v_{i,j} & \text{for all } 1 \leq i < n \leq j \leq 2n
\end{align}

\begin{lemma}\label{lem:C1}
Let $(v_{i,j})_{1 \leq i < n \leq j \leq 2n} \in C_x$ be non-zero. Then $v_{i,j} = 0$ for all $(i,j) \neq (n-1,n)$. 
\end{lemma}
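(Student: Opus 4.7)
The plan is to exploit the Anosov expansion estimates of Proposition~\ref{prop:ratio of norms} to show that, as $z_k \to x$, the $(n-1,n)$-component of $u_{i,j}(y, z_k, x)$ dominates every other component by a factor growing exponentially in the geodesic flow time. From this domination it will follow that, if the rescaled sequence $s_k u_{i,j}(y, z_k, x)$ converges to a finite limit $v_{i,j}$ for every $(i,j)$, then $v_{i,j} = 0$ for every $(i,j) \neq (n-1, n)$.

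Without loss of generality, suppose $z_k$ approaches $x$ from the positive arc (the other case is symmetric), fix a reference point $z_0$ on this arc, and parameterize the geodesic $z(\cdot)$ starting at $z_0$ at $t=0$ so that $z_k = z(t_k)$ with $t_k \to \infty$. For each $(i,j) \neq (n-1, n)$ with $1 \leq i \leq n-1$ and $n \leq j \leq 2n$, apply Proposition~\ref{prop:ratio of norms} with parameters $(p, q, r, s) = (i, n-1, n, j)$ (which satisfy $p \leq q < r \leq s$ and $(p,s) \neq (q,r)$) to the elements $f_k := u_{i,j}(y, z_k, x)$ and $f'_k := u_{n-1, n}(y, z_k, x)$; the latter is nonzero by Lemma~\ref{lem:transversality}. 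Combined with the uniform bounds of Lemma~\ref{lem:bounds}, which give $\|f_k\|_{(y, z_k, x)} / \|f'_k\|_{(y, z_k, x)} \leq C^2$, this yields
\[ \frac{\|f_k\|_{(y, z_0, x)}}{\|f'_k\|_{(y, z_0, x)}} \;\leq\; \frac{C^2}{A\, e^{C t_k}} \;\longrightarrow\; 0. \]
Since the fibers $\Hom(L_i, L_j)(x,y)$ and $\Hom(L_{n-1}, L_n)(x,y)$ are one-dimensional, the norms $\|\cdot\|_{(y, z_0, x)}$ are proportional to the fixed reference norms $|\cdot|$ used in the definition of $C_x$. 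Hence $|u_{i,j}(y, z_k, x)| / |u_{n-1, n}(y, z_k, x)| \to 0$ exponentially in $t_k$.

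To conclude, suppose for contradiction that $v_{i_0, j_0} \neq 0$ for some $(i_0, j_0) \neq (n-1, n)$. Then $s_k |u_{i_0, j_0}(y, z_k, x)|$ is bounded below by some $\delta > 0$ for $k$ large, and the ratio estimate gives
\[ s_k |u_{n-1, n}(y, z_k, x)| \;=\; s_k |u_{i_0, j_0}(y, z_k, x)| \cdot \frac{|u_{n-1, n}(y, z_k, x)|}{|u_{i_0, j_0}(y, z_k, x)|} \;\longrightarrow\; \infty, \]
contradicting the convergence $s_k u_{n-1, n}(y, z_k, x) \to v_{n-1, n}$ to a finite vector. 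The only delicate point in executing the plan is the interplay between the equivariant norms $\|\cdot\|_{(y,z,x)}$ that appear in the dynamical estimates and the fixed reference norm $|\cdot|$ used to measure convergence in $T_{\xi^{(n-1)}(x)} \Gr_{n-1}(\RR^{2n})$; the one-dimensionality of the fibers $\Hom(L_i, L_j)(x,y)$ renders this translation elementary.
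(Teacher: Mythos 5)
Your proof is correct and follows essentially the same route as the paper's: both arguments combine the exponential growth of the ratio of norms from Proposition~\ref{prop:ratio of norms} with the uniform bounds of Lemma~\ref{lem:bounds} to conclude that the $(i,j)$-component of $u$ is dominated by the $(n-1,n)$-component, and then rule out a non-zero limit $v_{i,j}$ by observing it would force $s_k u_{n-1,n}(y,z_k,x)$ to diverge. The only cosmetic difference is that the paper factors $u_{i,j}=\zeta_{i,j}b_{i,j}$ against fixed spanning vectors and applies the proposition to the $b_{i,j}$, whereas you apply it directly to the varying elements and evaluate at a reference triple; by one-dimensionality of the fibers these are the same computation.
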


\begin{proof}
Let $z_k \to x$ in $\partial \Gamma$ and let $s_k \to \infty$ in $\RR$ so that~\eqref{eqn:converge} holds. 
 For each $i,j$, choose a non-zero element $b_{i,j}$ spanning $\Hom(L_i,L_j)(x,y)$ and write 
 \[u_{i,j}(y,z,x) =: \zeta_{i,j}(z) b_{i,j}.\]
 Let $p < n \leq q$ with $(p,q) \neq (n-1,n)$. Then
 \begin{align*}
 \frac{\| u_{p,q}(y,z_k,x) \|_{(y,z_k,x)}}{ \| u_{n-1,n}(y,z_k,x) \|_{(y,z_k,x)}} &= \frac{ |\zeta_{p,q}(z_k)|}{| \zeta_{n-1,n}(z_k)|} \frac{ \| b_{p,q} \|_{(y,z_k,x)}}{\| b_{n-1,n}\|_{(y,z_k,x)}}.\\
 \end{align*}
By Proposition~\ref{prop:ratio of norms}, the term $\frac{ \| b_{p,q} \|_{(y,z_k,x)}}{\| b_{n-1,n}\|_{(y,z_k,x)}}$ on the right-hand side goes to infinity.
But by Lemma~\ref{lem:bounds}, the left-hand side is bounded. It follows that
\[ \frac{ |\zeta_{p,q}(z_k)|}{| \zeta_{n-1,n}(z_k)|} \to 0 \]
hence we must have $s_k \zeta_{p,q}(z_k) \to 0$, else $s_k \zeta_{n-1,n}(z_k) \to\pm\infty$ which means $s_k u_{n-1,n}(y,z_k,x)$ diverges and that contradicts the definition of $s_k$.  Hence $s_k u_{p,q}(y,z_k,x) \to 0$ showing that $v_{p,q} = 0$.
\end{proof}

Lemma~\ref{lem:C1} implies that the tangent cone $C_x$ to $\xi^{(n-1)}(\partial \Gamma)$ at the point $\xi^{(n-1)}{(x)}$ is contained in the line corresponding, in the coordinates~\eqref{eqn:coords}, to $\Hom(L_{n-1},L_n)(x,y)$ (note this does not depend on $y$), so it is equal to that line or to a ray contained in the line. Let us show now that $C_x$ is the full line.

\begin{lemma}\label{lem:inj}
Let $z_1,z_2\in\partial\Gamma\setminus\{y\}$ be distinct. Then 
\[u_{n-1,n}(y,z_1,x)\neq u_{n-1,n}(y,z_2,x).\] 
\end{lemma}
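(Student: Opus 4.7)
My strategy is to prove Lemma~\ref{lem:inj} by contradiction, applying Corollary~\ref{cor:transversality} to the triple $(y, z_1, z_2)$ (valid since these points are pairwise distinct in $\partial\Gamma$). The point is that the assumed equality $u_{n-1,n}(y,z_1,x) = u_{n-1,n}(y,z_2,x)$ forces the sum appearing in that corollary to be deficient in dimension, contradicting its equality with $\RR^{2n}$.

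To set up the calculation, I would work in the affine chart $U_{\xi^{(n+1)}(y)}$ of $\Gr_{n-1}(\RR^{2n})$ based at $\xi^{(n-1)}(x)$, writing $\xi^{(n-1)}(z_i)$ as the graph of a linear map $\psi_i : \xi^{(n-1)}(x) \to \xi^{(n+1)}(y)$ whose $\Hom(L_i,L_j)$-components are the $u_{i,j}(y, z_i, x)$. The intersection $\xi^{(n-1)}(z_i)\cap \xi^{(n+2)}(y)$ then corresponds to the graph of the restriction $\psi_i|_{L_{n-1}(x,y)}$, and a brief calculation yields
\[
\xi^{(n-1)}(z_2) + \bigl(\xi^{(n-1)}(z_1) \cap \xi^{(n+2)}(y)\bigr) = \xi^{(n-1)}(z_2) + (\psi_1 - \psi_2)\bigl(L_{n-1}(x,y)\bigr),
\]
where the second summand sits inside $\xi^{(n+1)}(y)$ and is at most one-dimensional.

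The crux is the following structural identification. The assumption $u_{n-1,n}(y,z_1,x) = u_{n-1,n}(y,z_2,x)$ means that $(\psi_1 - \psi_2)|_{L_{n-1}(x,y)}$ has vanishing $L_n(x,y)$-component, so its image lies in $L_{n+1}(x,y) \oplus \xi^{(n-1)}(y)$. But this $n$-plane is precisely $\xi^{(n)}(y) = \xi^{(n)}_+(y)$, since it contains $\xi^{(n-1)}(y) = L_{n+2}(x,y) \oplus \cdots \oplus L_{2n}(x,y)$ together with the line $L_{n+1}(x,y) = \xi^{(n+1)}(x) \cap \xi^{(n)}(y)$. Consequently,
\[
\xi^{(n-1)}(z_2) + \bigl(\xi^{(n-1)}(z_1) \cap \xi^{(n+2)}(y)\bigr) + \xi^{(n)}_+(y) \subset \xi^{(n-1)}(z_2) + \xi^{(n)}_+(y),
\]
and since $\xi^{(n-1)}(z_2)$ is transverse to $\xi^{(n+1)}(y) \supset \xi^{(n)}_+(y)$, the right-hand side has dimension exactly $(n-1)+n = 2n-1$, contradicting Corollary~\ref{cor:transversality}.

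The only subtlety I anticipate is the sign convention: the identification $L_{n+1}(x,y) \oplus \xi^{(n-1)}(y) = \xi^{(n)}_+(y)$ relies on the standing choice (made at the start of Section~\ref{sec:main2}) that $\xi^{(n)} = \xi'^{(n)}_+$. In the opposite case, the same computation goes through using the $-$ version of Corollary~\ref{cor:transversality}, which holds for both signs. Apart from this bookkeeping and the elementary dimension count, the proof is a direct application of the positivity-driven transversality from Corollary~\ref{cor:transversality}.
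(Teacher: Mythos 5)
Your proof is correct and rests on the same key input as the paper's: Corollary~\ref{cor:transversality} applied to the triple $\{y,z_1,z_2\}$. The paper reaches the identical contradiction slightly more indirectly, via a change-of-basepoint formula $u_{n-1,n}(y,z_2,z_1)=\left(u_{n-1,n}(y,z_2,x)-u_{n-1,n}(y,z_1,x)\right)\circ\Pi_x^{z_1}$ followed by Lemma~\ref{lem:transversality} applied at $(y,z_2,z_1)$, whereas you inline that same linear algebra in the chart based at $\xi^{(n-1)}(x)$ — essentially the same argument.
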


\begin{proof}
This follows easily from Lemma~\ref{lem:transversality} and the following formula for changing coordinates on the affine chart $U_{\xi^{(n+1)}(y)}$ to move the origin from $\xi^{(n-1)}(x)$ to $\xi^{(n-1)}(z_1)$: 
\[ u_{n-1,n}(y,z_2,z_1)  = \left( u_{n-1,n}(y,z_2,x) - u_{n-1,n}(y,z_1,x) \right) \circ \Pi_{x}^{z_1}\]
where $\Pi_{x}^{z_1}: \xi^{(n-1)}(z_1) \to \xi^{(n-1)}(x)$ is the projection induced by the splitting $\xi^{(n-1)}(x)\oplus\xi^{(n+1)}(y) = \RR^{2n}$.
 In particular, if $u_{n-1,n}(y,z_2,x) = u_{n-1,n}(y,z_1,x)$, then $u_{n-1,n}(y,z_2,z_1) = 0$, which would contradict Lemma~\ref{lem:transversality}.
 \end{proof}

We now prove Lemma~\ref{lem:tangent}.
\begin{proof}[Proof of Lemma~\ref{lem:tangent}]
Lemma~\ref{lem:C1} implies that the tangent cone $C_x$ to $\xi^{(n-1)}(\partial \Gamma)$ at the point $\xi^{(n-1)}{(x)}$ is contained in the line corresponding, in the coordinates~\eqref{eqn:coords}, to $\Hom(L_{n-1},L_n)(x,y)$, which varies continuously with $x \in \partial \Gamma$. Lemma~\ref{lem:inj} implies that $C_x$ is the entire line, and not just a ray. It now follows that $\xi^{(n-1)}(\partial \Gamma)$ is a differentiable sub-manifold of dimension one (although the parameterization of $\xi^{(n-1)}(\partial\Gamma)$ by $\partial\Gamma$ is not necessarily $C^1$).

Since $\Gr_{n-1}(\RR^{n,n})$ is smoothly embedded in $\Gr_{n-1}(\RR^{2n})$, we may work in the coordinates~\eqref{eqn:coords} on $\Gr_{n-1}(\RR^{2n})$. 
Proposition \ref{prop:isotropic tangent} tells us that in these coordinates, the tangent space to the fiber $\ell_{\xi^{(n-1)}(x)}$ above $\xi^{(n-1)}(x)$ of the projection $\varpi^+: \Gr_{n-1}(\RR^{n,n}) \to \Gr_n^+(\RR^{n,n})$ is $\Hom(\xi^{(n-1)}(x), L_n(x,y))$. Hence Lemma~\ref{lem:C1} and Lemma ~\ref{lem:inj} imply that $\xi^{(n-1)}(\partial \Gamma)$ is tangent to $\ell_{\xi^{(n-1)}(x)}$ at $\xi^{(n-1)}(x)$.  
Since this holds for all points $\xi^{(n-1)}(x)$ on $\xi^{(n-1)}(\partial \Gamma)$, we conclude that $\xi^{(n-1)}(\partial \Gamma)$ is contained in a single fiber, concluding the proof of Lemma~\ref{lem:tangent}. 
\end{proof}

\section{Properly discontinuous actions on $\HH^{n,n-1}$}\label{sec:thm:main-H}
We now prove Theorem~\ref{thm:main-H} which states that the action of a surface group on the pseudo-Riemannian hyperbolic space $\HH^{n,n-1}$ by a $\PSO(n,n)$-Hitchin representation is not properly discontinuous. 
Theorem~\ref{thm:main-H} follows directly from Theorem~\ref{thm:main2} and from the following theorem. Let $\iota_{2n}: \PSO(n,n) \to \PSL(2n,\RR)$ denote the inclusion.

\begin{theorem}\label{thm:gen-H}
Suppose $\varrho: \Gamma \to \PSO(n,n)$ is Anosov with respect to the stabilizer $P_{n-1}'$ of an isotropic $(n-1)$-plane. Then the $\varrho$-action of $\Gamma$ on $\HH^{n,n-1}$ is properly discontinuous if and only if $\iota_{2n} \circ \varrho:\Gamma\to\PSL(2n,\RR)$ is Anosov with respect to the stabilizer $P_n$ of an $n$-plane.
\end{theorem}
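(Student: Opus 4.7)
The plan is to combine Benoist's properness criterion for reductive homogeneous spaces $G'/H$ with the Cartan projection characterization of Anosov representations in Theorem~\ref{thm: GGKW}. First I would compute the Cartan projection of the isotropy group $H:=\Stab_{G'}(\mathbf{x}_0)\cong\OO(n,n-1)$, where $\mathbf{x}_0=[e_n-e_{n+1}]\in\HH^{n,n-1}$: using the splitting $\RR^{n,n}=\RR^{n,n-1}\oplus\RR^{0,1}$ from Section~\ref{sec:grass}, elements of $H$ act orthogonally on $\RR^{n,n-1}$ and trivially on $\Span(e_n-e_{n+1})$, so in the coordinates $\diag(a_1,\ldots,a_n,-a_n,\ldots,-a_1)$ on $\mathfrak{a}'$ the Cartan image of $H$ is exactly the wall $\{a_n=0\}$ of $\mathfrak{a}'^+$. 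Benoist's criterion then says that the $\varrho$-action on $\HH^{n,n-1}=G'/H$ is properly discontinuous if and only if $|\mu'_n(\varrho(\gamma))|\to\infty$ as $\gamma\to\infty$ in $\Gamma$. By Example~\ref{eg:Jordan projection}, $2|\mu'_n(g)|=\alpha_n(\mu(\iota_{2n}(g)))$, so the properness condition is equivalent to $\alpha_n(\mu(\iota_{2n}(\varrho(\gamma))))\to\infty$.

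The ``if'' direction then follows at once from Theorem~\ref{thm: GGKW}(2), since $P_n$-Anosovness of $\iota_{2n}\circ\varrho$ supplies exactly the required growth of $\alpha_n\circ\mu$.

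For the ``only if'' direction, the properness hypothesis gives the Cartan projection growth, so by Theorem~\ref{thm: GGKW}(2) the remaining task is to produce a continuous, $\varrho$-equivariant, transverse, dynamics-preserving limit map $\xi:\partial\Gamma\to\Gr_n(\RR^{2n})$. Via the identity $P'_{n-1}=P'^+_n\cap P'^-_n$ (Example~\ref{eg:tangent to grassmannian 4}) and Fact~\ref{fact:larger-parabolic}, the $P'_{n-1}$-Anosov hypothesis yields two continuous equivariant candidate maps $\xi'^{(n)}_\pm:=\varpi^\pm\circ\xi'^{(n-1)}:\partial\Gamma\to\Gr^\pm_n(\RR^{n,n})\subset\Gr_n(\RR^{2n})$ via the projections of Proposition~\ref{prop:projections}. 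For $n$ odd, Remark~\ref{rem:isotropic transverse} shows that neither candidate is transverse as a map into $\Gr_n(\RR^{2n})$, but Benoist~\cite{Benoist} also forbids proper actions in that case, so both sides of the equivalence fail vacuously. For $n$ even, Remark~\ref{rem:isotropic transverse} together with the classification of $G'$-orbits on $\Gr^\pm_n\times\Gr^\pm_n$ identifies $G'$-transversality of $\xi'^{(n)}_\pm$ with transversality as subspaces of $\RR^{2n}$, so each $\xi'^{(n)}_\pm$ is transverse in $\Gr_n(\RR^{2n})$.

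The main obstacle is to select the sign $\epsilon\in\{+,-\}$ so that $\xi:=\xi'^{(n)}_\epsilon$ is dynamics-preserving. First, the properness hypothesis forces $\lambda'_n(\varrho(\gamma))\ne 0$ for every non-torsion $\gamma$: otherwise the middle pair of eigenvalues of $\varrho(\gamma)$ would lie on the unit circle and the corresponding $(1,1)$-signature eigenspace would contain a timelike direction on which the $\langle\varrho(\gamma)\rangle$-orbit in $\HH^{n,n-1}$ is bounded, contradicting properness. So each non-torsion $\varrho(\gamma)$ has a unique attracting $n$-plane $V^+_\gamma\in\Gr_n(\RR^{2n})$ containing $\xi'^{(n-1)}(\gamma^+)$, lying in $\Gr^+_n$ or $\Gr^-_n$ according to the sign $\epsilon(\gamma):=\sgn(\lambda'_n(\varrho(\gamma)))$. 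To show $\epsilon$ is constant, I would verify directly in the bundle picture of Definition~\ref{def:Anosov} that one of the parallel candidate sections $\sigma_\pm(y,z,x):=(\xi'^{(n)}_\pm(x),\xi'^{(n)}_\pm(y))$ of the $\mathcal{O}_{P_n}$-bundle satisfies the uniform expansion/contraction conditions, using the $P'^\pm_n$-Anosov expansion on tangent directions to $\Gr^\pm_n$ together with the growth of $\alpha_n\circ\mu$ to obtain expansion in the transverse (``non-isotropic'') directions; the uniqueness of the Anosov section (Fact~\ref{thm:flag maps}) then forces the chosen $\xi'^{(n)}_\epsilon$ to coincide with the attracting-flag assignment $\gamma^+\mapsto V^+_\gamma$, yielding constancy of $\epsilon$ and completing the construction of the required Anosov limit map, after which Theorem~\ref{thm: GGKW}(2) concludes.
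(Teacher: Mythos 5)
Your setup --- the Benoist--Kobayashi criterion with $\mu(H')=\{a_n=0\}$, the identity $\alpha_n(\mu(\iota_{2n}g))=2|\mu'_n(g)|$, the ``if'' direction via Theorem~\ref{thm: GGKW}, and the observation that properness forces $\lambda'_n(\varrho(\gamma))\neq 0$ --- coincides with the paper's proof. The gap is in the step you yourself flag as the main obstacle: proving that $\epsilon(\gamma)=\sgn(\lambda'_n(\varrho(\gamma)))$ is constant. Your plan is to verify the expansion condition of Definition~\ref{def:Anosov} directly for one of the sections $\sigma_\pm$, using the growth of $\alpha_n\circ\mu$ to control the ``non-isotropic'' directions. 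But $\alpha_n(\mu(\iota_{2n}\varrho(\gamma)))=2|\mu'_n(\varrho(\gamma))|$ is sign-blind, whereas the first-return map along the periodic orbit of $\gamma$ acts on the symmetric part of $\Hom(\xi'^{(n)}_+(\gamma^+),\xi'^{(n)}_+(\gamma^-))$ with slowest rate governed by the \emph{signed} quantity $2\lambda'_n(\varrho(\gamma))$: if $\epsilon(\gamma)=-$ for even one $\gamma$, that direction is contracted, not expanded, along $\sigma_+$ (and symmetrically for $\sigma_-$). So you cannot verify expansion for a fixed choice of sign without already knowing the sign is consistent; and if it is not consistent, neither section is Anosov, which by itself contradicts nothing. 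The step is circular as written.

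The ingredient the paper uses to close this is Theorem~\ref{thm:kasco1} (Kassel): for a proper action on $G'/H'$ with $\mathrm{rank}_\RR(H')=\mathrm{rank}_\RR(G')-1$, all but finitely many points of $\mu(\varrho(\Gamma))$ lie in a single component of $\mathfrak{a}'^+\setminus\mu(H')$. This pins down a consistent sign for $\mu'_n$, hence $\lambda'_n(\varrho(\gamma))\geq 0$ for all $\gamma$ after normalizing, and combined with your non-vanishing observation gives $\lambda'_n>0$ throughout, i.e. $\epsilon\equiv +$; the remainder of your construction (dynamics preservation of $\xi'^{(n)}_+$, then Theorem~\ref{thm: GGKW}) proceeds as in the paper. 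A secondary point: for $n$ even, transversality of $\xi'^{(n)}_+$ does not follow from Remark~\ref{rem:isotropic transverse} alone, since two positive isotropic $n$-planes need not be transverse once $n\geq 4$; one needs the parity constraint $\dim(H_1\cap H_2)\equiv n \pmod 2$ for same-sign isotropic $n$-planes together with the transversality of $\xi^{(n-1)}$ and $\xi^{(n+1)}$.
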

We will now prove this theorem. We will use the techniques of Gu\'eritaud-Guichard-Kassel-Wienhard~\cite{GGKW}. 

Let us first recall (a version of) the properness criterion due independently to Benoist and to Kobayashi. In the following, $\mathfrak a'$ denotes a Cartan sub-algebra in the Lie algebra $\mathfrak g'$ of a semi-simple Lie group $G'$, and $\|\cdot\|$ is any norm on $\mathfrak a'$. We assume, as in Section~\ref{sec:char}, that the adjoint group $\Ad(G')$ is contained in the group of inner automorphisms of the complexification $\mathfrak g_\CC$ of the Lie algebra $\mathfrak g$ so that the Cartan projection $\mu': G' \to \mathfrak a'^+$ is well-defined. For the case $G' = \PSO(n,n)$, see Example~\ref{eg:Jordan projection}. 

\begin{theorem}[Benoist~\cite{Benoist}, Kobayashi~\cite{kob}]\label{thm:benkob}
Let $G'$ be a semi-simple Lie group and $H' < G'$ a reductive subgroup. Let $\varrho: \Gamma \to G'$ be a discrete faithful representation of a finitely generated group $\Gamma$. Then the $\varrho$-action of $\Gamma$ on $G'/H'$ is properly discontinuous if and only if $\|\mu(\varrho(\gamma)) - \mu(H')\| \to \infty$ as $\gamma \to \infty$ in~$\Gamma$.
\end{theorem}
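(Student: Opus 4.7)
The plan is to use the Cartan decomposition $G' = K \exp(\mathfrak{a}'^+) K$, where $K < G'$ is a maximal compact subgroup, to convert proper discontinuity of the $\Gamma$-action on $G'/H'$ into a statement about the Cartan projection $\mu: G' \to \mathfrak{a}'^+$ relative to $\mu(H') \subset \mathfrak{a}'^+$. Recall that proper discontinuity is equivalent to finiteness of $\{\gamma \in \Gamma : \varrho(\gamma) \in CH'C^{-1}\}$ for every compact $C \subset G'$ (the condition $\varrho(\gamma)(CH') \cap CH' \neq \emptyset$ rewrites as $\varrho(\gamma) \in CH'C^{-1}$). Under the discrete and faithful hypothesis, $\gamma_n \to \infty$ in $\Gamma$ implies $\varrho(\gamma_n) \to \infty$ in $G'$, so the task is to match the divergence of $\varrho(\gamma_n)$ modulo $CH'C^{-1}$ with the divergence of $\mu(\varrho(\gamma_n))$ from $\mu(H')$.

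The key technical ingredient I would establish first is the \emph{quasi-invariance} of $\mu$ under bounded multiplication: for any compact $C \subset G'$ there exists $R_C > 0$ such that
\begin{equation*}
\|\mu(cgc') - \mu(g)\| \leq R_C \quad \text{for all } c,c' \in C \text{ and } g \in G'.
\end{equation*}
This follows from the $K$-biinvariance $\mu(kgk') = \mu(g)$ together with the Cartan decomposition of each $c \in C$, reducing the claim to a uniform continuity estimate for $\mu$ under multiplication by elements $\exp(X)$ with $\|X\|$ bounded.

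With this lemma in hand, the easier direction ($\Leftarrow$) is immediate by contrapositive: if the action fails to be proper, one finds $\gamma_n \to \infty$ in $\Gamma$, a compact $C \subset G'$, and sequences $c_n, c_n' \in C$, $h_n \in H'$ with $\varrho(\gamma_n) = c_n h_n (c_n')^{-1}$; quasi-invariance then gives $\|\mu(\varrho(\gamma_n)) - \mu(h_n)\| \leq R_C$, bounding $\|\mu(\varrho(\gamma_n)) - \mu(H')\|$ and contradicting the hypothesis.

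For the converse direction ($\Rightarrow$), I would argue by contradiction: suppose some $\gamma_n \to \infty$ satisfies $\|\mu(\varrho(\gamma_n)) - \mu(H')\| \leq M$. Choose $h_n \in H'$ realizing this bound and write Cartan decompositions $\varrho(\gamma_n) = k_n \exp(A_n) k_n'$ and $h_n = l_n \exp(B_n) l_n'$ with $A_n, B_n \in \mathfrak{a}'^+$ and $\|A_n - B_n\| \leq M$. The crucial point is that $A_n, B_n$ lie in the \emph{abelian} subalgebra $\mathfrak{a}'$, so
\begin{equation*}
\exp(A_n) = \exp(A_n - B_n)\exp(B_n),
\end{equation*}
and hence
\begin{equation*}
\varrho(\gamma_n) = \bigl(k_n \exp(A_n - B_n) l_n^{-1}\bigr) \, h_n \, \bigl((l_n')^{-1} k_n'\bigr).
\end{equation*}
Both outer factors lie in the compact set $K \cdot \exp(\overline{B_M(0)} \cap \mathfrak{a}') \cdot K$, so $\varrho(\gamma_n) \in CH'C$ for a fixed compact $C$. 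Combined with discreteness of $\varrho(\Gamma)$, this produces infinitely many $\gamma$ with $\varrho(\gamma) \in CH'C$, contradicting proper discontinuity.

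The main obstacle I anticipate is the quasi-invariance lemma itself, which requires some care since $\mu$ is not additive; it must be derived from $K$-biinvariance and a direct estimate using the Cartan decomposition of the bounded factor. The non-uniqueness of the $k,k'$ factors when $\mu(g)$ lies on a wall of $\mathfrak{a}'^+$ is a harmless ambiguity here since no step requires choosing canonical representatives. The commutativity of $\mathfrak{a}'$ is genuinely essential in the converse direction, as it is what makes the identity $\exp(A_n) = \exp(A_n - B_n)\exp(B_n)$ available and permits the factorization that isolates $h_n$ between two compact pieces.
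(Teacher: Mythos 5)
The paper does not actually prove this statement---it is quoted with attribution to Benoist and Kobayashi---so there is no internal proof to compare against; judged on its own terms, your argument is essentially the standard one from those sources (also used by Kassel), and its structure is sound. The translation of proper discontinuity into finiteness of $\{\gamma: \varrho(\gamma)\in CH'C^{-1}\}$ for all compact $C$ is correct, the ``$\Leftarrow$'' direction via the quasi-invariance lemma is correct, and the ``$\Rightarrow$'' direction is exactly the right trick: writing $\exp(A_n)=\exp(A_n-B_n)\exp(B_n)$ inside the abelian $\mathfrak a'$ and absorbing $k_n\exp(A_n-B_n)l_n^{-1}$ and $(l_n')^{-1}k_n'$ into the fixed compact set $K\exp(\overline{B_M(0)}\cap\mathfrak a')K$ (which is symmetric, so $CH'C=CH'C^{-1}$) does isolate $h_n$ between two bounded factors and contradicts properness.

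The one place where your proposal falls short of a proof is the quasi-invariance lemma itself, and your suggested derivation does not suffice. After using $K$-biinvariance and the Cartan decomposition of $c,c'$, what remains is precisely the claim that $\|\mu(\exp(X)\,g\,\exp(X'))-\mu(g)\|$ is bounded \emph{uniformly over all} $g\in G'$ when $\|X\|,\|X'\|$ are bounded; ``uniform continuity of $\mu$'' only gives control on compact sets of $g$, so it gives nothing here---the uniformity in $g$ is the entire content of the lemma. The standard ways to get it are either the subadditivity estimate $\|\mu(gh)-\mu(g)\|\le\|\mu(h)\|$ for a Weyl-group-invariant norm, proved by expressing the coordinates of $\mu$ as logarithms of operator norms in the fundamental (proximal) representations and using submultiplicativity of operator norms, or equivalently the triangle inequality for the $\mathfrak a'^+$-valued distance on the Riemannian symmetric space $G'/K$. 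Since this lemma (due to Benoist, and also appearing in Kobayashi's and Kassel's work) is true, your proof is correct once you import it; but as written, the step you flag as ``the main obstacle'' is indeed a gap rather than a routine estimate, and should either be cited or proved by one of the arguments above.
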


In the setting of interest, $G' = \PSO(n,n)$ and $H' \cong \OO(n,n-1)$ is the subgroup which stabilizes the orthogonal splitting $\RR^{n,n} = \RR^{n,n-1}\oplus \RR^{0,1}$, so that $G'/H' = \HH^{n,n-1}$ (see Section~\ref{sec:intro:negcurv}). Recall from Example~\ref{eg:Jordan projection} that the positive Weyl chamber $\mathfrak a'^+$ for $G'$ may be thought of as the subset of the diagonal matrices of the form $\mathrm{diag}(a_1, \ldots, a_n, -a_n, \ldots, -a_1)$ where 
\begin{align*}
a_1 \geq a_2 \geq \cdots \geq a_{n-1} \geq a_n, -a_n
\end{align*}
but $a_{n}$ is allowed to have either sign. The Cartan projection of $H'$ is then given by
\begin{align*}
\mu(H') &= \{\mathrm{diag}(a_1, \ldots, a_n, -a_n, \ldots, -a_1) \in \mathfrak a^+ : a_n = 0\}.
\end{align*}
Hence, in this setting, the criterion for properness of the action of $\Gamma$ on $G'/H'$ in Theorem~\ref{thm:benkob} reduces to the simple condition that the $n^{th}$ diagonal entry of the Cartan projection $\mu'_n(\varrho(\gamma))$ escapes all compact subsets of $\RR$ as $\gamma \to \infty$ in $\Gamma$. Note that $\mu'_n(\varrho(\gamma))$ does not necessarily need to be positive, unlike $\mu'_i(\varrho(\gamma))$ for $i<n$. However, by the following result of Kassel~\cite{kassel}, we can deduce that $\mu'_n(\varrho(\gamma))$ diverges to infinity in a consistent direction (i.e. always positive or always negative). Note that here $\mu'(H)$ separates $\mathfrak a'^+$ into two connected components.
\begin{theorem}[Kassel]\label{thm:kasco1}
Let $G', H', \Gamma,$ and $\varrho$ be as in Theorem~\ref{thm:benkob} and suppose further that $G'$ and $H'$ are both connected, that $\Gamma$ is not virtually cyclic, and that $\mathrm{rank}_\RR(H') = \mathrm{rank}_\RR(G') -1$. If the $\varrho$-action of $\Gamma$ on $G'/H'$ is proper, then all but finitely many points of $\mu(\varrho(\Gamma))$ lie in a single component of the complement $\mathfrak a'^+ \setminus \mu(H')$.
\end{theorem}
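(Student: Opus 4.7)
The plan is to combine the Benoist--Kobayashi properness criterion (Theorem~\ref{thm:benkob}) with the Lipschitz behavior of the Cartan projection with respect to the word metric on $\Gamma$, then to conclude via the ``ends of $\Gamma$'' dichotomy permitted by the non-virtually-cyclic hypothesis.

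First, the rank hypothesis $\mathrm{rank}_\RR(H')=\mathrm{rank}_\RR(G')-1$ ensures that $\mu(H')$ is codimension one in $\mathfrak{a}'^+$ and that its complement decomposes into two connected open components $C_+$ and $C_-$. Partition $\Gamma$ according to which component contains the image under $\mu\circ\varrho$, setting $\Gamma_\pm := \{\gamma : \mu(\varrho(\gamma)) \in C_\pm\}$, together with a set $\Gamma_0$ of exceptional elements whose Cartan projection meets $\mu(H')$. For every $R > 0$, Theorem~\ref{thm:benkob} asserts that $B_R := \{\gamma : \mathrm{dist}(\mu(\varrho(\gamma)), \mu(H')) \leq R\}$ is a finite subset of $\Gamma$; in particular $\Gamma_0 \subset B_0$ is finite, so it suffices to show that one of $\Gamma_+, \Gamma_-$ is finite.

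Second, apply the standard sub-additivity $\|\mu(gh) - \mu(g)\| \leq \|\mu(h)\|$ for the Cartan projection, valid on any reductive Lie group and an immediate consequence of the $KAK$ decomposition, with $h$ ranging over a fixed finite generating set $S$ of $\Gamma$. This yields a uniform constant $C := \max_{s \in S}\|\mu(\varrho(s))\|$ such that $\|\mu(\varrho(\gamma s)) - \mu(\varrho(\gamma))\| \leq C$ for all $\gamma \in \Gamma$ and all $s \in S$. Now fix any $R > C$. If $\gamma \in \Gamma_+ \setminus B_R$ were Cayley-graph-adjacent to $\gamma s \in \Gamma_- \setminus B_R$, then the two images would lie on opposite sides of $\mu(H')$, each farther than $R$ away, but the straight segment joining them must cross the wall, forcing at least one to lie within $C < R$ of it: contradiction. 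Consequently, in the Cayley graph of $\Gamma$ with the finite set $B_R$ deleted, no edge joins $\Gamma_+ \setminus B_R$ to $\Gamma_- \setminus B_R$; that is, the finite set $B_R$ separates $\Gamma_+$ from $\Gamma_-$.

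Third, invoke the ends-of-groups trichotomy: a finitely generated infinite group has $1$, $2$, or $\infty$ ends, and the non-virtually-cyclic hypothesis rules out the two-ended case. In the one-ended case, no finite subset can disconnect the Cayley graph into two infinite components, so one of $\Gamma_+ \setminus B_R$ or $\Gamma_- \setminus B_R$ must lie inside the finite union of bounded components, whence the corresponding $\Gamma_\pm$ is finite as desired. In the infinitely-ended case, refine the argument via the asymptotic behavior of Cartan projections along powers: for each infinite-order $\gamma$, $\mu(\varrho(\gamma^n))/n$ converges to the Lyapunov projection $\lambda(\varrho(\gamma)) \in \mathfrak{a}'^+$, and properness applied to $\{\gamma^n\}_{n\in\ZZ}$ forces $\lambda(\varrho(\gamma)) \notin \mu(H')$, hence selects a preferred component $C_+$ or $C_-$; a Benoist-style convexity argument on the limit cone of the Zariski closure of $\varrho(\Gamma)$ then shows that this preferred component is the same for all $\gamma$.

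The main obstacle will be the infinitely-ended case. There the combinatorial separation argument alone is insufficient, because a finite set can cut the Cayley graph into infinitely many infinite pieces; one must supplement it with the reductive structure of the target, ruling out the possibility that two distinct ends of $\Gamma$ point toward opposite components $C_+$ and $C_-$ by showing that otherwise some sequence of products (across independent directions in $\Gamma$) would have Cartan projection accumulating on the wall $\mu(H')$, contradicting properness via Theorem~\ref{thm:benkob} once more.
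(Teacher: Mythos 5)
First, a structural remark: the paper does not prove this statement at all --- it is quoted from Kassel~\cite{kassel} --- so there is no internal proof to compare against, and I am assessing your argument on its own terms.

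Your first two steps are correct and standard: the subadditivity $\|\mu(gh)-\mu(g)\|\le\|\mu(h)\|$ for a Weyl-invariant norm, combined with Theorem~\ref{thm:benkob}, shows that for $R>\max_{s\in S}\|\mu(\varrho(s))\|$ the finite set $B_R$ separates, in the Cayley graph, the parts of $\Gamma$ mapping to different components of $\mathfrak a'^+\setminus\mu(H')$ (this does not even require that there be exactly two components, which in general there need not be, since $\mu(H')$ is only a finite union of codimension-one cones). This settles the one-ended case --- which is in fact all the present paper needs, since here $\Gamma=\pi_1 S$. But the theorem is stated for arbitrary finitely generated, non-virtually-cyclic $\Gamma$, and infinitely-ended groups are genuinely in scope: free groups do act properly on corank-one homogeneous spaces such as $\HH^{2,1}$, so the conclusion cannot follow from the separation argument alone, and your treatment of that case is where the proof breaks down. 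Concretely: (i) the claim that properness of the cyclic action $\{\gamma^n\}$ forces $\lambda(\varrho(\gamma))\notin\mu(H')$ is false in general --- properness only gives $d(\mu(\varrho(\gamma^n)),\mu(H'))\to\infty$, and this divergence can be sublinear (e.g.\ an element of $\PSL(2,\RR)\times\PSL(2,\RR)$ acting on $\mathrm{AdS}^3$ that is unipotent in one factor and trivial in the other generates a proper cyclic action while $\lambda(\varrho(\gamma))\in\mu(H')$), so the ``preferred component'' of an element need not even be defined by its Lyapunov projection; (ii) the sentence ``a Benoist-style convexity argument on the limit cone then shows that this preferred component is the same for all $\gamma$'' is a placeholder, not an argument. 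To rule out two ends pointing at opposite components one needs quantitative control of $\mu$ along products --- e.g.\ the Abels--Margulis--Soifer/Benoist fact that, after inserting an element of a fixed finite subset, $\mu(\gamma f\delta)$ agrees with $\mu(\gamma)+\mu(\delta)$ up to a uniform additive constant --- followed by a Diophantine choice of exponents steering $m\lambda(\gamma)+k\lambda(\delta)$ back into a bounded neighborhood of $\mu(H')$, with care taken for the cone structure of $\mu(H')$. None of this is carried out, and it is precisely the content of Kassel's proof that goes beyond your first three steps. So your proposal establishes the special case actually used in this paper, but leaves a genuine gap for the statement as written.
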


Observe that 
\begin{align*}
 \mathrm{rank}_\RR \OO(p,q) &= \mathrm{rank}_\RR \PSO(p,q)=\min(p,q),
\end{align*}
and that rank is invariant under taking finite index subgroups. 
 Hence the theorem applies in the case $G' = \PSO_0(n,n)$ and $H' = \OO(n,n-1) \cap \PSO_0(n,n)$. However, it is easy to check that the same result continues to hold in the case of interest here, namely $G' = \PSO(n,n)$ and $H' = \OO(n,n-1)$ and we will apply the theorem in this case without further remark.
 
\begin{proof}[Proof of Theorem~\ref{thm:gen-H}]
Let $\varrho: \Gamma \to \PSO(n,n)$ be $P_{n-1}'$-Anosov. 
We begin with the reverse implication, which is straightforward. Suppose $\iota_{2n} \circ \varrho$ is $P_n$-Anosov. 
Let $\mu: \PSL(2n,\RR) \to \mathfrak{a}^+$ denote the Cartan projection of $G = \PSL(2n,\RR)$ as in Example~\ref{eg:Jordan projection1}. Then by Theorem~\ref{thm: GGKW}, $\mu_n(\iota_{2n}\circ\varrho(\gamma)) \to \infty$ as $\gamma \to \infty$ in $\Gamma$. Since $\mu_n(\iota_{2n}g) = |\mu'_n(g)|$ for all $g \in \PSO(n,n)$, it follows that $|\mu'_n(\iota_{2n}\circ\varrho(\gamma))| \to \infty$ as $\gamma \to \infty$ in $\Gamma$. Hence, the $\varrho$-action on $\HH^{n,n-1} = G'/H'$ is proper by Theorem~\ref{thm:benkob}, since $\mu'_n(H') = 0$.

We now prove the forward implication. 
Let $\xi^{(n-1)}: \partial \Gamma \to \Gr_{n-1}(\RR^{n,n-1})$ be the Anosov limit curve, and 
let $\xi^{(n)}_+:  \partial \Gamma \to \Gr_n^+(\RR^{n,n})$ and $\xi^{(n)}_-: \partial \Gamma \to \Gr_n^-(\RR^{n,n})$ denote the $\varrho$-equivariant, continuous embeddings defined by $\xi^{(n)}_\pm = \varpi_\pm \circ \xi^{(n-1)}$, where $\varpi_+$ (resp. $\varpi_-$) is the projection taking an isotropic $(n-1)$-plane to the unique positive (resp. negative) istropic $n$-plane containing it, see Proposition~\ref{prop:projections}.

Now assume that $\varrho$ determines a proper action of $\Gamma$ on $\HH^{n,n-1}$. Then by Theorem~\ref{thm:benkob} and the discussion just above, we have that $\mu'_n(\varrho(\gamma))$ leaves every compact set as $\gamma \to \infty$ in $\Gamma$. Further, by Theorem~\ref{thm:kasco1}, either $\mu'_n(\varrho(\gamma)) \to +\infty$ or $\mu'_n(\varrho(\gamma)) \to -\infty$ and the sign is consistent for all escaping sequences in $\Gamma$. Without loss in generality, we assume $\mu'_n(\varrho(\gamma)) \to +\infty$ whenever $\gamma \to \infty$ in $\Gamma$. It then follows that for any $\gamma \in \Gamma \setminus \{1\}$, the $n^{th}$ value $\lambda'_n(\varrho(\gamma)) = \lim_{m \to \infty} \mu'_n(\varrho(\gamma^m))/m$ of the Lyapunov projection is non-negative. 
Now, fix $\gamma \in \Gamma \setminus \{1\}$, and observe that $\xi^{(n-1)}(\gamma^+)^\perp \cap \xi^{(n-1)}(\gamma^-)^\perp$ is a $\varrho(\gamma)$-invariant subspace on which the restriction of the inner product has signature $(1,1)$, where here $\gamma^\pm = \lim_{m \to \pm \infty} \gamma^m \in \partial \Gamma$. It follows that the restriction of $\varrho(\gamma)$ to this $(1,1)$ subspace is diagonalizable, and the corresponding eigenvalues are precisely the exponentials of $\pm \lambda'_n(\varrho(\gamma))$. 
If $\lambda'_n(\varrho(\gamma)) = 0$, then $\xi^{(n-1)}(\gamma^+)^\perp \cap \xi^{(n-1)}(\gamma^-)^\perp$
projects to a line in $\HH^{n,n-1}$ which is point-wise fixed by the action of $\varrho(\gamma)$, contradicting properness of the action. Hence $\lambda'_n(\varrho(\gamma)) > 0$. It then follows that the $n$-plane $\xi^{(n)}_+(\gamma^+)$ is the attracting fixed point for the action of $\varrho(\gamma)$ on the \emph{full} Grassmannian $\Gr_n(\RR^{2n})$ of $n$-planes in $\RR^{2n}$. Hence, composing with the inclusion $\Gr_n^+(\RR^{n,n}) \hookrightarrow \Gr_n(\RR^{2n})$, the map $\xi^{(n)}_+$ determines a continuous embedding $\partial \Gamma \to \Gr_n(\RR^{2n})$ which is equivariant and dynamics preserving for the representation $\iota_{2n} \circ \varrho: \Gamma \to \PSL(2n, \RR)$.
Hence, the implication (2)~$\implies$~(1) in Theorem~\ref{thm: GGKW} shows that $\iota \circ \varrho$ is Anosov with respect to the stabilizer $P_n$ of an $n$-plane in $\RR^{2n}$. 
\end{proof}

Theorem~\ref{thm:main-H} follows directly from Theorem~\ref{thm:gen-H}, Theorem~\ref{thm:main2}, and the fact that Hitchin representations are Anosov with respect to the Borel subgroup (Theorem \ref{thm:HitchinAnosov}).

\section{Constant curvature geometry in signature $(n,n-1)$}\label{sec:length-functions}
We now turn to some of the geometry needed for Theorem~\ref{thm:main-flat}. In order to understand properly discontinuous actions by isometries of the psuedo-Riemannian Euclidean space $\EE^{n,n-1}$, we recall the notion of signed translation length in $\EE^{n,n-1}$, known as the \emph{Margulis invariant} (Section~\ref{sec:Margulis-invariant}). The proof of Theorem~\ref{thm:main-flat} involves deforming into pseudo-Riemannian hyperbolic geometry $\HH^{n,n-1}$, and it will be important to have a theory of signed translation length in that setting as well.  We develop the notion of signed translation length in each of $\EE^{n,n-1}$ and $\HH^{n,n-1}$ in parallel.

Before we proceed, we will perform a change of basis on $\RR^{2n}$ that we will use for the rest of this article. Recall that in Section \ref{sec:grass}, we specified the bilinear form $\langle\cdot,\cdot\rangle_{n,n}$ on $\RR^{2n}$ using the matrix $J_{2n}$ in the standard basis $e_1,\dots,e_{2n}$ of $\RR^{2n}$: if $x,y\in\RR^{2n}$ are written as $x=(x_1,\dots,x_{2n})^T$ and $y=(y_1,\dots,y_{2n})^T$  in the standard basis of $\RR^{2n}$, then 
\[\langle x,y\rangle_{n,n}=\sum_{i=1}^{2n}x_iy_{2n+1-i}.\]
Let $e'_1,\dots,e'_{2n}$ be the basis of $\RR^{2n}$ defined by
\[e_i':=\left\{
\begin{array}{ll}
e_i+e_{2n+1-i}&\text{if }i\leq n\\
e_{i-n}-e_{3n+1-i}&\text{if }i\geq n+1\\
\end{array}\right..\]
If $x,y\in\RR^{2n}$ are written as $x=(x_1,\dots,x_{2n})^T$ and $y=(y_1,\dots,y_{2n})^T$  in the basis $e_1',\dots,e_{2n}'$, then 
\[\langle x,y \rangle_{n,n} = \sum_{i=1}^n x_i y_i - \sum_{i=n+1}^{2n} x_i y_i.\]
In Section \ref{sec:length-functions} and Section \ref{sec:geometric limit}, we will think of $e_1',\dots,e_{2n}'$ as the standard basis of $\RR^{2n}$ instead of $e_1,\dots,e_{2n}$, as this will be more convenient. Henceforth, all coordinates, matrices, and vectors will be written using the basis $e_1',\dots,e_{2n}'$.

\subsection{$\HH^{n,n-1}$ and $\EE^{n,n-1}$ as real projective geometries}\label{sec:subgeometries}
Both $\HH^{n,n-1}$ and $\EE^{n,n-1}$ naturally embed in real projective geometry.
Indeed, the projective model for $\HH^{n,n-1}$ is given by:
\begin{align*}
\HH^{n,n-1} := \left\{ [x] \in \PP(\RR^{2n}): \langle x, x \rangle_{n,n} < 0\right\}.
\end{align*}
The projective orthogonal group $\PO(n,n) < \PGL(2n, \RR)$ for this inner product preserves $\HH^{n,n-1}$ and is the isometry group of a geodesically complete pseudo-Riemannian metric $\metric^\HH$ of signature $(n,n-1)$. The metric $\metric^\HH$ is the natural metric coming from restriction of $\langle \cdot, \cdot \rangle_{n,n}$ to the tangent spaces of the hyperboloid $\langle x,x \rangle_{n,n} = 1$, which double covers $\HH^{n,n-1}$.

The restriction of $\langle \cdot, \cdot \rangle_{n,n}$ to the vector space $\RR^{2n-1}=\mathsf{span}\{e_1',\dots,e_{2n-1}'\}$, determines a complete, flat metric $\metric^{\EE}$ of signature $(n,n-1)$ on the parallel affine hyperplane defined by $x_{2n} = 1$, and hence on the corresponding affine chart of projective space. We henceforth identify this affine chart with $\EE^{n,n-1}$:

\begin{align*}
\EE^{n,n-1} &:= \left\{ [x_1: \ldots: x_{2n-1}: 1]\right\} \subset  \PP(\RR^{2n}).
\end{align*}
The subgroup of the projective general linear group $\PGL(2n,\RR)$ that preserves this affine chart and its flat metric gives the isometry group of~$\EE^{n,n-1}$:
\begin{align} \label{eqn:isom-flat}
\Isom(\EE^{n,n-1}) &= \left\{ \begin{bmatrix} A & v \\ 0 & 1 \end{bmatrix} \in \PGL(2n, \RR): A \in \OO(n,n-1), v \in \RR^{2n-1} \right\},
\end{align}
where here $\OO(n,n-1)$ denotes the orthogonal group for the restriction, to be denoted $\langle \cdot, \cdot \rangle_{n,n-1}$, of $\langle \cdot, \cdot \rangle_{n,n}$ to $\RR^{2n-1}$. The vector subspace $\RR^{2n-1}$ together with inner product $\langle \cdot, \cdot \rangle_{n,n-1}$ is denoted $\RR^{n,n-1}$ as usual.
We will henceforth restrict to the orientation-preserving isometry groups $\PSO(n,n)$ of $\HH^{n,n-1}$ and $\Isom^+(\EE^{n,n-1})$ of $\EE^{n,n-1}$, which consists of the elements as in~\eqref{eqn:isom-flat} with $A \in \SO(n,n-1)$.  The reason for this is that the discussion of properly discontinuous actions, in Section~\ref{sec:length-function-H} and Section~\ref{sec:Margulis-invariant}, will make important use of the orientation. A theory of properly discontinuous actions in the general setting will follow from elementary considerations, but is not needed for the main goal of the paper.

We fix once and for all an orientation on $\RR^{n,n}$ defined by the $n$-form $e_1'\wedge\dots\wedge e_{2n}'$, and an orientation on $\RR^{n,n-1}$ defined by the $n$-form $e_1'\wedge\dots\wedge e_{2n-1}'$. The diffeomorphism $\EE^{n,n-1}\to\RR^{n,n-1}$ given by $[x_1:\dots:x_{2n-1}:1]\mapsto(x_1,\dots,x_{2n-1})$ then defines an orientation on $\EE^{n,n-1}$.

\subsection{Translation lengths in $\HH^{n,n-1}$}\label{sec:length-function-H}
We follow the conventions from Example~\ref{eg:Jordan projection} and think of $G' = \PSO(n,n)$ as embedded in $G= \PSL(2n,\RR)$, denoting by $\lambda'$ and $\lambda$ the respective Lyapunov projections.

Consider an element $g \in G'$ whose Lyapunov projection $\lambda'(g)$ satisfies that
\begin{align}\label{eqn:evals-g}
\lambda_1'(g) \geq  \cdots \geq \lambda_{n-1}'(g) > \lambda_n'(g), -\lambda_n'(g) > -\lambda_{n-1}'(g) \geq \cdots \geq -\lambda_1'(g)
\end{align}
where here $\lambda_n'(g)$ may be positive, in which case $\lambda_n'(g) = \lambda_n(g)$, or negative, in which case $\lambda_n'(g) = -\lambda_n(g)$, or zero. 
With our future application in mind, we note that this assumption holds for all non-trivial elements of a $\PSO(n,n)$-Hitchin representation.
Thinking of $g$ as an element of the (projective) matrix group $G$, the entries~\eqref{eqn:evals-g} of the Lyapunov projection are the logarithms of the moduli of the eigenvalues of $g$. Let $V^+_{n-1}(g)$ denote the sum of the generalized eigenspaces associated to the $\lambda_1'(g), \ldots, \lambda_{n-1}'(g)$, and let $V^-_{n-1}(g)$ denote the sum of the generalized eigenspaces associated to the $-\lambda_{n-1}'(g), \ldots, -\lambda_{1}'(g)$. Then $(V^+_{n-1}(g), V^-_{n-1}(g))$ is a pair of transverse isotropic $(n-1)$-spaces. The orthogonal complement of $V^+_{n-1}(g) \oplus V^-_{n-1}(g)$ is a $(1,1)$-subspace $L^+_n(g) \oplus L^-_n(g)$, where $L_n^+(g), L_n^-(g)$ are defined as follows. In the case that $\lambda'_n(g) \neq -\lambda'_n(g)$, $L^+_n(g)$ (resp. $L^-_n(g)$) denotes the eigenspace for the eigenvalue $\exp \lambda'_n(g)$ (resp. $\exp(- \lambda'_n(g))$), and we note that by definition of $\lambda'$, the subspace $V^+_{n-1}\oplus L^+_n(g)$ is a positive isotropic $n$-plane; it is precisely this convention that defines the sign of $\lambda'_n(g)$. If $\lambda'_n(g) = -\lambda'_n(g) =0$, then $L^+_n(g) \oplus L^-_n(g)$ is a decomposition of the $1 = \exp(0)$ eigenspace into null lines so that $V^+_{n-1}(g)\oplus L^+_n(g)$ is a positive isotropic $n$-plane.

Here is a geometric picture of the action of $g$ on $\HH^{n,n-1}$. Each of the subspaces $\PP(V^+_{n-1}), \PP(V^-_{n-1})$ in $\PP(\RR^{2n})$ are contained in the ideal boundary
\begin{align*}
\partial \HH^{n,n-1} &= \left\{ [x] \in \PP(\RR^{2n}): \langle x, x \rangle_{n,n} = 0 \right\}.
\end{align*}
of $\HH^{n,n-1}$.
The subspace $\PP(V^+_{n-1}(g) \oplus V^-_{n-1}(g))$ intersects $\HH^{n,n-1}$ in a totally geodesic copy of $\HH^{n-1,n-2}$ and the action of $g$  repels from $\PP(V^-_{n-1}(g))$ and attracts toward $\PP(V^+_{n-1}(g))$. For example, if $g$ is diagonalizable with distinct eigenvalues, then for each $1 \leq i \leq n-1$, $\exp \lambda'_i(g)$ is an eigenvalue of $g$ with eigenline $L_i^+$ and $\exp(-\lambda'_i(g))$ is an eigenvalue with eigenline $L^-_i$ such that $L^+_i \oplus L^-_i$ has signature $(1,1)$. The projection $\PP(L^+_i \oplus L^-_i)$ to $\PP(\RR^{2n})$ intersects $\HH^{n,n-1}$ in a line with ideal endpoints $\PP(L_i^+), \PP(L_i^-) \in \partial \HH^{n,n-1}$, which is invariant under $g$, is Riemannian, and has a well-defined orientation defined by labeling $\PP(L_i^+)$ the positive endpoint. The picture of the action on $\PP(V^+_{n-1}(g) \oplus V^-_{n-1}(g))$ is slightly more complicated in the case that $g$ is not diagonalizable and we do not attempt a thorough description here.

The important behavior we wish to observe is in the $g$-invariant Riemannian line $\axisH = \axisH(g) := \PP(L^+_n \oplus L^-_n)\cap \HH^{n,n-1}$ with endpoints $\PP(L^+_n),  \PP(L^-_n) \in \partial \HH^{n,n-1}$.
The translation along the axis $\axisH$, which is sometimes referred to as the \emph{slow axis}, may be either toward or away from $\PP(L_n^+)$, depending on the sign of $\lambda_n'(g)$. Hence, the translation amount
\begin{align}\label{eqn:length-H}
\mathscr L(g) := 2\lambda_n'(g)
\end{align}
has a well-defined sign. Note that under the same assumptions on $g \in \PSO(n,n)$ as above, the action of the cyclic group $\langle g \rangle$ on $\HH^{n,n-1}$ is properly discontinuous if and only if $\mathscr L(g) \neq 0$.

\begin{remark}\label{rem:sign-change}
If $g \in \PSO(n,n)$ has Lyapunov projection $\lambda'(g)$ as in~\eqref{eqn:evals-g} above, then $\mathscr L(g) = (-1)^n \mathscr L(g^{-1})$. This follows easily because $V^\pm_{n-1}(g^{-1}) = V^\mp_{n-1}(g)$, but $L^\pm_n(g^{-1}) = L^\mp_n(g)$ if $n$ is even while $L^\pm_n(g^{-1}) = L^\pm_n(g)$ if $n$ is odd. 
\end{remark}

\begin{remark}
It follows from Theorem~\ref{thm:kasco1} that if $g, h \in \PSO(n,n)$ have Lyapunov projections $\lambda'(g), \lambda'(h)$ as in~\eqref{eqn:evals-g} above, if $\langle g, h\rangle$ is not virtually cyclic, and if further $\mathscr L(g)$ and $\mathscr L(h)$ have opposite sign, then  $\langle g, h\rangle$ does not act properly discontinuously on $\HH^{n,n-1}$. This is the analogue of Margulis's Opposite Sign Lemma from the setting of affine geometry, see Lemma~\ref{lem:osl} below.
In particular, in light of Remark~\ref{rem:sign-change}, if $n$ is odd, then the only groups which admit proper actions by isometries of $\HH^{n,n-1}$ are virtually cyclic, see Benoist~\cite{Benoist}.
\end{remark}

\subsection{Translation lengths in $\EE^{n,n-1}$: the Margulis invariant}\label{sec:Margulis-invariant}

Recall that an element $g \in \Isom_+(\EE^{n,n-1}) < \PSL(2n,\RR)$ has the form
\begin{align}
g = \begin{bmatrix} A_g & v_g \\ 0 & 1 \end{bmatrix} \in \PSL(2n,\RR)
\end{align}
where $v_g \in \RR^{2n-1}$ is called the \emph{translational part} and $A_g \in \SO(n,n-1)$ is called the \emph{linear part}. Here we think of $\SO(n,n-1)$ as the subgroup of $\PSL(2n,\RR)$ which preserves the vector space $\RR^{2n-1}$ spanned by the first $2n-1$ coordinate basis vectors of $\RR^{2n}$, and which preserves the form $\langle \cdot, \cdot \rangle_{n,n}$, and hence preserves its restriction, denoted $\langle \cdot, \cdot \rangle_{n,n-1}$, to $\RR^{2n-1}$. The form $\langle \cdot, \cdot \rangle_{n,n-1}$ on $\RR^{2n-1}$ makes the affine hyperplane $x_{2n} = 1$, and hence the corresponding affine chart of projective space $\PP(\RR^{2n})$, into a copy of $\EE^{n,n-1}$, whose orientation-preserving isometry group has the form above.

Let $g \in \Isom_+(\EE^{n,n-1}) < \PSL(2n,\RR)$ and note that the Lyapunov projection $\lambda(g)$ is equal to the Lyapunov projection $\lambda''(A_g)$ of the linear part $A_g$, where we follow the convention of Example~\ref{eg:Jordan projection2} and think of $G'' = \SO(n,n-1)$ as embedded in $G' = \PSO(n,n)$ with both embedded in $G = \PSL(2n,\RR)$.
With our future application to actions on $\EE^{n,n-1}$ whose linear part is Hitchin, 
let us assume that the Lyapunov projection $\lambda(g)$ satisfies:
\begin{align}\label{eqn:evals-affine}
\lambda_1(g) \geq \cdots \geq \lambda_{n-1}(g) > \lambda_n(g) = 0 = -\lambda_n(g) >  -\lambda_{n-1}(g) \geq \cdots \geq -\lambda_1(g).
\end{align}
Then the affine transformation $g$ has a unique invariant line $\axisR$, which we will describe now.
The values listed in~\eqref{eqn:evals-affine} are precisely the logarithms of the moduli of the eigenvalues of $g$, repeated with multiplicity.
Let $V^+_{n-1}(g)$ denote the sum of the generalized eigenspaces associated to the $\lambda_1(g), \ldots, \lambda_{n-1}(g)$, and let $V^-_{n-1}(g)$ denote the sum of the generalized eigenspaces associated to the $-\lambda_{n-1}(g), \ldots, -\lambda_{1}(g)$. In fact, $V^+_{n-1}(g), V^-_{n-1}(g)$ are contained in $\RR^{2n-1} \subset \RR^{2n}$ and are sums of generalized eigenspaces for the linear part $A_g$ of~$g$. Each of $V^+_{n-1}(g), V^-_{n-1}(g)$ is an isotropic $(n-1)$-plane for the form $\langle \cdot, \cdot \rangle_{n,n-1}$ and the pair $(V^+_{n-1}(g), V^-_{n-1}(g))$ is transverse, meaning the span has signature $(n-1,n-1)$.
The generalized eigenspace $V_0(g)$ of $g$ for the eigenvalue $1 = \exp(0)$ is two-dimensional and contains the eigenline $L_0(g) \subset \RR^{2n-1}$ for the eigenvalue $1 = \exp(0)$ of $A_g$. Since $V_0 \cap \RR^{2n-1} = L_0$, we have that $\axisR = \axisR(g) := \PP(V_0) \cap \EE^{n,n-1}$ is an affine line parallel to the direction of $L_0$.
We may orient $L_0$ and hence $\axisR$ as follows. Choose a positively oriented basis 
\begin{align}\label{eqn:basis-affine}
(f_1^+, \ldots, f_{n-1}^+, f_0, f_{n-1}^-, \ldots, f_1^-) \subset (\RR^{2n-1})^{2n-1}
\end{align}
for $\RR^{2n-1}$ so that 
\begin{align*}
\mathsf{span}(f_1^+, \ldots, f_{n-1}^+) &= V^+_{n-1}\\
\mathsf{span}(f_1^-, \ldots, f_{n-1}^-) &= V^-_{n-1}\\
\RR f_0 &= L_0.
\end{align*}
Then $\langle f_i^+, f_j^+ \rangle = \langle f_i^-, f_j^- \rangle = \langle f_i^+, f_0 \rangle = \langle f_i^-, f_0\rangle= 0$ for all $1 \leq i, j \leq n-1$, and
$\langle f_0, f_0 \rangle > 0$. Further we may arrange that 
\begin{align*}
\langle f_i^+, f_j^- \rangle \ \ &\left\{\begin{array}{rr} = 0 & \text{ if } i \neq j \\ < 0 & \text{ if } i =j \end{array} \right..
\end{align*}
This together with the positive orientation of the basis determines the direction of $f_0$ and we orient $L_0$ so that the $f_0$ direction is positive.
This determines an orientation on any parallel affine line, in particular on the translation axis $\mathcal A$.

\begin{remark}\label{rem:orientation}
Alternatively, we may orient the line $L_0(g)$ as follows. 
Since $L_0(g)$ is positive for $\langle \cdot, \cdot \rangle_{n,n-1}$, the two-plane $L_0 \oplus \RR e_{2n}$ has signature $(1,1)$ for the form $\langle \cdot, \cdot \rangle_{n,n}$ and hence splits as a direct sum of null lines $L^+_0 \oplus L^-_0$ where we choose the labeling so that the isotropic $n$-plane $V^+_{n-1} \oplus L^+_0$ is positive. Then, 
there is a unique $\ell \in L_0(g)$ so that $\ell + e_{2n} \in L^+_0$. We orient $L_0(g)$ in the direction of $\ell$. This agrees with the orientation defined above.
\end{remark}

Since the line $\axisR$ is Riemannian, oriented, $\langle g\rangle$-invariant, and its direction is given by $f_0$, we may measure the signed translation distance of $g$ along $\axisR$ by the formula,
\begin{align}\label{eqn:Margulis-invt}
\alpha(g) := \langle g\cdot\mathbf x -\mathbf x, f_0 \rangle_{n,n-1},
\end{align}
where $\mathbf x$ is any point in $\axisR$.

\begin{remark}\label{rem:any-x}
A simple computation shows that the right-hand side of Equation~\ref{eqn:Margulis-invt} yields the same quantity for any $\mathbf x \in \EE^{n,n-1}$ (not just for $\mathbf x\in \axisR$):
\begin{align}
\alpha(g) = \langle g\cdot\mathbf x - \mathbf x, f_0\rangle_{n,n-1} &= \langle v, f_0\rangle_{n,n-1}\label{eqn:simplify-alpha}\\
 &= d_{\axisR}(\Pi_{\axisR}(\mathbf x),\Pi_{\axisR} (g\cdot\mathbf x))\label{eqn:alpha-projection}
\end{align}
where here $\Pi_{\axisR}: \EE^{n,n-1} \to \axisR$ denotes the orthogonal projection and $d_{\axisR}(y, x)$ denotes plus or minus the Riemannian distance between $y$ and $x$ along $\axisR$ with positive sign if and only if the pair $(y,x)$ is positive for the orientation induced by $f_0$.
\end{remark}

The quantity $\alpha(g)$ is often called the \emph{Margulis invariant} of the transformation $g$. It plays a crucial role in determining proper discontinuity of group actions on $\EE^{n,n-1}$. Indeed, the action of the cyclic group $\langle g \rangle$ is properly discontinuous if and only if $\alpha(g) \neq 0$. The following lemma, known as the Opposite Sign Lemma, goes back to Margulis's original work~\cite{mar83} on properly discontinuous groups of isometries of $\EE^{2,1}$. 

\begin{lemma}[Margulis~\cite{mar83}, Abels-Margulis-Soifer~\cite{AMS1}]\label{lem:osl}
Assume that $g, h \in \Isom_+(\EE^{n,n-1})$ have linear parts $A_g, A_h$ as above. Assume further that $\langle g,h \rangle$ is not virtually cyclic. If $\alpha(g), \alpha(h)$ have opposite sign, then $\langle g,h \rangle$ does not act properly discontinuously on $\EE^{n,n-1}$.
\end{lemma}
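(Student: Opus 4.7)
The result is classical: it is due to Margulis~\cite{mar83} for $\EE^{2,1}$ and to Abels--Margulis--Soifer~\cite{AMS1} in the generality stated here. I would follow their strategy, which is to contradict proper discontinuity directly by constructing a sequence $\gamma_k \in \langle g,h\rangle$ diverging in word length, together with bounded sequences of points $x_k, \gamma_k x_k \in \EE^{n,n-1}$.

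The key geometric input is the interpretation of $\alpha(g)$ as the signed translation distance of $g$ along its oriented axis $\axisR(g)$, see \eqref{eqn:alpha-projection}. The eigenvalue hypothesis \eqref{eqn:evals-affine} says that the linear part $A_g$ is ``hyperbolic'' with transverse attracting and repelling isotropic flags $V^\pm_{n-1}(g)$. Consequently, for large $N$, the map $g^N$ collapses any compact set disjoint from a neighborhood of the repelling flag exponentially close to $\axisR(g)$ and then translates by approximately $N\alpha(g)$ in the direction of $f_0$; the analogous statement holds for $h$. Under the opposite sign assumption, these two motions translate in opposing directions along their respective slow axes.

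The plan is to exploit this cancellation. One chooses carefully balanced exponents $N_k, M_k \to \infty$ so that the translational contributions of $g^{N_k}$ and $h^{M_k}$ approximately cancel after a common orthogonal projection, producing near-fixed points $x_k$ in a bounded region. The non-virtually-cyclic hypothesis guarantees transversality of the dynamical flags of $g$ and $h$ (possibly after replacing them with suitable powers or conjugates), ruling out the degenerate case where the two axes are parallel with compatible orientations. The principal obstacle, and the main technical content of~\cite{AMS1}, is controlling the drift in directions transverse to the two slow axes: one must verify that the near-fixed points produced by the construction actually remain in a fixed compact subset of $\EE^{n,n-1}$. This requires a ping-pong argument on the isotropic Grassmannian, using the exponential contraction/expansion rates of $A_g^{N_k}$ and $A_h^{M_k}$ on the complementary flags, together with careful bookkeeping of the translational parts via the cocycle recursion $v_{gh} = v_g + A_g v_h$ to ensure that the displacement $\gamma_k x_k - x_k$ along $f_0^g$ and $f_0^h$ stays bounded while $\gamma_k$ escapes to infinity.
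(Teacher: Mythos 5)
The paper does not prove this lemma; it is stated as a classical result and cited to Margulis~\cite{mar83} and Abels--Margulis--Soifer~\cite{AMS1}, so there is no in-paper argument to compare against. Your sketch is a faithful outline of the classical strategy from those references: approximate additivity of $\alpha$ on words in $g,h$ with uniformly transverse dynamical flags, cancellation of the opposite-signed contributions to produce words $\gamma_k\to\infty$ with bounded Margulis invariant, and a ping-pong/contraction argument to upgrade ``bounded invariant'' to ``bounded displacement of a point near the axis $\axisR(\gamma_k)$,'' which contradicts properness. You correctly flag that the real technical content is the transverse-drift control, and you correctly locate the role of the non-virtually-cyclic hypothesis (arranging transversality of the flags of suitable powers or conjugates). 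Worth noting: in the setting the paper actually cares about ($\Gamma=\pi_1 S$ with $P_{n-1}''$-Anosov linear part), the lemma admits a much slicker derivation, which the paper points out immediately after Theorem~\ref{thm:GLM-GT} --- the Margulis functional $\alpha_{(\rho,u)}$ extends continuously to the connected space of currents $\mathcal C(S)$, so opposite signs on two closed geodesics force a zero of $\alpha_{(\rho,u)}$ and hence non-properness. That route buys uniformity and avoids the combinatorial bookkeeping entirely, but it does not cover the full generality of the lemma as stated (an arbitrary non-virtually-cyclic two-generator subgroup, without reference to a surface group or an Anosov ambient representation), which is why the classical argument you sketch is the right one for the statement as given.
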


In order to prove Theorem~\ref{thm:main-flat}, we will need a full properness criterion for actions on $\EE^{n,n-1}$. The converse of Lemma~\ref{lem:osl} is not true (\cite{GLMM} give an example in $\EE^{2,1}$). However, a modified version of the converse does hold in the context of interest to us here. It is phrased in terms of geodesic currents.

\begin{remark}\label{rem:AMS}
Suppose that $g \in \Isom_+(\EE^{n,n-1}) $ has the property that the Lyapunov projection $\lambda''(A_g)$ of the linear part of $g$ satisfies
\[\lambda_1''(A_g) \geq \cdots \geq \lambda_{n-1}''(A_g)>0 >  -\lambda_{n-1}''(A_g) \geq \cdots \geq -\lambda_1''(A_g).\]
Observe that $\alpha(g) = (-1)^n \alpha(g^{-1})$. In particular, Lemma \ref{lem:osl} implies that if $\langle g,h \rangle\subset\Isom_+(\EE^{n,n-1})$ is not virtually cyclic, then $\langle g,h \rangle$ cannot act properly discontinuously on $\EE^{n,n-1}$ when $n$ is odd.
\end{remark}

\subsection{The space of geodesic currents}
We now return to our surface group $\Gamma = \pi_1 S$.
As in Section~\ref{sec:Anosov}, we fix a hyperbolic metric on the surface $S$ for this entire discussion. We let $\varphi_t$ denote the geodesic flow on $T^1 S$.
\begin{definition}\label{def:current}
An \emph{geodesic current} $\mu$ is a finite, $\varphi_t$-invariant, Borel measure on the unit tangent bundle $T^1 S$. We denote the space of geodesic currents on $S$ by $\mathcal{C}(S)$. 
\end{definition}

\begin{remark}
Geodesic currents were  introduced by Bonahon \cite{Bonahon1988} in his description of the Thurston boundary of Techm\"uller space. Definition~\ref{def:current}, which follows Goldman-Labourie-Margulis~\cite{GLM}, is slightly different than Bonahon's original definition in that the currents of Definition~\ref{def:current} are oriented, while those from Bonahon's setting are not.
\end{remark}

The most basic example of a geodesic current is the current associated to an oriented closed geodesic $c$ on $S$. Denote by $\mu_c$ the geodesic current that is uniformly supported on the tangent field of $c$ and whose total mass is
\[\int_{T^1S}d\mu_c=\ell(c),\]
where $\ell(c)$ denotes the length of $c$. This defines a map from the oriented closed geodesics $\mathcal{CG}(S)$ into the space $\mathcal C(S)$ of geodesic currents.

As a consequence of the Banach-Alaoglu Theorem, we have the following fact.

\begin{fact}\label{thm:Bonahon}
Equip $\mathcal{C}(S)$ with the weak-* topology. The space of probability currents 
\[\mathcal{C}_1(S):=\left\{\mu\in\mathcal{C}(S):\int_{T^1 S}d\mu=1\right\}.\]
is compact. 
\end{fact}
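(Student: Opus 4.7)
The plan is to deduce this compactness statement from the Banach--Alaoglu theorem applied to the compact manifold $T^1 S$. First I would recall that since $S$ is closed, $T^1 S$ is a compact metrizable space, so by the Riesz representation theorem the space $\mathcal{M}(T^1 S)$ of finite signed Borel measures on $T^1 S$ is naturally identified with the continuous dual of $C(T^1 S)$, with the weak-$*$ topology corresponding to testing against continuous functions. In this identification, the total variation norm coincides with the operator norm on $C(T^1 S)^*$.

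Next I would consider the closed unit ball $B \subset C(T^1 S)^*$, which by Banach--Alaoglu is weak-$*$ compact. A geodesic current is a positive element of $\mathcal{M}(T^1 S)$ which is $\varphi_t$-invariant, so $\mathcal{C}(S) \subset \mathcal{M}(T^1 S)$; further, a probability current $\mu \in \mathcal{C}_1(S)$ is positive, has $\mu(T^1 S) = 1$, and is $\varphi_t$-invariant. In particular $\mathcal{C}_1(S) \subset B$. The key observation is that each of these three conditions cuts out a weak-$*$ closed subset: positivity is the intersection over $f \in C(T^1 S)$ with $f \geq 0$ of the closed half-spaces $\{\mu : \int f\, d\mu \geq 0\}$; the normalization $\mu(T^1 S) = 1$ is defined by pairing against the continuous function $\mathbf{1}$; and $\varphi_t$-invariance is the intersection over $t \in \RR$ and $f \in C(T^1 S)$ of the closed hyperplanes $\{\mu : \int f \circ \varphi_t\, d\mu = \int f\, d\mu\}$, which makes sense because $f \circ \varphi_t \in C(T^1 S)$ for all $t$ (the geodesic flow is continuous).

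Therefore $\mathcal{C}_1(S)$ is a weak-$*$ closed subset of the weak-$*$ compact ball $B$, hence weak-$*$ compact. I do not expect any real obstacle: the only point that requires a moment of care is distinguishing the weak-$*$ topology from the total variation topology, and checking that the flow-invariance condition is genuinely weak-$*$ closed (which it is, by continuity of $\varphi_t$ on the compact space $T^1 S$). No hyperbolicity, no dynamics beyond continuity of $\varphi_t$, and no deeper structure of currents enters the argument.
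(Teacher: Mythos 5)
Your argument is correct and is exactly the route the paper intends: it states this fact as a direct consequence of Banach--Alaoglu, and your write-up simply fills in the standard details (Riesz identification of measures with $C(T^1S)^*$, weak-$*$ compactness of the unit ball, and the observation that positivity, normalization, and flow-invariance are all weak-$*$ closed conditions). Nothing to add.
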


\subsection{The Margulis invariant for currents and the properness criterion}\label{sec:Marg-currents}
Here we will discuss a properness criterion for actions on $\EE^{n,n-1}$, due originally to Goldman-Labourie-Margulis~\cite{GLM} in the case of free and surface groups acting on $\EE^{2,1}$, and extended by Ghosh-Trieb~\cite{GT} to the case of word hyperbolic groups acting with Anosov linear part in any $\EE^{n,n-1}$. This is one of several key tools needed for Theorem~\ref{thm:main-flat}. We shall discuss the properness criterion in the context of interest, namely $\Gamma = \pi_1 S$ is the fundamental group of a closed surface $S$ of negative Euler characteristic. As in the previous section we equip $S$ with a fixed hyperbolic metric.

Let $(\rho, u): \Gamma \to \Isom_+(\EE^{n,n-1}) = \SO(n,n-1) \ltimes \RR^{2n-1}$ be an action of the group $\Gamma$ by isometries of $\EE^{n,n-1}$. Here $\rho: \Gamma \to \SO(n,n-1) =: G''$ denotes the \emph{linear part} of the action, a homomorphism, and $u: \Gamma \to \RR^{2n-1}$ denotes the \emph{translational part}, which is a $\rho$-\emph{cocycle}: 
\[u(\gamma_1 \gamma_2) = u(\gamma_1) + \rho(\gamma_1)u(\gamma_2).\]

Suppose the linear part $\rho: \Gamma \to \SO(n,n-1)= G''$  is Anosov with respect to the stabilizer $P''_{n-1}$ of an isotropic $(n-1)$-plane in $\RR^{n,n-1}$. Then each non-trivial element $g = (\rho(\gamma), u(\gamma))$ satisfies~\eqref{eqn:evals-affine} and therefore the Margulis invariant $\alpha(\rho(\gamma),u(\gamma))$ is defined. Recall that oriented closed geodesics $c \in \mathcal{CG}(S)$ are in one-one correspondence with non-trivial conjugacy classes $[\gamma] \subset \Gamma$. Since the Margulis invariant is invariant under conjugation, we may naturally associate to the oriented closed geodesic $c = [\gamma]$, the Margulis invariant $\alpha((\rho(\gamma),u(\gamma)))$.

\begin{theorem}[Goldman-Labourie-Margulis, Ghosh-Trieb]\label{thm:GLM-GT}
Suppose the linear part $\rho: \Gamma \to \SO(n,n-1)$ of the affine action $(\rho,u)$ is $P''_{n-1}$-Anosov. Then:
\begin{enumerate}
\item\label{item:extend} There exists a unique continuous linear functional $\alpha_{(\rho, u)}: \mathcal C(S) \to \RR$ such that for each $c =[\gamma] \in \mathcal{CG}(S)$, 
\begin{align}\label{eqn:alpha-function}
\alpha_{(\rho,u)}(\mu_c) = \alpha((\rho(\gamma), u(\gamma))).
\end{align}
\item\label{item:properness} The action $(\rho,u)$ of $\Gamma$ on $\EE^{n,n-1}$ is properly discontinuous if and only if $\alpha_{(\rho,u)}(\mu) \neq 0$ for all $\mu \in \mathcal C(S)$. 
\end{enumerate}

\end{theorem}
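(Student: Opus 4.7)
The plan is to prove Theorem~\ref{thm:GLM-GT} by first constructing the functional $\alpha_{(\rho,u)}$ in~(\ref{item:extend}) as the integration of an equivariant continuous function on the unit tangent bundle, then deducing~(\ref{item:properness}) by combining continuity with a dynamical ``opposite sign'' type argument.

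For part~(\ref{item:extend}), I would start from the Anosov limit map $\xi^{(n-1)}:\partial\Gamma\to\Gr_{n-1}(\RR^{n,n-1})$ of $\rho$. For each ordered pair $(y,x)\in\partial\Gamma^{(2)}$ of distinct boundary points, the isotropic $(n-1)$-planes $\xi^{(n-1)}(x),\xi^{(n-1)}(y)$ span a transverse pair, whose orthogonal complement is a positive line $L_0(y,x)\subset\RR^{n,n-1}$. Orienting $L_0(y,x)$ as in Remark~\ref{rem:orientation} (using the auxiliary sign convention in $\RR^{n,n}$ from Section~\ref{sec:intro:negcurv}, applied to the transverse pair of isotropic $n$-planes $\xi^{(n-1)}(x)\oplus\RR(f_0+e_{2n})$ and $\xi^{(n-1)}(y)\oplus\RR(f_0-e_{2n})$) produces a continuous $\rho$-equivariant unit section $f_0:T^1\widetilde{S}\to\RR^{n,n-1}$ that is constant along every geodesic flow line. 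Next, the affine action $(\rho,u)$ admits a continuous $\rho$-equivariant map $\Phi:T^1\widetilde{S}\to\EE^{n,n-1}$ which is smooth along flow lines; such a $\Phi$ can be built by composing a smooth $\Gamma$-equivariant map $T^1\widetilde S\to\widetilde S$ (say, footpoint followed by an equivariant identification $\widetilde S\to\EE^{n,n-1}$ coming from any choice of $\rho$-equivariant embedding obtained via partition of unity). Define $\psi_{(\rho,u)}(\nu):=\langle\partial_t\Phi(\varphi_t\nu)|_{t=0},f_0(\nu)\rangle_{n,n-1}$. A direct computation using $\rho$-equivariance of $\Phi$ and $f_0$, together with flow-invariance of $f_0$, shows that $\psi_{(\rho,u)}$ is $\Gamma$-invariant and descends to a continuous function on $T^1S$; the choice of $\Phi$ only affects $\psi_{(\rho,u)}$ by a coboundary (a Lie derivative of a continuous $\Gamma$-invariant function along the flow), hence does not affect integrals against $\varphi_t$-invariant measures. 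Setting $\alpha_{(\rho,u)}(\mu):=\int_{T^1S}\psi_{(\rho,u)}\,d\mu$ gives a continuous linear functional on $\mathcal C(S)$, and for $c=[\gamma]$ the fundamental theorem of calculus applied along the closed orbit of $c$ yields $\int_0^{\ell(c)}\psi_{(\rho,u)}(\varphi_t\nu_c)\,dt=\langle u(\gamma),f_0(\gamma^-,\gamma^+)\rangle=\alpha(\rho(\gamma),u(\gamma))$ as required by~(\ref{eqn:alpha-function}). Uniqueness follows because closed geodesic currents $\{\mu_c\}$ span a dense subspace of $\mathcal C(S)$.

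For the ``if'' direction of part~(\ref{item:properness}), suppose $\alpha_{(\rho,u)}$ is nowhere zero on $\mathcal C(S)\setminus\{0\}$. By Fact~\ref{thm:Bonahon}, $\mathcal C_1(S)$ is compact, so continuity gives a uniform bound $|\alpha_{(\rho,u)}(\mu)|\geq\epsilon>0$ for $\mu\in\mathcal C_1(S)$, hence $|\alpha(\rho(\gamma),u(\gamma))|\geq\epsilon\ell(c_\gamma)$ for every non-trivial $\gamma$, where $\ell(c_\gamma)$ is the length of the closed geodesic represented by $\gamma$. Because the linear part is $P''_{n-1}$-Anosov, $\ell(c_\gamma)$ is comparable to $|\gamma|_\infty$ and to the largest Lyapunov exponent of $\rho(\gamma)$; in particular the Margulis invariants grow linearly in word length. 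Combining this uniform growth of the displacement in the neutral direction with the Anosov expansion/contraction in the transverse directions, one verifies a Benoist--Kobayashi style displacement estimate directly: for any compact $K\subset\EE^{n,n-1}$ only finitely many $\gamma$ satisfy $\gamma K\cap K\neq\emptyset$, so the action is properly discontinuous.

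For the ``only if'' direction, suppose some $\mu\in\mathcal C(S)$ has $\alpha_{(\rho,u)}(\mu)=0$; by ergodic decomposition we may take $\mu$ ergodic with $\int d\mu>0$. The Anosov closing lemma produces closed geodesics $c_k$ with $\mu_{c_k}/\ell(c_k)\to\mu/\int d\mu$ weakly, so $\alpha(\rho(\gamma_k),u(\gamma_k))/\ell(c_k)\to 0$. If $\alpha(\rho(\gamma_k),u(\gamma_k))$ itself is unbounded we can pair with a second sequence producing Margulis invariants of opposite sign and large magnitude (taking inverses or using the two different ergodic limits $c_k, c_k^{-1}$ and adjusting) and invoke the Opposite Sign Lemma~\ref{lem:osl}; otherwise $\alpha(\rho(\gamma_k),u(\gamma_k))$ is bounded while the $\gamma_k$ tend to infinity, and an explicit accumulation argument on a compact neighbourhood of the axis $\axisR(g_k)$ shows that $\gamma_k\cdot K\cap K\neq\emptyset$ for infinitely many $k$, violating properness. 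The main obstacle, and the most technical step, is the construction and control of the equivariant $\Phi$ together with the verification that the displacement estimate from uniform positivity of $\alpha_{(\rho,u)}$ on $\mathcal C_1(S)$ genuinely implies properness in all directions of $\EE^{n,n-1}$, not merely along axes, where the Anosov transverse dynamics must be brought to bear to handle the ``escaping to infinity transverse to $f_0$'' contributions.
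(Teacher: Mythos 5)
First, a point of context: the paper does not prove Theorem~\ref{thm:GLM-GT}; it is imported from Goldman--Labourie--Margulis~\cite{GLM} and Ghosh--Trieb~\cite{GT}, and the paper only reproduces the construction of the functional (formula~\eqref{eqn:defn-alpha}) and the verification of~\eqref{eqn:alpha-function} on closed geodesics. Your part~(\ref{item:extend}) coincides with that construction: your $\Phi$ is the lift of a section $s$ of the flat $\EE^{n,n-1}$-bundle, your $\psi_{(\rho,u)}$ is the integrand $\metric^\EE(\nabla_\varphi s, f_0)$, your line $L_0(y,x)$ and its orientation agree with the paper's neutral section, and the fundamental-theorem-of-calculus computation along a closed orbit is the one carried out after the theorem statement. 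Your uniqueness observation (density of closed-orbit currents) and the coboundary argument for independence of the section are correct; the latter is exactly what the paper defers to \cite[\S 6.2]{GLM} in a remark.

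Part~(\ref{item:properness}) is where your proposal has genuine gaps. For the implication ``$\alpha_{(\rho,u)}$ nowhere zero $\Rightarrow$ proper,'' the uniform bound $|\alpha(\rho(\gamma),u(\gamma))|\geq\epsilon\,\ell(c_\gamma)$ controls only the displacement of points \emph{on or near the axes} $\axisR$ in the neutral direction; it does not by itself give a lower bound on $\inf_{\mathbf x\in K} d(\mathbf x,\gamma\cdot\mathbf x)$ for an arbitrary compact $K$, because the quadratic form is indefinite and points can a priori return to $K$ while moving enormously in the null directions. The sentence ``one verifies a Benoist--Kobayashi style displacement estimate directly'' is precisely the content of the cited works --- the diffused/continuous Margulis invariant machinery of~\cite{GLM}, respectively the affine Anosov property of~\cite{GT} --- and not an argument. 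In the converse direction, your opposite-sign maneuver fails in the relevant case $n$ even: by Remark~\ref{rem:AMS}, $\alpha(g^{-1})=(-1)^n\alpha(g)$, so inverting elements or reversing the approximating closed geodesics produces invariants of the \emph{same} sign, and the hard case is a current $\mu$ with $\alpha_{(\rho,u)}(\mu)=0$ while $\alpha_{(\rho,u)}\geq 0$ everywhere. Your fallback ``explicit accumulation argument'' for a sequence $\gamma_k\to\infty$ with bounded Margulis invariants must show that the axes $\axisR(\rho(\gamma_k),u(\gamma_k))$ do not escape to infinity in $\EE^{n,n-1}$, which again requires the Anosov structure (compactness of the neutral section over $T^1S$) and is not supplied. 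Part~(\ref{item:extend}) stands; part~(\ref{item:properness}) should either be cited, as the paper does, or completed along the lines of~\cite{GLM,GT}.
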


Note that Theorem~\ref{thm:GLM-GT}.\eqref{item:properness} implies the Opposite Sign Lemma~\ref{lem:osl}, since the space $\mathcal C(S)$ of currents is connected.

In order to study properly discontinuous affine actions with Anosov linear part, as in Theorem~\ref{thm:GLM-GT}, Ghosh-Trieb~\cite{GT} generalize the ideas of~\cite{GLM} and introduce a notion of \emph{affine Anosov}.
We will not recall their definition here. However, since it will be useful for the proof of Theorem~\ref{thm:main-flat}, let us explain the construction of the functional $\alpha_{(\rho,u)}$ in Theorem~\ref{thm:GLM-GT}.\eqref{item:extend}. 
Let $(\rho,u)$ as in the theorem statement.
In order to discuss Anosov properties of representations in $\Isom_+(\EE^{n,n-1})$, which is not reductive, we think of $\Isom_+(\EE^{n,n-1})$ as a subgroup of $\PSL(2n,\RR)$. Observe that since $\rho$ is $P_{n-1}''$-Anosov, the representation $(\rho,0)$, when viewed as a representation into $\PSL(2n,\RR)$, is Anosov with respect to the stabilizer $P_{n-1,n+1}$ in $\PSL(2n,\RR)$ of a flag made up of an $(n-1)$-space contained in a $(n+1)$-space, see Section~\ref{sec:Anosovinclusion}. 
Since $(\rho,u)$ is conjugate in $\PSL(2n,\RR)$ to $(\rho, \eps u)$ for any $\eps > 0$, and since $(\rho, \eps u) \to (\rho,0)$ as $\eps \to 0$, it follows from Fact \ref{fact:open} that $(\rho,u)$, when viewed as a representation into $\PSL(2n,\RR)$, is also $P_{n-1,n+1}$-Anosov. 
Let $\xi^{(n-1)}: \partial \Gamma \to \Gr_{n-1}(\RR^{2n})$ and $\xi^{(n+1)}: \partial \Gamma \to \Gr_{n+1}(\RR^{2n})$ denote the corresponding boundary maps. Then note that $\xi^{(n-1)}$ does not depend on the translational part~$u$; it is simply the composition of the boundary map for the $P_{n-1}''$-Anosov representation $\rho: \Gamma \to \SO(n,n-1)$ with the inclusion $\Gr_{n-1}(\RR^{n,n-1}) \to \Gr_{n-1}(\RR^{2n})$ induced by the inclusion $\RR^{n,n-1} = \RR^{2n-1} \to \RR^{2n}$ as the subspace orthogonal to $e_{2n}'$. However, $\xi^{(n+1)}$ does depend on~$u$. 

Together, the Anosov boundary maps define a splitting of the flat $\RR^{2n}$-bundle associated to $(\rho,u)$ into sub-bundles that are invariant under the geodesic flow $d\varphi_t$:
\begin{align}\label{eqn:decomp-2n}
V_{(\rho,u)} &= \Gamma \backslash(T^1 \widetilde S \times \RR^{2n}) =  V^+ \oplus V_0 \oplus V^-
\end{align}
where $V^+, V^-$ have rank $n-1$ and $V_0$ has rank two. Thought of as $(\rho,u)$-equivariant maps $V^\pm: T^1\widetilde{S}\to\Gr_{n-1}(\RR^{2n})$ and $V_0: T^1\widetilde{S}\to\Gr_2(\RR^{2n})$, the three maps depend only on the $y$ and $x$ coordinates of the point $(y,z,x) \in T^1 \widetilde S$.  Explicitly, $V^+(y,x) = \xi^{(n-1)}(x)$, $V^-(y,x) = \xi^{(n-1)}(y)$ and $V_0(y,x) = \xi^{(n+1)}(y) \cap \xi^{(n+1)}(x)$. 

Since $V^+(y,x), V^-(y,x)$ are each contained in $\RR^{2n-1}$, it follows that $V_0(y,x) \cap \RR^{2n-1} =: L_0(y,x)$ is one-dimensional. Indeed the decomposition $V^+(y,x) \oplus L_0(y,x) \oplus V^-(y,x) = \RR^{2n-1}$ does not depend on the translational part~$u$; it precisely induces the decomposition of the flat $\RR^{2n-1}$ bundle $V_\rho$ associated to $\rho$ coming from the Anosov boundary map $\xi^{(n-1)}$:
\begin{align}
V_\rho =  \Gamma \backslash(T^1 \widetilde S \times \RR^{2n-1}) =  V^+ \oplus L_0 \oplus V^-.
\end{align}

Note that $V^+(y,x), V^-(y,x)$ are null subspaces of $\RR^{n,n-1}$ and $L_0(y,x)$ is a positive line (meaning the restriction of $\langle \cdot, \cdot \rangle_{n,n-1}$ is positive definite) which is orthogonal to $V^+(y,x) \oplus V^-(y,x)$ in $\RR^{n,n-1}$.

For $(y,x) = (\gamma^-, \gamma^+)$ the pair of repelling and attracting fixed points for an element $\gamma \in \Gamma$, the subspaces $V^+, V^-, V_0, L_0$ corresponds precisely to those coming from the decomposition into generalized eigenspaces for $g = (\rho(\gamma, u(\gamma))$ of Section~\ref{sec:Margulis-invariant}:
\begin{align*}
V^\pm(\gamma^-, \gamma^+) &= V_{n-1}^\pm((\rho(\gamma), u(\gamma)))\\
V_0(\gamma^-, \gamma^+) &= V_0((\rho(\gamma), u(\gamma)))\\
L_0(\gamma^-, \gamma^+) &= L_0((\rho(\gamma), u(\gamma))).
\end{align*} 
By the discussion in Section~\ref{sec:Margulis-invariant}, $L_0(\gamma^-, \gamma^+)$ is an oriented line and using the same convention we define an orientation on $L_0(y,x)$ for all $y \neq x$ in $\partial \Gamma$. Hence, there is a unique positive unit vector $f_0(y,x)$ in each line $L_0(y,x)$, which defines the \emph{neutral section} $f_0: T^1 S \to V_\rho$.

Now, consider the flat $\EE^{n,n-1}$-bundle over $T^1S$,
\begin{align}
\mathsf{E}_{(\rho,u)} := \Gamma \backslash(T^1 \widetilde S \times \EE^{n,n-1}),
\end{align}
where here $\Gamma$ acts on the $\EE^{n,n-1}$ factor by the (affine) isometries $(\rho,u)$.
The geodesic flow $\varphi_t$ lifts in the usual way to a flow on $\mathsf{E}_{(\rho,u)}$ which is locally constant in the fiber. 
Let $s: T^1 S \to \mathsf{E}_{(\rho,u)}$ be a section which is differentiable along flow lines. The derivative $\nabla_{\varphi} s$ along the geodesic flow takes values in the vertical tangent bundle $T^{\mathrm{v}} \mathsf{E}_{(\rho,u)}$ of $\mathsf{E}_{(\rho,u)}$ which canonically identifies with the vector bundle $V_\rho$. Then for a current $\mu \in \mathcal C(S)$, define:
\begin{align}\label{eqn:defn-alpha}
\alpha_{(\rho,u)}(\mu) := \int_{\nu \in T^1 S} \left\langle \nabla_{\varphi}s , f_0 \right\rangle \ d\mu
\end{align}
where here $\langle \cdot, \cdot \rangle$ is the inner product on $V_\rho$ coming from the inner product $\langle \cdot, \cdot \rangle_{n,n-1}$ on $\RR^{n,n-1}$.
Let us  see Theorem~\ref{thm:GLM-GT}.\eqref{item:extend}: that~\eqref{eqn:defn-alpha} satisfies~\eqref{eqn:alpha-function} in the case that $\mu = \mu_c$ is the current associated to the closed geodesic $c \in \mathcal{CG}(S)$. In the following, $\D c: [0, \ell(c)] \to T^1 S$ is the tangent vector to the path traversing the geodesic $c$ at unit speed.
\begin{align}\label{eqn:calculus}
\alpha_{(\rho,u)}(\mu_c) &= \int_{\tau = 0}^{\ell(c)}  \left\langle (\nabla_\varphi s)(\D c(\tau)), f_0(\D c(\tau)) \right\rangle \ d\tau 
\end{align}
and the right-hand side may be evaluated by lifting to $T^1 \widetilde S$ where the bundles in consideration become products. Let $\widetilde s: T^1 \widetilde S \to \EE^{n,n-1}$ be the lift of the section $s$, a $(\rho, u)$-equivariant map. Choose a lift $\widetilde c$ of $c$ to $\widetilde S$ and let $\D \widetilde{c}: [0,\ell(c)] \to T^1 \widetilde S$ be the tangent vector to the unit speed parameterization of $\widetilde c$. Then the right-hand side of~\eqref{eqn:calculus} becomes
\begingroup
\addtolength{\jot}{0.5em}
\begin{align*}
\alpha_{(\rho,u)}(\mu_c)&=  \int_{\tau = 0}^{\ell(c)}  \left\langle (\nabla_\varphi \widetilde s)(\D \widetilde c(\tau)), f_0(\D \widetilde c(\tau)) \right\rangle \ d\tau
\\
&= \int_{\tau = 0}^{\ell(c)}  \left\langle \left.\frac{\D}{\D t}\right|_{t=0}\widetilde s(\D \widetilde{c}(\tau+t)), f_0(\D \widetilde{c}(\tau)) \right\rangle \ d\tau \\ &= \int_{\tau = 0}^{\ell(c)}  \left.\frac{\D}{\D t}\right|_{t=0}\left\langle \widetilde s(\D \widetilde{c}(\tau+t)), f_0(\D \widetilde{c}(\tau)) \right\rangle \ d\tau\\
 &=  \left\langle \widetilde s(\D\widetilde{c}(\ell(c))) - \widetilde s(\D \widetilde{c}(0)) , f_0(\gamma^-, \gamma^+) \right\rangle \\
 &= \left\langle \widetilde s(\gamma\cdot\D \widetilde{c}(0)) - \widetilde s(\D \widetilde{c}(0)) , f_0(\gamma^-, \gamma^+) \right\rangle\\ 
 &= \left\langle  (\rho(\gamma), u(\gamma))\cdot\widetilde s(\D \widetilde{c}(0)) - \widetilde s(\D \widetilde{c}(0)) , f_0(\gamma^-, \gamma^+) \right\rangle\\
 &= \alpha_{(\rho,u)}(c)
\end{align*}
\endgroup
where here $\gamma \in \Gamma$ is the element corresponding to the chosen lift $\widetilde c$ of $c$, we observe that $f_0(\D \widetilde{c}(\tau))= f_0(\gamma^-, \gamma^+)$ is independent of $\tau$, and we note the final equality follows from~\eqref{eqn:simplify-alpha}.

Before continuing to an analogous theory in the $\HH^{n,n-1}$ setting, let us first give a useful interpretation of Formula~\eqref{eqn:defn-alpha}. Formula~\eqref{eqn:defn-alpha} says that to calculate $\alpha_{(\rho,u)}(\mu)$, one first measures the infinitesimal signed progress (along the geodesic flow on $T^1S$) made by a section $s$ of $\mathsf{E}_{(\rho,u)}$ in the neutral direction $f_0$ above each point of $T^1 S$ , and then integrate it against $\mu$. 
We will now interpret the neutral vector $f_0$ as the vector field on $\EE^{n,n-1}$ whose pairing with a vector $v$ based at any point $\mathbf x \in \EE^{n,n-1}$ measures the projection of $v$ to an oriented \emph{translation axis} $\axisR \subset \EE^{n,n-1}$ parallel to~$f_0$.
We define the translation axis $\axisR(\nu)$ above $\nu\in T^1 S$ using the middle sub-bundle $V_0$ from the decomposition~\eqref{eqn:decomp-2n}. More precisely, suppose that $\nu\in T^1S$ lifts to $\widetilde{\nu}=(y,z,x)\in T^1\widetilde{S}$. Then
\begin{align}
\axisR(y,x) := \PP(V_0(y,x)) \cap \EE^{n,n-1}
\end{align}
is an affine line $\EE^{n,n-1}$ whose direction is $L_0(y,x)$.
Since $(y,x) \mapsto \axisR(y,x)$ is $(\rho,u)$-equivariant, $\axisR$ defines an affine Riemannian line, denoted by $\axisR(\nu)$, in the fiber $\EE^{n,n-1}_\nu$ of $\mathsf{E}_{(\rho,u)}$ which varies continuously with $\nu$.
The neutral vector $f_0(\nu)$ at $\nu \in T^1 S$, which is tangent to $\axisR(\nu)$, defines an orientation of the corresponding line $\axisR(\nu)$.
We call $\axisR(\nu)$ the \emph{translation axis} associated to $\nu \in T^1S$. Note that $\axisR(\nu)$ is locally constant under the geodesic flow $\varphi_t$.

Next consider any oriented Riemannian line $\axisR$ in $\EE^{n,n-1}$ and let $f_0 \in \RR^{n,n-1}$ denote a non-zero tangent vector to $\axisR$. Let $\Pi_\axisR: \EE^{n,n-1} \to \axisR$ denote the orthogonal projection, defined by the property that $\Pi_\axisR(\mathbf x)$ is the unique point in $\axisR$ so that $\langle \mathbf x - \Pi_\axisR(\mathbf x), f_0 \rangle_{n,n-1} = 0$.
Note that $\Pi_{\axisR}$ satisfies the equivariance property that for any $g \in \Isom_+(\EE^{n,n-1})$, $\Pi_{g\cdot\axisR}(g\cdot\mathbf x) = g\cdot\Pi_{\axisR}(\mathbf x)$. In particular, if $\axisR$ is invariant under $g$, then $\Pi_{\axisR}(g\cdot\mathbf x) = g\Pi_{\axisR}(\mathbf x)$.
Note that the function $(y,z,x) \mapsto \Pi_{\axisR(y,x)}$ is $(\rho,u)$-equivariant and hence descends to $T^1S$ giving a continuous assignment of a projection map $\Pi_{\axisR(\nu)}: \EE^{n,n-1}_\nu \to \axisR(\nu)$ in the fiber above $\nu \in T^1S$.
Observe that for any vector $v \in T_{\mathbf x} \EE^{n,n-1}$, 
\begin{align*}
 \metric^\EE_{\mathbf x}(v, f_0) 
 &= \metric^\EE_{\Pi_{\axisR}\mathbf x}(\D \Pi_{\axisR}v, f_0)
 \end{align*}
 where $\metric^\EE$ denotes the flat metric on $\EE^{n,n-1}$, and we interpret $f_0 \in \RR^{n,n-1}$ as a parallel vector field on $\EE^{n,n-1}$. Hence we may rewrite formula~\eqref{eqn:defn-alpha} as follows (see Figure~\ref{fig:diagram1}):
 \begin{align}\label{eqn:defn-alpha-alt}
\alpha_{(\rho,u)}(\mu) &= \int_{\nu \in T^1 S} \metric^\EE (\nabla_{\varphi}s , f_0 ) \ d\mu\\
&= \int_{\nu \in T^1 S} \metric^\EE \left(\D \Pi_{\axisR(\nu)} \left((\nabla_{\varphi}s)(\nu)\right) , f_0(\nu) \right) \ d\mu\nonumber.
\end{align}

\begin{figure}[h]
\centering

\vspace{0.4cm}
\def\svgwidth{7.0cm}
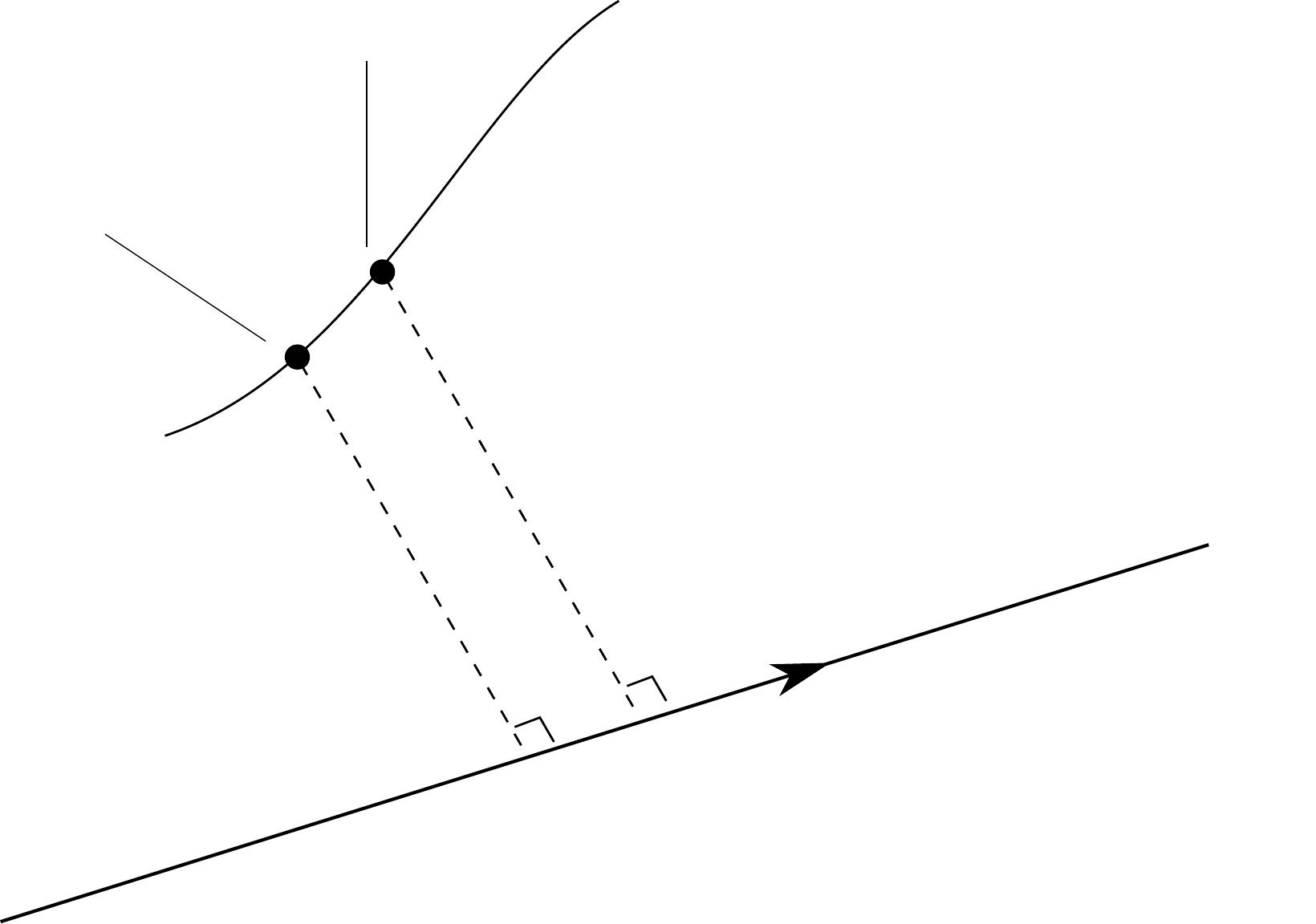

\caption[]{The Margulis invariant $\alpha_{(\rho,u)}(\mu)$ is the rate at which the projection of $s(\nu)$ to the translation axis $\axisR(\nu)$ makes progress under the geodesic flow, averaged over $\nu \in T^1 S$ against the current $\mu$. Here we use the flat connection to identify the fibers of $\mathsf E_{(\rho,u)}$ above the flow line $\varphi_t \nu$ with a fixed copy of $\EE^{n,n-1}$ and note that the translation axis $\axisR(\varphi_t \nu) = \axisR(\nu)$ is constant in $t$.} \label{fig:diagram1}
\end{figure}

\subsection{Extending the length function $\mathscr L$ for $\HH^{n,n-1}$ to currents}
\label{sec:n-1n+1}

We now give an analogue of the construction from the previous section in the setting of $\HH^{n,n-1}$ geometry.
We follow the notation conventions of Section~\ref{sec:length-function-H} and Example~\ref{eg:Jordan projection}, thinking of $G' = \PSO(n,n)$ as embedded in $G= \PSL(2n,\RR)$.

Consider a representation $\varrho:\Gamma\to G'$ which is Anosov with respect to the stabilizer $P_{n-1}'$ of an isotropic $(n-1)$-plane and form the flat $\HH^{n,n-1}$ bundle associated to $\varrho$:
\begin{align*}
\mathsf{H}_\varrho := \Gamma \backslash (T^1 \widetilde S \times \HH^{n,n-1}).
\end{align*}
As usual, we lift the geodesic flow $\varphi_t$ to $\mathsf{H}_\varrho$ so that it is locally constant in the fiber.
Suppose now that there is a differentiable section $s: T^1 S \to \mathsf{H}_\varrho$ (such a section exists in the setting where we will apply this later). The present goal will be to work by analogy to~\eqref{eqn:defn-alpha-alt} and use the variation of the section $s$ along the geodesic flow to define a continuous length functional $\mathscr L_\varrho$ on the space of geodesic currents $\mathcal C(S)$, that satisfies
\begin{align*}
\mathscr L_\varrho(\mu_c) = \mathscr L(\varrho(\gamma)).
\end{align*}
Here, $[\gamma] = c$ and the function $\mathscr L$ of Section~\ref{sec:length-function-H} is well-defined on $\varrho(\gamma)$ since $\varrho$ is $P_{n-1}'$-Anosov. 

Let $\xi^{(n-1)}: \partial \Gamma \to \Gr_{n-1}(\RR^{n,n})$ and $\xi^{(n+1)}: \partial \Gamma \to \Gr_{n+1}(\RR^{n,n})$ be the associated Anosov boundary maps and let $\xi^{(n)}_+:  \partial \Gamma \to \Gr_{n}^+(\RR^{n,n})$ and $\xi^{(n)}_-: \partial \Gamma \to \Gr_{n}^-(\RR^{n,n})$ be the maps
defined by $\xi^{(n)}_\pm = \varpi_\pm \circ \xi^{(n-1)}$, where $\varpi_+$ (resp. $\varpi_-$) is the projection taking an isotropic $(n-1)$-plane to the unique positive (resp. negative) istropic $n$-plane containing it, see Proposition~\ref{prop:projections}. Then for each pair $(y,x)$ of distinct points in $\partial \Gamma$, define
\begin{align}
L_n^+(y,x) &= \xi^{(n)}_+(x) \cap \xi^{(n+1)}(y)\\
L_n^-(y,x) &= \xi^{(n)}_-(x) \cap \xi^{(n+1)}(y).
\end{align}

Note that in the case that $(y,x) = (\gamma^-, \gamma^+)$ are the repelling and attracting fixed points for an element $\gamma \in \Gamma$, we have:
\begin{align*}
 L_n^+(\gamma^-, \gamma^+) &= L_n^+(\varrho(\gamma))\\
 L_n^-(\gamma^-, \gamma^+) &= L_n^-(\varrho(\gamma))
 \end{align*} where $L_n^\pm(g)$ is as defined in Section~\ref{sec:length-function-H}.
Now, define 
\begin{align}
\axisH_\varrho(y,x)=\axisH(y,x) := \PP( L_n^+(y,x) \oplus L_n^-(y,x)) \cap \HH^{n,n-1}
\end{align}
Then $\axisH(\gamma^-, \gamma^+)$ is the slow axis for $\varrho(\gamma)$ as in Section~\ref{sec:length-function-H}. For any point $(y,z,x) \in T^1 \widetilde S$, we call $\axisH(y,x)$ the \emph{slow axis} associated to $(y,z,x)$; it is invariant under $\varphi_t$ (\ie it is independent of $z$). Further, by the same convention as for $\axisH(\gamma^-, \gamma^+)$, described in Section~\ref{sec:length-function-H}, the axis $\axisH(y,x)$ is endowed with a natural orientation, namely that for which $\PP(L_n^+(y,x))$ is the forward endpoint and $\PP(L_n^-(y,x))$ the backward endpoint. We equip $\axisH(y,x)$ with the (anti-symmetric) signed distance function $d_{\axisH(y,x)}(\cdot, \cdot)$, where $d_{\axisH(y,x)}([v],[w])$ is plus/minus the Riemannian distance with positive sign if and only $([v],[w])$ is positive for the orientation. Let $\nu\in T^1S$ be the point that lifts to $\widetilde{\nu}=(y,z,x)\in T^1\widetilde{S}$. Since the construction of $\axisH(y,x)$ is equivariant, $\axisH$ descends to $T^1 S$, giving a smooth assignment of an oriented Riemannian axis $\axisH(\nu)$ in the fiber $\HH^{n,n-1}_\nu$ above $\nu$.

Next, consider any oriented Riemannian geodesic axis $\axisH$ in $\HH^{n,n-1}$. Write $\axisH = \PP(L^- \oplus L^+)$ where $\PP(L^-), \PP(L^+) \in \partial \HH^{n,n-1}$ are the negative and positive endpoints of $\axisH$ respectively. 
Choose $f^+ \in L^+$ and $f^- \in L^-$ so that $\langle f^+, f^- \rangle = -1$. Define $\mathcal U(\axisH) \subset \HH^{n,n-1}$ to be the open subset of points $[v] \in \HH^{n,n-1}$ so that $\langle v, f^+\rangle\langle v, f^- \rangle > 0$ and note that $\mathcal U(\axisH)$ is independent of the choice of $f^+, f^-$ as above. The region $\mathcal U(\axisH)$ is a maximal neighborhood of the axis $\axisH$ on which the following ``nearest point" projection is defined. Let  $\Pi_{\axisH}: \mathcal U(\axisH) \to \axisH$ be given by the formula:
\begin{align}
\Pi_{\axisH}([w]) := \left[-\langle w, f^+ \rangle f^- - \langle w, f^-\rangle f^+\right]
\end{align}
Again, note that $\Pi_{\axisH}$ is independent of the choice of $f^+, f^-$ as above, and note also that $\Pi_{\axisH}$ is smooth and varies smoothly as $\axisH$ varies.
Further, note that $\Pi_{\axisH}$ satisfies the equivariance property that for any $g \in \PSO(n,n)$, $\Pi_{g\axisH}(g\cdot[v]) = g\cdot\Pi_{\axisH}([v])$. In particular, if $\axisH$ is invariant under $g$, then $\Pi_{\axisH}(g\cdot[v]) = g\Pi_{\axisH}([v])$.
Note that the functions $(y,z,x) \mapsto \mathcal U(\axisH(y,x))$ and $(y,z,x) \mapsto \Pi_{\axisH(y,x)}$ are $\varrho$-equivariant and hence descend to $T^1S$ giving a smooth assignment of an open neighborhood $\mathcal U(\axisH(\nu))$ of $\axisH(\nu)$, and a projection map $\Pi_{\axisH(\nu)}$ of the fiber above $\nu \in T^1S$ to the axis $\axisH(\nu)$ in that fiber.

Next, let $\axisH$ denote any oriented Riemannian line in $\HH^{n,n-1}$. 
We define the vector field $f = f_\axisH$ on $\mathcal U(\axisH)$ to be the extension of the unit tangent field to $\axisH$ that satisfies that $\D \Pi_{\axisH} f_{\mathbf x} = f_{\Pi_{\axisH}\mathbf x}$ for any $\mathbf x \in \mathcal U(\axisH)$, and that $f$ is orthogonal to the kernel of the projection $\D \Pi_{\axisH}$. 
Hence
\begin{align}\label{eqn:f}
\metric^\HH_{\mathbf x}(v, f) &= \metric^\HH_{\Pi_{\axisH} \mathbf x}( \D \Pi_{\axisH} v, f )
\end{align}
holds for any tangent vector $v \in T_{\mathbf x} \mathcal U(\axisH)$, where
here $\metric^\HH$ denotes the invariant metric on $\HH^{n,n-1}$ of constant curvature $-1$. 
Then for a path $\mathbf x(t)$ in $\mathcal U(\axisH)$, the amount of infinitesimal signed progress the projection $\Pi_{\axisH}(\mathbf x(t))$ is making along $\axisH$ at time $t = \tau$ may be expressed as follows:
\begin{align}
\left.\frac{\D}{\D t}\right|_{t=0} d_{\axisH}\left(\Pi_{\axisH} \mathbf x(\tau),\Pi_{\axisH} \mathbf x(\tau + t)\right)
&= \metric^\HH_{\Pi_{\axisH} \mathbf x(\tau)} ( \D \Pi_{\axisH} \mathbf x'(\tau), f ) \nonumber\\
& = \metric^\HH_{\mathbf x(\tau)}( \mathbf x'(\tau), f ). \label{eqn:derivative}
\end{align}

%

We now give the definition of the length function. Suppose the differentiable section $s: T^1 S \to \mathsf{H}_\varrho$ satisfies that $s(\nu) \subset \mathcal U(\axisH(\nu))$ for all $\nu \in T^1 S$.
Define the function $\mathscr L_\varrho: \mathcal C(S) \to \RR$ by the formula
\begin{align}\label{eqn:defn-L}
\mathscr L_\varrho(\mu) := \int_{\nu \in T^1 S} \metric^\HH( \nabla_\varphi s, f ) d \mu 
\end{align}
where here $\nabla_\varphi s$ is the derivative of $s$ in the flow direction using the flat connection, $f = f(\axisH(\nu))$ is the vector field defined as above in the subset $\mathcal U(\axisH(\nu))$ of the fiber above $\nu \in T^1 S$, and $\metric^\HH$ is the natural metric of constant curvature $-1$ on the fiber $\HH^{n,n-1}_\nu$ above $\nu$.
See Figure~\ref{fig:diagram2}.
\begin{figure}[h]
\centering

\def\svgwidth{4.8cm}
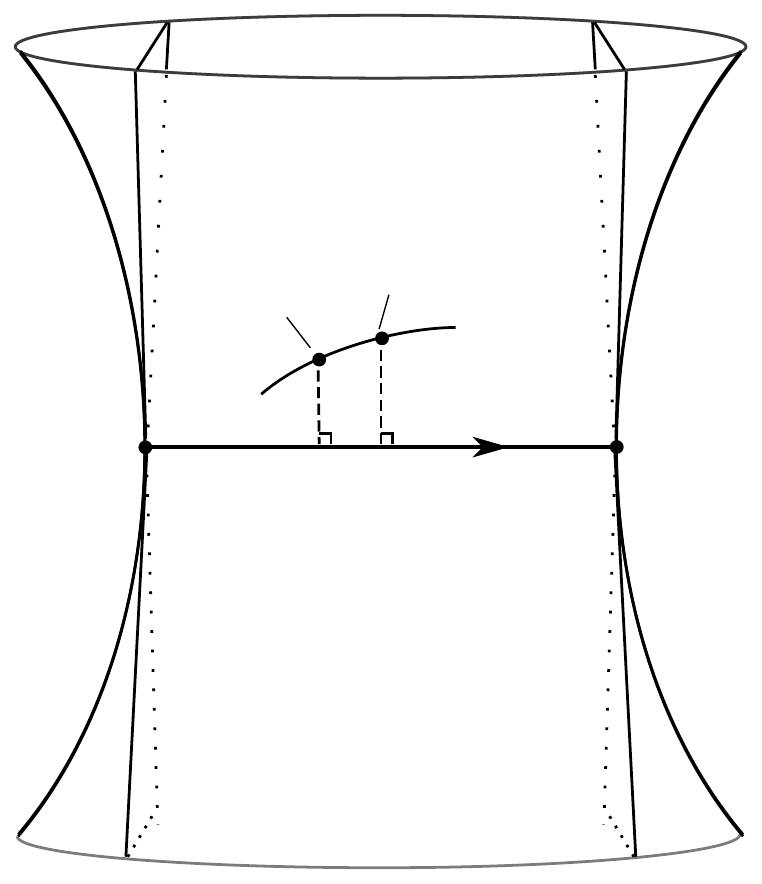

\caption[]{The length function $L_\varrho(\mu)$ is the rate at which the projection of $s(\nu)$ to the slow axis $\axisH(\nu)$ makes progress under the geodesic flow, averaged over $\nu \in T^1 S$ against the current $\mu$. Here we use the flat connection to identify the fibers of $\mathsf H_{(\rho,u)}$ above the flow line $\varphi_t \nu$ with a fixed copy of $\HH^{n,n-1}$ and note that the slow axis $\axisH(\varphi_t \nu) = \axisH(\nu)$ is constant in $t$.} \label{fig:diagram2}
\end{figure}

The function $\mathscr L$ is clearly continuous and linear. Further:

\begin{proposition}\label{prop:Lperiods}
Suppose $\varrho$ is $P_{n-1}'$-Anosov and $s: T^1 S \to \mathsf{H}_\varrho$ is a section so that $s(\nu) \in \mathcal U(\axisH(\nu))$ for all $\nu \in T^1 S$ as above. Then, for any $c = [\gamma] \in \mathcal{CG}(S)$, 
\begin{align*}
\mathscr L_\varrho(\mu_c) = \mathscr L(\varrho(\gamma)).
\end{align*}
\end{proposition}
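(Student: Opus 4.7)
The plan is to mimic exactly the computation done earlier for the affine length function $\alpha_{(\rho,u)}(\mu_c)$, replacing the projection onto the affine translation axis $\axisR$ with the projection $\Pi_{\axisH}$ onto the slow axis $\axisH$, and replacing the inner product $\langle\cdot,f_0\rangle_{n,n-1}$ with the pseudo-Riemannian pairing $\metric^\HH(\cdot,f)$. Concretely, I would begin by rewriting the integral defining $\mathscr L_\varrho(\mu_c)$ as an integral along the closed geodesic $c$ parameterized by arc length, then lift to $T^1\widetilde S$ where the bundle $\mathsf H_\varrho$ trivializes. Choose a lift $\widetilde c$ of $c$ and let $\gamma \in \Gamma$ be the element such that $\D\widetilde c(\ell(c)) = \gamma \cdot \D\widetilde c(0)$; let $\widetilde s : T^1\widetilde S \to \HH^{n,n-1}$ be the $\varrho$-equivariant lift of $s$. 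Because the slow axis $\axisH(y,x)$ depends only on $(y,x) \in \partial\Gamma^2$ (not on $z$), the axis is constant along the flow line $\widetilde\varphi_t \D\widetilde c(0)$ and equals $\axisH(\gamma^-,\gamma^+)$, which is exactly the $\varrho(\gamma)$-invariant slow axis from Section~\ref{sec:length-function-H}.

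Next, I would apply identity~\eqref{eqn:derivative} fiberwise: for each $\tau$, the integrand
\[
\metric^\HH_{\widetilde s(\D\widetilde c(\tau))}\!\bigl((\nabla_\varphi \widetilde s)(\D\widetilde c(\tau)),\, f\bigr)
\]
equals $\dfrac{\D}{\D t}\bigl|_{t=0} d_{\axisH(\gamma^-,\gamma^+)}\!\bigl(\Pi_{\axisH}\widetilde s(\D\widetilde c(\tau)),\, \Pi_{\axisH}\widetilde s(\D\widetilde c(\tau+t))\bigr)$. This uses in an essential way the hypothesis $s(\nu) \in \mathcal U(\axisH(\nu))$ so that the projection is defined throughout the integration, and it uses that $f$ was defined in~\eqref{eqn:f} precisely so that the pseudo-Riemannian pairing with $f$ computes the variation of the signed distance along the oriented axis.

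Since the axis $\axisH(\gamma^-,\gamma^+)$ does not vary with $\tau$, the signed distance function $d_{\axisH(\gamma^-,\gamma^+)}$ is additive along the flow, and integrating from $\tau=0$ to $\tau=\ell(c)$ telescopes:
\[
\mathscr L_\varrho(\mu_c) = d_{\axisH(\gamma^-,\gamma^+)}\!\Bigl(\Pi_{\axisH}\widetilde s(\D\widetilde c(0)),\, \Pi_{\axisH}\widetilde s(\D\widetilde c(\ell(c)))\Bigr).
\]
Using $\varrho$-equivariance of $\widetilde s$, the second argument equals $\Pi_{\axisH}\bigl(\varrho(\gamma)\cdot \widetilde s(\D\widetilde c(0))\bigr)$, and since $\axisH(\gamma^-,\gamma^+)$ is $\varrho(\gamma)$-invariant, the equivariance property $\Pi_{g\axisH}(g\cdot[v]) = g\cdot\Pi_{\axisH}([v])$ recorded in Section~\ref{sec:n-1n+1} lets us pull $\varrho(\gamma)$ outside: the second argument becomes $\varrho(\gamma)\cdot \Pi_{\axisH}\widetilde s(\D\widetilde c(0))$.

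Finally, for any point $[v] \in \axisH(\gamma^-,\gamma^+)$, the signed distance $d_{\axisH}([v], \varrho(\gamma)\cdot[v])$ is, by construction, the signed translation length of $\varrho(\gamma)$ along its oriented slow axis, which is exactly $\mathscr L(\varrho(\gamma)) = 2\lambda'_n(\varrho(\gamma))$ as defined in~\eqref{eqn:length-H}; the sign works out correctly because the orientation on $\axisH$ is defined so that $\PP(L_n^+)$ is the forward endpoint, consistent with the sign convention on $\lambda'_n$. This gives $\mathscr L_\varrho(\mu_c) = \mathscr L(\varrho(\gamma))$. The only mildly delicate point is checking the sign/orientation bookkeeping matches, but this is exactly dictated by the choices made in Section~\ref{sec:length-function-H}; no new argument is needed.
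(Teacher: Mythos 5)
Your proposal is correct and follows essentially the same route as the paper's proof: rewrite $\mathscr L_\varrho(\mu_c)$ as an integral over the unit-speed parameterization of $c$, lift to $T^1\widetilde S$, use identity~\eqref{eqn:derivative} together with the constancy of $\axisH(\gamma^-,\gamma^+)$ along the flow line to telescope via the fundamental theorem of calculus, and then invoke $\varrho$-equivariance of $\widetilde s$ and of $\Pi_{\axisH}$ to identify the result with the signed translation length $\mathscr L(\varrho(\gamma))$. No gaps.
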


\begin{proof}
In the following, $\D c: [0, \ell(c)] \to T^1 S$ is the tangent vector to the path traversing the geodesic $c$ at unit speed. Then
\begin{align}\nonumber
\mathscr L_\varrho(\mu_c) &= \int_{\nu \in T^1 S} \metric^\HH( \nabla_\varphi s, f ) \ d \mu_c 
\\
&= \int_{\tau = 0}^{\ell(c)} \metric^\HH\left( (\nabla_\varphi s)(\D c(\tau)), f(\axisH(\D c(\tau))) \right) d \tau\nonumber\\
\label{eqn:calculus2} &= \int_{\tau = 0}^{\ell(c)}  \left.\frac{\D}{\D t}\right|_{t=0} d_{\axisH(\D c(\tau))}(\Pi_{\axisH(\D c(\tau))} s(\D c(\tau+t)), \Pi_{\axisH(\D c(\tau))} s(\D c(\tau)) \ d\tau 
\end{align}
where the last equality follows from~\eqref{eqn:derivative}. The right-hand side of~\eqref{eqn:calculus2} may be evaluated by lifting to $T^1 \widetilde S$ where the bundle in consideration becomes a product. Let $\widetilde s: T^1 \widetilde S \to \HH^{n,n-1}$ be the $\varrho$-equivariant map lifting~$s$. Choose a lift $\widetilde c$ of $c$ to $\widetilde S$, corresponding to an element $\gamma \in \Gamma$ with $[\gamma] = c$ and let $\D \widetilde{c}: [0,\ell] \to T^1 \widetilde S$ be the tangent vector to the unit speed parameterization of $\widetilde c$, where here $\ell = \ell(c)$ is the length of the closed geodesic $c$ on $S$. Then the right-hand side of~\eqref{eqn:calculus2} becomes
\begingroup
\addtolength{\jot}{0.5em}
\begin{align*}
\mathscr L_\varrho(\mu_c) &=
 \int_{\tau = 0}^{\ell}  \left.\frac{\D}{\D t}\right|_{t=0} d_{\axisH(\D \widetilde c(\tau))}(\Pi_{\axisH(\D \widetilde c(\tau))} \widetilde s(\D \widetilde c(\tau+t)), \Pi_{\axisH(\D \widetilde c(\tau))}  \widetilde s(\D \widetilde c(\tau)) \ d\tau 
 \\
 &= d_{\axisH(\gamma^-, \gamma^+)}(\Pi_{\axisH(\gamma^-, \gamma^+)}  \widetilde s(\D \widetilde c(\ell)), \Pi_{\axisH(\gamma^-, \gamma^+)}  \widetilde s(\D \widetilde c(0)))\\
 &=  d_{\axisH(\gamma^-, \gamma^+)}(\Pi_{\axisH(\gamma^-, \gamma^+)}  \widetilde s(\gamma.\D \widetilde c(0)), \Pi_{\axisH(\gamma^-, \gamma^+)}  \widetilde s(\D \widetilde c(0)))\\
&=  d_{\axisH(\gamma^-, \gamma^+)}\left(\Pi_{\axisH(\gamma^-, \gamma^+)} \varrho(\gamma)\cdot \widetilde s(\D \widetilde c(0)), \Pi_{\axisH(\gamma^-, \gamma^+)}  \widetilde s(\D \widetilde c(0))\right)\\
&= d_{\axisH(\gamma^-, \gamma^+)}\left(\varrho(\gamma)\cdot\Pi_{\axisH(\gamma^-, \gamma^+)}  \widetilde s(\D \widetilde c(0)), \Pi_{\axisH(\gamma^-, \gamma^+)}  \widetilde s(\D \widetilde c(0))\right)\\
&=  \mathscr L(\varrho(\gamma)).
\end{align*}
\endgroup
where here we observe that the axis $\axisH(\D \widetilde c(\tau)) = \axisH(\gamma^-, \gamma^+)$ is constant in the integral and the fundamental theorem of calculus is applied in the first step above to the signed distance function $d_{\axisH(\gamma^-, \gamma^+)}(\cdot, \D \widetilde c(0))$ on the axis $\axisH(\gamma^-, \gamma^+)$. The final two equalities follow respectively from the $\varrho$-equivariance of $s$ and the equivariance property of $\Pi_\axisH$ discussed above.
\end{proof}

Finally, let $G'' = \SO(n,n-1)$ be embedded in $G' = \PSO(n,n)$ via the inclusion $\iota_{n,n}: G'' \hookrightarrow G'$ as described in Example~\ref{eg:Jordan projection2}. Recall from Example \ref{eg:include} that if $\rho: \Gamma \to G''$ is Anosov with respect to the stabilizer $P_{n-1}'' < G''$ of an isotropic $(n-1)$-plane in $\RR^{n,n-1}$, then $\iota_{n,n} \circ \rho$ is Anosov with respect to the stabilizer $P_{n-1}' < G'$ of an isotropic $(n-1)$-plane in $\RR^{n,n}$.

 \begin{lemma}\label{lem:compact-section}
Let $\rho: \Gamma \to G''$ be $P_{n-1}''$-Anosov. Then for $\varrho: \Gamma \to G'$ close enough to $\iota_{n,n} \circ \rho$, there exists a differentiable section $s: T^1S \to \mathsf{H}_\varrho$ such that $s(\nu) \in \mathcal U(\axisH(\nu))$ for all $\nu \in T^1 S$, and hence $\mathscr L_\varrho: \mathcal C(S) \to \RR$ is well-defined. 
 \end{lemma}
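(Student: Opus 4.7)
The plan is to exhibit an obvious section for the reference representation $\varrho_0 := \iota_{n,n}\circ\rho$, using the fact that $\varrho_0(\Gamma)$ globally fixes a point in $\HH^{n,n-1}$, and then transfer to nearby $\varrho$ by stability of the Anosov property combined with an equivariant averaging construction.

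For $\varrho_0$, since $\iota_{n,n}(\SO(n,n-1))$ pointwise stabilizes $\mathbf x_0 := [e_{2n}']$, the constant map $\widetilde s_0 \equiv \mathbf x_0$ is $\varrho_0$-equivariant and descends to a smooth section $s_0: T^1 S \to \mathsf{H}_{\varrho_0}$. I will verify the stronger statement that $s_0(\nu)\in\axisH_{\varrho_0}(\nu)$ for every $\nu$; the inclusion $\axisH\subset\mathcal U(\axisH)$ follows directly from the formulas in Section~\ref{sec:n-1n+1}. Since $\rho$ takes values in $\SO(n,n-1)$, which preserves $\RR^{n,n-1} = (e_{2n}')^\perp$, the Anosov boundary curve $\xi^{(n-1)}$ in fact takes values in $\Gr_{n-1}(\RR^{n,n-1})$. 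For any distinct $y,x\in\partial\Gamma$, both $\xi^{(n-1)}(y)$ and $\xi^{(n-1)}(x)$ are then $\langle\cdot,\cdot\rangle_{n,n}$-orthogonal to $e_{2n}'$, so
\[e_{2n}' \;\in\; \xi^{(n-1)}(x)^\perp \cap \xi^{(n-1)}(y)^\perp \;=\; \xi^{(n+1)}(x)\cap\xi^{(n+1)}(y).\]
The Anosov transversality makes the latter intersection exactly $2$-dimensional and it decomposes (as noted in Section~\ref{sec:n-1n+1}) as $L_n^+(y,x)\oplus L_n^-(y,x)$, so $\mathbf x_0\in\axisH_{\varrho_0}(y,x)$.

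Next, for $\varrho$ near $\varrho_0$, I would construct a $\varrho$-equivariant section $\widetilde s_\varrho$ close to $\widetilde s_0$ by an averaging procedure. Fact~\ref{fact:open} gives that $\varrho$ is $P'_{n-1}$-Anosov in a neighborhood of $\varrho_0$, and the Anosov boundary maps, hence the axes $\axisH_\varrho(\nu)$, vary continuously in $\varrho$. Fix $\phi\in C_c^\infty(T^1\widetilde S,[0,1])$ with $\sum_{\gamma\in\Gamma}\phi\circ\gamma^{-1} \equiv 1$, and set $\psi_\gamma := \phi\circ\gamma^{-1}$; this yields a smooth $\Gamma$-equivariant partition of unity on $T^1\widetilde S$ with only finitely many $\psi_\gamma$ non-zero at any point, uniformly in $\widetilde\nu$ over any compact fundamental set. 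For $\varrho$ sufficiently close to $\varrho_0$, all of the points $\varrho(\gamma)\mathbf x_0$ with $\psi_\gamma(\widetilde\nu)>0$ cluster inside a small geodesically convex neighborhood of $\mathbf x_0$; a standard $\Gamma$-equivariant center-of-mass construction, done in normal coordinates centered at an equivariantly chosen point (for instance $\varrho(\gamma_\ast)\mathbf x_0$ for a $\gamma_\ast$ selected by a fundamental-domain convention), produces a smooth $\varrho$-equivariant map $\widetilde s_\varrho:T^1\widetilde S\to\HH^{n,n-1}$ converging uniformly to $\widetilde s_0$ as $\varrho\to\varrho_0$.

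Finally, since $s_0(\nu)\in\axisH_{\varrho_0}(\nu)\subset\mathcal U(\axisH_{\varrho_0}(\nu))$ uniformly in $\nu$ (an open condition on the fiber, and $T^1 S$ is compact), and since $\axisH_\varrho(\nu)$ depends continuously on $(\varrho,\nu)$, the resulting $s_\varrho$ satisfies $s_\varrho(\nu)\in\mathcal U(\axisH_\varrho(\nu))$ for all $\nu\in T^1 S$ when $\varrho$ is close enough to $\varrho_0$. The main technical point is the $\Gamma$-equivariant averaging; the presence of the fixed point $\mathbf x_0$ and the constant section $\widetilde s_0$ makes this a routine partition-of-unity construction that works because $\varrho(\gamma)\mathbf x_0$ lies in a small neighborhood of $\mathbf x_0$ for the finitely many relevant $\gamma$ and for all $\varrho$ close to $\varrho_0$.
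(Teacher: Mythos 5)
Your first step is exactly the paper's: the constant map $\widetilde s_0\equiv[e_{2n}']$ descends to a section $s_0$ of $\mathsf H_{\varrho_0}$ lying on the axis $\axisH_{\varrho_0}(\nu)$ in every fiber (the paper records this as $[e_{2n}']=[f_++f_-]$; your verification via $e_{2n}'\in\xi^{(n-1)}(x)^\perp\cap\xi^{(n-1)}(y)^\perp=L_n^+(y,x)\oplus L_n^-(y,x)$ is correct), and your closing compactness/continuity argument is also the paper's. Where you diverge is in how you produce a section of $\mathsf H_\varrho$ for $\varrho$ near $\varrho_0$. The paper does not build a new equivariant map at all: it observes that the bundles $\mathsf H_{\varrho_t}$ over the fixed compact base $T^1S$ are all smoothly isomorphic, so $s_0$ itself may be regarded as a differentiable section of $\mathsf H_\varrho$; only the closed set $\bigcup_\nu\bigl(\HH^{n,n-1}_\nu\setminus\mathcal U(\axisH(\nu))\bigr)$ moves (continuously, by continuity of the Anosov limit maps), and compactness of $s_0(T^1S)$ finishes. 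Your route instead constructs a genuinely $\varrho$-equivariant map $\widetilde s_\varrho$ by averaging the orbit points $\varrho(\gamma)\mathbf x_0$ against an equivariant partition of unity. This is heavier but, if executed correctly, gives the same conclusion and makes explicit the bundle isomorphism the paper invokes implicitly.

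The one step that does not work as written is the averaging itself. Computing a weighted average ``in normal coordinates centered at $\varrho(\gamma_\ast)\mathbf x_0$ for a $\gamma_\ast$ selected by a fundamental-domain convention'' is not well defined: the choice of $\gamma_\ast$ jumps at the boundary of the fundamental domain, and a weighted average taken in normal coordinates depends on the chosen center, so your $\widetilde s_\varrho$ need not even be continuous there, let alone differentiable. Moreover $\HH^{n,n-1}$ is pseudo-Riemannian, so the usual Riemannian center of mass (minimizer of summed squared distances) is not available to make the average intrinsic. The fix is to use a base-point-free equivariant average: for the finitely many relevant $\gamma$ the points $\varrho(\gamma)\mathbf x_0$ cluster in a small ball, so one may choose lifts $v_\gamma\in\RR^{n,n}$ with $\langle v_\gamma,v_\gamma\rangle_{n,n}=-1$ that are mutually close (unambiguous up to a single overall sign), and set $\widetilde s_\varrho(\widetilde\nu):=\bigl[\sum_\gamma\psi_\gamma(\widetilde\nu)\,v_\gamma\bigr]$, which still has negative norm, is independent of the overall sign, is smooth in $\widetilde\nu$, and is $\varrho$-equivariant by the computation $\psi_\gamma(\eta\widetilde\nu)=\psi_{\eta^{-1}\gamma}(\widetilde\nu)$. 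With that replacement your argument is complete.
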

 
 \begin{proof}
 For $\varrho_0 = \iota \circ \rho$, such a section exists, namely the projection $s_0$ of the $(\iota \circ \rho)$-equivariant map $\widetilde s_0: T^1 \widetilde S \to \HH^{n,n-1}$, defined by $\widetilde s_0(\nu) = [e_{2n}]$ constant. Indeed, in this case $[e_{2n}] \in \axisH(y,x) \subset \mathcal U(\axisH(y,x))$ for all pairs $(y,x)$ of distinct points in $\partial \Gamma$, because $[e_{2n}]=[f_++f_-]$.

Now consider a path $\varrho_t: \Gamma \to G'$ based at $\varrho_0 = \iota_{n,n} \circ \rho$. The bundles $\mathsf{H}_{\varrho_t}$ are all isomorphic as smooth fiber bundles, so we may regard $\mathsf{H}_{\varrho_t}$ as a fixed fiber bundle with continuously varying flat structure. Hence, the section $s_0$ may be regarded as a differentiable  section of any of the bundles~$\mathsf{H}_{\varrho_t}$. The open subsets $\mathcal U(\axisH(\nu))$ in the fiber over $\nu$ of $\mathsf{H}_{\varrho_t}$ also depend on~$t$, and the dependence is continuous because the dependence of the Anosov boundary map $\xi_{\varrho_t}: \partial \Gamma \to \Gr_{n-1}(\RR^{n,n})$ on $t$ is continuous. More precisely, the union $\bigcup_{\nu\in T^1S}\HH^{n,n-1}_\nu\setminus\mathcal U(\axisH(\nu))$ is a closed subset of the bundle $\mathsf{H}_{\varrho_t}$ that varies continuously in $t$ in the topology of uniform convergence on compact subsets. Hence, since $s_0(T^1 S)$ is compact, it remains contained in the union $\bigcup_{\nu\in T^1S}\mathcal U(\axisH(\nu))$, provided that $t$ is sufficiently small. \end{proof}
 
 \begin{remark}
By the same argument given in Section 6.2 of Goldman-Labourie-Margulis \cite{GLM}, one shows that $\mathscr L_\varrho$ does not depend on the section~$s$. We do not give that argument here as we do not need it for our purposes. 
\end{remark}

\section{$\EE^{n,n-1}$ as a geometric limit of $\HH^{n,n-1}$ \label{sec:geometric limit}}
We now give the crucial geometric input needed for the main result, namely the understanding of group actions by isometries of the psuedo-Riemannian Euclidean space $\EE^{n,n-1}$ as limits of group actions by isometries on the pseudo-Riemannian hyperbolic space $\HH^{n,n-1}$.
This geometric transition interpretation, which follows the work of Danciger-Gu\'eritaud-Kassel~\cite{DGK1} in the setting of free groups acting on $\RR^{2,1}$, will be used to make a connection to Theorem~\ref{thm:main2} in order to prove Theorem~\ref{thm:main-flat} and eventually Theorem~\ref{thm:main-affine}.

\subsection{$\EE^{n,n-1}$ as a limit of $\HH^{n,n-1}$ in real projective geometry}

We continue to work with the coordinates of Section~\ref{sec:subgeometries}, in which $\EE^{n,n-1}$ and $\HH^{n,n-1}$ are embedded in $\PP(\RR^{2n})$ with the isometry groups $\Isom_+(\EE^{n,n-1}) = \SO(n,n-1) \ltimes \RR^{2n-1}$ and $\Isom_+(\HH^{n,n-1}) = \PSO(n,n)$ embedded in $\PSL(2n,\RR)$.

Consider a differentiable path $r \mapsto g_r$ in $\PSO(n,n)$ based at $g_0 = \iota(h)$, where $h \in \SO(n,n-1)$ and where $\iota = \iota_{n,n}: \SO(n,n-1) \hookrightarrow \PSO(n,n)$ is the inclusion as in Section~\ref{sec:subgeometries}. We write $g_r$ in the form:
\begin{align*}
g_r =\left[\begin{array}{cc}
A_r &v_r\\
w_r^T&b_r
\end{array}\right],
\end{align*}
where $A_r$ is a $(2n-1)\times(2n-1)$ matrix, $v_r , w_r \in \RR^{2n-1}$,and $b_r \in \RR$. (These are well-defined up to simultaneously changing signs.) Since $g_0=\iota(h)$, we see that $A_0= h$, $v_0=0$, $w_0=0$ and $b_0=1$. 

Let $c_r:\PP(\RR^{2n})\to\PP(\RR^{2n})$ be the projective transformation given by the matrix
\[c_r=\left[\begin{array}{cc}
\frac{1}{r}\Id_{2n-1}&0\\
0&1
\end{array}\right],\]
where $\Id_{2n-1}$ is the $(2n-1)\times(2n-1)$ identity matrix. Then observe that
\begin{align}\lim_{r\to 0} c_r g_r c_r^{-1} &=
\lim_{r\to 0}\left[\begin{array}{cc}
A_r&\frac{1}{r}\cdot v_r\\
r\cdot w_r^T&b_r
\end{array}\right]
=\left[\begin{array}{cc}
h & u \\
0&1
\end{array}\right],\label{eqn:limit-rep}
\end{align}
is the element $(h,u) \in \SO(n,n-1) \ltimes \RR^{2n-1} =  \Isom_+(\RR^{n,n-1})$, where $u:=\frac{d}{dr}\Big|_{r=0}v_r$ (where $v_r$ is chosen with the appropriate sign). 
This (essentially) shows that $c_r \PSO(n,n) c_r^{-1}$ converges as $r \to 0$ to $\SO(n,n-1) \ltimes \RR^{2n-1}$ in the Chabauty topology on closed subgroups of $\PSL(2n,\RR)$. In fact, the action of $\PSO(n,n)$ on $\HH^{n,n-1}$ converges to the action of $\SO(n,n-1) \ltimes \RR^{2n-1}$ on $\EE^{n,n-1}$ under conjugation by $c_r$ in the following sense. 
Let $r\mapsto \mathbf x_r$ be a differentiable path in $\HH^{n,n-1}$ based at the basepoint $\mathbf x_0 = [0:\ldots: 0: 1] \in \HH^{n,n-1}$ which is stabilized by $\iota(\SO(n,n-1))$. For sufficiently small $r$,  $c_r\mathbf x_r $ lies in $\EE^{n,n-1}$, so $c_r\mathbf x_r \to \mathbf x'$ as $r \to 0$ for some $\mathbf x'\in \EE^{n,n-1}$. Thinking of $\RR^{n,n-1} = T_{\mathbf x_0}\HH^{n,n-1}$, the tangent vector to the path $\mathbf x_r$ at $r = 0$ is precisely the displacement vector between $\mathbf x'$ and the basepoint.  Next,
\begin{align*}
c_r g_r \mathbf x_r &= c_r g_r c_r^{-1} (c_r \mathbf x_r)\\
&\to (h,u) \cdot \mathbf x'
\end{align*} 
as $r\to 0$. Hence the geometry $\EE^{n,n-1}$ is a \emph{geometric limit} of $\HH^{n,n-1}$ as sub-geometries of real projective geometry, in the sense of Cooper-Danciger-Wienhard~\cite{CDW}.

Now, let $\rho:\Gamma\to\SO(n,n-1)$ be a representation and let $\varrho_r:\Gamma\to\PSO(n,n)$ be a differentiable path of representations so that $\varrho_0=\iota \circ\rho$.
Define ${\varrho}_r^{c_r}:\Gamma\to\PSL(2n,\RR)$ by \[{\varrho}_r^{c_r}(\gamma):=c_r\cdot\varrho_r(\gamma)\cdot c_r^{-1}.\]
By the above calculation, 
\begin{align*}
\lim_{r\to 0} \varrho_r^{c_r} &= (\rho, u)
\end{align*}
is a representation into $\SO(n,n-1) \ltimes \RR^{2n-1}$ with linear part $\rho$ and translational part the $\rho$-cocycle $u: \Gamma \to \RR$.

\begin{lemma}\label{lem:smooth-point}
If $(\rho,u): \Gamma \to \SO(n,n-1) \ltimes \RR^{2n-1}$ is any surface group representation with irreducible linear part $\rho$, then there exists a path $\varrho_r: \Gamma \to \PSO(n,n)$ so that $\varrho_0 = \iota \circ \rho$ and $\lim_{r\to 0} \varrho_r^{c_r} = (\rho, u)$ as above.
\end{lemma}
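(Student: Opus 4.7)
The plan is to recognize this as essentially a statement about the smoothness of $\Hom(\Gamma, \PSO(n,n))$ at $\iota_{n,n}\circ\rho$ and the identification of its tangent space with a space of $1$-cocycles. First I would unpack the relevant Lie algebraic structure. The stabilizer of $\mathbf x_0 = [0{:}\cdots{:}0{:}1] \in \HH^{n,n-1}$ in $\PSO(n,n)$ is (up to connected components) $\iota_{n,n}(\OO(n,n-1))$, and there is a corresponding $\Ad(\iota_{n,n}(\SO(n,n-1)))$-invariant orthogonal splitting
\[
\mathfrak{pso}(n,n) = \mathfrak{so}(n,n-1) \oplus \RR^{n,n-1},
\]
where the first summand carries the adjoint action and the second, identified with $T_{\mathbf x_0}\HH^{n,n-1}$, carries the standard (defining) representation of $\SO(n,n-1)$. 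In the block form of Section~\ref{sec:geometric limit} this second summand sits as the off-diagonal entries. Consequently, if $\varrho_r$ is any smooth path through $\iota_{n,n}\circ\rho$, then in the block decomposition $\varrho_r(\gamma) = \left[\begin{smallmatrix} A_r & v_r \\ w_r^T & b_r \end{smallmatrix}\right]$, the derivative $\gamma \mapsto \dot v_0(\gamma)$ is precisely the $\RR^{n,n-1}$-component of the cocycle $\gamma \mapsto \dot\varrho_0(\gamma)\cdot\varrho_0(\gamma)^{-1} \in Z^1(\Gamma,\mathfrak{pso}(n,n))$.

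Next I would establish that $\iota_{n,n}\circ\rho$ is a smooth point of $\Hom(\Gamma,\PSO(n,n))$. By Goldman's theorem on surface group representation varieties \cite{Goldmansymplectic}, this follows once we verify $H^0(\Gamma,\mathfrak{pso}(n,n)_{\Ad\circ\iota_{n,n}\circ\rho}) = 0$ (noting that $\mathfrak{pso}(n,n)$ has trivial center). Using the invariant splitting above, this $H^0$ decomposes into the $\rho$-invariant vectors in $\RR^{n,n-1}$, which vanish by irreducibility, and the $\rho$-equivariant skew-adjoint (with respect to $\langle\cdot,\cdot\rangle_{n,n-1}$) endomorphisms of $\RR^{2n-1}$; absolute irreducibility of $\rho$ makes the commutant $\RR\cdot\Id$, on which the skew-adjointness condition kills everything. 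Goldman's smoothness result then identifies the tangent space at $\iota_{n,n}\circ\rho$ with $Z^1(\Gamma,\mathfrak{pso}(n,n)_{\Ad\circ\iota_{n,n}\circ\rho})$.

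With smoothness in hand, the construction of the path is immediate: embed the given $\rho$-cocycle $u: \Gamma \to \RR^{n,n-1}$ into $Z^1(\Gamma,\mathfrak{pso}(n,n))$ using the splitting (with trivial $\mathfrak{so}(n,n-1)$-component), and take any smooth path $\varrho_r$ in $\Hom(\Gamma,\PSO(n,n))$ through $\iota_{n,n}\circ\rho$ whose tangent vector at $r=0$ is this cocycle. By the block computation above, $\dot v_0(\gamma) = u(\gamma)$, and the limit formula \eqref{eqn:limit-rep} from the excerpt then gives $\lim_{r\to 0} \varrho_r^{c_r}(\gamma) = (\rho(\gamma), u(\gamma))$ for all $\gamma$. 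The one substantive step is the smoothness input, and the potential obstacle is making sure that \emph{irreducible} in the lemma is strong enough to kill $H^0$; the argument above really uses absolute irreducibility, which holds automatically in the case of eventual interest (Hitchin linear parts) and is the intended reading.
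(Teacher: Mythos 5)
This is essentially the paper's proof: irreducibility of $\rho$ gives $\iota_{n,n}\circ\rho$ a finite centralizer (equivalently $H^0(\Gamma,\mathfrak{pso}(n,n)_{\Ad\circ\iota_{n,n}\circ\rho})=0$), Goldman's smoothness theorem makes the cocycle with $\RR^{n,n-1}$-component $u$ and trivial $\mathfrak{so}(n,n-1)$-component integrable, and the block computation feeding into \eqref{eqn:limit-rep} identifies the limit of $\varrho_r^{c_r}$ with $(\rho,u)$. The concern you flag about needing \emph{absolute} irreducibility is vacuous here: the commutant of an $\RR$-irreducible representation is a real division algebra, and since $\RR^{2n-1}$ has odd real dimension it cannot be a module over $\CC$ or $\HH$, so the commutant is $\RR\cdot\Id$ and irreducibility over $\RR$ already implies absolute irreducibility.
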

\begin{proof}
Since $\rho$ is irreducible, $\iota \circ \rho: \Gamma \to \PSO(n,n)$ has finite centralizer. Hence $\iota \circ \rho$ is a smooth point of $\Hom(\Gamma, \PSO(n,n))$ by~Goldman~\cite{Goldmansymplectic}. Hence any tangent direction to $\iota \circ \rho$ is integrable, in particular the tangent direction defined by the $\mathfrak{pso}(n,n)$ valued cocycle
\begin{align*}
\gamma &\mapsto  \left(\begin{array}{cc}
0 & u(\gamma) \\
u^T(\gamma)J  &0
\end{array}\right) \in \mathfrak{pso}(n,n),
\end{align*}
where $J = \Id_n \oplus (-\Id_{n-1})$ is the matrix for the form $\langle \cdot, \cdot \rangle_{n,n-1}$ (in the basis $e_1',\dots,e_{2n-1}'$). Any path $\varrho_r$ tangent to this direction satisfies the conclusion of the lemma.
\end{proof}

\subsection{The derivative formula}

We now state and prove the key lemma. In the following $P_{n-1}'' < G'' = \SO(n,n-1)$ denotes the stabilizer of an isotropic $(n-1)$-plane in $\RR^{n,n-1}$ and $P_{n-1}' < G' = \PSO(n,n)$ denotes the stabilizer of an isotropic $(n-1)$-plane in $\RR^{n,n}$.

\begin{lemma}\label{lem:key}
Let $(\rho,u): \Gamma \to \SO(n,n-1) \ltimes \RR^{2n-1}$ be any representation whose linear part $\rho: \Gamma \to \SO(n,n-1)$ is $P_{n-1}''$-Anosov. Let $\varrho_r: \Gamma \to \PSO(n,n)$ be a differentiable path based at $\varrho_0 = \iota \circ \rho$ and satisfying $\lim_{r\to 0} \varrho_r^{c_r} = (\rho, u)$. Then the length functions $\alpha_{(\rho, u)}, \mathscr L_{\varrho_r}: \mathcal C(S) \to \RR$ of Sections~\ref{sec:Marg-currents} and~\ref{sec:n-1n+1} satisfy:
\begin{align}\label{eqn:key}
\lim_{r \to 0} \frac{1}{r} \mathscr L_{\varrho_r}(\cdot) = \alpha_{(\rho,u)}(\cdot)
\end{align}
and the convergence is uniform on compact subsets of $\mathcal C(S)$, in particular on the probability currents $\mathcal C_1(S)$.
\end{lemma}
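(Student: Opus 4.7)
The plan is in two stages: first prove the convergence on periods, i.e.\ on currents $\mu_c$ with $c \in \mathcal{CG}(S)$, and then upgrade this to uniform convergence on compact subsets of $\mathcal{C}(S)$.

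For $c = [\gamma]$, Proposition~\ref{prop:Lperiods} and the period formula in Theorem~\ref{thm:GLM-GT} reduce the claim to
$\lim_{r \to 0} \frac{1}{r}\mathscr{L}(\varrho_r(\gamma)) = \alpha((\rho(\gamma), u(\gamma)))$.
I would argue this via the transition picture: the conjugates $\varrho_r^{c_r}(\gamma) = c_r \varrho_r(\gamma) c_r^{-1}$ converge to $(\rho(\gamma), u(\gamma))$ acting on $\EE^{n,n-1}$, since $c_r \HH^{n,n-1}$ opens up to fill $\EE^{n,n-1}$ and $\varrho_r^{c_r} \to (\rho, u)$. The slow axis $\axisH(\varrho_r(\gamma))$, transferred by $c_r$, converges to the translation axis $\axisR((\rho(\gamma), u(\gamma)))$. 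Since $c_r$ fixes the basepoint $\mathbf x_0$ and $\D c_r^{-1}$ scales tangent vectors transverse to $\mathbf x_0$ by $r$, the pulled-back metric satisfies
$(c_r^{-1})^* \metric^\HH / r^2 \longrightarrow \metric^\EE$
pointwise near $\mathbf x_0$, using $\metric^\HH|_{\mathbf x_0} = \metric^\EE|_{\mathbf x_0}$. Because $\mathscr{L}$ is conjugation-invariant and equals the signed translation length of $\varrho_r^{c_r}(\gamma)$ in $(c_r^{-1})^*\metric^\HH$, this gives $\mathscr{L}(\varrho_r(\gamma)) \sim r \cdot \alpha((\rho(\gamma), u(\gamma)))$ as $r \to 0$. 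As a cross-check, a direct first-order eigenvalue perturbation on the two-dimensional degenerate 1-eigenspace $L_0(\gamma) \oplus \RR e_{2n}'$ of $\iota \rho(\gamma)$ yields the same conclusion: restricted to this $(1,1)$-subspace, the perturbed action lies in $\SO(1,1)$ and its hyperbolic angle is linear in $r$ with leading coefficient determined by $\langle u(\gamma), f_0(\gamma) \rangle_{n,n-1}$.

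To extend to all currents and obtain uniform convergence, I would work directly with the integral formulas~\eqref{eqn:defn-alpha-alt} and~\eqref{eqn:defn-L}, which express both length functions as integrals against $\mu$ of continuous functions on the compact space $T^1 S$. The plan is to construct, for small $r$, a $\varrho_r$-equivariant differentiable section $\widetilde{s}_r: T^1 \widetilde S \to \HH^{n,n-1}$ such that the transferred section $c_r \widetilde{s}_r$ converges in $C^1$-topology, uniformly modulo $\Gamma$, to a chosen $(\rho, u)$-equivariant section $\widetilde s: T^1\widetilde S \to \EE^{n,n-1}$. Continuous dependence of the Anosov boundary maps $\xi_r^{(n-1)}, \xi_r^{(n+1)}$ on $r$ (and the convergence of the boundary maps of $\varrho_r^{c_r}$, regarded as a $P_{n-1,n+1}$-Anosov representation into $\PSL(2n,\RR)$, to the boundary maps of $(\rho,u)$) then implies that the slow axis bundle $\axisH_r$ and neutral field $f_r$, transferred by $c_r$, converge to the translation axis bundle $\axisR$ and to $f_0$. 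Combined with the metric rescaling above, the integrand $\metric^\HH(\nabla_\varphi s_r, f_r)/r$ converges uniformly on $T^1 S$ to the integrand $\metric^\EE(\nabla_\varphi s, f_0)$, which yields the uniform convergence on compact subsets of $\mathcal{C}(S)$.

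The main technical obstacle is the construction of compatible equivariant sections $\widetilde{s}_r$ with $c_r \widetilde{s}_r \to \widetilde{s}$. A direct approach is a perturbation-and-averaging argument: starting from $\widetilde s$ viewed in $\HH^{n,n-1}$ via the projective chart, which is equivariant only for $\iota\circ\rho$, one would correct it to a $\varrho_r$-equivariant section on a fundamental domain, exploiting the smooth dependence of $\varrho_r$ on $r$ and the smoothness of the Anosov section provided by Lemma~\ref{lem:compact-section}. Alternatively, one may bypass compatible sections by an equicontinuity argument: since each $\frac{1}{r}\mathscr{L}_{\varrho_r}$ is a continuous linear functional on $\mathcal{C}(S)$ whose operator norm on the compact set $\mathcal{C}_1(S)$ remains bounded as $r \to 0$ (a uniform bound on the integrands follows from the openness of Anosov under deformation together with compactness of $T^1 S$), pointwise convergence on the dense subset $\{\mu_c\}$ (via Bonahon's density of weighted periods) upgrades to uniform convergence on $\mathcal{C}_1(S)$ by an Arzel\`a--Ascoli argument.
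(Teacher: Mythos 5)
Your primary route is essentially the paper's proof: rescale the metric by $r^{-2}$ (Lemma~\ref{lem:metrics}), show that $r(c_r)_*f_{\axisH_{\varrho_r}}$ converges uniformly to the neutral field $f_0$ (Lemma~\ref{lem:vector-fields-converge}), and deduce uniform convergence of the integrands in~\eqref{eqn:defn-L} and~\eqref{eqn:defn-alpha-alt} over a compact fundamental domain, using $C^0$-continuity of the Anosov boundary maps in $r$. The one point worth flagging is that the ``main technical obstacle'' you identify --- producing compatible equivariant sections $\widetilde s_r$ with $c_r\widetilde s_r \to \widetilde s$ --- needs no perturbation-and-averaging at all: the paper simply regards the flat bundles $\mathsf P_r$ as a single fixed smooth projective-space bundle carrying a continuously varying family of flat structures, so one fixed section $s$ of the underlying bundle serves for every $r$; the only thing to check is that for small $r$ it lands in $c_r\,\mathcal U(\axisH_{\varrho_r})$, which follows from compactness of the fundamental domain (Lemma~\ref{lem:sections}). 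By contrast, your fallback Arzel\`a--Ascoli argument does not go through as stated: a uniform bound on the integrands gives uniform boundedness of the functionals $\tfrac1r\mathscr L_{\varrho_r}$ on $\mathcal C_1(S)$, but to upgrade pointwise convergence on the dense set of weighted closed-geodesic currents to uniform convergence on the weak-$*$ compact set $\mathcal C_1(S)$ you would need equicontinuity, which amounts to precompactness of the integrands in $C(T^1S)$ --- i.e.\ essentially the uniform integrand convergence you were trying to avoid proving. So the alternative route is circular rather than a genuine shortcut, but since your main line of argument coincides with the paper's, the proposal stands.
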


Note that for $r$ sufficiently small, $\varrho_r$ is $P_{n-1}'$-Anosov by Fact~\ref{fact:open}, and $\mathscr L_{\varrho_r}$ is well-defined by Lemma~\ref{lem:compact-section}.

It is easy to verify that, in the context of the Lemma, 
\begin{align*}
\lim_{r \to 0} \frac{1}{r} \mathscr L(\varrho_r(\gamma)) = \alpha((\rho,u)(\gamma)),
\end{align*}
for any $\gamma \in \Gamma$, hence the formula~\eqref{eqn:key} holds pointwise on currents $\mu_c$ supported on closed geodesics. The difficulty is to show the uniform convergence. In order to do this, we show that the integrand from Equation~\eqref{eqn:defn-L} defining $\mathscr L$ may be arranged to, after rescaling, converge uniformly to the integrand from Equation~\eqref{eqn:defn-alpha}.

We must first examine a bit more carefully the notion that the geometries $c_r \HH^{n,n-1}$ converge to $\EE^{n,n-1}$, as sub-geometries of real projective geometry.
First we note that $c_s \HH^{n,n-1} \subset c_r \HH^{n,n-1}$ whenever $0 < r < s$, and that 
\begin{align}\label{eqn:union}
\bigcup_{r \to 0} c_r \HH^{n,n-1} \supset \EE^{n,n-1}.
\end{align}

Next, we prove a statement about convergence of metrics, analogous to \cite[\S 7.4]{DGK1}.
The space $\EE^{n,n-1}$ (respectively $\HH^{n,n-1}$) admits a pseudo-Riemannian metric of zero (respectively constant negative) curvature which is invariant under the group $\Isom_+(\EE^{n,n-1}) = \SO(n,n-1) \times \RR^{2n-1}$ (respectively $\Isom_+(\HH^{n,n-1}) = \PSO(n,n)$). As in Section~\ref{sec:length-functions}, we denote these metrics by $\metric^{\EE}$ and $\metric^\HH$, and we view $\Isom_+(\EE^{n,n-1})$ and $\Isom_+(\HH^{n,n-1}) = \PSO(n,n)$ as subgroups of $\PSL(2n,\RR)$. Since the stabilizer of the basepoint $\mathbf x_0 = [0,\ldots,0,1]$ is the same in both isometry groups, we  arrange that $\metric^{\EE}_{\mathbf x_0} = \metric^\HH_{\mathbf x_0}$.
For $r>0$, consider the metric $\metric^r$ defined on $c_r \cdot \HH^{n,n-1} \subset \PP(\RR^{2n})$ by
$$\metric^r := (c_r)_* \, \metric^{\HH},$$
where $(c_r)_*$ is the pushforward by~$c_r$.

\begin{lemma}\label{lem:metrics}
The sequence of metrics $r^{-2}\,\metric^r$ converges to $\metric^{\EE}$ uniformly on compact subsets of $\EE^{n,n-1}$ as $r \to 0$, where for a given compact set~$C\subset\EE^{n,n-1}$ we only consider $r$ small enough so that $\metric^r$ is defined on~$C$.
\end{lemma}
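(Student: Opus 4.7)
The plan is to exploit the transitive isometric actions of $\PSO(n,n)$ on $\HH^{n,n-1}$ and of $\Isom^+(\EE^{n,n-1})$ on $\EE^{n,n-1}$ to reduce the problem to an exact identity at the basepoint $\mathbf{x}_0$, then transport this identity uniformly to any $\mathbf{y} \in \EE^{n,n-1}$ via a carefully chosen path in $\PSO(n,n)$. In the affine chart $\{x_{2n} = 1\}$ with coordinates $(x_1, \ldots, x_{2n-1})$, $c_r$ fixes $\mathbf{x}_0$ and acts as scalar multiplication by $r^{-1}$, so $Dc_r^{-1}|_{\mathbf{x}_0} = r \cdot \Id$. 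Since $\metric^r = (c_r)_* \metric^{\HH}$ and our normalization gives $\metric^{\HH}_{\mathbf{x}_0} = \metric^{\EE}_{\mathbf{x}_0}$, this yields the \emph{exact} identity $r^{-2}\,\metric^r_{\mathbf{x}_0} = \metric^{\EE}_{\mathbf{x}_0}$ for every $r > 0$.

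For the transport step, fix a compact $K \subset \EE^{n,n-1}$ and $r_0 > 0$ small enough that $K \subset c_r \HH^{n,n-1}$ for all $r \leq r_0$ (such $r_0$ exists by~\eqref{eqn:union}). Writing $\mathbf{y} = [u(\mathbf{y}):1]$, the point $c_r^{-1}\mathbf{y}$ lies in a neighborhood of $\mathbf{x}_0$ that shrinks uniformly as $r \to 0$. Consider the ``infinitesimal translations''
\begin{equation*}
X_v \ := \ \begin{pmatrix} 0 & v \\ v^T J & 0 \end{pmatrix} \ \in \ \mathfrak{pso}(n,n), \qquad v \in \RR^{2n-1},
\end{equation*}
where $J$ denotes the matrix of $\langle\cdot,\cdot\rangle_{n,n-1}$. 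Since the map $v \mapsto \exp(X_v)\mathbf{x}_0$ is a local diffeomorphism at $0$ (its differential there is the identity on $T_{\mathbf{x}_0} \HH^{n,n-1} \cong \RR^{2n-1}$), the implicit function theorem yields a jointly smooth family $v_r(\mathbf{y})$ on $K \times [0, r_0]$ with $\exp(X_{v_r(\mathbf{y})})\mathbf{x}_0 = c_r^{-1}\mathbf{y}$ and Taylor expansion $v_r(\mathbf{y}) = r\,u(\mathbf{y}) + O(r^2)$ uniformly in $\mathbf{y} \in K$. Setting $h_r(\mathbf{y}) := \exp(X_{v_r(\mathbf{y})})$, the direct conjugation
\begin{equation*}
c_r X_v c_r^{-1} \ = \ \begin{pmatrix} 0 & v/r \\ r\,v^T J & 0 \end{pmatrix}
\end{equation*}
applied to $v = v_r(\mathbf{y})$ gives $c_r X_{v_r(\mathbf{y})} c_r^{-1} \to \bigl(\begin{smallmatrix} 0 & u(\mathbf{y}) \\ 0 & 0 \end{smallmatrix}\bigr)$ smoothly on $K$, and exponentiation then yields $c_r h_r(\mathbf{y}) c_r^{-1} \to \tau_\mathbf{y}$ smoothly on $K$, where $\tau_\mathbf{y} \in \Isom^+(\EE^{n,n-1})$ is the translation by $u(\mathbf{y})$.

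Since $c_r h_r(\mathbf{y}) c_r^{-1}$ is by construction an isometry of $\metric^r$ taking $\mathbf{x}_0$ to $\mathbf{y}$, pulling back the basepoint identity yields
\begin{equation*}
r^{-2}\,\metric^r_\mathbf{y} \ = \ \bigl(c_r h_r(\mathbf{y}) c_r^{-1}\bigr)_* \bigl(r^{-2}\,\metric^r_{\mathbf{x}_0}\bigr) \ = \ \bigl(c_r h_r(\mathbf{y}) c_r^{-1}\bigr)_* \metric^{\EE}_{\mathbf{x}_0},
\end{equation*}
which converges to $(\tau_\mathbf{y})_* \metric^{\EE}_{\mathbf{x}_0} = \metric^{\EE}_\mathbf{y}$ uniformly in $\mathbf{y} \in K$. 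The main obstacle is securing the uniformity of this convergence: because pushforward of a bilinear form depends on first derivatives of the transporting diffeomorphism, one needs the convergence $c_r h_r(\mathbf{y}) c_r^{-1} \to \tau_\mathbf{y}$ to hold in $C^1$ (not merely $C^0$) on $K$, which is precisely what the joint smoothness of $v_r(\mathbf{y})$ combined with the explicit matrix formula above provides.
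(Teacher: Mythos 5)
Your proof is correct, but it takes a genuinely different (and longer) route than the paper's. The paper's argument is a direct two-line computation in the standard trivialization of $T\EE^{n,n-1}$ over the affine chart: since $c_r^{-1}$ restricted to that chart is the \emph{linear} scaling $w \mapsto rw$, one has $(c_r^{-1})_* v = rv$ at every point, whence $r^{-2}\,\metric^r_{\mathbf x}(v,w) = \metric^{\HH}_{c_r^{-1}(\mathbf x)}(v,w)$ identically; then $c_r^{-1}(C)$ shrinks into arbitrarily small neighborhoods of $\mathbf x_0$ and continuity of $\mathbf x \mapsto \metric^{\HH}_{\mathbf x}$ gives uniform convergence to $\metric^{\HH}_{\mathbf x_0} = \metric^{\EE}_{\mathbf x_0} = \metric^{\EE}_{\mathbf x}$. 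Your exact basepoint identity $r^{-2}\metric^r_{\mathbf x_0} = \metric^{\EE}_{\mathbf x_0}$ is precisely the $\mathbf x = \mathbf x_0$ case of that computation; but instead of exploiting the global linearity of $c_r$ on the chart, you transport the identity by homogeneity, producing isometries $c_r h_r(\mathbf y) c_r^{-1}$ of $\metric^r$ carrying $\mathbf x_0$ to $\mathbf y$ and converging, uniformly in $C^1$ on $K$, to the Euclidean translations $\tau_{\mathbf y}$. This is a legitimate and more conceptual argument: it exhibits the convergence of metrics as a visible consequence of the Chabauty convergence $c_r\,\PSO(n,n)\,c_r^{-1} \to \Isom^+(\EE^{n,n-1})$ that drives the whole section, and you correctly isolate the one point needing care, namely that pushing forward a $2$-tensor requires $C^1$ (not merely $C^0$) convergence of the transporting maps at $\mathbf x_0$, which your joint smoothness of $v_r(\mathbf y)$ and the explicit conjugation formula supply. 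The cost is that it invokes the inverse function theorem and an explicit slice of $\mathfrak{pso}(n,n)$ where the paper needs only continuity of $\metric^{\HH}$ in the fixed trivialization.
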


\begin{proof}
In what follows, we use the trivialization of the tangent bundle $T\EE^{n,n-1}$ to the affine chart $\EE^{n,n-1}$, denoting the associated parallel transport of a vector $v \in T_{\mathbf x} \EE^{n,n-1}$ to $T_{\mathbf y} \EE^{n,n-1}$ again by~$v$.
First, note that for any tangent vector $v \in T_\mathbf x \EE^{n,n-1}$ we have $(c_r^{-1})_*v = r v \in T_{c_r^{-1}(\mathbf x)}\EE^{n,n-1}$.
Thus, for $v,w \in T_{\mathbf x} \EE^{n,n-1}$,
\begin{align*}
r^{-2}\,\metric^r_{\mathbf x}(v,w) &= r^{-2}\,\big((c_r)_* \metric^\HH\big)_{\mathbf x} (v,w)\\
&= r^{-2}\ \metric^\HH_{c_r^{-1}({\mathbf x})}\big((c_r^{-1})_*v, (c_r^{-1})_* w\big)\\
&= \metric^\HH_{c_r^{-1}({\mathbf x})}(v,w).
\end{align*}
Given a compact set $C\subset \EE^{n,n-1}$, the projective transformation $c_r^{-1}$ maps $C$ into arbitrarily small neighborhoods of the basepoint~$\mathbf x_0$ as $r \to 0$.
Therefore, by continuity of~$\metric^\HH$,
$$r^{-2}\,\metric^r_{\mathbf x} = \metric^\HH_{c_r^{-1}({\mathbf x})} \,\xrightarrow[r\to 0]{}\, \metric^\HH_{\mathbf x_0} = \metric^{\EE}_{\mathbf x_0} = \metric^{\EE}_{\mathbf x}$$
uniformly for ${\mathbf x}\in C$ (where we use again the trivialization of $T\EE^{n,n-1}$).
\end{proof}

We also need a statement about convergence of the vector fields used to calculate the translation length functions in $\HH^{n,n-1}$ and $\RR^{n,n-1}$.
\begin{lemma}\label{lem:vector-fields-converge}
Let $\axisH_r$ be a continuous path of oriented Riemannian lines in $\HH^{n,n-1}$ so that $\axisH_0 \ni \mathbf x_0$. Let $f_{\axisH_r}$ denote the vector field~\eqref{eqn:f} defined on $\mathcal U(\axisH_r)$.  
\begin{enumerate}
\item \label{item:all-of-Rd} The open sets $c_r \mathcal U(\axisH_r)$ converge to $\EE^{n,n-1}$ in the sense that for any compact subset $C \subset \EE^{n,n-1}$, there exists $r_0 > 0$, so that $c_r \mathcal U(\axisH_r) \supset C$ for all $ 0 < r < r_0$.
\item \label{item:vector-fields}  On any compact subset $C \subset \EE^{n,n-1}$, the vector fields $r (c_r)_* f_{\axisH_r}$ converge uniformly to the parallel unit vector field $f_0$ on $\EE^{n,n-1}$ which agrees with the positive unit vector in the direction of $\axisH_0$ in the tangent space $T_{\mathbf x_0} \PP(\RR^{2n}) = T_{\mathbf x_0} \HH^{n,n-1} = T_{\mathbf x_0} \EE^{n,n-1}$.
\end{enumerate}
\end{lemma}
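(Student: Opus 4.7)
The plan is to exploit the explicit form of $c_r$, which in the affine coordinates on $\EE^{n,n-1}$ is simply multiplication by $1/r$; hence $c_r^{-1}$ contracts any compact subset $C \subset \EE^{n,n-1}$ uniformly toward the basepoint $\mathbf x_0$ as $r \to 0$. Combined with the continuous dependence of $\axisH_r$, $\mathcal U(\axisH_r)$, and $f_{\axisH_r}$ on $r$ (which follows from the explicit smooth formulas in Section~\ref{sec:n-1n+1}) and the fact that $\mathbf x_0 \in \axisH_0 \subset \mathcal U(\axisH_0)$, both claims will follow.

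Part~(\ref{item:all-of-Rd}) then amounts to noting that $c_r^{-1}(C)$ is contained in an arbitrarily small neighborhood of $\mathbf x_0$ for $r$ small, which is itself contained in the open set $\mathcal U(\axisH_r)$ (by continuity in $r$ and the fact that $\mathbf x_0 \in \mathcal U(\axisH_0)$). This gives $C \subset c_r \mathcal U(\axisH_r)$. For part~(\ref{item:vector-fields}), I would work in the canonical parallel trivialization of $T\EE^{n,n-1}$ given by the affine structure, in which the differential of $c_r$ is multiplication by $r^{-1}$ on tangent vectors based in the affine chart. Viewing tangent vectors as elements of $\RR^{2n-1}$ via this trivialization, one obtains
\[ r\,\bigl((c_r)_* f_{\axisH_r}\bigr)(\mathbf y) \;=\; f_{\axisH_r}\bigl(c_r^{-1}\mathbf y\bigr) \]
for every $\mathbf y \in C$ and $r$ small enough that $c_r^{-1}\mathbf y \in \mathcal U(\axisH_r)$. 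As $r \to 0$, the argument $c_r^{-1}\mathbf y$ tends to $\mathbf x_0$ uniformly for $\mathbf y \in C$, while $f_{\axisH_r}$ converges uniformly to $f_{\axisH_0}$ on a neighborhood of $\mathbf x_0$. Hence the right-hand side converges uniformly on $C$ to the constant vector $f_{\axisH_0}(\mathbf x_0) \in \RR^{2n-1}$, which by construction is the positive unit tangent to $\axisH_0$ at $\mathbf x_0$. A constant $\RR^{2n-1}$-valued field in the trivialization corresponds precisely to a parallel vector field on $\EE^{n,n-1}$, namely $f_0$.

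The main technical step will be verifying the uniform continuous dependence of $f_{\axisH_r}$ and its domain $\mathcal U(\axisH_r)$ on $r$ near $\mathbf x_0$. This is not hard: from the explicit formula $\Pi_{\axisH}([w]) = [-\langle w, f^+\rangle f^- - \langle w, f^-\rangle f^+]$ in Section~\ref{sec:n-1n+1}, both $\Pi_{\axisH_r}$ and the field $f_{\axisH_r}$ (characterized by invariance under $\D\Pi_{\axisH_r}$ and orthogonality to $\ker \D\Pi_{\axisH_r}$) are smooth functions of the null directions $L_n^\pm(r)$ spanning $\axisH_r$, which vary continuously in $r$ by hypothesis, and of the base point. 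This yields the Hausdorff convergence of the closed complements $\PP(\RR^{2n})\setminus\mathcal U(\axisH_r)$ on compact subsets needed for~(\ref{item:all-of-Rd}) and the uniform convergence of $f_{\axisH_r} \to f_{\axisH_0}$ on a neighborhood of $\mathbf x_0$ needed for~(\ref{item:vector-fields}).
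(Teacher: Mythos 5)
Your proof of part~(\ref{item:all-of-Rd}) is essentially the paper's: a fixed small neighborhood of $\mathbf x_0$ sits inside $\mathcal U(\axisH_r)$ for all small $r$, and its image under $c_r$ (a dilation by $1/r$ in the affine chart) eventually swallows any compact set. For part~(\ref{item:vector-fields}) your route is genuinely different and, I think, cleaner. The paper argues indirectly: it pairs $r(c_r)_*f_{\axisH_r}$ against arbitrary test vectors $v$ using the rescaled metric $r^{-2}\metric^r$, computes that $r^{-2}\metric^r_{\mathbf x}(v, r(c_r)_*f_{\axisH_r}) = \metric^\HH_{c_r^{-1}(\mathbf x)}(v, f_{\axisH_r}) \to \metric^\EE_{\mathbf x}(v,f_0)$, and then combines Lemma~\ref{lem:metrics} with a uniform bound on $r(c_r)_*f_{\axisH_r}$ and nondegeneracy of $\metric^\EE$ to upgrade convergence of pairings to convergence of the vector fields. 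You instead use the exact scaling of $\D c_r$ in the parallel trivialization to get the pointwise identity $r\bigl((c_r)_*f_{\axisH_r}\bigr)(\mathbf y) = f_{\axisH_r}(c_r^{-1}\mathbf y)$, which reduces the whole statement to joint continuity of $(r,\mathbf z)\mapsto f_{\axisH_r}(\mathbf z)$ at $(0,\mathbf x_0)$ — the same continuity input the paper needs anyway for its own limit step, and which you correctly trace back to the explicit formula for $\Pi_\axisH$ in terms of $L_n^\pm$. Your version buys a shorter argument that bypasses Lemma~\ref{lem:metrics} and the boundedness step; the paper's version has the mild advantage of being phrased entirely in terms of the metric pairings that actually appear in the integrand of Lemma~\ref{lem:key}, so the two convergences can be cited in one breath there. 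One small point worth making explicit if you write this up: the identity only makes sense where $c_r^{-1}\mathbf y$ lies in the affine chart (so that the parallel trivialization applies to $f_{\axisH_r}$ there), which holds for small $r$ since $c_r^{-1}(C)$ collapses onto $\mathbf x_0$.
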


\begin{proof}
For~\eqref{item:all-of-Rd} simply observe that a small open neighborhood $U$ of the basepoint $\mathbf x_0$ is contained in $\mathcal U(\axisH_0)$ and hence in all $\mathcal U(\axisH_r)$ for $r$ sufficiently small. The open sets $c_r U$, which in the affine chart $\EE^{n,n-1}$ are just dilated copies of $U$, eventually contain any compact subset of the affine chart $\EE^{n,n-1}$.

We now prove~\eqref{item:vector-fields}. 
Consider $\mathbf x \in C$ and $v \in T_{\mathbf x} \EE^{n,n-1}$ and suppose $r > 0$ is sufficiently small so that $c_r \mathcal U(\axisH_r) \supset C$.
As in the proof of Lemma~\ref{lem:metrics}, we again use the trivialization of the tangent bundle $T\EE^{n,n-1}$ to the affine chart $\EE^{n,n-1}$, and denote again by $v$ the constant vector field which agrees with the given $v \in T_{\mathbf x} \EE^{n,n-1}$. Then $(c_r^{-1})_*v = r v \in T_{c_r^{-1}({\mathbf x})}\EE^{n,n-1}$.
Next, observe that
\begin{align*}
r^{-2}\metric^r_{\mathbf x}(v, r(c_r)_*f_{\axisH_r}) &= r^{-2}\metric^\HH_{c_r^{-1}({\mathbf x})}((c_r)^{-1}_* v, rf_{\axisH_r})\\
&= r^{-2}\metric^\HH_{c_r^{-1}({\mathbf x})}(rv, rf_{\axisH_r})\\
&= \metric^\HH_{c_r^{-1}({\mathbf x})}(v, f_{\axisH_r})\\
&\xrightarrow[]{r\to0} \metric^\HH_{{\mathbf x}_0}(v, f_{\axisH_0})\\
&= \metric^\EE_{{\mathbf x}_0}(v, f_0) = \metric^\EE_{{\mathbf x}}(v, f_0).
\end{align*}
But on the other hand, the vector field $r(c_r)_*f_{\axisH_r}$ is bounded on $C$, independent of $r$, hence by Lemma~\ref{lem:metrics}, the quantity $r^{-2} \metric^r_{\mathbf x}(v, r(c_r)_*f_{\axisH_r})$ differs from $\metric^\EE_{\mathbf x}(v, r(c_r)_*f_{\axisH_r})$ by a uniform constant tending to zero with~$r$. We conclude that the vector field $r(c_r)_*f_{\axisH_r}$ converges to $f_0$ uniformly on $C$.
\end{proof}

The final ingredient needed for Lemma~\ref{lem:key} is a statement about convergence of sections of the bundles associated to the convergent path $\varrho_r^{c_r} \to (\rho, u)$. In the context of Lemma~\ref{lem:key}, define for each $r \geq 0$, the flat projective space bundle 
\begin{align*}
\mathsf{P}_r &= \Gamma \backslash T^1 \widetilde S \times \PP(\RR^{2n})
\end{align*} 
where for $r > 0$, the action of $\Gamma$ on $\PP(\RR^{2n})$ is by $\varrho_r^{c_r}$ and for $r = 0$, the action of $\Gamma$ on $\PP(\RR^{2n})$ is by $(\rho,u)$.  For each $r > 0$, the map $c_r: \PP(\RR^{2n}) \to \PP(\RR^{2n})$ induces a fiberwise embedding $c_r: \mathsf{H}_{\varrho_r} \to \mathsf{P}_r$. 
Further, the fiberwise action of $\PSO(n,n)$ on $\mathsf{H}_{\varrho_r}$ is taken by $c_r$ to the fiberwise action of $c_r \PSO(n,n) c_r^{-1}$ on $\mathsf{P}_r$.
For $r = 0$, the embedding $\EE^{n,n-1} \hookrightarrow \PP(\RR^{2n})$ induces a fiberwise embedding $\mathsf{E}_{(\rho,u)} \to \mathsf{P}_0$ which is invariant under the fiberwise action of $(\rho,u)$. Further, the fiberwise action of $c_r\PSO(n,n)c_r^{-1}$ on $\mathsf{P}_r$ converges to the action of $\SO(n,n-1) \ltimes \RR^{2n-1}$ on $\mathsf{P}_0$.

Let $s: T^1 S \to \mathsf{E}_{(\rho,u)}$ be any differentiable section of the $\EE^{n,n-1}$ bundle associated to $(\rho,u)$.
 Using the embedding $\mathsf{E}_{(\rho,u)} \to \mathsf{P}_0$, we regard $s$ as a section of $\mathsf{P}_0$.
The path $\mathsf{P}_r$ is a continuous family of flat bundles. The underlying bundles are (smoothly) isomorphic to a fixed projective space bundle and the path $\mathsf{P}_r$ may be thought of as a continuously varying family of flat connections on that fixed bundle. Hence the section $s$ determines a family of sections $s_r: T^1S \to \mathsf{P_r}$ which lift to a family of maps
\[ \widetilde s_r: T^1 \widetilde S \to \PP(\RR^{2n})\]
which vary continuously in the compact open topology and satisfy that 
\begin{itemize}
\item $\widetilde s_0(T^1 \widetilde S) \subset \EE^{n,n-1}$, and $s_0$ is $(\rho,u)$-equivariant.
\item $\widetilde s_r$ is $\varrho_r^{c_r}$-equivariant.
\end{itemize}

\begin{lemma}\label{lem:sections}
Let $(\rho,u)$ and $\varrho_r$ be as in Lemma~\ref{lem:key}. Let $s_r: T^1S \to \mathsf{P}_r$ be a continuously varying family of sections as above. Then for $r > 0$ sufficiently small, we have: 
\begin{enumerate}
\item $s_r(T^1 S) \subset c_r\mathsf{H}_{\varrho_r}$, or in other words $\widetilde s_r(T^1 \widetilde S) \subset c_r \HH^{n,n-1}$, and
\item for any $(y,z,x) \in T^1 \widetilde S$, $\widetilde{s}_r(y,z,x) \subset c_r \mathcal U(\axisH_{\varrho_r}(y,x))$.
\end{enumerate}
\end{lemma}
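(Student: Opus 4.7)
The plan is to first observe that statement~(2) implies statement~(1), since $\mathcal U(\axisH_{\varrho_r}(\nu)) \subset \HH^{n,n-1}$ by definition; so I focus on proving (2). The strategy is a uniform compactness argument in the universal cover, combining uniform convergence of the sections $\widetilde s_r$ with a parameterized version of Lemma~\ref{lem:vector-fields-converge}\eqref{item:all-of-Rd}. Throughout, I fix a compact fundamental domain $D \subset T^1 \widetilde S$ for the $\Gamma$-action, and let $K \subset \partial^{(2)} \Gamma$ denote the compact image of $D$ under $(y,z,x) \mapsto (y,x)$. By equivariance of $\widetilde s_r$ under $\varrho_r^{c_r}$ and of the sets $c_r \mathcal U(\axisH_{\varrho_r}(y,x))$ under the same action, it suffices to verify $\widetilde s_r(y,z,x) \in c_r \mathcal U(\axisH_{\varrho_r}(y,x))$ for $(y,z,x) \in D$.

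The first ingredient is uniform convergence of sections. The bundles $\mathsf P_r$ are topologically the same fixed $\PP(\RR^{2n})$-bundle over $T^1 S$ with flat connections varying continuously in $r$, and the lifts $\widetilde s_r: T^1 \widetilde S \to \PP(\RR^{2n})$ depend continuously on $r$ in the compact open topology. Since $D$ is compact and $\widetilde s_0(D) \subset \EE^{n,n-1}$ is a compact subset, I may choose a compact neighborhood $C \subset \EE^{n,n-1}$ of $\widetilde s_0(D)$ and $r_1 > 0$ such that $\widetilde s_r(D) \subset C$ for all $r \in [0, r_1)$.

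The second ingredient is the key geometric claim: $\mathbf x_0 \in \axisH_{\varrho_0}(y,x)$ for every $(y,x) \in \partial^{(2)} \Gamma$. To see this, note that $\varrho_0(\gamma) = \iota_{n,n}(\rho(\gamma))$ stabilizes the basepoint $\mathbf x_0 = [e_{2n}']$, and the slow axis $\axisH_{\varrho_0}(\gamma^-, \gamma^+) = \PP(L_n^+(\gamma^-,\gamma^+) \oplus L_n^-(\gamma^-,\gamma^+)) \cap \HH^{n,n-1}$ corresponds to the two-dimensional $1$-eigenspace of $\varrho_0(\gamma)$, which contains $\RR e_{2n}'$ and has signature $(1,1)$, so its projectivization meets $\HH^{n,n-1}$ in a Riemannian line through $\mathbf x_0$. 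By density of $\{(\gamma^-, \gamma^+) : \gamma \in \Gamma\}$ in $\partial^{(2)} \Gamma$ and continuity of $(y,x) \mapsto \axisH_{\varrho_0}(y,x)$, the claim follows. Consequently $\mathbf x_0 \in \mathcal U(\axisH_{\varrho_0}(y,x))$ for all $(y,x) \in K$; by joint continuity of the family $(r, y, x) \mapsto \mathcal U(\axisH_{\varrho_r}(y, x))$ as open subsets of $\PP(\RR^{2n})$, and compactness of $K \times \{0\}$, I extract a uniform open neighborhood $V$ of $\mathbf x_0$ and some $r_2 > 0$ such that $V \subset \mathcal U(\axisH_{\varrho_r}(y,x))$ for all $(y,x) \in K$ and $r \in [0, r_2)$.

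To conclude, I use that $c_r$ dilates $\EE^{n,n-1}$ away from $\mathbf x_0$ by factor $1/r$, so the open sets $c_r V$ exhaust $\EE^{n,n-1}$ as $r \to 0$, and hence $C \subset c_r V$ for $r < r_3$ small enough. For $r < \min(r_1, r_2, r_3)$ and every $(y,z,x) \in D$ I have $\widetilde s_r(y,z,x) \in C \subset c_r V \subset c_r \mathcal U(\axisH_{\varrho_r}(y,x))$, and equivariance propagates this to all of $T^1 \widetilde S$. The main obstacle I anticipate is Step~2, namely verifying the geometric fact that $\mathbf x_0$ lies on every limiting axis; once that is in hand, together with continuous dependence of the axes and their $\mathcal U$-neighborhoods on $(r,y,x)$, the rest is a straightforward compactness argument.
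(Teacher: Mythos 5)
Your proof is correct and follows essentially the same route as the paper's: reduce to a compact fundamental domain, use uniform convergence of $\widetilde s_r$ into a compact subset of $\EE^{n,n-1}$, observe that every axis $\axisH_{\varrho_0}(y,x)$ passes through $\mathbf x_0$ so that the sets $c_r\mathcal U(\axisH_{\varrho_r}(y,x))$ uniformly engulf that compact set for small $r$ (this is exactly Lemma~\ref{lem:vector-fields-converge}(1) plus compactness), and finish by equivariance. The only cosmetic difference is that you establish $\mathbf x_0 \in \axisH_{\varrho_0}(y,x)$ by density of fixed-point pairs, whereas the paper notes directly that $[e_{2n}] = [f_+ + f_-] \in \PP(L_n^+ \oplus L_n^-)$; both are fine.
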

\begin{proof}
Note that (1) will follow from (2). We prove (2).
Let $\mathscr F \subset T^1 \widetilde S$ be a compact fundamental domain. 
For all sufficiently small $r > 0$, $\widetilde s_r(\mathscr F)$ is contained in a uniform neighborhood $U$ of the compact subset $\widetilde s_0(\mathscr F)$ in  $\EE^{n,n-1}$.
For a fixed $(y,z,x) \in \mathscr F$, the Riemannian line $\axisH_{\varrho_0}(y,x)$ contains the basepoint ${\mathbf x}_0 \in \HH^{n,n-1}$, so by Lemma~\ref{lem:vector-fields-converge}.\eqref{item:all-of-Rd}, $c_r \mathcal U(\axisH_{\varrho_r}(y,x))$ contains $U$ for all $r > 0$ sufficiently small. 
By compactness of $\mathscr F$, we have that $ \bigcap_{(y,z,x) \in \mathscr F} c_r \mathcal U(\axisH_{\varrho_r}(y,x)) \supset U$ for all $r > 0$ sufficiently small. Hence (2) holds for all $(y,z,x) \in \mathscr F$, and hence over all of $T^1 \widetilde S$ by equivariance.
\end{proof}

We now give the proof of Lemma~\ref{lem:key}.

\begin{proof}[Proof of Lemma~\ref{lem:key}]
Let $s_r: T^1S \to \mathsf{P}_r$ be a continuously varying family of sections and assume $r > 0$ is sufficiently small as in Lemma~\ref{lem:sections} above. We may use the section $c_r^{-1} s_r: T^1 S \to \mathsf{H}_{\varrho_r}$ to calculate the length function $\mathscr{L}_{\varrho_r}$ via Formula~\eqref{eqn:defn-L}:
\begin{align*} \mathscr L_{\varrho_r}(\mu) := \int_{\nu \in T^1 S} \metric^\HH( \nabla_\varphi (c_r^{-1}s_r), f ) d \mu 
\end{align*}
Let us calculate the integral by lifting everything to the product bundle $T^1 \widetilde S \times \HH^{n,n-1}$. Let $\mathscr F \subset T^1 \widetilde S$ be a fundamental domain for the action of $\Gamma = \pi_1 S$ and let $\widetilde \mu$ denote the pullback of $\mu$ to $T^1 \widetilde S$. Then 
\begin{align} r^{-1}\mathscr L_{\varrho_r}(\mu) &= r^{-1}\int_{\widetilde \nu \in \mathscr F} \metric^\HH\left( (\nabla_\varphi (c_r^{-1}\widetilde s_r))(\widetilde \nu), f_{\axisH_{\varrho_r}(\widetilde \nu)} \right) d \widetilde \mu \nonumber\\
&= r^{-1}\int_{\widetilde \nu \in \mathscr F} \metric^\HH\left( ((c_r^{-1})_* \nabla_\varphi \widetilde s_r )(\widetilde \nu), f_{\axisH_{\varrho_r}(\widetilde \nu)} \right) d \widetilde \mu \nonumber \\
&= r^{-1}\int_{\widetilde \nu \in \mathscr F} \metric^r\left( (\nabla_\varphi \widetilde s_r )(\widetilde \nu), (c_r)_* f_{\axisH_{\varrho_r}(\widetilde \nu)} \right) d \widetilde \mu \nonumber \\
&=  \int_{\widetilde \nu \in \mathscr F} r^{-2}\metric^r\left( (\nabla_\varphi \widetilde s_r )(\widetilde \nu), r(c_r)_* f_{\axisH_{\varrho_r}(\widetilde \nu)} \right) d \widetilde \mu \label{eqn:giraffe}
\end{align}

Let us now examine the integrand of right-hand side of~\eqref{eqn:giraffe}. The oriented axes $\axisH_{\varrho_r}(\widetilde 
\nu)$ converge to $\axisH_{\varrho_0}(\widetilde \nu)$, where $\varrho_0 = \iota \circ \rho$, simply because the Anosov boundary maps associated to the path $\varrho_r$ of Anosov representations vary continuously in the $C^0$ topology~\cite[Theorem 5.13]{GW}. The convergence is uniform over $\widetilde \nu$ in the compact fundamental domain $\mathscr F$. Writing $\widetilde \nu = (y,z,x)$, the axis $\axisH_{\varrho_0}(\widetilde \nu) = \xi^{n-1}(y)^\perp \cap \xi^{n-1}(x)^\perp$ contains the basepoint ${\mathbf x}_0 = [0:\ldots:0:1]$ and the positive unit tangent direction of $\axisH_{\varrho_0}$ at $\mathbf x_0$ is precisely the neutral vector $f_0(y,x) =f_0(\widetilde \nu) \in \RR^{2n-1} = T_{\mathbf x_0} \HH^{n,n-1}$ associated to $(y,x)$ for the representation $\rho$. Hence, by Lemmas~\ref{lem:metrics} and~\ref{lem:vector-fields-converge},
\begin{align*}
\lim_{r \to 0} r^{-2}\metric^r\left( (\nabla_\varphi \widetilde s_r )(\widetilde\nu), r(c_r)_* f_{\axisH_{\varrho_r}(\widetilde\nu)} \right) &=\metric^\EE( (\nabla_\varphi \widetilde s_0)(\widetilde \nu), f_0(\widetilde \nu) )
\end{align*}
and the convergence is uniform over $\mathscr F$. 
Hence
\begin{align*}
\lim_{r \to 0} r^{-1}\mathscr L_{\varrho_r}(\mu) &= \int_{\widetilde p \in \mathscr F}\metric^\EE( (\nabla_\varphi \widetilde s_0)(\widetilde\nu), f_0(\widetilde\nu) ) d \widetilde \mu\\
&= \int_{T^1 S} \metric^\EE( \nabla_{\varphi} s_0, f_0 ) d \mu\\ &= \alpha_{(\rho,u)}(\mu)
\end{align*}
and the convergence is uniform for $\mu$ varying in a compact subset of $\mathcal C(S)$.
\end{proof}

\begin{remark}
In the context of Lemma~\ref{lem:key}, the slow axes for $\varrho_r^{c_r}$ converge to the translation axis for $(\rho,u)$: for each $(y,x) \in (\partial \Gamma)^2$, $c_r \axisH_{\varrho_r}(y,x) \to \axisR(y,x)$. This follows easily since the axes are constructed directly from Anosov boundary maps. Similarly, the associated projection maps converge: $c_r \Pi_{\axisH_{\varrho_r}(y,x)} \to \Pi_{\axisR(y,x)}$. However, we did not use this convergence in the proof of Lemma~\ref{lem:key}. We used only the statement that the vector field $f_{\axisH_{\varrho_r}}(y,x)$ converges as $r\to 0$ to the neutral vector field $f_0(y,x)$. This is a slightly weaker statement because while the vector field $f_{\axisH}$ determines the line $\axisH$ in $\HH^{n,n-1}$, a parallel vector field $f_0$ does not determine one line, but only a family of parallel lines.
 \end{remark}

We now have the ingredients needed to prove Theorem~\ref{thm:main-flat} and finally Theorem~\ref{thm:main-affine}.

\subsection{Proof of Theorems~\ref{thm:main-flat} and~\ref{thm:main-affine}}

Let $\iota_{n,n}: \SO(n,n-1) \to \PSO(n,n)$ and $\iota_{2n}: \PSO(n,n) \to \PSL(2n,\RR)$ be the inclusions in the examples in Section~\ref{sec:Anosovinclusion}. Theorem~\ref{thm:main-flat} follows from the more general statement:
\begin{theorem}\label{thm:general}
Let $\rho: \Gamma \to \SO(n,n-1) = G''$ be Anosov with respect to the stabilizer $P_{n-1}''$ of an isotropic $(n-1)$-plane. Let $u: \Gamma \to \RR^{2n-1}$ be a $\rho$-cocycle so that the affine action $(\rho,u)$ on $\RR^{n,n-1}$ is properly discontinuous. Let $\varrho_r: \Gamma \to \PSO(n,n)$ be any path so that $\varrho_0 = \iota_{n,n} \circ \rho$ and $\varrho_r^{c_r}$ converges to $(\rho,u)$ as in Lemma~\ref{lem:smooth-point}. Then for all $r > 0$ sufficiently small, $\iota_{2n} \circ \varrho_r: \Gamma \to \PSL(2n,\RR)$ is Anosov with respect to the stabilizer $P_n$ of an $n$-plane in~$\RR^{2n}$.
\end{theorem}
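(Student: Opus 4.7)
The plan is to use properness of the affine action $(\rho,u)$ to obtain a uniform positive lower bound on the Margulis-invariant functional $\alpha_{(\rho,u)}$, transfer this via Lemma~\ref{lem:key} into a lower bound on the translation-length functional $\mathscr{L}_{\varrho_r}$, and extract from that a linear-in-$|\gamma|_\infty$ lower bound on the middle Lyapunov exponent $\lambda_n'(\varrho_r(\gamma))$ of consistent sign. Combining this middle-exponent bound with the $P_{n-1}'$-Anosov structure of $\varrho_r$ (which persists for small $r>0$ by Fact~\ref{fact:open} and Example~\ref{eg:include}), I will verify the Lyapunov characterization, condition~(3) of Theorem~\ref{thm: GGKW}, for $P=P_n$. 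Concretely: Theorem~\ref{thm:GLM-GT} gives that $\alpha_{(\rho,u)}$ is nowhere zero on $\mathcal{C}(S)\smallsetminus\{0\}$; since $\mathcal{C}_1(S)$ is compact (Fact~\ref{thm:Bonahon}) and convex, hence connected, $\alpha_{(\rho,u)}$ has constant nonzero sign on $\mathcal{C}_1(S)$ and, without loss of generality, $\alpha_{(\rho,u)}(\mu)\geq 2\delta>0$ uniformly. Lemma~\ref{lem:key} says $r^{-1}\mathscr{L}_{\varrho_r}\to\alpha_{(\rho,u)}$ uniformly on $\mathcal{C}_1(S)$, so $\mathscr{L}_{\varrho_r}(\mu)\geq r\delta$ on $\mathcal{C}_1(S)$ for all sufficiently small $r>0$. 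Applied to the normalized current $\mu_c/\ell(c)$ of a closed geodesic $c=[\gamma]$, Proposition~\ref{prop:Lperiods} yields
\[
2\lambda_n'(\varrho_r(\gamma))=\mathscr{L}(\varrho_r(\gamma))\geq r\delta\,\ell(c)\geq c'r\,|\gamma|_\infty,
\]
where the last inequality uses that the hyperbolic length $\ell(c)$ is comparable to the stable word-length $|\gamma|_\infty$. The consistency of sign forces $n$ to be even (by Remark~\ref{rem:sign-change}, if $n$ were odd then $\lambda_n'(\varrho_r(\gamma^{-1}))=-\lambda_n'(\varrho_r(\gamma))$, a contradiction); hence for $n$ odd the hypotheses of the theorem are vacuous, and I restrict attention to $n$ even.

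To feed this into Theorem~\ref{thm: GGKW}(3), I take as boundary curve
\[
\xi:=\xi_+'^{(n)}=\varpi_+\circ\xi'^{(n-1)}:\partial\Gamma\to\Gr_n^+(\RR^{n,n})\subset\Gr_n(\RR^{2n}),
\]
where $\xi'^{(n-1)}:\partial\Gamma\to\Gr_{n-1}(\RR^{n,n})$ is the $P_{n-1}'$-Anosov limit curve of $\varrho_r$. Continuity and $(\iota_{2n}\circ\varrho_r)$-equivariance are inherited from $\xi'^{(n-1)}$. For dynamics preservation at $\gamma^+$: the vector $\xi(\gamma^+)$ is by construction the positive isotropic $n$-plane $V_{n-1}^+\oplus L_n^+$ of Section~\ref{sec:length-function-H}; since $\lambda_n'(\varrho_r(\gamma))>0$, this is exactly the attracting $n$-plane of $\iota_{2n}\circ\varrho_r(\gamma)$ in $\Gr_n(\RR^{2n})$ (and the automatic dynamics preservation at $\gamma^-=(\gamma^{-1})^+$ works because, for $n$ even, Remark~\ref{rem:isotropic transverse} ensures that the repelling $n$-plane $V_{n-1}^-\oplus L_n^-$ is also positive). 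The required Lyapunov growth hypothesis is exactly our bound, namely $\alpha_n(\lambda(\iota_{2n}\circ\varrho_r(\gamma)))=2\lambda_n'(\varrho_r(\gamma))\geq c'r\,|\gamma|_\infty\to\infty$ as $|\gamma|_\infty\to\infty$.

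The hard part will be the remaining hypothesis: transversality of $\xi$. I propose to establish it by a soft argument using minimality of the $\Gamma$-action on $\partial\Gamma^{(2)}$, which is standard for non-elementary hyperbolic groups. The set
\[
Z:=\{(x,y)\in\partial\Gamma^{(2)}:\xi(x)\cap\xi(y)\neq 0\}
\]
is closed by upper semicontinuity of intersection dimension, and $\Gamma$-invariant. At any fixed-point pair $(\gamma^-,\gamma^+)$ of a non-trivial $\gamma\in\Gamma$, the $n$-planes $\xi(\gamma^-)$ and $\xi(\gamma^+)$ are the repelling and attracting $n$-planes of $\iota_{2n}\circ\varrho_r(\gamma)$, and since the middle eigenvalues $\exp(\pm\lambda_n')$ are distinct they are transverse; in particular, $(\gamma^-,\gamma^+)\notin Z$. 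By minimality the $\Gamma$-orbit of every point of $\partial\Gamma^{(2)}$ is dense, so any closed nonempty $\Gamma$-invariant subset must contain fixed-point pairs, forcing $Z=\emptyset$. With transversality in hand, Theorem~\ref{thm: GGKW}(3) delivers that $\iota_{2n}\circ\varrho_r$ is $P_n$-Anosov for all sufficiently small $r>0$, completing the proof.
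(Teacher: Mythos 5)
Your main line of argument --- uniform positivity of $\alpha_{(\rho,u)}$ on $\mathcal C_1(S)$ from Theorem~\ref{thm:GLM-GT} together with compactness and connectedness, transfer to $\mathscr L_{\varrho_r}$ via the uniform convergence in Lemma~\ref{lem:key}, the resulting bound $2\lambda_n'(\varrho_r(\gamma))\geq c'r\,|\gamma|_\infty$ via Proposition~\ref{prop:Lperiods} and the bi-Lipschitz comparison of $\ell([\gamma])$ with $|\gamma|_\infty$, the choice of $\xi=\varpi_+\circ\xi'^{(n-1)}$ as the candidate limit map, and the appeal to Theorem~\ref{thm: GGKW} --- is exactly the paper's proof, and all of those steps are correct. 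Your observations that the $n$ odd case is vacuous and that dynamics preservation at $\gamma^-$ uses Remark~\ref{rem:isotropic transverse} are also fine.

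The gap is in the transversality step, which you correctly identify as the remaining hypothesis of Theorem~\ref{thm: GGKW} but then dispose of with a false dynamical fact. The action of $\Gamma$ on $\partial\Gamma^{(2)}$ is \emph{not} minimal: for a primitive hyperbolic $\gamma$, the orbit of $(\gamma^-,\gamma^+)$ is the set of endpoint pairs of the lifts of the closed geodesic $[\gamma]$, which is a closed, discrete, proper subset of $\partial\Gamma^{(2)}$ (the action is topologically transitive, but only the set of \emph{all} fixed-point pairs is dense, not each individual orbit). Worse, the weaker statement you actually use --- that every nonempty closed $\Gamma$-invariant subset of $\partial\Gamma^{(2)}$ contains a fixed-point pair --- is also false: the endpoint pairs of the leaves of (the preimage of) a minimal geodesic lamination with no closed leaves form a closed invariant set containing no fixed-point pairs. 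So your argument does not rule out $Z\neq\emptyset$.

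Fortunately the transversality you need is essentially a linear-algebra fact, given what you have already established. Since $\xi^{(n)}_\pm(y)\subset\xi'^{(n+1)}(y)=\xi'^{(n-1)}(y)^\perp$, the transversality of the $P_{n-1}'$-limit curve gives $\xi'^{(n-1)}(x)\cap\xi'^{(n+1)}(y)=0$, and writing $\xi^{(n)}_+(x)=\xi'^{(n-1)}(x)\oplus\ell$ one sees that
\[
\xi^{(n)}_+(x)\cap\xi^{(n)}_+(y)\ \subseteq\ \xi^{(n)}_+(x)\cap\xi'^{(n+1)}(y)
\]
has dimension at most one. On the other hand, two maximal isotropic subspaces of $\RR^{n,n}$ lie in the same $\PSO(n,n)$-orbit if and only if their intersection has dimension congruent to $n$ modulo $2$; since $n$ is even and both $\xi^{(n)}_+(x)$ and $\xi^{(n)}_+(y)$ are positive, the intersection has even dimension, hence is zero. (The paper does not spell this out, but this is the justification that makes the appeal to Theorem~\ref{thm: GGKW} legitimate.) With this replacement for your minimality argument, the proof is complete and coincides with the paper's.
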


\begin{proof}
By Theorem~\ref{thm:GLM-GT}, the Margulis invariant functional $\alpha_{(\rho,u)}: \mathcal C(S) \to \RR$ is well-defined and satisfies that $\alpha_{(\rho,u)}(\mu) \neq 0$ for any current $\mu \in \mathcal C(S)$. Since the space of currents is connected, $\alpha_{(\rho,u)}(\mu)$ has the same sign for all $\mu \in \mathcal C(S)$, and without loss in generality we assume $\alpha_{(\rho,u)}(\mu) > 0$ for all $\mu \in \mathcal C(S)$ (if not, simply conjugate by the orientation reversing isometry $-\mathrm{Id}_{2n-1}$, which does not affect proper discontinuity, but flips the sign of the Margulis invariants).
In particular, there exists $\eps > 0$ so that
\[ \alpha_{(\rho,u)}(\mathcal C_1(S)) > \eps > 0 \]
where $\mathcal C_1(S) \subset \mathcal C(S)$ denotes the currents with total mass one, a compact subset.
It then follows from Lemma~\ref{lem:key} that for all $r > 0$ sufficiently small, 
\begin{align}\label{eqn:growth}
\mathscr L_{\varrho_r}(\mathcal C_1(S)) > r \frac{\eps}{2}. 
\end{align}
Henceforth assume $r > 0$ is sufficiently small so that~\eqref{eqn:growth} holds.
Note that the stable length function $\gamma \mapsto |\gamma|_\infty$ is bi-Lipschitz to the length function $\gamma \mapsto \ell([\gamma])$ for the fixed hyperbolic metric on $S$;  denote the bi-Lipschitz constant by $M > 1$.  Equation~\ref{eqn:growth} and Proposition \ref{prop:Lperiods} then imply that for every $\gamma \in \Gamma \setminus \{1 \}$, the Lyapunov projection $\lambda'(\varrho_r(\gamma))$ satisfies that $\lambda'_n(\varrho_r(\gamma)) \geq Mr\frac{\eps}{2} |\gamma|_\infty$. In particular, $\lambda'_n(\varrho_r(\gamma)) > 0$.  
Letting $\gamma^+ = \lim_{m \to \infty} \gamma^m \in \partial \Gamma$, it then follows that $\xi^{(n)}_+(\gamma^+)$ is the attracting fixed point for the action of $\varrho(\gamma)$ on the full Grassmannian $\Gr_n(\RR^{2n})$ of $n$-planes in $\RR^{2n}$ (recall that $\xi^{(n)}_+(\gamma^+)$ is the positive isotropic $n$-plane in $\RR^{n,n}$ containing $\xi^{(n-1)}(\gamma^+)$). Hence, composing with the inclusion $\Gr_n^+(\RR^{n,n}) \hookrightarrow \Gr_n(\RR^{2n})$, the map $\xi^{(n)}_+$ determines a continuous embedding $\partial \Gamma \to \Gr_n(\RR^{2n})$ which is equivariant and dynamics preserving for the representation $\iota_{2n} \circ \varrho: \Gamma \to \PSL(2n, \RR)$.
Hence, the implication (3')~$\implies$~(1) in Theorem~\ref{thm: GGKW} shows that $\iota_{2n} \circ \varrho$ is $P_n$-Anosov.
\end{proof}

\begin{proof}[Proof of Theorem~\ref{thm:main-flat}]
Suppose $(\rho, u): \pi_1 S_g \to \mathrm{Isom^+}(\RR^{n,n-1}) = \SO(n,n-1) \ltimes \RR^{2n-1}$ is an action by isometries of $\RR^{n,n-1}$ with linear part $\rho$ a Hitchin representation in $\SO(n,n-1)$. Suppose for contradiction that the action is properly discontinuous.
Then, since $\rho$ is irreducible, Lemma~\ref{lem:smooth-point} gives the existence of a path $\varrho_r: \Gamma \to \PSO(n,n)$ such that $\varrho_0 = \iota_{n,n} \circ \rho$ and $\varrho_r^{c_r}$ converges to $(\rho,u)$.
Since $\rho$ is $P_{n-1}''$-Anosov, Theorem~\ref{thm:general} implies that  for $r > 0$ sufficiently small, $\iota_{2n} \circ \varrho_{r}: \Gamma \to \PSL(2n,\RR)$ is $P_n$-Anosov. However, $\varrho_r$ is a $\PSO(n,n)$ Hitchin representation for any $r$ and therefore cannot be $P_n$-Anosov by Theorem~\ref{thm:main2}.
\end{proof}

\begin{proof}[Proof of Theorem~\ref{thm:main-affine}]
Suppose for contradiction that $(\rho, u): \Gamma \to \Aff(\RR^d) = \GL(d,\RR) \ltimes \RR^d$ is a proper affine action with linear part~$\rho$ a lift of a representation $\sigma: \Gamma \to \PSL(d,\RR)$ in the $\PSL(d,\RR)$ Hitchin component. We show that, up to conjugation, $\rho(\Gamma) < \SO(n,n-1)$. 

First, let $\rho'$ be another lift of $\sigma$ which takes values in $\SL(d,\RR)$. Then $\rho$ and $\rho'$ differ by a scalar: $\rho(\gamma) = \lambda(\gamma)\rho'(\gamma)$, where $\lambda: \Gamma \to \RR^*$ is a homomorphism.
Guichard~\cite{Guichard2} has announced work showing that the Zariski closure $\overline{\rho'(\Gamma)}^Z < \SL(d,\RR)$ must contain the principle $\SL(2,\RR)$, i.e. the image of the irreducible representation $\tau_d: \SL(2,\RR)\to \SL(d,\RR)$. The following is a list of algebraic subgroups with that property:
\begin{enumerate}
\item\label{item:all} all of $\SL(d,\RR)$
\item\label{item:PSL2} the image of the irreducible representation $\tau_d: \SL(2,\RR)\to \SL(d,\RR)$.
\item\label{item:PSO} the orthogonal group $\SO(n,n-1)$ if $d = 2n-1$ is odd.
\item\label{item:PSp} the symplectic group $\Sp(2n,\RR)$ if $d = 2n$ is even.
\item\label{item:G2} the seven dimensional representation of $G_2$ if $d = 7$, which is contained in $\SO(4,3)$.
\end{enumerate}

Observe first that $\rho$ and $\rho'$ must agree on the commutator subgroup $[\Gamma, \Gamma]$. 
Hence
\begin{align*}
\overline{\rho(\Gamma)}^Z \supset \overline{\rho([\Gamma, \Gamma])}^Z &= \overline{\rho'([\Gamma, \Gamma])}^Z\\ &=  \overline{[\rho'(\Gamma), \rho'(\Gamma)])}^Z\\ &= [ \overline{\rho'(\Gamma)}^Z, \overline{\rho'(\Gamma)}^Z] =  \overline{\rho'(\Gamma)}^Z,
\end{align*}
where the last equality is easily checked for each of the groups listed above. 
Hence, in particular, $\overline{\rho(\Gamma)}^Z$ contains $\tau_d(\SL(2,\RR))$. 

On the other hand, it is a basic linear algebra fact that an affine transformation $g = (A_g, u_g)$ fixes a point unless $A_g$ has one as an eigenvalue, see e.g.~\cite{KS75}. Hence for all $\gamma \in \Gamma \setminus \{1\}$, $\rho(\gamma)$ has one as an eigenvalue and this property passes to the Zariski closure $\overline{\rho(\Gamma)}^Z$.  In the case that $d = 2n$ is even, $\tau_d(\SL(2,\RR))$ contains, for example 
\begin{align*}
\tau_{2n} \diag(e^t, e^{-t}) &= \diag(e^{(2n-1)t}, e^{(2n-3)t}, \ldots, e^t, e^{-t}, \ldots, e^{-(2n-1)t})
\end{align*}
which does not have one as an eigenvalue. 

Hence $d = 2n-1$ is odd.
By the above, $\overline{\rho(\Gamma)}^Z \supset \overline{\rho'(\Gamma)}^Z$. It is also true that $\overline{\rho(\Gamma)}^Z \subset \Pi^{-1}(\overline{\rho'(\Gamma)}^Z)$, where $\Pi: \GL(2n-1,\RR) \to \SL(2n-1,\RR)$ is the natural projection, since the algebraic equations in $\SL(2n-1,\RR)$ defining $\overline{\rho'(\Gamma)}^Z$ pull back to algebraic equations in $\GL(2n-1,\RR)$. Further, if $g_1, g_2 \in \overline{\rho(\Gamma)}^Z$ are such that $\Pi(g_1) = \Pi(g_2)$, then $g_1g_2^{-1} \in \overline{\rho(\Gamma)}^Z$ is a multiple of the identity, hence equal to the identity, by the eigenvalue one property. It follows that the projection $\Pi$ maps $\overline{\rho(\Gamma)}^Z$ to $\overline{\rho'(\Gamma)}^Z$ one to one, and hence that $\overline{\rho(\Gamma)}^Z = \overline{\rho'(\Gamma)}^Z$.
Therefore $\overline{\rho(\Gamma)}^Z$ is (conjugate to) one of the items on the above list, namely $\SO(n,n-1)$ (case~\ref{item:PSO}) or $G_2$ (case~\ref{item:G2}) or $\tau_d(\SL(2,\RR)$ (case~\ref{item:PSL2}). In all cases $\overline{\rho(\Gamma)}^Z < \SO(n,n-1)$. Hence $\rho(\Gamma) < \SO(n,n-1)$. This contradicts Theorem~\ref{thm:main-flat} and concludes the proof of Theorem~\ref{thm:main-affine}.
\end{proof}

\appendix
\section{Positivity in $\PSO(n,n)$}\label{app:matrices}
While an explicit description of positive triples of flags for the Lie group $G = \PSL(d,\RR)$ is given in almost every introduction to the study of positivity (see e.g.~\cite{GW16}), an explicit description of the positive triples of flags for $G' = \PSO(n,n)$ seems to be absent from the literature. 
The purpose of this appendix is to give one such description, by induction on $n$. Let $B'^\pm$ be the Borel subgroups of $G'$ described in Example~\ref{eg:SO(n,n)}. In Example~\ref{eg:SO(n,n)}, we also described the unipotent radicals $U'^\pm\subset B'^\pm$, the corresponding positive Weyl chamber $\mathfrak{a}'^+\subset\mathfrak{a}'$, and the simple roots $\Delta'=\{\alpha'_1,\dots,\alpha'_{n}\}$. Let $(B'^+,B'^-,\{x_{\alpha'_i}^+\}_{i=1}^n,\{x_{\alpha'_i}^-\}_{i=1}^n)$ be the pinning described in Example~\ref{eg:pinning}. We give an explicit inductive formula describing $U'^+_{>0}$, the set of positive elements in $U'^+$ corresponding to the chosen pinning.

We will now inductively define, for any positive integer $n$ and any $k=1,\dots,n-1$, a family of $(2n)\times (2n)$ matrices $M_{n,k}$ whose entries depend on variables $a_1,\dots,a_k,b_1,\dots,b_k$. When $n=2$ and $k=1$, define 
\[M_{2,1}=M_{2,1}(a_1,b_1):=\left(\begin{array}{cccc}
1&a_1&b_1&-a_1b_1\\
0&1&0&-b_1\\
0&0&1&a_1\\
0&0&0&1
\end{array}\right).\]
Now suppose that we have defined $M_{n,k}=M_{n,k}(a_1,\dots,a_k,b_1,\dots,b_k)$. Then define the $(2n+2)\times(2n+2)$ square matrices
\[M_{n+1,k}(a_1,\dots,a_k,b_1,\dots,b_k):=\left(\begin{array}{ccc}
1&0&0\\
0&M_{n,k}&0\\
0&0&1
\end{array}\right),\]
and 
\[M_{n+1,n}(a_1,\dots,a_n,b_1,\dots,b_n):=\left(\begin{array}{ccc}
1&v_1&v_3\\
0&M_{n,n-1}&v_2\\
0&0&1
\end{array}\right),\]
where $v_1$ is the $1\times (2n)$ matrix, $v_2$ is the $(2n)\times 1$ matrix, and $v_3$ is the $1\times 1$ matrix given by
\begin{eqnarray*}
v_1&:=&\big(a_n+b_n\,,\,a_n\cdot(M_{n,n-1})_{1,2}\,,\,a_n\cdot(M_{n,n-1})_{1,3}\,,\,\dots\,,\,a_n\cdot(M_{n,n-1})_{1,2n}\big),\\
v_2&:=&\left(
\begin{array}{c}
-b_n\cdot(M_{n,n-1})_{1,2n}\\
-b_n\cdot(M_{n,n-1})_{2,2n}\\
\vdots\\
-b_n\cdot(M_{n,n-1})_{2n-1,2n}\\
-(a_n+b_n)
\end{array}\right),\\
v_n&:=&-a_n\cdot b_n\cdot (M_{n,n-1})_{1,2n}=(-1)^{n-1}\prod_{i=1}^{n-1}a_i\cdot b_i.
\end{eqnarray*}
In the above formulas, $(M_{n,n-1})_{i,j}$ denotes the $(i,j)$-entry of $M_{n,n-1}$.

Let $s_i:=s_{\alpha'_i}$ be the generators of the Weyl group $W(\mathfrak{a}')$ of $G'$ described in Section \ref{sec:Positivity}, and let $x_i^+:=x_{\alpha'_i}^+$. Recall from Example \ref{eg:Mkl} that $\mu_1\cdot\mu_2\cdot\dots\cdot \mu_{n-1}$ is a reduced word expression for the longest word element in $W(\mathfrak{a}')$, where $\mu_1:=s_{n-1}\cdot s_n$, and $\mu_k:=s_{n-k}\cdot\mu_{k-1}\cdot s_{n-k}$ for all $k=2,\dots,n-1$. Using the description of $x^+_i$ in Example \ref{eg:pinning}, one can check via a straightforward induction argument that for all $k=1,\dots,n-1$, the matrix $M_{n,k}$ defined above satisfies
\[M_{n,k}=\left(\prod_{i=n-k}^{n-2}x_i^+(a_{n-i})\right)\cdot\big(x_{n-1}^+(a_1)\cdot x_n^+(b_1)\big)\cdot \left(\prod_{i=n-2}^{n-k}x_i^+(b_{n-i})\right).\]
It follows immediately that
 \[U^+_{>0}(G)=\left\{M_n(\{a_{k,l}\},\{b_{k,l}\}):\begin{array}{l}
k=1,\dots,n-1;\,\,l=1,\dots,k;\\ 
a_{k,l},b_{k,l}>0\text{ for all }k,l\end{array}\right\}
\]
where $M_n=M_n(\{a_{k,l}\},\{b_{k,l}\})$ is the $(2n)\times(2n)$ matrix given by
\[M_n:=\prod_{k=1}^{n-1}M_{n,k}(a_{k,1},\dots,a_{k,k},b_{k,1},\dots,b_{k,k}).\]
\section{The positive curve for $\PSO(n,n)$-Hitchin representations}\label{app:negative triples}

Here we prove the following proposition, which improves upon Theorem~\ref{thm: Fock-Goncharov} in the specific case of $G'$-Hitchin representations, where as usual $G'$ denotes $\PSO(n,n)$.
\begin{proposition}
Suppose $\varrho: \Gamma \to G'$ is a $G'$-Hitchin representation. Then the associated positive curve $\xi: \partial \Gamma \to G'/B'$ takes any triple of pairwise distinct points (independent of the cyclic ordering) to a positive triple of flags.
\end{proposition}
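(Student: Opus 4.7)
The plan is to prove the stronger statement that the set of positive triples in $(\mathcal{F}_{B'})^3$ is invariant under the full symmetric group $S_3$, not merely under cyclic permutations. Granted this, the proposition follows at once from Theorem~\ref{thm: Fock-Goncharov}: any triple $(\eta_1,\eta_2,\eta_3)$ of pairwise distinct points in $\partial\Gamma$ is either positively or negatively cyclically ordered. In the first case, $(\xi(\eta_1),\xi(\eta_2),\xi(\eta_3))$ is positive by Theorem~\ref{thm: Fock-Goncharov}; in the second case, $(\xi(\eta_3),\xi(\eta_2),\xi(\eta_1))$ is positive by the same theorem, and $S_3$-invariance upgrades this to positivity of $(\xi(\eta_1),\xi(\eta_2),\xi(\eta_3))$.

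Invariance of positive triples under cyclic permutation is standard and follows directly from Definition~\ref{def:positive}, so it remains to show that if $(F_1,F_2,F_3)$ is a positive triple, then so is the reverse $(F_3,F_2,F_1)$. After applying a suitable element of $G'$ we may assume $(F_1,F_2,F_3)=(F^+,uF^-,F^-)$ for some $u\in U'^+_{>0}$. Choosing a lift $\dot{w}_0\in G'$ of the longest Weyl element $w_0$ with $\dot{w}_0 F^+=F^-$ and $\dot{w}_0 F^-=F^+$, applying $\dot{w}_0$ to $(F^-,uF^-,F^+)$ yields $(F^+,\dot{w}_0 u F^-,F^-)$. Since pairwise transversality of positive triples implies that $uF^-$ is transverse to both $F^\pm$, the same applies to $\dot{w}_0 uF^-$, and so via the Gauss decomposition there is a unique $u'\in U'^+$ with $\dot{w}_0 uF^-=u' F^-$.

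The heart of the matter is to show that $u'\in U'^+_{>0}$. The most conceptual route is to invoke Lusztig's theory of total positivity, according to which the map $u\mapsto u'$ just described is an isomorphism of the positive semigroup $U'^+_{>0}$ onto itself; equivalently, conjugation by $\dot{w}_0$ carries $U'^+_{>0}$ to $U'^-_{>0}$, and the subsequent passage back to $U'^+$ via the Bruhat decomposition is positivity-preserving. The main obstacle will be to justify this step in a self-contained way for $G'=\PSO(n,n)$. We expect to do so directly by combining the explicit inductive parametrization of $U'^+_{>0}$ from Appendix~\ref{app:matrices} with a case analysis of the braid-type Chevalley relations among the one-parameter subgroups $x^\pm_{\alpha_i'}$ of the pinning of Example~\ref{eg:pinning}, choosing $\dot{w}_0$ compatibly with this pinning so that the positivity of $u$ transports correctly to~$u'$.
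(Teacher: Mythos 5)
Your reduction of the proposition to reversal-invariance of positive triples is sound, and it is a genuinely different route from the paper's: the paper instead deforms $\varrho$ to the Fuchsian locus (using that positive triples form a union of connected components of the pairwise transverse triples) and then exploits the symmetry of the Fuchsian curve under $\tau_{G'}(\PGL(2,\RR))$. However, your argument has a real gap exactly where you place ``the heart of the matter,'' and the gap cannot be closed by a generic appeal to Lusztig's theory. The claim that the passage $u\mapsto u'$ is positivity-preserving for a suitable lift $\dot w_0$ is not a group-independent fact: it is \emph{false} for $G=\PSL(2,\RR)$. There $(F^+,uF^-,F^-)=([e_1],[te_1+e_2],[e_2])$ with $t>0$, every $g\in\PSL(2,\RR)$ swapping $[e_1]$ and $[e_2]$ has the form $\left(\begin{smallmatrix}0&a\\ b&0\end{smallmatrix}\right)$ with $ab=-1$, and such $g$ sends $[te_1+e_2]$ to $[se_1+e_2]$ with $s=-1/(b^2t)<0$; so the reversed triple is never positive, and correspondingly the proposition itself fails for Fuchsian representations into $\PSL(2,\RR)$. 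Any correct proof must therefore isolate what is special about $\PSO(n,n)$, and the ``conceptual route'' you invoke would prove a false statement in the model case. (Your preliminary step is also glossed: cyclic invariance of positivity does not ``follow directly'' from Definition~\ref{def:positive}; it is itself a theorem of Lusztig/Fock--Goncharov.)

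Concretely, what your framework requires is an element $g\in\PSO(n,n)$ normalizing $A'$, representing $w_0$, and satisfying $g\,x^+_{\alpha_i'}(t)\,g^{-1}=x^-_{\alpha_{\sigma(i)}'}(c_it)$ with all $c_i>0$; whether such $g$ exists is a group-specific question (it does for $\PGL(2,\RR)$ but not for $\PSL(2,\RR)$). For $n=2$ one can check that the naive lift $\dot w_0=J_4$ fails --- it produces $u'$ with parameters $-1/t_1,-1/t_2$ --- whereas $g=J_4\cdot\diag(1,-1,-1,1)$, which lies in the non-identity component of $\PSO(2,2)$, does the job. Carrying this out for all $n$ (and for $n$ odd, where $-w_0$ permutes $\alpha_{n-1}'$ and $\alpha_n'$ and $\PSO(n,n)$ is connected) via the matrices of Appendix~\ref{app:matrices} is plausible but is genuine work that your proposal defers rather than completes. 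By contrast, the paper's proof needs only the connectedness statement of Lusztig together with the observation that an orientation-reversing element of $\PGL(2,\RR)$ maps under $\tau_{G'}$ into $\PSO(n,n)$ up to an element fixing the equivariant curve pointwise; that observation is precisely the $\PSO(n,n)$-specific input your argument is missing.
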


\begin{proof}
Consider $y,z,x\in\partial\Gamma$ pairwise distinct. We wish to show that $(\xi(y), \xi(z), \xi(x))$ is a positive triple. If $(y,z,x)$ is positive for the cyclic ordering $\partial\Gamma$, then this is given by Theorem~\ref{thm: Fock-Goncharov}. We assume $(y,z,x)$ is not a positive triple.

Recall that we chose an oriented hyperbolic structure on $S$ to identify $\partial\Gamma$ with $\partial\HH^2$, the visual boundary of the upper half plane. Let $j:\Gamma\to\PSL(2,\RR)$ be the Fuchsian representation corresponding to this choice, let $\varrho_0:=\tau_{G'}\circ j:\Gamma\to G'$, and let $\varrho_t$ be a continuous path so that $\varrho_1=\varrho$. For each $t$, let $\xi_t:\partial\Gamma\to\mathcal F_{B'}$ be the $\varrho_t$-equivariant positive boundary map. 
The positive triples of flags make up a union of connected components of the space of pairwise transverse triples of flags (\cite[Proposition 8.14]{Lusztig}). Thus if $(\xi_0(y),\xi_0(z),\xi_0(x))$ is a positive triple, then so is $(\xi_t(y),\xi_t(z),\xi_t(x))$ for all $t$, hence it is sufficient to prove that $(\xi_0(y),\xi_0(z),\xi_0(x))$ is a positive triple.

Observe that $\tau_{G'}$ extends to a homomorphism $\tau_{G'}: \PGL(2,\RR) \to \PO(n,n)$. Let $g \in \PGL(2,\RR) \setminus \PSL(2,\RR)$ be any orientation reversing element. One easily computes that 
\begin{itemize}
\item $\tau_{G'}(g) \in \PSO(n,n)$ if $n$ is even, or
\item $\tau_{G'}(g) \in \PO(n,n) \setminus \PSO(n,n)$ if $n$ is odd, hence $\tau_{G'}(g) = hm$ where $h \in \PSO(n,n)$ and $m \in \PO(n,n) \setminus \PSO(n,n)$ is the element which pointwise fixes the copy of $\RR^{n,n-1}$ invariant under $\tau_{G'}(\PSL(2,\RR)) \subset \SO(n,n-1)$ and flips the sign of the orthogonal $\RR^{0,1}$.	
\end{itemize}
The triple $(gy,gz,gx)$ has positive orientation, hence the triple of flags
$$(\xi_0(gy), \xi_0(gz), \xi_0(gx)) = \tau_{G'}(g)(\xi_0(y), \xi_0(z), \xi_0(x))$$
is positive by Theorem~\ref{thm: Fock-Goncharov}. If $n$ is odd, it follows that $(\xi_0(y), \xi_0(z), \xi_0(x))$ is also positive, since $\tau_{G'}(g) \in \PSO(n,n)$. 
Otherwise, if $n$ is even, we have that $\tau_{G'}(g) = hm$ as above. However, the $\tau_{G'}$-equivariant embedding $\RP^1 \to G'/B'$, which defines $\xi_0$, is fixed by $m$, hence again have that $(\xi_0(y), \xi_0(z), \xi_0(x))$ differs from $(\xi_0(gy), \xi_0(gz), \xi_0(gx))$ by an element of $\PSO(n,n)$ and hence is also positive.
\end{proof}

\bibliographystyle{amsalpha}
\bibliography{ref}
\end{document}